\documentclass[11pt, twoside, a4paper]{amsart}

\usepackage{amsmath}
\usepackage{latexsym}
\usepackage{amsthm}
\usepackage{thmtools}
\usepackage{hyperref}
\usepackage[nameinlink]{cleveref}
\usepackage{IEEEtrantools}
\usepackage{crossreftools}
\usepackage{enumitem}
\setlist[enumerate]{topsep=10pt, itemsep=5pt, parsep=0pt}
\usepackage{mathrsfs}
\usepackage{blkarray}
\usepackage{adjustbox,lipsum}
\usepackage{stmaryrd}
\usepackage{mathdots}
\usepackage{amssymb}
\usepackage{tikz}
\usepackage{tikz-cd}
\usepackage{yfonts}
\usepackage{bbm}
\usepackage{etoolbox}
\usepackage{float}
\usepackage{xcolor}

\author{Yao Ming Chan}
\address{Yao Ming Chan, Mathematical Institute, Andrew Wiles Building, Woodstock Rd, OX2 6GG, Oxford, United Kingdom \newline} 
\email{yao.chan@maths.ox.ac.uk} 
\title{The Cuntz semigroup of the graph C*-algebra of a finite graph}

\theoremstyle{theorem}
\newtheorem{theorem}{Theorem}[section]
\newtheorem{lemma}[theorem]{Lemma}
\newtheorem{corollary}[theorem]{Corollary}
\newtheorem{proposition}[theorem]{Proposition}

\theoremstyle{definition}
\newtheorem{definition}[theorem]{Definition}
\newtheorem{example}[theorem]{Example}
\newtheorem{remark}[theorem]{Remark}

\newtheorem{IntroTheorem}[theorem]{Theorem}

\newtheorem{IntroCorollary}[theorem]{Corollary}

\numberwithin{equation}{section}

\crefname{subsection}{Subsection}{Subsections}
\Crefname{subsection}{Subsection}{Subsections}

\makeatletter 
\def\l@subsection{\@tocline{2}{0pt}{2pc}{5pc}{}} 
\def\l@subsection{\@tocline{2}{0pt}{3pc}{6pc}{}}
\makeatother

\DeclareMathOperator{\Cu}{Cu}

\DeclareMathOperator{\supp}{supp}
\DeclareMathOperator{\rank}{rank}
\DeclareMathOperator{\Lsc}{Lsc}
\DeclareMathOperator{\Lat}{Lat}
\DeclareMathOperator{\Idl}{Idl}

\DeclareMathOperator{\Span}{span}

\newcommand{\stack}{ \;
    \begin{tikzpicture}[scale=0.125, line width = 0.7pt]
        \draw (-1,0) rectangle (0,1);
        \draw (0,1) rectangle (1,2);
    \end{tikzpicture}
\; }

\begin{document}
\date{}

\begin{abstract} 
We compute the Cuntz semigroup of a graph C*-algebra associated to a finite graph by combining Hong and Szymański's realisation of the entire ideal lattice of a graph C*-algebra in \cite{HS04},  Ara, Moreno and Pardo's computation of the Murray-von Neumann semigroup of a unital graph C*-algebra in \cite{AMP07} and various Cuntz semigroup techniques. We also investigate certain properties of such Cuntz semigroups, and demonstrate that at the level of the Cuntz semigroup, there is no obstruction to the conjecture made in \cite{ENSW25} that the nuclear dimension of any graph C*-algebra is at most one. \newline
\phantom{} \newline
\textbf{Keywords.} Cuntz semigroup, Graph C*-algebra
\end{abstract}
\maketitle
\tableofcontents
\vspace*{-1mm}

\section*{Introduction}
\phantom{} 
The goal of this paper is to describe the Cuntz semigroup of a graph C*-algebra coming from a finite graph: one with finitely many vertices and edges. The Cuntz semigroup of a C*-algebra $A$, denoted by $\Cu(A)$, is a semigroup consisting of positive elements in $A \otimes K$ modulo Cuntz equivalence, an equivalence relation first considered in \cite[Section 1]{Cun78} which generalises Murray-von Neumann equivalence of projections. Due to the plentiful supply of positive elements in any C*-algebra, the Cuntz semigroup is notable for its ability to generally recover far more information about the underlying C*-algebra than other invariants commonly considered for C*-algebras. By exploiting this, Toms constructed his landmark counterexample to the Elliott conjecture in \cite[Theorem 1.1]{Tom08}, and in the process, demonstrated the Cuntz semigroup's faculty for distinguishing pairs of C*-algebras other invariants may remain blind to. Consequently, the theory of Cuntz semigroups rose to prominence as an active area of research, resulting in powerful techniques involving positive elements which are consistently used to study the structure of C*-algebras with nice regularity properties; see \cite{APS11}, \cite{ABP11}, \cite{Rob11}, \cite{KNZ17}, \cite{APRT22}, \cite{APTV24} and \cite{TV24} for example. \newline
\phantom{} \newline
So far, classification has been achieved primarily for classes of C*-algebras satisfying strong regularity conditions; see \cite[Corollary 6.2.4]{Rob12}, \cite[Corollary H]{Gab24} and \cite[Theorem A]{CGSTW26} for instance. One class of mostly non-simple C*-algebras with both stably finite and purely infinite building blocks which has a complete classification are unital graph C*-algebras. This was achieved in the paper \cite{ERRS21} using ordered reduced filtered $K$-theory --- developed in \cite[Section 3.3]{ERRS18} --- and moves for graph C*-algebras, a concept originating from symbolic dynamics (see \cite{Wil73}) which systematically evolved over the past twenty years; see \cite{BP04}, \cite{Sor13}, \cite{ERRS21} and \cite{ER25} for instance. Therefore, graph C*-algebras provide very natural and useful examples of  non-simple C*-algebras to test various techniques from the classification programme on.  \newline
\phantom{} \newline
Many invariants for C*-algebras can be explicitly realised for graph C*-algebras by a condition on the underlying graph. For instance, Cuntz computed the $K$-theory of the subclass of Cuntz-Krieger algebras in \cite[Section 4]{Cun81}, \cite[Theorem 3.4]{DHS03} described the stable rank of a graph C*-algebra in terms of certain conditions on the graph and \cite[Theorem 4.3, Theorem 4.6]{JPS01} characterised real rank zero via Condition (K). The computability of invariants for graph C*-algebras is very often due to the Cuntz-Krieger relations, which force the projections at the vertices and the partial isometries at the edges to interact with each other in a way that is dictated by the structure of the underlying graph. The most relevant example of this to the paper is \cite[Theorem 7.1]{AMP07}, which says that for a row-finite graph $E$, the Murray-von Neumann semigroup of the graph C*-algebra $C^*(E)$ is generated by the Murray-von Neumann equivalences classes of vertex projections, with the relations amongst the generators being precisely the ones induced by the Cuntz-Krieger relations. Due to the importance of positive elements in probing the structure of C*-algebras, \cite[Theorem 7.1]{AMP07} naturally leads to the following question, which serves as one of the primary motivations for this paper: \emph{what do the Cuntz-Krieger relations say about the behaviour of Cuntz classes of positive elements?} \newline
\phantom{} \newline
Another source of motivation stems from the question of computing the nuclear dimension of a graph C*-algebra. Introduced in \cite[Definition 2.1]{WZ10}, nuclear dimension is one of many non-commutative generalisations of the covering dimension of a topological space, notable for its intimate connection to nuclearity (see \cite[Remark 2.2 (i)]{WZ10}) and its tame behaviour with respect to  extensions of C*-algebras (see \cite[Proposition 2.9]{WZ10}). Due to its importance to the classification programme, an active area of research is to compute the precise nuclear dimension of certain classes of C*-algebras. For graph C*-algebras, it was conjectured in \cite[Question C]{ENSW25} that the nuclear dimension of any graph C*-algebra is at most one. With our description of the Cuntz semigroup of a graph C*-algebra from a finite graph, we will show that there is no obstruction to this conjecture which arises from the Cuntz semigroup since for instance, the covering dimension of such a Cuntz semigroup, defined in \cite[Definition 3.1]{TV22}, will be shown to be at most one. 

\subsection*{Description of the main theorem}

In order to succinctly describe the Cuntz semigroup of a graph C*-algebra coming from a finite graph, we will begin with an example. Suppose that $E$ is the following graph, whose graph C*-algebra $C^*(E)$ was identified in \cite[Page 89]{ENSW25} as an important example to compute the nuclear dimension of:
\begin{equation*}
\begin{tikzpicture}[baseline=(current  bounding  box.center)] 
\node (1) at (0, 0) {$v$};
\node (2) at (3, 0) {$w$};
\draw[->] (1) to (2);
\draw[->] (1) to [out = 45, in = 135, looseness = 10] (1);
\draw[->] (2) to [out = 45, in = 135, looseness = 10] (2);
\end{tikzpicture}
\end{equation*}
First, recall that the graph C*-algebra corresponding to the single loop graph
\begin{equation*}
\begin{tikzpicture}[baseline=(current  bounding  box.center)] 
\node (1) at (0, 0) {$v$};
\draw[->] (1) to [out = 45, in = 135, looseness = 10] (1);
\end{tikzpicture}
\end{equation*}
is isomorphic to $C(\mathbb{T})$ and that by \cite[Theorem 1.1]{Rob13b}, the Cuntz semigroup of $C(\mathbb{T})$ is isomorphic to lower semicontinuous functions from $\mathbb{T}$ to $\overline{\mathbb{N}} = \{0, 1, \dots \} \cup \{ \infty \}$, which we denote by $\Lsc(\mathbb{T}, \overline{\mathbb{N}})$. The addition and order on $\Lsc(\mathbb{T}, \overline{\mathbb{N}})$ is pointwise and the isomorphism is implemented by the pointwise rank map. \newline
\phantom{} \newline
The graph C*-algebra $C^*(E)$ decomposes as an extension of $C(\mathbb{T})$ by $C(\mathbb{T}) \otimes K$. Its Cuntz semigroup is given by stacking the Cuntz semigroup of the quotient $S_w \cong \Lsc(\mathbb{T}, \overline{\mathbb{N}})$ on top of the Cuntz semigroup of the ideal $S_v \cong \Lsc(\mathbb{T}, \overline{\mathbb{N}})$ in such a way that the zero element of $S_w$ is identified with the maximal element of $S_v$. The addition and order is pointwise on each disjoint component. Now by the Cuntz-Krieger relations, if $1_w$ and $1_v$ are the constant functions 1 in $S_w$ and $S_v$ respectively, then $1_w + 1_v = 1_w$. It turns out that this behaviour extends to all Cuntz classes of $\Cu(C^*(E))$. Indeed, the key feature of $\Cu(C^*(E))$ is that a Cuntz class in $S_w$ automatically absorbs any Cuntz class in $S_v$. That is, if $f \in S_w$ and $g \in S_v$ then $f + g = f$. Consequently, any Cuntz class in $S_v$ is automatically Cuntz dominated by a Cuntz class in $S_w$. We call the construction of $\Cu(C^*(E))$ from $S_v$ and $S_w$ the stack of $S_v$ below $S_w$; see \cref{subsec: Stack} for further details about the stack. \newline
\phantom{} \newline
Now suppose that we add an extra loop to the vertex $w$ so that the graph $E$ is the following:
\begin{equation*}
\begin{tikzpicture}[baseline=(current  bounding  box.center)] 
\node (1) at (0, 0) {$v$};
\node (2) at (3, 0) {$w$};
\draw[->] (1) to (2);
\draw[->] (1) to [out = 45, in = 135, looseness = 10] (1);
\draw[->] (2) to [out = 45, in = 135, looseness = 10] (2);
\draw[->] (2) to [out = 315, in = 225, looseness = 10] (2);
\end{tikzpicture}
\end{equation*}
This time, $C^*(E)$ is an extension of $\mathcal{O}_2$ by $C(\mathbb{T}) \otimes K$. Recall that the Cuntz semigroup of a simple purely infinite C*-algebra (such as $\mathcal{O}_2$) is $\{ 0, \infty \}$ with the natural addition and order. Just like the previous example, the Cuntz semigroup of $C^*(E)$ is given by stacking the Cuntz semigroup of the quotient $S_w \cong \{ 0, \infty \}$ on top of the Cuntz semigroup of the ideal $S_v \cong \Lsc(\mathbb{T}, \overline{\mathbb{N}})$. \newline
\phantom{} \newline
A key property shared by the above two examples is that as extensions, they are both \emph{purely large}. By \cite[Definition 4.8]{BG24} and \cite[Proposition 4.14]{BG24}, pure largeness imposes a strong restriction on the Cuntz semigroup of a purely large extension. If the extension also happens to be a unital graph C*-algebra, then its Cuntz semigroup also contends with further restrictions coming from the Cuntz-Krieger relations. Combining both of these, we obtain the following Cuntz semigroup computation, which not only generalises the Cuntz semigroups of the previous two examples, but also serves as an important special case of the main theorem. 

\begin{IntroTheorem} \label{introtheorem: General Cuntz Krieger}
Let $E$ be a finite graph. Suppose that there exists a vertex $v \in E^0$ such that $v$ either supports a single loop or supports multiple cycles and every vertex in $E$ admits a finite path to $v$. Let $$H = \{ x \in E^0 \; \vert \; \text{$v$ does not admit a finite path to $x$} \}$$ so that $\Cu(C^*(H))$ is an ideal of $\Cu(C^*(E))$. The Cuntz semigroup of $C^*(E)$ is isomorphic to the stack of the Cuntz semigroup of the quotient $\Cu(C^*(E \backslash H))$ above the Cuntz semigroup of the ideal $\Cu(C^*(H))$.  
\end{IntroTheorem}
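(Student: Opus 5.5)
The plan is to realise $C^*(E)$ as an extension
\[
0 \to I_H \to C^*(E) \to C^*(E\backslash H) \to 0,
\]
where $I_H$ is the gauge-invariant ideal generated by the vertex projections $\{p_x : x \in H\}$. By Hong--Szymański, $I_H$ is Morita equivalent to $C^*(H)$, so $\Cu(I_H)\cong \Cu(C^*(H))$ as an ideal of $\Cu(C^*(E))$. Since $H$ is hereditary and saturated (by the definition of $H$ and finiteness of $E$), the quotient is $C^*(E\backslash H)$. Write $\pi$ for the quotient map. The approach is to show that the extension is \emph{purely large} in the sense of \cite[Definition 4.8]{BG24}, and then to use \cite[Proposition 4.14]{BG24} together with the Cuntz--Krieger relations to identify $\Cu(C^*(E))$ with the stack.

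First, I would describe the order ideal structure. Every vertex of $E\backslash H$ is reachable from $v$ and reaches $v$, so $E\backslash H$ is the strongly connected component of $v$. The hypothesis (a single loop, or multiple cycles) means that $C^*(E\backslash H)$ is either $M_n(C(\mathbb{T}))$, with $\Cu \cong \Lsc(\mathbb{T},\overline{\mathbb{N}})$, or a simple purely infinite algebra, with $\Cu=\{0,\infty\}$.

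The central step is the absorption property: for every $x\in\Cu(C^*(E))$ with $\pi(x)\neq 0$ and every $y\in\Cu(I_H)$, we have $x+y=x$. I would first prove this for vertex projections. Every vertex $u\in H$ admits a path $\mu$ to $v$, and since $v$ lies on a cycle $\alpha$ we can use the paths $\alpha^k\mu$. Iterating the Cuntz--Krieger relation $p_w=\sum_{s(e)=w} s_es_e^*$ along the cycle at $v$ (or, in the purely infinite case, along two distinct cycles) then gives $[p_v]\ge [p_v]+[p_u]$, and hence $[p_v]+n[p_u]=[p_v]$ for all $n$. Because the ideal $I_H$ is generated by the $p_u$, an arbitrary $y\in\Cu(I_H)$ satisfies $y\le \infty\cdot\sum_{u\in H}[p_u]$, so $[p_v]+y=[p_v]$.

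Next, for a general $x$ with $\pi(x)\neq 0$, I need to find $x'\ll x$ and a vertex-type element $e$ (a subprojection of a corner at $v$) with $e\le x$. This is where pure largeness comes in: the defining property says that any element not in the ideal dominates an element that is full in the ideal in a strong sense. I would verify pure largeness by checking the criterion that for every positive $a\notin I_H$, the hereditary subalgebra $\overline{aC^*(E)a}\cap I_H$ contains a stable full element. To do this, I would cut down by $\pi(a)$ using the relation $p_v=\sum s_es_e^*$. In the $C(\mathbb{T})$ case, since $a\otimes K$ need not dominate a projection, I would rather apply \cite[Proposition 4.14]{BG24}. That proposition gives $x+y=x$ whenever $\pi(x)\neq0$, and it also gives that the map $x\mapsto (\pi(x), x|_{I})$ is injective on elements with nonzero image.

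I expect this verification of pure largeness for the $C(\mathbb{T})$-quotient case to be the main obstacle. My first attempt would be a direct argument: a positive element whose image is nonzero on an arc of $\mathbb{T}$ contains, via the gauge and the loop unitary, a nonzero element of the corner at $v$ whose "tails" $s_\mu^*\,\cdot\, s_\mu$ in $I_H$ are full and properly infinite.

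Finally, I would assemble the isomorphism. I define $\Phi\colon \Cu(C^*(E))\to$ stack by sending $x\mapsto \pi(x)$ if $\pi(x)\neq 0$, and $x\mapsto x\in\Cu(I_H)$ otherwise. Additivity follows from the absorption property, and the order-embedding property follows from \cite[Proposition 4.14]{BG24}. For surjectivity onto the upper layer, I lift elements of $\Cu(C^*(E\backslash H))$; this is possible because $\Cu$ of a quotient is a quotient of $\Cu$. The identification of $0$ in the upper layer with the maximum of the lower layer then matches the behaviour at the top of $\Cu(I_H)$. The remaining step is to check that $\Phi$ preserves suprema of increasing sequences and the way-below relation, which is routine given the stack structure described in \cref{subsec: Stack}.
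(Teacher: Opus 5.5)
Your proposal has a genuine gap at its central step. You credit \cite[Proposition 4.14]{BG24} with giving $x+y=x$ whenever $\pi(x)\neq 0$ and $y\in\Cu(I_H)$, and you lean on it again for the order-embedding of $\Phi$. That result concerns the equivalence of formulations of pure largeness; the paper uses it only to identify the Bouwen--Gabe and Elliott--Kucerovsky definitions. At the level of $\Cu$, pure largeness only gives $y\le x$ (Corollary \ref{corollary: Purely Large Order Stabilised}), and absorption cannot follow from it alone. The unitisation $\tilde K$ is a purely large extension of $\mathbb{C}$ by $K$, yet it has stable rank one, so projections cancel and $[1]+[e]\neq[1]$ for a rank-one projection $e\in K$ (Remark \ref{remark: Purely Large Absorption}). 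Additivity of $\Phi$, and via Theorem \ref{theorem: CRS} its order-embedding property, both reduce to exactly this absorption, so the argument has no valid core.

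Your fallback of finding a vertex-type element $e\le x$ does not close the gap. Knowing $e\le x$ and $e+\infty_I=e$ (with $\infty_I$ the largest element of $\Cu(I_H)$) says nothing about $x+\infty_I$ unless such an $e$ lies above every $x'\ll x$. If $e$ is a projection class, that is impossible when $\pi(x)=\chi_U$ for a proper open arc $U$, because projections over $\mathbb{T}$ have constant rank.

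What is missing is a mechanism that turns $\infty_I\le x$ into $x+\infty_I=x$, and it differs between the two cases.
\begin{itemize}
\item \emph{Purely infinite quotient.} The paper shows $2z=z$ for every $z\in\Cu(C^*(E))\setminus\Cu(I_H)$. The Kirchberg--R{\o}rdam splitting (Lemma \ref{lemma: KR Lemma}) gives orthogonal $a_1,\dots,a_4$ and $c$ in the ideal with $(x-\epsilon)_+\precsim a_j\oplus c$. Pure largeness then gives $[c]\le[a_2]$, so $a_1+a_2$ and $a_3+a_4$ are orthogonal and each dominates $(x-\epsilon)_+$. Hence $z+\infty_I\le 2z=z$.
\item \emph{$C(\mathbb{T})$ quotient.} Axiom (O5) gives $y_\epsilon$ with $[(x-\epsilon)_+]+y_\epsilon\le[x]\le[(x-\epsilon/2)_+]+y_\epsilon$. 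If every $y_\epsilon$ lies outside $\Cu(I_H)$, pure largeness gives $\infty_I\le y_\epsilon$ and hence absorption. Otherwise $\pi(x)$ is compact, i.e.\ $\pi(x)=n\cdot\chi_{\mathbb{T}}$ with $n$ finite (Proposition \ref{proposition: Purely Large Axiom (O5)}).
\end{itemize}
These compact-image classes are exactly where the $\tilde K$ phenomenon lives, and only there do the Cuntz--Krieger relations enter. One approximates by $(y-\frac1m)_+$ with entries in the Leavitt path algebra, then compresses by powers of $s_g$ to show $[(y-\frac1m)_+]=n[p_v]$ when the image has full rank $n$ (Lemma \ref{lemma: Span Matrix Cutdown Quotients To Projection}). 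Smaller ranks are handled by induction on the matrix size. Your computation $[p_v]+\infty\cdot[p_u]=[p_v]$ is the right input, but only at this last step.

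Your verification of pure largeness in the circle case is also only a sketch. The paper gets pure largeness from Kucerovsky--Ng (Theorem \ref{theorem: Kucerovsky Ng}), using that $I_H\cong C^*(H)\otimes K$ is stable with the corona factorisation property, together with fullness of the Busby map. Fullness is automatic for a unital extension with simple quotient. For the $C(\mathbb{T})$ quotient it is obtained by using graph moves to pass to a quantum double suspension graph $S^2F$, whose Busby map factors through $\mathcal{Q}(K)$.
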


In the situation of Theorem \ref{introtheorem: General Cuntz Krieger}, if $w$ is a vertex distinct from $v$, then by the Cuntz-Krieger relations, $[p_v] + [p_w] = [p_v]$ because $w$ admits a finite path to $v$. Noting that $[p_v] \not \in \Cu(C^*(H))$, we see by Theorem \ref{introtheorem: General Cuntz Krieger} that this behaviour in fact generalises directly to Cuntz classes of positive elements, similarly to the first example. Consequently, we may think of Theorem \ref{introtheorem: General Cuntz Krieger} as a generalisation of the Cuntz-Krieger relations for $\Cu(C^*(E))$ with $E$ as in Theorem \ref{introtheorem: General Cuntz Krieger}. \newline
\phantom{} \newline
The computation of the Cuntz semigroup of $C^*(E)$ where $E$ is a finite graph is done in two steps. Firstly, by using moves for graph C*-algebras, we may assume without loss of generality that $E$ satisfies the following three properties: 
\begin{enumerate}
\item[{\crtcrossreflabel{(S1)}[property: S1]}] $E$ has no regular sinks. 
\item[{\crtcrossreflabel{(S2)}[property: S2]}] $E$ has no regular transitional vertices (vertices that are neither sources nor sinks and support no cycles). 
\item[{\crtcrossreflabel{(S3)}[property: S3]}] If a vertex $v$ supports a single cycle $\mu$, then $\mu$ is a loop (has length one). 
\end{enumerate}
The assumptions \ref{property: S1}, \ref{property: S2} and \ref{property: S3} will be collectively referred throughout the paper as the \textbf{standing assumptions}. The point of the standing assumptions is that they greatly simplify our analysis of the ideal lattice of $C^*(E)$. At a high level, the second step is to merge the topological analysis of the entire ideal lattice of a graph C*-algebra from \cite{HS04} with the more algebraic computation of the Murray-von Neumann semigroup of a unital graph C*-algebra from \cite[Theorem 7.1]{AMP07} to produce the desired computation, which we informally describe below. 

\begin{IntroTheorem} \label{introtheorem: Cu E Iso}
Let $E$ be a finite graph satisfying the standing assumptions \ref{property: S1}, \ref{property: S2} and \ref{property: S3}. Place a copy of $\overline{\mathbb{N}}$ at every source vertex in $E$, a copy of $\Lsc(\mathbb{T}, \overline{\mathbb{N}})$ at every vertex supporting a single loop and a copy of $\{ 0, \infty \}$ at every vertex supporting more than one cycle. If a vertex $v$ receives an edge from another vertex $w$ which cannot be reached by a finite path beginning at $v$, then remove the zero element from the Cuntz semigroup at $v$. The Cuntz semigroup of $C^*(E)$ is generated by the elements of the Cuntz semigroups at every vertex of $E$. \newline
\phantom{} \newline
When restricted to each individual disjoint component, the addition and order are the usual ones. Across components, the elements of the Cuntz semigroups at every vertex are subject to the following relation:
\begin{enumerate}
\item If $v, w$ are vertices in $E$, $w$ admits a finite path to $v$ and $w \neq v$, then a (non-zero) Cuntz class coming from the Cuntz semigroup at $v$ absorbs any Cuntz class coming from the Cuntz semigroup at $w$. 
\end{enumerate}
\end{IntroTheorem}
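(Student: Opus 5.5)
We argue by induction on the number of vertices of $E$, peeling off ideals built from hereditary saturated subsets. The base cases are the three kinds of vertices allowed by the standing assumptions. A source with no cycles contributes $K$ (stably), so its Cuntz semigroup is $\overline{\mathbb{N}}$. A vertex supporting a single loop contributes $C(\mathbb{T})\otimes K$, whose Cuntz semigroup is $\Lsc(\mathbb{T},\overline{\mathbb{N}})$ by \cite[Theorem 1.1]{Rob13b}. A vertex supporting several cycles gives a purely infinite simple piece, whose Cuntz semigroup is $\{0,\infty\}$.

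For the inductive step, choose a vertex $v$ that is maximal for the reachability preorder, i.e.\ a vertex that no other vertex outside its own strongly connected component reaches. By \ref{property: S2} and \ref{property: S3}, this component is either a single source, a single loop, or a multi-cycle block. Let $H$ be the hereditary saturated set of vertices not reachable from $v$, and let $H'$ be the saturation of $E^0\setminus\{v\}$. This gives a short exact sequence $0\to C^*(H')\to C^*(E)\to Q\to0$, where $Q$ is the corner at $v$. The ideal $C^*(H')$ is again a graph algebra satisfying the standing assumptions after moves, so by induction its Cuntz semigroup has the described form.

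Next we identify $\Cu(C^*(E))$ with an amalgam of the pieces. I would use the ideal lattice of \cite{HS04} to see that every ideal corresponds to data at vertices. Each class $x\in\Cu(C^*(E))$ is then determined by the following data:
\begin{enumerate}
\item the image of $x$ in $\Cu$ of the quotient;
\item the ideal generated by $x$;
\item in the case where the ideal generated by $x$ is $C^*(H')$, the class of $x$ itself in the ideal.
\end{enumerate}

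The absorption relation should come from a purely large extension argument in the style of \cite[Proposition 4.14]{BG24}. Suppose $w$ admits a path to $v$, and $x$ comes from the piece at $v$. The Cuntz–Krieger relations, together with \cite[Theorem 7.1]{AMP07}, give $[p_v]+[p_w]=[p_v]$. From this one gets that $p_v$ dominates, stably, an infinite sum of copies of $p_w$. For an arbitrary nonzero $x$ at $v$, I would approximate $x$ by $(x-\epsilon)_+$ and use that $x$ is full in the relevant subquotient. This should give $x\geq \infty\cdot[p_w]$ in the appropriate sense, hence $x+y=x$ for every $y$ at $w$.

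The removal of $0$ at $v$, when $v$ receives an edge from a vertex it cannot reach back, reflects that the lift to $C^*(E)$ of the zero class in the quotient lies in the ideal. Surjectivity of the proposed map then follows by lifting positive elements across the extension. Injectivity follows by showing that two classes with the same quotient image and the same ideal-support agree, which is where absorption is used.

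The main obstacle is the absorption step for general positive elements rather than projections. This is especially delicate at single-loop vertices, where $x\in\Lsc(\mathbb{T},\overline{\mathbb{N}})$ may vanish on open arcs. I would first handle $x$ that is full in the gauge-invariant ideal at $v$. Non-full $x$ correspond to non-gauge-invariant ideals in the \cite{HS04} description, and I would treat these by working in the quotient by the ideal at the vanishing set. There I would check that absorption still holds because the edge $w\to v$ survives in that quotient.
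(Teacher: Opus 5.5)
There is a genuine gap at the step the paper spends most of its effort on: absorption for \emph{arbitrary} Cuntz classes at $v$, not just for $[p_v]$. Your mechanism does not produce it.
\begin{itemize}
\item Fullness of $x$, whether in $I_{L(v)}$ or in a subquotient, only gives $[p_v]\le n[x]$ for some $n$, at best up to an error term from the ideal. It never gives $[p_v]\le[x]$.
\item For a class whose component at a loop vertex $v$ is $\chi_U$ with $U\neq\mathbb{T}$, one even has $[x]\not\geq[p_v]$. So absorption cannot be routed through $[p_v]+\infty\cdot[p_w]=[p_v]$.
\item Lifting $[\pi(p_v)]\le[\pi(x)]$ from the quotient only yields $[p_v]\le[x]+z$ with $z$ in the ideal (Theorem \ref{theorem: CRS}). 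That is circular.
\item Your fallback for non-full $x$ fails for the same reason: absorption in a quotient lifts only modulo the kernel. In addition, the non-gauge-invariant ideals $\mathcal{R}_{v,K}$ that encode a vanishing set at $v$ all contain $I_{E^0\backslash R(v)}\ni p_w$. So the edge $w\to v$ does \emph{not} survive in those quotients.
\end{itemize}
What is needed is Theorem \ref{introtheorem: General Cuntz Krieger} applied to $L(v)$. The extension of $C^*(L(v)\backslash H_v)$ by $I_{H_v}$ is shown to be purely large via Theorem \ref{theorem: Kucerovsky Ng}. This needs a stable ideal with the corona factorisation property and a \emph{full} Busby map. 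For the $C(\mathbb{T})$ quotient, fullness is only obtained after moving to a quantum double suspension, whose Busby map factors through $\mathcal{Q}(K)$. Even then, pure largeness gives only $\infty_I\le x$, not $x+\infty_I=x$; compare $\tilde{K}$ in Remark \ref{remark: Purely Large Absorption}. The paper gets absorption as follows:
\begin{itemize}
\item in the Kirchberg case, from proper infiniteness (Lemma \ref{lemma: Not Ideal Properly Infinite});
\item in the circle case, from Vilalta's (O5) dichotomy (Proposition \ref{proposition: Purely Large Axiom (O5)}), which reduces to classes whose quotient image is $n\chi_{\mathbb{T}}$. These are then handled by the explicit compression by powers of $s_g$ in Lemma \ref{lemma: Span Matrix Cutdown Quotients To Projection}, together with an induction on matrix size.
\end{itemize}

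Your reduction for injectivity and surjectivity also fails as stated. With the gauge-simple piece as the \emph{quotient}, a class is not determined by its quotient image and the ideal it generates. For example, let $u,v_1,v_2$ each support one loop, with edges $u\to v_1$ and $u\to v_2$. Then $[p_{v_1}]+[p_{v_2}]$ and $2[p_{v_1}]+[p_{v_2}]$ are both full, neither lies in $I_{\{u,v_1\}}$, and they have the same image in $C^*(E)/I_{\{u,v_1\}}\cong C(\mathbb{T})$. Yet their images in $C^*(E)/I_{\{u,v_2\}}\cong C(\mathbb{T})$ differ.

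The paper instead quotients by a \emph{minimal} gauge-simple ideal $I_{H_1}$ with $S(H_1)^0=\{\check{w}\}$. In the connected case, any fully supported model element $y$ is $\infty$ at $\check{w}$, so it absorbs $\infty\cdot[p_w]$. This gives the order-embedding, and for surjectivity it gives $[x]\le y+\infty\cdot[p_w]=y$ when $y$ lifts $[\pi(x)]$. But the reverse inequality $y\le[x]$ requires the \emph{arbitrary} class $[x]$ to absorb $[p_w]$. That class is not yet known to be in the image and need not live in any single $I_{L(v)}$, and ``lifting positive elements'' does not give this. The paper needs two further arguments:
\begin{itemize}
\item If $I_{H_1}$ is purely infinite, it compresses $x$ by $p_{R(v)}$ for a terminal $\check{v}$ with $\check{w}\geq\check{v}$. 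The stack decomposition of $\Cu(I_{L(v)})$ then gives $p_w\precsim x$, and the properly infinite projection is absorbed by Lemma \ref{lemma: Polar Part Absorbed}.
\item If $I_{H_1}$ has stable rank one, it cancels $\ell[p_w]$ from $\Delta_E(h_k)+\ell[p_w]\le[x]+\ell[p_w]$, using the new Theorem \ref{theorem: SR1 Corner Positive Cancellation}.
\end{itemize}
You also never say how the classes at a loop vertex are realised in $\Cu(C^*(E))$. The paper uses strictly positive elements of the corners in Definition \ref{definition: Strictly Positive}, because strictly positive elements of the ideals themselves are not dominated by $[p_v]$ (Example \ref{example: Why Corner}).
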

More precisely, for a finite graph $E$ satisfying the standing assumptions, we abstractly characterise the Cuntz semigroup of $C^*(E)$ using the Cu-semigroup $\Cu_E$; see \Cref{sec: Cu E} for the details. Hong and Szymański's computation of the entire ideal lattice of $C^*(E)$ in \cite{HS04} is used to construct a map $\Delta_E$ from $\Cu_E$ to $\Cu(C^*(E))$. Theorem \ref{introtheorem: General Cuntz Krieger} plays a crucial role in ensuring $\Delta_E$ is a well-defined Cu-morphism. \newline
\phantom{} \newline
The proof that $\Delta_E$ is a Cu-isomorphism is done by induction over the finite gauge-invariant ideal lattice of $C^*(E)$, akin to the proof of \cite[Theorem 7.1]{AMP07}. The primary difficulty at this stage is to prove that $\Delta_E$ is surjective; that any element of $\Cu(C^*(E))$ is either a finite sum of Cuntz classes of vertex projections and strictly positive elements of certain corners of non-gauge-invariant ideals or the supremum of an increasing sequence of such finite sums. To accomplish this, we argue in a similar fashion to \cite[Theorem 7.1]{AMP07} and divide into two cases, depending on whether $C^*(E)$ has a simple purely infinite ideal or a stable rank one, gauge-simple ideal. In the former case, we rely on two results from the paper \cite{KR00}: Theorem \ref{theorem: Purely Infinite Properly Infinite} and Lemma \ref{lemma: Polar Part Absorbed}. In the latter case, we apply Theorem \ref{theorem: SR1 Corner Positive Cancellation}: a generalisation of \cite[Theorem 4.3]{RW10} which says that a projection cancels from Cuntz subequivalences if the corner it generates has stable rank one. Theorem \ref{theorem: SR1 Corner Positive Cancellation} can be thought of as a Cuntz semigroup version of the cancellation of modules result \cite[Theorem 2.1]{War80}, which was implicitly used in the proof of \cite[Theorem 7.1]{AMP07}. The proof of Theorem \ref{theorem: SR1 Corner Positive Cancellation} is done by adapting the proofs of \cite[Theorem 4.8]{RS21} and \cite[Theorem 4.9]{RS21}. \newline
\phantom{} \newline
Note that if $E$ is an arbitrary row-finite graph, then $C^*(E)$ can be written as an inductive limit of graph C*-algebras associated to finite graphs (see \cite[Lemma 3.3]{AMP07}). Since the Cuntz semigroup functor preserves inductive limits (see \cite[Theorem 2]{CEI08}) then by Theorem \ref{introtheorem: Cu E Iso}, we can in principle compute the Cuntz semigroup of any graph C*-algebra, as in the case where $E$ is not row-finite, we may use Drinen-Tomforde desingularisation to produce a row-finite graph $F$ such that $C^*(E)$ is Morita equivalent to $C^*(F)$ by \cite[Theorem 2.11]{DT05}.

\subsection*{Consequences}

By relying on the properties of the Cu-semigroup $\Cu_E$, we obtain various consequences of Theorem \ref{introtheorem: Cu E Iso}. Firstly, the Cuntz semigroup of a graph C*-algebra associated to a finite graph is almost unperforated.

\begin{IntroCorollary} \label{introcorollary: Cu E One Comparison}
Let $E$ be a finite graph satisfying the standing assumptions. Then $\Cu_E$ is almost unperforated. In particular, if $F$ is an arbitrary finite graph, then $\Cu(C^*(F))$ is almost unperforated. 
\end{IntroCorollary}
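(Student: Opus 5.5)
We prove almost unperforation of $\Cu_E$ directly from its combinatorial description: if $(k+1)x \le ky$ for some $k \in \mathbb{N}$, then $x \le y$. The second assertion then follows formally. For an arbitrary finite graph $F$, moves for graph C*-algebras produce a finite graph $E$ satisfying the standing assumptions with $C^*(F)$ stably isomorphic (or Morita equivalent) to $C^*(E)$. The Cuntz semigroup is invariant under stabilisation, and Theorem \ref{introtheorem: Cu E Iso} gives $\Cu(C^*(E)) \cong \Cu_E$. Almost unperforation is preserved by Cu-isomorphisms, so it suffices to treat $\Cu_E$.

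For $\Cu_E$, I will first record a normal form for its elements. An element $x$ is determined by its \emph{support}, the set of vertices whose component contributes a non-zero part to $x$. By the absorption relation, only the vertices that are maximal with respect to reachability survive: if $w$ admits a path to $v$, the $v$-component absorbs the $w$-component. So $x$ is a family $(x_v)_{v \in M(x)}$ with $M(x)$ an antichain of vertices and $x_v$ in the local semigroup at $v$. The local semigroups are $\overline{\mathbb{N}}$, $\Lsc(\mathbb{T},\overline{\mathbb{N}})$ or $\{0,\infty\}$, possibly with zero removed.

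Next I will characterise the order in this normal form: $x \le y$ if and only if for every $v \in M(x)$, either some $u \in M(y)$ with $u \ne v$ is reachable from $v$ (so $y$ absorbs the $v$-part), or $v \in M(y)$ and $x_v \le y_v$ pointwise. I will also use that the multiple $nx$ for $n \ge 1$ has the same maximal support $M(nx) = M(x)$ with components $n x_v$. This holds because reachability is unaffected by scaling, and because removing zero does not interfere, since $n x_v \neq 0$ whenever $x_v \neq 0$.

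Now suppose $(k+1)x \le ky$. Applying the order criterion to each $v \in M(x)$, either $v$ is strictly absorbed by some vertex of $M(ky) = M(y)$, which gives the same conclusion for $x \le y$ at $v$, or $v \in M(y)$ and $(k+1)x_v \le k y_v$ in the local semigroup. Each local semigroup is almost unperforated. This is clear for $\overline{\mathbb{N}}$ and $\{0,\infty\}$. For $\Lsc(\mathbb{T},\overline{\mathbb{N}})$ it holds pointwise, since $(k+1)a \le kb$ in $\overline{\mathbb{N}}$ forces $a \le b$. Hence $x_v \le y_v$ at every such $v$, and the criterion yields $x \le y$.

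The main obstacle I expect is the order characterisation in the normal form. One must check carefully from the definition of $\Cu_E$ in \Cref{sec: Cu E} that the only way a higher-support element dominates a lower-support element is absorption along paths. In particular, comparison at a common vertex $v$ must not be helped by components at vertices below $v$, and the zero-removed components must behave correctly under the absorption convention. I would verify this by induction over the hereditary saturated subsets, mirroring the stack construction of \cref{subsec: Stack}. In a stack, $S \le T$ fails exactly when the top parts fail to compare, or the tops vanish and the bottom parts fail to compare. That reduces everything to repeated use of the statement that a stack of almost unperforated Cu-semigroups is almost unperforated, which I would prove as a lemma.
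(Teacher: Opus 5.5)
Your argument is correct and is essentially the paper's proof. From $(k+1)f \le kg$ the paper reads off $\supp_0(f)\subseteq\supp_0(g)$ and compares values only at terminal vertices with $c \neq \infty$ via Lemma \ref{lemma: Order Alternative}, which is equivalent to your normal-form order criterion. It then uses almost unperforation of $\overline{\mathbb{N}}$ and $\Lsc(\mathbb{T},\overline{\mathbb{N}})$ at those vertices, and transfers to an arbitrary finite graph through Proposition \ref{proposition: Collapse Graph} and Theorem \ref{introtheorem: Cu E Iso}.

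The step you flag as the main obstacle is immediate, because addition and order on $\Cu_E$ are defined pointwise on functions $(S(E)^0)_\kappa \to \overline{\mathbb{N}}$; in fact almost unperforation then follows coordinatewise. So you do not need the stack induction. That is fortunate, since $\Cu_E$ need not decompose as a stack over a gauge-invariant ideal: see the non-purely-large example at the start of \cref{subsec: Relatively Purely Large}, and disconnected graphs, where $\Cu_E$ is a direct sum.
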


From Corollary \ref{introcorollary: Cu E One Comparison}, we recover the fact that a graph C*-algebra coming from a finite graph satisfies the \emph{corona factorisation property} of \cite[Definition 1.3]{KN06a}; see Remark \ref{remark: Graph CFP 2}. By \cite[Theorem 5.11]{OPR12}, this is consistent with the fact that a unital graph C*-algebra always has finite nuclear dimension (see \cite[Proposition 2.9]{WZ10}). \newline
\phantom{} \newline
As mentioned previously, we also compute Thiel and Vilalta's covering dimension (see \cite[Definition 3.1]{TV22}) for the Cuntz semigroup of a graph C*-algebra coming from a finite graph and as a result, prove the following Cuntz semigroup version of the conjecture that any graph C*-algebra has nuclear dimension at most one. 

\begin{IntroTheorem} \label{introtheorem: Covering Dimension}
Let $E$ be an arbitrary graph. Then $\dim \Cu(C^*(E)) \leq 1$. Moreover, if $E$ satisfies Condition (K), then $\dim \Cu(C^*(E)) = 0$. 
\end{IntroTheorem}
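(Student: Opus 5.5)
We reduce from an arbitrary graph to finite graphs satisfying the standing assumptions, and then compute the dimension of $\Cu_E$ directly. First, by Drinen--Tomforde desingularisation \cite[Theorem 2.11]{DT05}, $C^*(E)$ is Morita equivalent to $C^*(F)$ for a row-finite graph $F$. Stable isomorphism preserves the Cuntz semigroup, so $\Cu(C^*(E)) \cong \Cu(C^*(F))$. By \cite[Lemma 3.3]{AMP07}, $C^*(F)$ is an inductive limit of graph C*-algebras $C^*(F_n)$ of finite graphs. By \cite[Theorem 2]{CEI08}, $\Cu(C^*(F)) \cong \lim \Cu(C^*(F_n))$ in the category Cu. Thiel and Vilalta show that covering dimension does not increase under inductive limits, i.e.\ $\dim \lim S_n \leq \liminf \dim S_n$. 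It therefore suffices to prove $\dim \Cu(C^*(G)) \leq 1$ for every finite graph $G$. For Condition (K), I would check that the approximating finite graphs can be chosen to satisfy (K); this should hold because the subgraphs in \cite[Lemma 3.3]{AMP07} are built from complete pieces of $F$, and desingularisation preserves (K). The moves used to impose the standing assumptions preserve Morita equivalence and also Condition (K). Hence by Theorem \ref{introtheorem: Cu E Iso} we reduce to $\dim \Cu_E$ for $E$ finite satisfying (S1)--(S3).

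For $\Cu_E$, we argue by induction over the finite gauge-invariant ideal lattice, using the structure given in Theorem \ref{introtheorem: General Cuntz Krieger}. Whenever a vertex $v$ is reached from every other vertex, $\Cu_E$ is the stack of the quotient semigroup above the ideal semigroup. The building blocks are $\overline{\mathbb{N}}$ and $\{0,\infty\}$, which have dimension $0$, and $\Lsc(\mathbb{T},\overline{\mathbb{N}})$, which has dimension $1 = \dim \mathbb{T}$ by the computation in \cite{TV22} for $\Lsc(X,\overline{\mathbb{N}})$. For the general finite case, $\Cu_E$ decomposes into finitely many such stacks combined through the absorption relation. I would prove two permanence results directly from \cite[Definition 3.1]{TV22}:
\begin{enumerate}
\item $\dim(S \oplus T) \le \max(\dim S, \dim T)$;
\item the dimension of a stack is at most the maximum of the dimensions of its two components.
\end{enumerate}
The alternative is the general bound for ideals and quotients, $\dim S \le \max(\dim I, \dim S/I)$, if \cite{TV22} provides it; this bound would cover both cases at once. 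Induction on the number of vertices then gives $\dim \Cu_E \le 1$.

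For Condition (K), under the standing assumptions no vertex supports exactly one cycle. By (S3) a single cycle would have to be a loop, and that loop would violate (K). So no $\Lsc(\mathbb{T},\overline{\mathbb{N}})$ blocks occur, and every block is $\overline{\mathbb{N}}$, $\{0,\infty\}$, or one of these with zero removed. The same permanence results then give dimension $0$. Alternatively, (K) gives real rank zero by \cite{JPS01}, and for such algebras the Cuntz semigroup is algebraic with dimension zero by \cite{TV22}.

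The main obstacle is the stack permanence. Given $x' \ll x \ll y_1+\dots+y_r$ in a stack, we need refinements into two colour classes of pairwise orthogonal-ish elements. When $x$ lies in the upper component, its decomposition must absorb the lower-component summands. My approach is to write each element as an (upper, lower) pair and apply the dimension witnesses separately in each component. Absorption then lets the lower-component refinement be discarded whenever an upper-component element appears, and the $n+1$ colour classes are glued componentwise. The delicate point is checking the required $\ll$ relations across the identified point, where $0$ of the upper component equals the top element of the lower one.
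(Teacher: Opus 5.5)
Your outer reductions (desingularisation, continuity of $\Cu$ under inductive limits together with \cite[Proposition 3.10]{TV22}, and real rank zero plus \cite[Theorem 5.7]{TV22} under Condition (K)) are the ones the paper uses. The core step, however, has a genuine gap: proving $\dim \Cu_E \le 1$ for a finite graph $E$ satisfying the standing assumptions. Your induction assumes that $\Cu_E$ is built from $\overline{\mathbb{N}}$, $\{0,\infty\}$ and $\Lsc(\mathbb{T},\overline{\mathbb{N}})$ by finitely many direct sums and stacks. Theorem \ref{introtheorem: General Cuntz Krieger} only gives a stack when some vertex is reached from every other vertex, and in general no such decomposition exists. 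In $S \stack T$ the lower part $S$ is an ideal lying below every element outside it, and every ideal of $\Cu_E$ comes from an ideal of $C^*(E)$.

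Take the graph at the start of \cref{subsec: Relatively Purely Large}. Its skeleton has edges $\check b \to \check a$, $\check b \to \check c$, $\check d \to \check c$, with $c(\check a) = c(\check c) = \infty$ and $c(\check b) = c(\check d) = 0$. There are no single loops, so every ideal is $\Cu_H$ for a hereditary $H$. For each of the six proper non-empty hereditary $H$, some element outside $\Cu_H$ fails to dominate an element of $\Cu_H$:
\begin{enumerate}
\item $\theta_{\check d,1}$ vanishes at $\check b$, which handles $H = \{\check b\}$ and $H = \{\check a,\check b\}$;
\item $\theta_{\check b,1}$ vanishes at $\check d$, which handles $H = \{\check d\}$;
\item $\theta_{\check a,\infty}$ vanishes at $\check d$, which handles $H = \{\check b,\check d\}$ and $H = \{\check b,\check c,\check d\}$;
\item $\theta_{\check c,\infty}$ vanishes at $\check a$, which handles $H = \{\check a,\check b,\check d\}$.
\end{enumerate}
The skeleton is connected, so $\Cu_E$ is not a non-trivial direct sum either. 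Hence your lemmas (1) and (2) never reach this $\Cu_E$. Lemma (2) is in fact true, by a short case analysis on whether $x$ lies in the upper or lower part, so your stated ``main obstacle'' is not where the difficulty lies. The unproved step is the one you call ``combined through the absorption relation''.

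The fallback $\dim S \le \max(\dim I, \dim S/I)$ cannot be cited from \cite{TV22}. The ideal/quotient permanence there, \cite[Proposition 3.5]{TV22}, goes the other way ($\dim I, \dim S/I \le \dim S$); that is how the paper gets its lower bound. Even an extension estimate of Winter--Zacharias type, $\dim I + \dim S/I + 1$, would not give $\le 1$ after iteration.

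To close the gap you need one of two things. One option is a permanence lemma for the relative decomposition $\Cu_E \cong \Cu_H \sqcup (\Cu_{E \backslash H'} \backslash \Cu_{H \backslash H'})$ of Corollary \ref{corollary: Extension Picture}. The other, which the paper uses, is to argue directly in $\Cu_E$. By Lemma \ref{lemma: Way Below 2}, $\ll$ is detected only at terminal vertices. So given $f' \ll f \ll g_1 + \dots + g_r$, solve the covering problem separately at each $\check v \in \supp_0(f)_t$:
\begin{enumerate}
\item if $\check v$ is terminal for $\bigcup_j \supp_0(g_j)$, solve it inside $\overline{\mathbb{N}}$ or $\Lsc(\mathbb{T},\overline{\mathbb{N}})$;
\item otherwise some $g_\ell(\check v) = \infty$, and you can put everything on $g_\ell$.
\end{enumerate}
Then glue the local solutions by $\zeta_{j,k} = \sum_{\check v \in \supp_0(f)_t \cap \supp_0(g_j)} \theta_{\check v, z_{j,k,v}}$.

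Finally, your first argument for Condition (K) is not sufficient as stated. A finite complete subgraph need not inherit (K): it may contain a loop at $v$ and all edges into $v$ while omitting the edge that closes a second cycle through $v$. Your alternative is the clean route, and it is what the paper does: apply \cite{JPS01} directly to the row-finite desingularised graph and then \cite[Theorem 5.7]{TV22}, with no approximation needed.
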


Theorem \ref{introtheorem: Covering Dimension} supports the conjecture made in \cite[Question C]{ENSW25} because if there exists a graph C*-algebra whose Cuntz semigroup has covering dimension strictly greater than one, then by \cite[Theorem 4.1]{TV22}, its nuclear dimension must also be strictly greater than one. From Theorem \ref{introtheorem: Cu E Iso}, we also obtain further structural properties of the Cuntz semigroup of a graph C*-algebra associated to a finite graph and of the extensions used to build such a graph C*-algebra. 

\subsection*{Structure of the paper}

In \Cref{sec: Prelims}, we collect background material on Cu-semigroups, graph C*-algebras and purely large extensions. We also carefully establish the definitions and notation we will use for graph C*-algebras, since these vary considerably in the dense literature on the subject. \Cref{sec: Weak Cancellation} serves two main purposes --- firstly, to prove a Cuntz semigroup analogue of the cancellation of modules result from \cite[Theorem 2.1]{War80} which we will need for the proof of Theorem \ref{introtheorem: Cu E Iso} and secondly, to generalise the fact that the Cuntz semigroup of a stable rank one C*-algebra is weakly cancellative, first proved in \cite[Theorem 4.3]{RW10}. \newline
\phantom{} \newline
\Cref{sec: Refined Moves} details two important applications of moves for graph C*-algebras: deriving the standing assumptions \ref{property: S1}, \ref{property: S2} and \ref{property: S3} and simplifying the proof of Theorem \ref{introtheorem: General Cuntz Krieger} in the case where $v$ supports a single loop. In \Cref{sec: Cu E}, we define for a finite graph $E$ satisfying the standing assumptions, the positively ordered monoid $\Cu_E$ and prove that it is a Cu-semigroup. Moreover, in \cref{subsec: Cu E Ideal Lattice}, we prove that there is a bijection from the ideal lattice of $C^*(E)$ to the ideal lattice of $\Cu_E$. \newline
\phantom{} \newline
\Cref{sec: General Cuntz Krieger} is dedicated to proving Theorem \ref{introtheorem: General Cuntz Krieger}. First, we deal with the simpler case where $v$ supports multiple cycles in \cref{subsec: Kirchberg Quotient}. The harder case where $v$ supports a single loop is done in \cref{subsec: Circle Quotient}. In \Cref{sec: Delta E}, we prove Theorem \ref{introtheorem: Cu E Iso} by first constructing the map $\Delta_E$ from $\Cu_E$ to $\Cu(C^*(E))$, and then showing that it is a Cu-isomorphism. Finally, we prove various consequences of Theorem \ref{introtheorem: Cu E Iso} in \Cref{sec: Consequences}. \newline
\phantom{} \newline
\textbf{Acknowledgements.} The author is grateful to Aidan Sims, Eduard Vilalta, Jeremy Hume and Mike Whittaker for helpful conversations regarding some of the topics in this paper. The author thanks Eduard Vilalta and Jamie Gabe for allowing their privately communicated results, Proposition \ref{proposition: Purely Large Axiom (O5)} and Theorem \ref{theorem: Purely Large Tensor Product} respectively, to be utilised in this paper. The author also thanks Chris Schafhauser for pointing out the graph at the beginning of \cref{subsec: Relatively Purely Large}, which became a useful test case for the Cuntz semigroup computations done in this paper. Finally, the author is incredibly grateful to his supervisor Stuart White for his careful proofreading and detailed feedback on earlier drafts of the paper and for his illuminating advice on writing research papers in general. \newline
\phantom{} \newline
\textbf{AI statement.} The first version of this paper was posted to the arXiv on October 1st 2026. No AI tools were used for the first version. \newline
\phantom{} \newline
After the first version of the paper became available online, ChatGPT Astra was asked to produce a technical report on it, with the goal of identifying typos and gaps in the various arguments used. As a result, Definition \ref{definition: Delta E}, the proof of Theorem \ref{theorem: Purely Large Tensor Product} and various typos were corrected. ChatGPT Astra also identified that the proof of Corollary \ref{introcorollary: Cu E One Comparison} in the first draft of the paper in fact shows that the Cuntz semigroup of any graph C*-algebra associated to a finite graph is almost unperforated, rather than having the 1-comparison property defined more generally in \cite[Definition 2.8]{OPR12}. This version of the paper was updated to reflect this. More in-depth explanations were also added throughout this paper. The author takes full responsibility for the mathematical and literary content of this paper. 

\section{Preliminaries} 
\label{sec: Prelims}

This section is dedicated to preliminary material that is essential to the Cuntz semigroup computation undertaken in the paper. \Cref{subsec: Cu-semigroups} is a quick introduction to the theory of Cu-semigroups. Since graph C*-algebras are mostly non-simple, \cref{subsec: Cu-semigroups} was written with a particular emphasis on ideals and quotients of Cu-semigroups. In \cref{subsec: Moves}, we establish the notation and terminology we will use throughout the paper for graph C*-algebras, and define the four moves (O), (I-), (S) and (CO) which admit two important applications detailed in \Cref{sec: Refined Moves}. In \cref{subsec: Ideal Lattice Prelims}, we recall fundamental material about the ideal lattice of a graph C*-algebra developed in \cite{BPRS00} and \cite{HS04}. \newline
\phantom{} \newline
\Cref{subsec: Purely Large} introduces the notion of a purely large extension, and highlights how Cuntz subequivalence behaves for a C*-algebra coming from a purely large extension; see Theorem \ref{theorem: Purely Large Tensor Product} and Corollary \ref{corollary: Purely Large Order Stabilised}. Pure largeness is an important tool for the proof of Theorem \ref{introtheorem: General Cuntz Krieger}, as we will see in the proofs of Theorem \ref{theorem: Purely Large Purely Infinite Quotient} and Lemma \ref{lemma: Purely Large Circle Quotient}. Finally, graph C*-algebras can have purely infinite ideals and quotients. Due to this, we collect certain results about purely infinite C*-algebras from \cite{KR00} in \cref{subsec: PI}, which we will use to treat purely infinite ideals and quotients of graph C*-algebras. 

\subsection{Cu-semigroups}
\label{subsec: Cu-semigroups}

In this subsection, we recall various aspects of the theory of Cu-semigroups that will be used throughout the paper. For comprehensive references on the abstract theory of Cu-semigroups, the reader is referred to \cite{APT18} and \cite{GP26}. Recall that a \textbf{positively ordered monoid} is a commutative monoid $(S, +)$ together with a partial order $\leq$ such that the zero element of $S$ is the smallest element with respect to $\leq$ and $\leq$ respects addition on $S$. \newline
\phantom{} \newline
In this paper, we will focus exclusively on positively ordered monoids with the property that every increasing sequence has a supremum. If $S$ is such a positively ordered monoid and $s, t \in S$ then $s$ is said to be \textbf{way-below} $t$ if for any increasing sequence $\{ r_n \}_{n > 0}$ in $S$ such that $t \leq {\sup}_{n > 0} \; r_n$, there exists $m > 0$ such that $s \leq r_m$. This is denoted by $s \ll t$. An element $s \in S$ is said to be \textbf{compact} if $s \ll s$. 

\begin{definition}
Let $S$ be a positively ordered monoid. $S$ is said to be a \textbf{Cu-semigroup} if $S$ satisfies the following four axioms:
\begin{enumerate}
\item[{\crtcrossreflabel{(O1)}[axiom: O1]}] Every increasing sequence in $S$ has a supremum.
\item[{\crtcrossreflabel{(O2)}[axiom: O2]}] If $s \in S$ then there exists a sequence $\{ s_n \}_{n > 0}$ in $S$ such that $s_m \ll s_{m+1}$ for $m > 0$ and  $s = \sup_{n > 0} s_n$.
\item[{\crtcrossreflabel{(O3)}[axiom: O3]}] If $x, x', y, y' \in S$, $x' \ll x$ and $y' \ll y$ then $x' + y' \ll x + y$. 
\item[{\crtcrossreflabel{(O4)}[axiom: O4]}] If $\{ f_n \}_{n > 0}$ and $\{ g_n \}_{n > 0}$ are increasing sequences in $S$ then $${\sup}_{n > 0} \; f_n + {\sup}_{n > 0} \; g_n = {\sup}_{n > 0} \; (f_n + g_n).$$
\end{enumerate}
\end{definition}

As first unveiled in \cite[Theorem 1]{CEI08}, the archetypal example of a Cu-semigroup is the Cuntz semigroup of a C*-algebra, which is defined using a notion of equivalence of positive elements first considered in \cite[Section 1]{Cun78}. 

\begin{definition}
Let $A$ be a C*-algebra and $a, b \in A_+$. The element $a$ is \textbf{Cuntz subequivalent} to or Cuntz below $b$ (denoted by $a \precsim b$) if for $\epsilon > 0$, there exists $r \in A$ such that $\lVert a - r b r^* \rVert < \epsilon$. Moreover, $a$ is \textbf{Cuntz equivalent} to $b$ (denoted by $a \sim b$) if $a \precsim b$ and $b \precsim a$. \newline
\phantom{} \newline
The \textbf{Cuntz semigroup} of $A$, denoted by $\Cu(A)$, is defined to be the set
\begin{equation*}
\Cu(A) = (A \otimes K)_+ / \sim \; = \{ [a] \; \vert \; a \in (A \otimes K)_+ \}.
\end{equation*}
If $[a], [b] \in \Cu(A)$ then the partial order and addition on $\Cu(A)$ are given respectively by setting $[a] \leq [b]$ if and only if $a \precsim b$ and
\begin{equation*}
[a] + [b] = [a \oplus b] = \left[ 
\begin{pmatrix}
a & 0 \\
0 & b
\end{pmatrix}
\right].
\end{equation*}
\end{definition}

The notion of an ideal of a Cu-semigroup respects both the monoid and Cu-semigroup structures. 

\begin{definition}
Let $S$ be a Cu-semigroup and $I$ be a submonoid of $S$. $I$ is said to be an \textbf{ideal} of $S$ if it satisfies the following properties:
\begin{enumerate}
\item $I$ is downwards-hereditary. That is, if $x, y \in S$, $x \leq y$ and $y \in I$ then $x \in I$.
\item $I$ is closed under suprema of increasing sequences. That is, if $\{ x_n \}_{n > 0}$ is an increasing sequence in $I$ then its supremum must also lie in $I$.
\end{enumerate}
The lattice of ideals of $S$ is denoted by $\Lat(S)$. 
\end{definition}

\begin{definition} \label{definition: Countably Based}
A Cu-semigroup $S$ is said to be \textbf{countably based} if there exists a countable subset $X \subseteq S$ such that every element in $S$ is the supremum of a rapidly increasing sequence of elements in $X$. Such a subset $X$, if it exists, is called \textbf{sup-dense} in $S$.
\end{definition}

If $S$ is a Cu-semigroup, then any element $x \in S$ gives rise to the ideal
\begin{equation} \label{equation: Singly Gen Ideal}
\Idl(x) = \{ y \in S \; \vert \; y \leq \infty \cdot x = {\sup}_{k > 0} \; k x \}. 
\end{equation}
Moreover, if $S$ is countably based then by \cite[Proposition 5.1.7]{APT18}, every ideal of $S$ is of the form $\Idl(x)$ for some $x \in S$. By \cite[Lemma 1.3]{APS11}, the Cuntz semigroup of a separable C*-algebra is always countably based. \newline
\phantom{} \newline
Below, we isolate and state a consequence of the proof of \cite[Lemma 1.3]{APS11} which will be needed for the proof of Theorem \ref{introtheorem: General Cuntz Krieger} in the specific case where the vertex $v$ supports a single loop. If $\epsilon > 0$ and $x$ is a positive element of a C*-algebra then we define $(x - \epsilon)_+ = f_\epsilon(x)$ where $f_\epsilon: [0, \infty) \rightarrow [0, \infty)$ is the function $f_\epsilon(\lambda) = \max \{ \lambda - \epsilon, 0 \}$.

\begin{lemma}[{see \cite[Lemma 1.3]{APS11}}] \label{lemma: Countably Based}
Let $A$ be a separable C*-algebra and $X$ be a countable dense subset of $A_+$. If $a, b \in A_+$ and $[a] \ll [b]$ then there exists $x \in X$ and $m > 0$ such that in $\Cu(A)$,
\begin{equation*}
[a] \ll \Big[ \Big( x - \frac{1}{m} \Big)_+ \Big] \ll [b].
\end{equation*}
\end{lemma}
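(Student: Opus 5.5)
We prove the statement using three standard facts about Cuntz comparison in a C*-algebra $A$. First, for $c \in A_+$ and $\epsilon > 0$ we have $[(c-\epsilon)_+] \ll [c]$. Second, $[c] = \sup_{\epsilon \to 0}[(c-\epsilon)_+]$. Third, we use the R\o rdam-type perturbation estimate: if $\lVert c - d \rVert < \delta$, then $(c-\delta)_+ \precsim d$. A further consequence of this estimate is that $(c - \epsilon - \delta)_+ \precsim (d-\epsilon)_+$ whenever $\lVert c-d\rVert<\delta$.

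\textbf{Step 1: an intermediate cut-down of $b$.} Since $[b] = \sup_n [(b - \tfrac1n)_+]$ is the supremum of an increasing sequence and $[a] \ll [b]$, the definition of $\ll$ gives some $n>0$ with $[a] \leq [(b-\tfrac1n)_+]$. Set $\epsilon = \tfrac{1}{n}$, and note that $[(b-\epsilon)_+] \ll [(b-\tfrac{\epsilon}{2})_+]$.

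\textbf{Step 2: approximate $b$ by an element of $X$.} Choose $m>0$ with $\tfrac1m < \tfrac{\epsilon}{4}$. By density of $X$ in $A_+$, pick $x \in X$ with $\lVert x - b\rVert < \tfrac{1}{4m}$; the only requirement is that $\lVert x-b\rVert$ is small compared to $\tfrac1m$ and $\epsilon$. We then check the two required inequalities.

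\textbf{Step 3: the upper bound.} We need $[(x-\tfrac1m)_+] \ll [b]$. Since $\lVert x - b\rVert < \tfrac1m$, a sharper form of the perturbation estimate would give this directly. The simpler route is to show the intermediate inequality $(x-\tfrac1m)_+ \precsim (b - \tfrac{1}{2m})_+$, which holds because $\lVert x - b\rVert < \tfrac{1}{2m}$. Then
\[
[(x-\tfrac1m)_+] \le [(b-\tfrac{1}{2m})_+] \ll [b],
\]
and this yields $[(x-\tfrac1m)_+] \ll [b]$.

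\textbf{Step 4: the lower bound.} Since $\lVert b - x\rVert < \tfrac{1}{4m}$, the perturbation estimate gives
\[
(b - \epsilon)_+ \precsim \bigl(x - (\epsilon - \tfrac{1}{4m})\bigr)_+ \precsim (x - \tfrac{\epsilon}{2})_+,
\]
where the last step uses $\tfrac{1}{4m} < \tfrac{\epsilon}{2}$. Because $\tfrac{\epsilon}{2} > \tfrac1m$, we have $[(x-\tfrac{\epsilon}{2})_+] \ll [(x-\tfrac1m)_+]$. Combining this with Step 1,
\[
[a] \le [(b-\epsilon)_+] \le [(x-\tfrac{\epsilon}{2})_+] \ll [(x-\tfrac1m)_+],
\]
so $[a] \ll [(x-\tfrac1m)_+]$.

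The main obstacle is keeping the tolerances consistent. The distance $\lVert x-b\rVert$ must be small relative to $\tfrac1m$, so that the upper bound in Step 3 holds, while $\tfrac1m$ must be small relative to $\epsilon$, so that the lower bound in Step 4 holds. The choices $\tfrac1m < \tfrac\epsilon4$ and $\lVert x-b\rVert<\tfrac{1}{4m}$ meet both requirements. The remaining ingredient is the perturbation lemma in the form $\lVert c-d\rVert<\delta \Rightarrow (c-\eta-\delta)_+ \precsim (d-\eta)_+$ for $\eta \ge 0$. This follows from the basic estimate applied to $(c-\eta)_+$ and $(d-\eta)_+$ together with functional calculus, and we quote it from the standard literature (Kirchberg--R\o rdam).

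Finally, the statement is phrased in $\Cu(A)$, while the comparisons above take place in $A$. This causes no difficulty: the inclusion of $A$ as the corner $A\otimes e_{11}$ of $A \otimes K$ preserves the relation $\precsim$ and the cut-downs $(\cdot-\epsilon)_+$.
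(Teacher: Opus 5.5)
Your argument is correct. It is essentially the approximation-plus-perturbation argument behind the proof of \cite[Lemma 1.3]{APS11}, which the paper only cites and does not reprove. The one point to fix is the justification of the shifted estimate. Applying the basic estimate to $(c-\eta)_+$ and $(d-\eta)_+$ would need $\lVert (c-\eta)_+-(d-\eta)_+\rVert<\delta$, and this does not follow from $\lVert c-d\rVert<\delta$. Instead, apply the basic estimate to $c$ and $(d-\eta)_+$, using $\lVert c-(d-\eta)_+\rVert\le\lVert c-d\rVert+\lVert d-(d-\eta)_+\rVert<\delta+\eta$, which gives $(c-\eta-\delta)_+\precsim(d-\eta)_+$ directly.
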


In the Cuntz semigroup of a C*-algebra $A$, it is often useful to keep track of the Cuntz classes of elements in $A_+$. One way to do this is to consider the subset 
\begin{equation} \label{equation: Scale}
\Sigma_A = \big\{ [x] \in \Cu(A) \; \vert \; \text{There exists $a \in A_+$ such that $[x] \leq [a]$} \big\}
\end{equation}
which was shown in \cite[Lemma 3.3 (2)]{TV23} to satisfy the nice properties listed in \cite[Definition 4.1]{APT20}. Below, we record the statement of \cite[Lemma 3.3 (1)]{TV23}, which we will use in the proof of Theorem \ref{introtheorem: General Cuntz Krieger}. 

\begin{lemma}[{\cite[Lemma 3.3 (1)]{TV23}}] \label{lemma: Sandwich}
Let $A$ be a C*-algebra, $x \in \Sigma_A$ and $x' \in \Cu(A)$ be such that $x' \ll x$. Then there exists $a \in A_+$ such that $x' \ll [a] \ll x$. 
\end{lemma}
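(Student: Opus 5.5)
The statement to prove is Lemma \ref{lemma: Sandwich}: given $x \in \Sigma_A$ and $x' \ll x$, find $a \in A_+$ with $x' \ll [a] \ll x$. The plan is to first pick an element $a_0 \in A_+$ dominating $x$, then realise $x$ inside the hereditary subalgebra of $A \otimes K$ generated by $a_0$, and finally cut down by a spectral function to get a class strictly between $x'$ and $x$ which is represented in $A_+$ itself.

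Since $x \in \Sigma_A$, there is $a_0 \in A_+$ with $x \le [a_0]$. Write $x = [b]$ with $b \in (A\otimes K)_+$; viewing $a_0$ as $a_0 \otimes e_{11}$, we have $b \precsim a_0 \otimes e_{11}$. I would use the standard fact (R{\o}rdam's lemma) that for every $\epsilon>0$ there is $\delta>0$ and $r$ with $(b-\epsilon)_+ = r^*r$ and $rr^* \in \overline{(a_0\otimes e_{11})(A\otimes K)(a_0\otimes e_{11})} = A \otimes e_{11}$. Thus each class $[(b-\epsilon)_+]$ is represented by the positive element $rr^*$, which lies in the corner $A\otimes e_{11}\cong A$, i.e.\ it is the class of some element of $A_+$.

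Next, since $x = [b] = \sup_{\epsilon\to 0}[(b-\epsilon)_+]$ with the sequence $[(b-1/n)_+]$ rapidly increasing ($[(b-\epsilon)_+] \ll [(b-\epsilon')_+]$ for $\epsilon'<\epsilon$, a standard fact in $\Cu(A)$), and $x' \ll x$, there is $n$ with $x' \le [(b-1/n)_+]$. Then I choose $a$ to be the element of $A_+$ (via $A\otimes e_{11}$) Cuntz equivalent to $(b-\tfrac{1}{2n})_+$, obtained as in the previous paragraph. This gives
\begin{equation*}
x' \le [(b-\tfrac1n)_+] \ll [(b-\tfrac{1}{2n})_+] = [a] \le [(b - \tfrac{1}{4n})_+]\ll [b] = x ,
\end{equation*}
but we need $x' \ll [a]$, not merely $\le$: this follows because $x' \le [(b-\tfrac1n)_+] \ll [a]$, and $\le$ followed by $\ll$ gives $\ll$. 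Similarly $[a] \ll x$ holds since $[a] = [(b-\tfrac1{2n})_+] \ll [b]$.

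The main obstacle is the first step: verifying that a Cuntz subequivalence $b \precsim a_0\otimes e_{11}$ allows one to move $(b-\epsilon)_+$ into the hereditary subalgebra generated by $a_0\otimes e_{11}$ up to Cuntz equivalence. I would invoke R{\o}rdam's result that $b \precsim c$ implies that for each $\epsilon>0$ there exists $s$ with $(b-\epsilon)_+ = s^* s$ and $ss^* \in \overline{c(A\otimes K)c}$, together with $s^*s \sim ss^*$. The only remaining check is that $\overline{c(A\otimes K)c} = A\otimes e_{11}$-hereditary part, which lies in $A\otimes e_{11}$, so its positive elements come from $A_+$.
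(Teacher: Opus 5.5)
Your argument is correct and is essentially the standard proof of \cite[Lemma 3.3 (1)]{TV23}, which the paper cites rather than proves: you use the Kirchberg--R{\o}rdam form of R{\o}rdam's lemma to move $(b-\epsilon)_+$, up to Cuntz equivalence, into $\overline{a_0Aa_0}\otimes e_{11}$, and then $[(b-\tfrac1n)_+]\ll[(b-\tfrac1{2n})_+]\ll[b]$ gives both way-below relations. One cosmetic correction: $\overline{(a_0\otimes e_{11})(A\otimes K)(a_0\otimes e_{11})}=\overline{a_0Aa_0}\otimes e_{11}$ is only contained in $A\otimes e_{11}$, not equal to it, but the containment is all your argument uses.
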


The definition of a quotient Cu-semigroup is motivated by the concrete result \cite[Theorem 4.1]{CRS10}. We will state a shorter version of \cite[Theorem 4.1]{CRS10} that is suitable for the paper. 

\begin{theorem}[{\cite[Theorem 4.1]{CRS10}}] \label{theorem: CRS}
Let 
\begin{equation*}
\begin{tikzcd}
0 \arrow[r] & I \arrow[r, "\iota"] & A \arrow[r, "\pi"] & A/I \arrow[r] & 0
\end{tikzcd}
\end{equation*}
be an extension of C*-algebras and suppose that $I$ is a $\sigma$-unital ideal of $A$. The induced sequence of positively ordered monoids
\begin{equation*}
\begin{tikzcd}
0 \arrow[r] & \Cu(I) \arrow[r, "\Cu(\iota)"] & \Cu(A) \arrow[r, "\Cu(\pi)"] & \Cu(A/I) \arrow[r] & 0
\end{tikzcd}
\end{equation*}
is short exact. Furthermore, if $x, y \in \Cu(A)$ and $\Cu(\pi)(x) \leq \Cu(\pi)(y)$ then there exists $z \in \Cu(I)$ such that $x \leq y + z$ in $\Cu(A)$.
\end{theorem}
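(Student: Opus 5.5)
The plan is to verify the three exactness conditions separately, and then to deduce the final ``furthermore'' clause from a lifting argument together with the existence of a largest element in $\Cu(I)$. Here I use that $I$ is $\sigma$-unital. Throughout, $I \otimes K$ is a closed two-sided ideal, hence a hereditary C*-subalgebra, of $A \otimes K$, and $\pi$ also denotes $\pi \otimes \mathrm{id}_K$. Exactness of the sequence of positively ordered monoids is understood as follows: $\Cu(\iota)$ is an order embedding onto an ideal of $\Cu(A)$, $\Cu(\pi)$ is surjective, and $\Cu(\pi)(x) \leq \Cu(\pi)(y)$ holds exactly when $x \leq y$ modulo that ideal. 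The last point is essentially the ``furthermore'' clause.

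\emph{Step 1: $\Cu(\iota)$ is an order embedding onto an ideal.} Let $a, b \in (I \otimes K)_+$ with $a \precsim b$ in $A \otimes K$. I would use R{\o}rdam's lemma: for each $\epsilon > 0$ there is $r \in A \otimes K$ with $(a - \epsilon)_+ = r b r^*$. Put $v = r b^{1/2}$. Then $v^*v = b^{1/2} r^* r b^{1/2}$ lies in the hereditary subalgebra generated by $b$, so $v \in I \otimes K$. Writing $b^{1/2}$ as a limit of $b^{1/2} e_\lambda$ with $e_\lambda$ in $I \otimes K$, we get $(a - \epsilon)_+ = vv^*$, and $(a-\epsilon)_+ \precsim b$ is then witnessed inside $I \otimes K$. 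Hence $\precsim$ computed in $I \otimes K$ agrees with $\precsim$ computed in $A \otimes K$, which gives injectivity and order embedding. The image is downward hereditary: if $c \precsim a$ with $a \in I \otimes K$, the same argument shows $(c - \epsilon)_+ \sim vv^*$ with $v^*v \in I \otimes K$, so every $(c-\epsilon)_+$ lies in $I \otimes K$, and therefore so does $c$. The image is closed under suprema because suprema in $\Cu(I)$ are computed in $\Cu(A)$, which follows from the embedding and from $\Cu(I)$ satisfying \ref{axiom: O1}. I expect this hereditary bookkeeping to be the most delicate step, though it is standard.

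\emph{Step 2: exactness in the middle and surjectivity.} $\Cu(\pi)$ is a Cu-morphism by functoriality. If $[\pi(a)] = 0$ then $\pi(a) = 0$, so $a \in I \otimes K$; conversely, classes in $\Cu(I)$ clearly map to $0$. Hence $\ker \Cu(\pi) = \Cu(I)$. For surjectivity, any positive element of $(A/I) \otimes K$ lifts to a positive element of $A \otimes K$: lift to a self-adjoint element and take its positive part.

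\emph{Step 3: the furthermore clause.} Write $x = [a]$ and $y = [b]$ with $\pi(a) \precsim \pi(b)$. Fix $\epsilon > 0$ and choose $\bar r$ with $(\pi(a) - \epsilon)_+ = \bar r \pi(b) \bar r^*$. Lift $\bar r$ to some $r$. Then $h := (a - \epsilon)_+ - r b r^*$ is self-adjoint and lies in $I \otimes K$, so
\begin{equation*}
(a - \epsilon)_+ \leq r b r^* + h_+ .
\end{equation*}
Next I use the standard fact that $d \leq e_1 + e_2$ with all terms positive implies $d \precsim e_1 \oplus e_2$. This gives $[(a-\epsilon)_+] \leq [b] + [h_+]$. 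To remove the dependence on $\epsilon$, let $e$ be a strictly positive element of the $\sigma$-unital algebra $I \otimes K$. By stability, $z := [e]$ is the largest element of $\Cu(I)$, so $[h_+] \leq z$ for every $\epsilon$. Taking the supremum over $\epsilon \to 0$ then yields $x \leq y + z$.
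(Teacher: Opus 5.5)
Your argument is correct. The paper gives no proof of this statement; it quotes \cite[Theorem 4.1]{CRS10}. Your proposal is therefore a self-contained replacement working directly with positive elements. Its core is R{\o}rdam's lemma in the quotient, lifting $\bar r$ to $r$, and observing that the error term $h=(a-\epsilon)_+-rbr^*$ lies in $I\otimes K$. A strictly positive element of $I\otimes K$ then makes $z$ independent of $\epsilon$.

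A few points to tighten.

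First, closure of the image under suprema does not follow from ``order embedding plus (O1)'' alone, since an order embedding need not preserve suprema. Either cite that $\Cu(\iota)$ is a Cu-morphism, or argue as follows. If $(x_n)$ is increasing in $\Cu(I)$ with supremum $t$, the supremum $s$ of $\Cu(\iota)(x_n)$ in $\Cu(A)$ satisfies $s\leq\Cu(\iota)(t)$. Hence $s$ lies in the image by heredity, and the order embedding then forces $s=\Cu(\iota)(t)$.

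Second, Step 1 is easier than you suggest. If $r_nbr_n^*\to a$ with $r_n\in A\otimes K$ and $(e_\lambda)$ is an approximate unit for $I\otimes K$, then $r_ne_\lambda\in I\otimes K$ and $(r_ne_\lambda)b(r_ne_\lambda)^*$ approximates $a$. So R{\o}rdam's lemma and the element $v$ are not needed there. Heredity is also immediate: $c\precsim a$ with $a\in I\otimes K$ makes $c$ a norm limit of elements $r_nar_n^*\in I\otimes K$.

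Third, in Step 3 stability plays no role in $[e]$ being the largest element of $\Cu(I)$. What matters is that every $c\in(I\otimes K)_+$ lies in $\overline{e(I\otimes K)e}=I\otimes K$, and hence $c\precsim e$.

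Finally, your argument shows that $\sigma$-unitality is not actually needed. Let $h_k$ be the error term for $\epsilon=1/k$ and put $z=\sup_n\sum_{k\leq n}[(h_k)_+]\in\Cu(I)$, which exists by (O1). Then $[(a-1/k)_+]\leq y+z$ for every $k$, so $x\leq y+z$.
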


Specifically, the second half of Theorem \ref{theorem: CRS} is the blueprint for the notion of a quotient Cu-semigroup. 

\begin{definition} \label{definition: Quotient Cu Semigroup}
 Let $S$ be a Cu-semigroup and $I$ be an ideal of $S$. If $a, b \in S$ then we say $a \leq_I b$ if and only if there exists $i \in I$ such that $a \leq b + i$. Define an equivalence relation $\sim_I$ by setting $a \sim_I b$ if and only if $a \leq_I b$ and $b \leq_I a$. The \textbf{quotient Cu-semigroup} $S/I$ is given by the set
\begin{equation*}
S/I = S/\sim_I \; = \{ [s]_I \; \vert \; s \in S \}.
\end{equation*}
The partial order on $S/I$ is given by $[a]_I \leq [b]_I$ if and only if $a \leq_I b$ in $S$ and addition is given by $[a]_I + [b]_I = [a + b]_I$. 
\end{definition}

By \cite[Lemma 5.1.2]{APT18}, a quotient Cu-semigroup is a Cu-semigroup. Using Definition \ref{definition: Quotient Cu Semigroup}, Theorem \ref{theorem: CRS} tells us that if $I$ is a $\sigma$-unital ideal of $A$, then as Cu-semigroups, $\Cu(A/I)$ is isomorphic to the quotient Cu-semigroup $\Cu(A)/\Cu(I)$. 

\subsection{Notation and moves for graph C*-algebras}
\label{subsec: Moves}

For an accessible reference on graph C*-algebras, the reader is referred to the book \cite{Rae05}. A directed graph is \textbf{row-finite} if there are finitely many edges going into any vertex. \newline
\phantom{} \newline
Note that in this paper, we will adopt the notation and conventions for graph C*-algebras set out in \cite{Rae05}. In particular, if $E$ is a row-finite directed graph, $E^0$ is the set of vertices in $E$ and $E^1$ is the set of edges in $E$, then the graph C*-algebra $C^*(E)$ is the universal C*-algebra generated by the set $\{ s_e, p_v \; \vert \; e \in E^1, \; v \in E^0 \}$ consisting of partial isometries $s_e$ and mutually orthogonal projections $p_v$ which satisfy the Cuntz-Krieger relations:
\begin{enumerate}
\item If $e \in E^1$ then $p_{s(e)} = s_e^* s_e$. 
\item If $v \in E^0$ and $0 < \lvert r^{-1}(v) \rvert$ then $p_v = \sum_{r(e) = v} s_e s_e^*$. 
\end{enumerate}
We will mainly focus on finite graphs: \textbf{graphs with finitely many vertices and edges}. Note that a graph C*-algebra has finitely many vertices if and only if it is unital by \cite[Proposition 1.4]{KPR98}. \newline
\phantom{} \newline
Akin to Reidemeister moves for knots, moves for unital graph C*-algebras are special methods of changing the underlying graph of a unital graph C*-algebra in such a way that certain structures are preserved, such as its diagonal (see for instance, \cite[Definition 2.1]{NR12}) or its gauge action (see for instance, \cite[Proposition 2.1]{Rae05}). The first two moves, outsplitting and insplitting, were first studied in \cite{Wil73} with the goal of classifying subshifts of finite type up to topological conjugacy. In this paper, we will use moves (O), (I-), (CO) and (S) as defined in \cite[Section 2]{ER25} and \cite[Section 5]{Sor13}. Of particular importance is the fact that applying these moves results in a pair of graph C*-algebras that are at least, stably isomorphic. We state this result as Proposition \ref{proposition: Refined Moves Stable Iso} below; the proof is contained in \cite[Theorem 2.1.2]{ER25}, \cite[Theorem 2.2.4]{ER25}, \cite[Theorem 2.3.2]{ER25} and \cite[Theorem 2.3.5]{ER25} and uses the fact that move (CO) by definition is given by applying move (R+) from \cite[Definition 2.3.1]{ER25} and then move (S). 

\begin{proposition} \label{proposition: Refined Moves Stable Iso}
Let $E$ and $F$ be finite graphs and suppose that $F$ is obtained by applying a sequence of the moves (O), (I-), (CO) and (S) to $E$. Then $C^*(E) \otimes K \cong C^*(F) \otimes K$. In particular, $\Cu(C^*(E))$ is isomorphic to $\Cu(C^*(F))$.
\end{proposition}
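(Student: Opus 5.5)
The plan is to induct on the length of the sequence of moves. It therefore suffices to treat the case where $F$ is obtained from $E$ by a single application of one of the moves (O), (I-), (CO) or (S). Stable isomorphism is an equivalence relation, so a sequence of moves can be handled one move at a time: if $C^*(E_i) \otimes K \cong C^*(E_{i+1}) \otimes K$ for each consecutive pair $E_i, E_{i+1}$ in the sequence $E = E_0, E_1, \dots, E_n = F$, then composing these isomorphisms gives $C^*(E) \otimes K \cong C^*(F) \otimes K$. Each of the moves applied to a finite graph produces a finite graph, so every intermediate graph stays within the scope of the cited results.

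For a single move I would invoke the corresponding results of \cite{ER25}.
\begin{enumerate}
\item For outsplitting (O), \cite[Theorem 2.1.2]{ER25} gives an isomorphism $C^*(E) \cong C^*(F)$, which in particular is a stable isomorphism.
\item For the restricted insplitting (I-), \cite[Theorem 2.2.4]{ER25} gives $C^*(E) \otimes K \cong C^*(F) \otimes K$.
\item For move (S), removing a regular source, \cite[Theorem 2.3.5]{ER25} gives a stable isomorphism. Its content is that $p_{E^0 \setminus \{v\}}$ is a full projection in $C^*(E)$ whose corner is isomorphic to $C^*(F)$; Brown's theorem then yields the stable isomorphism.
\item Move (CO) is by definition the composite of move (R+) from \cite[Definition 2.3.1]{ER25} followed by move (S). Move (R+) preserves the stable isomorphism class by \cite[Theorem 2.3.2]{ER25}, and move (S) does by the previous item. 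Hence (CO) does too, by the transitivity argument above.
\end{enumerate}

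For the final assertion, I would use that the Cuntz semigroup is invariant under stabilisation, $\Cu(A) \cong \Cu(A \otimes K)$, because $K \otimes K \cong K$. Any $*$-isomorphism induces an isomorphism of Cuntz semigroups by functoriality. Combining these gives $\Cu(C^*(E)) \cong \Cu(C^*(E) \otimes K) \cong \Cu(C^*(F) \otimes K) \cong \Cu(C^*(F))$.

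The main point requiring care is bookkeeping rather than mathematics. The hypotheses of each cited theorem (for instance, which vertices are regular, or the precise conditions in the definition of (I-)) must match the conventions fixed in \cref{subsec: Moves}. These follow \cite{Rae05}, in which edges go from $s(e)$ to $r(e)$ with the relation $p_v = \sum_{r(e) = v} s_e s_e^*$, and this convention is opposite to the one in \cite{ER25}. I would therefore translate each move by reversing edges before applying the cited result. I expect this to be routine, since reversing all edges identifies the two conventions, and the moves are defined in the paper with this translation in mind.
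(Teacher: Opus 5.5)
Your proposal is correct and follows the paper's argument. The paper also cites \cite[Theorems 2.1.2, 2.2.4, 2.3.2, 2.3.5]{ER25} for the individual moves, uses that (CO) is (R+) followed by (S), and gets the Cuntz semigroup statement from invariance under stabilisation. One small slip: in the paper's (Raeburn) convention, move (S) deletes a regular \emph{sink}, not a regular source; ``source'' is the \cite{ER25} terminology under the edge reversal you already flag.
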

Since we have adopted opposing conventions to the paper \cite{ER25}, we will carefully define moves (O), (I-), (CO) and (S) and the necessary terminology. \newline
\phantom{} \newline
Let $E$ be a row-finite graph and $v, w \in E^0$. We say that $v$ is \textbf{singular} if $v$ is either a source or an infinite receiver (receives infinitely many edges). Otherwise, we say that $v$ is \textbf{regular}.   \newline
\phantom{} \newline
Let $E^*$ denote the set of finite paths in $E$. We say that $v \geq w$ if there exists $\mu \in E^*$ such that $s(\mu) = v$ and $r(\mu) = w$. A \textbf{cycle} is a finite length path $\mu = \mu_n \dots \mu_2 \mu_1 \in E^*$ such that $r(\mu_n) = s(\mu_1)$ and for $j \in \{2, 3, \dots, n \}$, $s(\mu_j) \neq s(\mu_1)$. We say that the cycle $\mu$ is a \textbf{loop} if $n = 1$ (the cycle has length one).  A cycle $\mu = \mu_n \dots \mu_1 \in E^*$ is said to have an \textbf{entry} if there exist $e \in E^1$ and $i \in \{1, 2, \dots, n \}$ such that $r(e) = r(\mu_i)$ but $e \neq \mu_i$. \newline
\phantom{} \newline
Moreover, let $M$ be a subset of $E^0$ and suppose that if $j \in \{1, 2, \dots, n \}$, then $r(\mu_j) \in M$. The cycle $\mu$ has an \textbf{entry in $M$} if there exist an edge $e \in E^1$ and $i \in \{1, 2, \dots, n \}$ as above with $s(e) \in M$. Finally, a vertex $v \in E^0$ is called \textbf{transitional} if it is not a sink and does not support a cycle.  \newline
\phantom{} \newline
The outsplitting move (O) for unital graph C*-algebras was first defined in \cite[Section 3]{BP04}. It was proved in \cite[Theorem 3.2]{BP04} that applying move (O) induces a *-isomorphism between the resulting pair of graph C*-algebras which preserves the gauge action. Moreover, the *-isomorphism is also diagonal preserving by the proof of \cite[Theorem 2.1.2]{ER25}.

\begin{definition}[Move (O): Outsplitting] \label{definition: O}
Let $E$ be a graph with finitely many vertices and $w \in E^0$ be a vertex that is not a source. Partition the set $r^{-1}(w)$ into a disjoint union of finitely many non-empty sets so that
\begin{equation*}
r^{-1}(w) = \mathcal{E}_1 \sqcup \mathcal{E}_2 \sqcup \dots \sqcup \mathcal{E}_n
\end{equation*}
and at most one of the $\mathcal{E}_i$ has infinite cardinality. Define the graph $E_O$ by
\begin{equation*}
E_O^0 = \{ v^1 \; \vert \; \text{$v \in E^0$ and $v \neq w$} \} \cup \{ w^1, \dots, w^n \},
\end{equation*} 
\begin{equation*}
E_O^1 = \{ e^1 \; \vert \; \text{$e \in E^1$ and $s(e) \neq w$} \} \cup \{ e^1, \dots, e^n \; \vert \; \text{$e \in E^1$ and $s(e) = w$} \},
\end{equation*}
\vspace{-1 mm}
\begin{equation*}
s_O(e^i) = 
\begin{cases}
s(e)^1, \text{ if $e \in E^1$ and $s(e) \neq w$,} \\
w^i, \text{ if $e \in E^1$ and $s(e) = w$.}
\end{cases}
\end{equation*} \vspace{1 mm}
\begin{equation*}
r_O(e^i) = 
\begin{cases}
r(e)^1, \text{ if $e \in E^1$ and $r(e) \neq w$,} \\
r(e)^j, \text{ if $e \in E^1$, $r(e) = w$ and $e \in \mathcal{E}_j$.}
\end{cases}
\end{equation*}
The graph $E_O$ is said to be obtained by applying move (O) to $E$. 
\end{definition}

Move (I-) was first introduced in \cite[Definition 2.2.1]{ER25} as a generalisation of the earlier insplitting move from \cite[Section 5]{BP04}. Unlike move (O), the insplitting move (I-) only preserves the graph C*-algebra up to stable isomorphism (see for instance, \cite[Theorem 2.2.4]{ER25}). By Proposition \ref{proposition: Refined Moves Stable Iso}, this is nevertheless enough for our purposes. 

\begin{definition}[Move (I-): Insplitting] \label{definition: I-}
Let $E$ be a graph with finitely many vertices and $w \in E^0$ be a regular vertex (not a source or an infinite receiver). Partition the set $s^{-1}(w)$ into a disjoint union of finitely many sets (some of which could be empty) so that
\begin{equation*}
s^{-1}(w) = \mathcal{E}_1 \sqcup \mathcal{E}_2 \sqcup \dots \sqcup \mathcal{E}_n.
\end{equation*}
Define the graph $E_I$ by
\begin{equation*}
E_I^0 = \{ v^1 \; \vert \; \text{$v \in E^0$ and $v \neq w$} \} \cup \{ w^1, \dots, w^n \},
\end{equation*} 
\begin{equation*}
E_I^1 = \{ e^1 \; \vert \; \text{$e \in E^1$ and $r(e) \neq w$} \} \cup \{ e^1, \dots, e^n \; \vert \; \text{$e \in E^1$ and $r(e) = w$} \},
\end{equation*} \vspace{-1 mm}
\begin{equation*}
s_I(e^i) = 
\begin{cases}
s(e)^1, \text{ if $e \in E^1$ and $s(e) \neq w$,} \\
w^j, \text{ if $e \in E^1$, $s(e) = w$ and $e \in \mathcal{E}_j$.}
\end{cases}
\end{equation*} \vspace{1 mm}
\begin{equation*}
r_I(e^i) = 
\begin{cases}
r(e)^1, \text{ if $e \in E^1$ and $r(e) \neq w$,} \\
w^i, \text{ if $e \in E^1$ and $r(e) = w$.}
\end{cases}
\end{equation*}
The graph $E_I$ is said to be obtained by applying move (I-) to $E$. 
\end{definition}

The move (S), first introduced in \cite[Proposition 3.1]{Sor13}, simply removes a regular sink from the graph. 

\begin{definition}[Move (S): Remove a regular sink] \label{definition: S}
Let $E$ be a a graph with finitely many vertices and $w \in E^0$ be a regular sink. Define the graph $E_S$ by
\begin{equation*}
E_S^0 = E^0 \backslash \{ w \}, \quad E_S^1 = E^1 \backslash r^{-1}(w), \quad s_S = s \vert_{E_S^1} \quad \text{and} \quad r_S = r \vert_{E_S^1}. 
\end{equation*}
The graph $E_S$ is said to be obtained by applying move (S) to $E$. 
\end{definition}

The move (CO), called \emph{collapse} in \cite[Section 5.1]{Sor13}, was originally defined in \cite[Theorem 5.2]{Sor13} and can be applied to any regular vertex which does not support a loop. Visually, the move (CO) removes the regular vertex it is applied to, merging any pair of edges which forms a path of length two passing through the regular vertex into a single edge. Move (CO) is closely connected to the unital reduction move (R+) which was first defined in \cite[Definition 14.4]{ERRS21}; the only difference between the two is that applying move (R+) also replaces the regular vertex it is applied to with a regular sink. Thus as mentioned previously, move (CO) can be thought of as first applying move (R+) and then removing the regular sink introduced by (R+) with move (S). 

\begin{definition}[Move (CO): Collapse] \label{definition: CO}
Let $E$ be a graph with finitely many vertices and $w \in E^0$ be a regular vertex which does not support a loop. Define the graph $E_{\text{CO}}$ by $E_{\text{CO}}^0 = E^0 \backslash \{ w \}$ and
\begin{equation*}
E_{\text{CO}}^1 = \big(E^1 \backslash (s^{-1}(w) \cup r^{-1}(w)) \big) \; \sqcup \;  \{ [ef] \; \vert \; e \in s^{-1}(w), \; f \in r^{-1}(w) \}.
\end{equation*}
The source and range maps of $E_{\text{CO}}$ extend those of $E$ and satisfy
\begin{equation*}
s_{\text{CO}}([ef]) = s(f) \qquad \text{and} \qquad  \quad r_{\text{CO}}([ef]) = r(e). 
\end{equation*}
The graph $E_{\text{CO}}$ is said to be obtained by applying move (CO) to $E$. 
\end{definition}

\subsection{The ideal lattice of a row-finite graph C*-algebra} 
\label{subsec: Ideal Lattice Prelims}

In this subsection, we collect various definitions about ideals of graph C*-algebras from \cite{BPRS00} and \cite{HS04}. We begin with the gauge-invariant ideals --- those that are invariant under the gauge action of the graph C*-algebra. \newline
\phantom{} \newline
Let $E$ be a row-finite graph and $H$ be a subset of $E^0$. $H$ is said to be \textbf{hereditary} if for $x, y \in E^0$, if $x \geq y$ and $y \in H$ then $x \in H$. Moreover, $H$ is said to be \textbf{saturated} if for $v \in E^0$ such that $0 < \lvert r^{-1}(v) \rvert$ and $\{ s(e) \; \vert \; e \in E^1, \; r(e) = v \} \subseteq H$, $v \in H$. By \cite[Theorem 4.1]{BPRS00}, there is a bijective correspondence:
\begin{equation} \label{equation: Hereditary Saturated}
\begin{matrix}
\left\{ 
\begin{matrix}
\text{Gauge-invariant ideals} \\
\text{of $C^*(E)$}
\end{matrix}
\right\} & \longleftrightarrow & \left\{ 
\begin{matrix}
\text{Hereditary, saturated subsets} \\
\text{of $E^0$}
\end{matrix}
\right\} \\
I & \longmapsto & H_I = \{ v \in E^0 \; \vert \; p_v \in I \} \\
I_H = \langle p_v \; \vert \; v \in H \rangle & \longmapsfrom & H.
\end{matrix}
\end{equation}
To be clear, $I_H$ is the ideal generated by the set $\{ p_v \; \vert \; v \in H \}$. Moreover, if we define $p_S = \sum_{v \in S} p_v$ for an arbitrary subset $S \subseteq E^0$, then
\begin{equation} \label{equation: Full Corner Iso}
C^*(H) \cong p_H I_H p_H
\end{equation} 
where the RHS of \eqref{equation: Full Corner Iso} is a full corner of $I_H$. A major consequence of \cite[Theorem 4.1]{BPRS00} is that any graph C*-algebra coming from a finite graph can be written as iterative extensions involving three building blocks:
\begin{enumerate}
\item C*-algebras stably isomorphic to a direct sum of finitely many copies of $K$, 
\item Kirchberg (separable, nuclear, simple purely infinite) algebras,
\item C*-algebras stably isomorphic to $C(\mathbb{T})$.
\end{enumerate}

\begin{definition}
Let $E$ be a row-finite graph. The graph C*-algebra $C^*(E)$ is said to be \textbf{gauge-simple} if the only gauge-invariant ideals of $C^*(E)$ are $\{ 0 \}$ and $C^*(E)$. Equivalently by \eqref{equation: Hereditary Saturated}, the only hereditary, saturated subsets of $E^0$ are $\emptyset$ and $E^0$. 
\end{definition}
The non-gauge-invariant ideals of a graph C*-algebra in general do not correspond to certain subsets of vertices of the graph. Fortunately, primitive non-gauge-invariant ideals can be recovered from certain subsets of vertices called maximal tails. 

\begin{definition}[{\cite[Page 321]{BPRS00}}]
Let $E$ be a graph and $M \subseteq E^0$ be non-empty. $M$ is said to be a \textbf{maximal tail} if the following three conditions are satisfied:
\begin{enumerate}
\item[{\crtcrossreflabel{(M1)}[property: M1]}] If $v \in E^0$, $w \in M$ and there exists a path from $w$ to $v$ then $v \in M$.
\item[{\crtcrossreflabel{(M2)}[property: M2]}] If $v \in M$ and $0 < \lvert r^{-1}(v) \rvert < \infty$ then there exists $e \in E^1$ such that $r(e) = v$ and $s(e) \in M$. 
\item[{\crtcrossreflabel{(M3)}[property: M3]}] If $v, w \in M$ then there exists $y \in M$ such that $y \geq v$ and $y \geq w$. 
\end{enumerate}
\end{definition}

It is straightforward to check that if $M$ is a maximal tail in $E$, then by property \ref{property: M1}, $E^0 \backslash M$ is a hereditary subset of $E^0$. 

\begin{definition}[{\cite[Page 49]{HS04}}] \label{definition: Maximal Tail Tau}
Let $E$ be a graph. Define $\mathcal{MT}(E)$ to be the set of all maximal tails in $E$. Define $\mathcal{MT}_\gamma(E)$ to be the set of all maximal tails in $E$ such that each cycle in $M$ has an entry in $M$. Finally, define $\mathcal{MT}_\tau(E) = \mathcal{MT}(E) \backslash \mathcal{MT}_\gamma(E)$. 
\end{definition}

By \cite[Corollary 2.12]{HS04}, if $E$ is a row-finite graph, then the primitive ideal space of $C^*(E)$ is in bijection with the set $\mathcal{MT}_\gamma(E) \cup (\mathcal{MT}_\tau(E) \times \mathbb{T})$. In particular, the primitive non-gauge-invariant ideals of $C^*(E)$ correspond to the set $\mathcal{MT}_\tau(E) \times \mathbb{T}$. 

\subsection{Purely large extensions}
\label{subsec: Purely Large}

The notion of a purely large extension was first defined by Elliott and Kucerovsky in \cite[Section 2]{EK01}. In this paper, we are interested in the consequences pure largeness has for the Cuntz semigroup of an extension. Hence, we will use the definition of a purely large extension stated in \cite[Definition 4.8]{BG24}, instead of the original one in \cite[Section 2]{EK01}. By \cite[Proposition 4.14]{BG24} (which was originally proved for the paper \cite{CGSTW}), the two definitions coincide whenever the ideal is $\sigma$-unital; this will be the case throughout the paper. In the context of this paper, pure largeness is relied on in \cref{subsec: Kirchberg Quotient} and \cref{subsec: Circle Quotient} to establish Theorem \ref{introtheorem: General Cuntz Krieger}: an important special case of Theorem \ref{introtheorem: Cu E Iso}. 

\begin{definition}[{\cite[Definition 4.8]{BG24} and \cite{CGSTW}}] \label{definition: Purely Large Bouwen Gabe}
Let 
\begin{equation*}
\begin{tikzcd}
0 \arrow[r] & I \arrow[r] & A \arrow[r] & A/I \arrow[r] & 0
\end{tikzcd}
\end{equation*}
be an extension of C*-algebras. The extension is said to be \textbf{purely large} if for $y \in I_+$ and $x \in A_+ \backslash I$, $y \precsim x$ in $A$. 
\end{definition}

It was proved by Gabe that pure largeness is preserved by tensoring by a simple, exact C*-algebra --- this result will appear in \cite{CGSTW}. In this paper, we only require this result in the special case where we tensor by $K$. The author is grateful to Jamie Gabe for allowing Theorem \ref{theorem: Purely Large Tensor Product} to be included and used in this paper. 

\begin{theorem}[Gabe] \label{theorem: Purely Large Tensor Product}
Let 
\begin{equation*}
\begin{tikzcd}
0 \arrow[r] & I \arrow[r] & A \arrow[r] & A/I \arrow[r] & 0
\end{tikzcd}
\end{equation*}
be a purely large extension. Then the stabilised extension
\begin{equation*}
\begin{tikzcd}
0 \arrow[r] & I \otimes K \arrow[r] & A \otimes K \arrow[r] & A/I \otimes K \arrow[r] & 0
\end{tikzcd}
\end{equation*}
is also purely large.
\end{theorem}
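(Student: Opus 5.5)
We verify directly the defining condition of Definition \ref{definition: Purely Large Bouwen Gabe} for the stabilised extension. Fix $y \in (I \otimes K)_+$ and $x \in (A \otimes K)_+ \backslash (I \otimes K)$; the goal is $y \precsim x$ in $A \otimes K$. By approximation, it suffices to show $(y-\epsilon)_+ \precsim x$ for every $\epsilon > 0$. Using the matrix units $\{e_{ij}\}$ of $K$ and the projections $P_n = \sum_{i \leq n} e_{ii}$, we have $(P_n \otimes 1) y (P_n \otimes 1) \to y$, so up to a small cut-down we may assume $y \in M_n(I)_+$ for some $n$. Since $M_n(I)$ is generated by its corners $I \otimes e_{ii}$, each element of $M_n(I)_+$ is Cuntz below a diagonal element $y_1 \oplus \dots \oplus y_n$ with $y_i \in I_+$. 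Concretely, $y \precsim \sum_i e_{ii}\otimes(\text{diagonal entries})$ up to $\epsilon$, or more crudely $y \leq n\,\mathrm{diag}(y_{11}, \dots, y_{nn})$. So it is enough to show $y_1 \oplus \dots \oplus y_n \precsim x$ with $y_i \in I_+$.

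Next we extract from $x$ many mutually orthogonal pieces living in $A$ and outside $I$. Since $x \notin I \otimes K$, its image $\pi(x) \in (A/I \otimes K)_+$ is nonzero. Cutting down again, some $(P_m \otimes 1)x(P_m\otimes 1)$ has nonzero image, so some compression $a = (1\otimes e_{1j})^* x (1 \otimes e_{1j})$, viewed in $A_+$ (with $1$ taken in the multiplier algebra), lies outside $I$; more precisely there is $\delta > 0$ with $(a-\delta)_+ \notin I$ and $(a - \delta)_+ \precsim x$. The main issue is that we need $n$ orthogonal copies. For this, fix one element $a_0 \in A_+ \backslash I$ with $a_0 \precsim x$. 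By pure largeness of the original extension, any $y_i \precsim a_0$. Thus each $y_i \precsim a_0$, and we need $a_0^{\oplus n} \precsim x$, or at least $\bigoplus_i y_i \precsim a_0$ inside $A$.

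The key step, and the main obstacle, is to fit $n$ elements of $I$ orthogonally under a single $a_0 \in A_+\backslash I$. My plan is to use pure largeness to split $a_0$: take a continuous function $g$ so that $b = g(a_0)$ satisfies $a_0 b = b$ up to a small error and $b \notin I$. The hereditary subalgebra $\overline{a_0 A a_0}$ is not contained in $I$, and its intersection with $I$ is a hereditary subalgebra $\overline{a_0 I a_0}$ that contains elements $c$ with $y_i \precsim c$. I would then try to find, inside $\overline{a_0 A a_0}$, mutually orthogonal positive elements $d_1, \dots, d_n \notin I$. These would come from the image in the quotient: $\pi(a_0)\ne 0$ and, if $\pi(a_0)$ has spectrum with at least $n$ points (or infinite-dimensional hereditary subalgebra), choose disjoint functional calculus pieces. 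Otherwise, the remaining case is where $\pi(\overline{a_0Aa_0})$ is finite-dimensional. Here I would instead exploit largeness within $I$: pure largeness forces $\overline{a_0 I a_0}$ to absorb every element of $I$. Taking $y' = y_1 + \dots$ stabilised appropriately, I would show that $\overline{a_0 I a_0}$ is a full stable-like hereditary subalgebra of $I$ in the sense that $I_+\ni z \precsim c$ with $c$ orthogonal pieces, using Kucerovsky–Elliott style absorption. If this becomes too delicate, a fallback is to use the Cuntz semigroup: pure largeness gives $[z] \leq [a_0]$ for all $z\in I_+$, so $[a_0] \geq \sup\Sigma_I$. Then one checks that $\sup \Sigma_I$ equals $\infty\cdot[\text{strictly positive of } I] = $ the maximal element of $\Cu(I)$, which is idempotent, and $\bigoplus y_i \leq n\cdot \sup\Sigma_I = \sup\Sigma_I \leq [a_0]\le[x]$. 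This gives $y \precsim x$. The delicate point is showing that $\sup\Sigma_I$ is all of the top element of $\Cu(I)$, which holds since $I$ is $\sigma$-unital so a strictly positive $h$ has $[h]\in\Sigma_I$. One also needs $\infty[h]\leq [a_0]$, not just $[h]\le[a_0]$; obtaining this additive closure from pure largeness is where I expect the real work, likely via orthogonal pieces $h_k=f_k(h)$ in $I$ whose sums remain in $I_+$.
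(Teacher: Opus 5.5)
Your preliminary reductions are fine. Your way of producing an element of $A_+\setminus I$ below $x$ is also more elementary than the paper's. Since $(\pi\otimes\mathrm{id})(x)\neq 0$, some diagonal entry $a_0=x_{jj}$ lies outside $I$, and $a_0\otimes e_{11}=(1\otimes e_{1j})x(1\otimes e_{j1})\precsim x$. This replaces the paper's use of Kirchberg's slice lemma.

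However, there is a genuine gap exactly at the step you flag. You never show $y_1\oplus\dots\oplus y_n\precsim a_0$, i.e.\ $n[y]\leq[a_0]$ in $\Cu(A)$ for $y\in I_+$, and this is the heart of the theorem.

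\emph{Route (i).} This cannot work in general, because the corner $\overline{\pi(a_0)(A/I)\pi(a_0)}$ may be tiny. For $\tilde K$ as an extension of $\mathbb{C}$ by $K$ (purely large, as the paper notes) this corner is $\mathbb{C}$, which contains no two orthogonal non-zero positive elements. So all the extra room must come from inside $I$, and ``Kucerovsky--Elliott style absorption'' is not an argument.

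\emph{The fallback.} This rests on a false claim. For a strictly positive $h\in I$, every element of $I_+$ is Cuntz below $h$, so $\sup\Sigma_I=[h]$, not $\infty\cdot[h]$. The equality $[h]=\infty\cdot[h]$ is a stability-type property of $I$ (it fails for $I=\mathbb{C}$ or $I=C_0((0,1))$). It has to be \emph{derived} from pure largeness; $\sigma$-unitality, which is not even a hypothesis, gives nothing of the sort. Your proposed repair via orthogonal pieces $f_k(h)$ only yields $\sum_k[f_k(h)]\leq[h]$, and never produces two copies of the same class.

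\textbf{The missing idea.} Pure largeness forces the ideal to absorb copies of its own elements. The paper gets this from \cite[Proposition 4.14]{BG24} and \cite[Proposition 6.10]{CGSTW26}: $I$ is separably stable. The argument then runs as follows.
\begin{enumerate}
\item Put $c:=y_1+\dots+y_n$; it lies in a separable stable $C\subseteq I$.
\item A unital copy of $\mathcal{O}_n$ in $\mathcal{M}(C)$ gives isometries $s_1,\dots,s_n$ with $\sum_j s_js_j^*=1$.
\item Then $c'=\sum_j s_jcs_j^*\in I_+$ satisfies $c\otimes 1_n\precsim c'\otimes e_{11}$.
\item Pure largeness applied to $c'$ gives $y_1\oplus\dots\oplus y_n\precsim c\otimes 1_n\precsim c'\otimes e_{11}\precsim a_0\otimes e_{11}\precsim x$.
\end{enumerate}
With this one input your argument closes, and it would not need the paper's step through simplicity of $\Cu(K)$.

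If you want to avoid the citation, you can prove $n[y]\leq[b]$ for all $b\in A_+\setminus I$ directly, by induction on $n$.
\begin{enumerate}
\item Take a quasicentral approximate unit $(u_k)$ of $I$ and functions $0\leq f,g\leq 1$ of disjoint support with $f(1)=1=g(0)$.
\item The elements $f(u_k)bf(u_k)\in I_+$ and $g(u_k)bg(u_k)\notin I$ are orthogonal.
\item Their sum is close to $b^{1/2}(f^2+g^2)(u_k)b^{1/2}\leq b$.
\item Cutting down a witness of $y\precsim b$ by $f(u_k)$ shows the first piece eventually dominates $(y-\delta)_+$.
\item The induction hypothesis applies to a cut-down of the second piece.
\end{enumerate}
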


\begin{proof}
Assume that $x \in (A \otimes K)_+ \backslash (I \otimes K)$ and let $\pi: A \otimes K \rightarrow A/I \otimes K$ denote the quotient map. We need to show that $x$ Cuntz dominates any positive element in $I \otimes K$. By applying Kirchberg's slice lemma (see \cite[Lemma 2.15]{BK04}), we will first reduce the problem to the special case where $x$ is an elementary tensor. \newline
\phantom{} \newline
Since $K$ is nuclear, there exist faithful representations $A/I \subseteq B(H_{A/I})$ and $K \subseteq B(H_K)$ such that $A/I \otimes K \subseteq B(H_{A/I} \otimes H_K)$ faithfully; see \cite[Proposition 1.22.9]{Sak71}. Since $\pi(x)$ is non-zero, there exist non-zero $\zeta_{A/I} \in H_{A/I}$ and $\zeta_K \in H_K$ such that $$\lVert \pi(x)^{\frac{1}{2}}(\zeta_{A/I} \otimes \zeta_K) \rVert^2 = \langle \pi(x) (\zeta_{A/I} \otimes \zeta_K), (\zeta_{A/I} \otimes \zeta_K) \rangle > 0.$$ Let $\phi_{A/I}$ and $\phi_K$ be the functionals induced by $\zeta_{A/I}$ and $\zeta_K$ respectively. Then
\begin{equation*}
(\phi_{A/I} \otimes \phi_K)(\pi(x)) = \langle \pi(x)(\zeta_{A/I} \otimes \zeta_K), \zeta_{A/I} \otimes \zeta_K \rangle = \lVert \pi(x)^{\frac{1}{2}}(\zeta_{A/I} \otimes \zeta_K) \rVert^2 > 0.  
\end{equation*}
Hence, there exist pure states $\rho_{A/I}$ and $\rho_K$ on $A/I$ and $K$ respectively such that $(\rho_{A/I} \otimes \rho_K)(\pi(x)) > 0$, as otherwise by \cite[Theorem 5.1.8]{Mur90}, $(\eta_{A/I} \otimes \eta_K)(\pi(x)) = 0$ for arbitrary positive linear functionals $\eta_{A/I}$ and $\eta_K$ on $A/I$ and $K$. \newline
\phantom{} \newline
Now let $\rho_A$ be the pure state on $A$ induced from $\rho_{A/I}$. Then $\rho_A(I) = \{ 0 \}$ and $(\rho_A \otimes \rho_K)(x) = (\rho_{A/I} \otimes \rho_K)(\pi(x)) > 0$. By Kirchberg's slice lemma, there exist $z \in (A \otimes K) \backslash \{ 0 \}$, $e \in A_+$ and $d \in K_+$ such that
\begin{equation*}
zz^* \in \overline{x ( A \otimes K) x}, \qquad z^* z = e \otimes d \qquad \text{and} \qquad \rho_A(e) \rho_K(d) > 0.
\end{equation*} 
Observe that $e \in A_+ \backslash I$ and by \cite[Proposition 2.7 (i)]{KR00}, $e \otimes d \sim z z^* \precsim x$. Consequently by Rørdam's lemma (see \cite[Proposition 2.4]{Ror92}), it suffices to show that $e \otimes d$ Cuntz dominates all positive elements of the form $(y - \delta)_+$ where $y \in (I \otimes K)_+$ and $\delta > 0$. \newline
\phantom{} \newline
To this end, assume that $y \in (I \otimes K)_+$ and $\delta > 0$. Using approximate units on both $I$ and $K$, there exist $a \in I_+$ and $b \in K_+$ such that 
\begin{equation*}
\lVert (a \otimes b) y (a \otimes b) - y \rVert < \frac{\delta}{2}. 
\end{equation*}
So by choosing $\epsilon > 0$ small enough, we can arrange for
\begin{equation*}
\lVert (a \otimes (b - \epsilon)_+) y (a \otimes (b - \epsilon)_+) - y \rVert < \delta. 
\end{equation*} 
Applying \cite[Lemma 2.5 (ii)]{KR00}, we have $(y - \delta)_+ \precsim a \otimes (b - \epsilon)_+$. Consequently, it suffices to show that if $a \in I_+$, $b \in K_+$ and $\epsilon > 0$ then
\begin{equation} \label{equation: Final Reduction}
a \otimes (b - \epsilon)_+ \precsim e \otimes d. 
\end{equation}
First, since $\Cu(K) \cong \overline{\mathbb{N}}$ is simple then it follows from \cite[Proposition 2.7 (v)]{KR00} that there exists $n > 0$ large enough such that 
\begin{equation} \label{equation: Simplicity Application}
(b - \epsilon)_+ \otimes e_{11} \sim (b - \epsilon)_+ \precsim d \otimes 1_n
\end{equation}
where $1_n$ is the unit of $M_n(\mathbb{C})$ and $e_{11} \in M_n(\mathbb{C})$ is the matrix with a 1 in the top left corner and zeros elsewhere. Now since $A$ is a purely large extension then by \cite[Proposition 4.14]{BG24} and \cite[Proposition 6.10]{CGSTW26}, $I$ is separably stable and so there exists a separable stable C*-subalgebra $C$ of $I$ such that $a \in C$. By stability of $C$, there is a unital embedding of $\mathcal{O}_n$ into the multiplier algebra $\mathcal{M}(C)$, which yields isometries $s_1, \dots, s_n \in \mathcal{M}(C)$ such that if $i \in \{1, 2, \dots, n \}$ then $$s_i^* s_i = {\sum}_{j = 1}^n s_j s_j^* = 1_{\mathcal{M}(C)}.$$ Now define $a' = \sum_{j = 1}^n s_j a s_j^* \in C_+$ and for $k > 0$, $$v_k = {\sum}_{j = 1}^n s_j a^{\frac{1}{k}} \otimes e_{1j} \in C \otimes M_n(\mathbb{C}).$$ By direct computation, $v_k^* (a' \otimes e_{11}) v_k \rightarrow a \otimes 1_n$ as $k \rightarrow \infty$. Hence by Definition \ref{definition: Purely Large Bouwen Gabe}, $a \otimes 1_n \precsim a' \otimes e_{11} \precsim e \otimes e_{11}$. In tandem with \eqref{equation: Simplicity Application}, we have in $(A \otimes K) \otimes M_n(\mathbb{C})$ that
\begin{equation*}
a \otimes (b - \epsilon)_+ \otimes e_{11} \precsim a \otimes d \otimes 1_n \precsim e \otimes d \otimes e_{11}. 
\end{equation*}
We obtain the desired Cuntz subequivalence in \eqref{equation: Final Reduction}. 
\end{proof}

The following corollary of Theorem \ref{theorem: Purely Large Tensor Product} tells us that pure largeness imposes a strong restriction on the partial order of the Cuntz semigroup of the extension. It is an essential ingredient in the Cuntz semigroup computations carried out in \cref{subsec: Kirchberg Quotient} and motivates the construction of the stack of two Cu-semigroups; see Definition \ref{definition: Cuntz Stack}. 

\begin{corollary} \label{corollary: Purely Large Order Stabilised}
Let
\begin{equation*}
\begin{tikzcd}
0 \arrow[r] & I \arrow[r] & A \arrow[r] & A/I \arrow[r] & 0
\end{tikzcd}
\end{equation*} 
be an extension of C*-algebras. The extension is purely large if and only if for $y \in \Cu(I)$ and $x \in \Cu(A) \backslash \Cu(I)$, $y \leq x$ in $\Cu(A)$. 
\end{corollary}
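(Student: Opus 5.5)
We prove the two implications separately. The forward direction uses Theorem \ref{theorem: Purely Large Tensor Product}; the backward direction amounts to unwinding the definitions.

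\emph{Forward direction.} Suppose the extension is purely large. By Theorem \ref{theorem: Purely Large Tensor Product}, the stabilised extension $0 \to I \otimes K \to A \otimes K \to A/I \otimes K \to 0$ is also purely large. Here $\Cu(I)$ is regarded inside $\Cu(A)$ via $\Cu(\iota)$, which is an order embedding by Theorem \ref{theorem: CRS}. Its image consists of the classes $[y]$ with $y \in (I \otimes K)_+$. Note that membership is defined up to Cuntz equivalence: $\Cu(I)$ is downward hereditary, so if $x \sim y$ with $y \in I \otimes K$, then $x$ lies in the hereditary subalgebra generated by $y$, hence in $I \otimes K$.

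Now take $y \in \Cu(I)$ and $x \in \Cu(A) \backslash \Cu(I)$, and choose representatives $y = [b]$ with $b \in (I\otimes K)_+$ and $x = [a]$ with $a \in (A \otimes K)_+$. Then $a \notin I \otimes K$, for otherwise $[a]$ would lie in $\Cu(I)$. Pure largeness of the stabilised extension gives $b \precsim a$ in $A \otimes K$, that is, $y \le x$.

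\emph{Backward direction.} Suppose the order condition holds in $\Cu(A)$. Let $y \in I_+$ and $x \in A_+ \backslash I$. Consider $y \otimes e_{11}$ and $x \otimes e_{11}$ in $A \otimes K$. Since $x \notin I$, we have $x \otimes e_{11} \notin I \otimes K$. We check that $[x \otimes e_{11}] \notin \Cu(I)$: if it were equal to $[c]$ for some $c \in (I \otimes K)_+$, then $x \otimes e_{11} \precsim c$ would force $x \otimes e_{11}$ into the closed ideal $I \otimes K$, since Cuntz subequivalence preserves membership in closed two-sided ideals. The hypothesis then gives $y \otimes e_{11} \precsim x \otimes e_{11}$ in $A \otimes K$. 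Finally, compress by $1 \otimes e_{11}$ (in the multiplier algebra), which identifies the corner with $A$ and preserves Cuntz subequivalence. This yields $y \precsim x$ in $A$, so the extension is purely large.

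All the substantive work in the forward direction is contained in Theorem \ref{theorem: Purely Large Tensor Product}. The only point requiring care is the identification of $\Cu(I)$ inside $\Cu(A)$ with the classes of elements of $(I \otimes K)_+$, so that "$x \notin \Cu(I)$" is equivalent to "every representative of $x$ lies outside $I \otimes K$". This follows from the standard fact that $a \precsim b$ with $b$ in a closed ideal $J$ implies $a \in J$: if $r b r^* \to a$ and each $r b r^* \in J$, then $a \in J$ because $J$ is closed.
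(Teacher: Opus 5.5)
Your proof is correct and is essentially the argument the paper intends (it states the result as an immediate corollary without writing out a proof): the forward direction is exactly Theorem~\ref{theorem: Purely Large Tensor Product} applied to the stabilised extension, and the converse is the routine compression to the corner $A \otimes e_{11}$. One harmless slip: $x \precsim y$ places $x$ in the closed two-sided ideal generated by $y$, not in the hereditary subalgebra $\overline{y(A \otimes K)y}$ (e.g.\ $e_{22} \sim e_{11}$ in $M_2(\mathbb{C})$), but your final paragraph already gives the correct ideal-closure argument, so nothing is affected.
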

Recall that an extension $A$ of $A/I$ by $I$ is \textbf{full} if the associated Busby invariant $\beta: A/I \rightarrow \mathcal{M}(I)/I$ is a full *-homomorphism; that is, if $x \in A/I$ is non-zero then the ideal generated by $\beta(x)$ is equal to $\mathcal{M}(I)/I$. Furthermore, a C*-algebra $B$ is said to have the \textbf{corona factorisation property} (first defined in \cite[Definition 1.3]{KN06a}) if every norm-full projection $p$ in the stable multiplier algebra $\mathcal{M}(B \otimes K)$ is Murray-von Neumann equivalent to the unit $1_{\mathcal{M}(B \otimes K)}$. \newline
\phantom{} \newline
Later on in the paper, we will rely on Theorem \ref{theorem: Kucerovsky Ng} below to show that the extensions of graph C*-algebras coming from the statement of Theorem \ref{introtheorem: General Cuntz Krieger} are purely large. The proof of Theorem \ref{theorem: Kucerovsky Ng} is contained in the proof of \cite[Theorem 3.5]{KN06b}, which in turn relies on the deep connection between absorbing extensions and purely large extensions established by Elliott and Kucerovsky in \cite{EK01}.
\begin{theorem}[from {\cite[Theorem 3.5]{KN06b}}] \label{theorem: Kucerovsky Ng}
Let 
\begin{equation*}
\begin{tikzcd}
0 \arrow[r] & I \arrow[r] & A \arrow[r] & A/I \arrow[r] & 0
\end{tikzcd}
\end{equation*} 
be an extension of C*-algebras. Suppose that the ideal $I$ is $\sigma$-unital, stable and satisfies the corona factorisation property. If $A$ is a full extension then it is purely large. 
\end{theorem}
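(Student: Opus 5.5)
The plan is to prove pure largeness through the multiplier algebra $\mathcal{M}(I)$ and the Busby map $\beta\colon A/I \to \mathcal{M}(I)/I$, regarding $A$ as a subalgebra of $\mathcal{M}(I)$ modulo the usual identification. Fix $x \in A_+ \backslash I$ and $y \in I_+$; we must show $y \precsim x$ in $A$. The strategy has three steps. First, find a full projection $P \in \mathcal{M}(I)$ that is, up to a negligible error, inside the hereditary subalgebra generated by $x$. Second, use the corona factorisation property to get $P \sim 1_{\mathcal{M}(I)}$. Third, transport $y$ under $P$ by an isometry, so that $y$ is Cuntz below $x$.

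\emph{Step 1: a full projection under $x$ modulo $I$.} Replace $x$ by $(x-\epsilon)_+$ with $\epsilon$ small enough that the image in $A/I$ is still non-zero. By fullness of the extension, $\beta(\pi(x))$ generates $\mathcal{M}(I)/I$ as an ideal, so $1 = \sum_{i=1}^n a_i \beta(\pi(x)) a_i^*$ in the corona. Equivalently, $1 \otimes e_{11} \precsim \beta(\pi(x)) \otimes 1_n$ in $M_n(\mathcal{M}(I)/I)$. Lifting, we find $d \in M_n(\mathcal{M}(I))$ such that $d^*(x \otimes 1_n) d = 1 + k$ with $k \in M_n(I)$. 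The key is to kill $k$ at the cost of passing to a smaller corner. Take a quasicentral approximate unit $(u_\lambda)$ of $I$ relative to the separable algebra generated by $x$, $d$ and $k$, and choose a sequence of pairwise orthogonal "blocks" $u_{\lambda_{m+1}} - u_{\lambda_m}$ pushed far out, as in Kasparov's technical theorem and Elliott--Kucerovsky. On the strictly convergent sum $Q$ of every other such block, the error $k$ is small in norm, so $Q(1+k)Q$ is invertible in $Q\mathcal{M}(I)Q$. Using $\sigma$-unitality and stability of $I$, the blocks can be chosen so that $Q$ is a projection in $\mathcal{M}(I)$ whose image in the corona is still $1$; in particular $Q$ is full. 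Functional calculus then produces a projection $P \leq$ (a compression of) $x \otimes 1_n$, that is, $P = r(x \otimes 1_n) r^*$ exactly for some $r \in M_n(\mathcal{M}(I))$. This $P$ is full in $M_n(\mathcal{M}(I)) \cong \mathcal{M}(I)$, where the isomorphism uses stability of $I$.

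\emph{Step 2: CFP.} Since $I$ is stable and $\sigma$-unital, $\mathcal{M}(I) \cong \mathcal{M}(I \otimes K)$, so the corona factorisation property applies. It gives an isometry $V \in \mathcal{M}(I) \otimes M_n$ with $VV^* = P$ and $V^*V = 1 \otimes e_{11}$.

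\emph{Step 3: transport $y$.} For $y \in I_+$, write
\[
y \otimes e_{11} = V^* (V (y \otimes e_{11}) V^*) V,
\]
and note that $V(y \otimes e_{11})V^* \in P M_n(I) P$. Hence $y \otimes e_{11} \precsim P \precsim x \otimes 1_n$, where every element involved lies in $M_n(A)$, because $I$ is an ideal of $A$ and $V^* (\cdot) V$ maps $M_n(I)$ into itself. This only yields $y \precsim x \otimes 1_n$. To remove the $1_n$, I will apply the same argument to $y' = \sum_j s_j y s_j^*$, with isometries $s_j \in \mathcal{M}(I)$ coming from stability, exactly as in the proof of Theorem \ref{theorem: Purely Large Tensor Product}. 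Alternatively, I will choose the orthogonal blocks in Step 1 so that $n$ disjoint copies of them sit under $x$ itself; this is possible since $x(1+k')x$-type corners of $I$ again contain full stable pieces. Either route gives $y \precsim x$ in $A$, which is pure largeness in the sense of Definition \ref{definition: Purely Large Bouwen Gabe}. If preferred, one can instead verify the Elliott--Kucerovsky formulation (that $\overline{xIx}$ contains a stable subalgebra full in $I$) and invoke \cite[Proposition 4.14]{BG24}.

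The main obstacle is Step 1. The difficulty is converting the corona-level fullness of $\beta(\pi(x))$ into an honest full projection in $\mathcal{M}(I)$ that is exactly Cuntz-dominated by $x$, without the compact error $k$. The first thing I would try is the quasicentral approximate unit and block-diagonal trick from Elliott--Kucerovsky and Kucerovsky--Ng, which pushes the error to infinity where it becomes norm-small.
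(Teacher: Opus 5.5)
\paragraph{Review.} There is a genuine gap, and it is exactly where the corona factorisation property has to do its work.

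\textbf{CFP is used where it is not needed.} Everything you prove up to $y\otimes e_{11}\precsim P\precsim x\otimes 1_n$ holds for \emph{every} full extension by a $\sigma$-unital stable ideal, whether or not CFP holds. After lifting to $\sum_{i=1}^n \tilde d_i x \tilde d_i^* = 1+k$ with $k\in I$, stability alone supplies $Q=1\otimes(1-p_N)\in\mathcal{M}(I\otimes K)\cong\mathcal{M}(I)$, where $p_N\in K$ is a rank-$N$ projection. This $Q$ is equivalent to $1$ and satisfies $\lVert QkQ\rVert<1$ for large $N$. Hence $1=\sum_i r_i x r_i^*$ with $r_i\in\mathcal{M}(I)$, and $y=\sum_i (y^{1/2}r_i)\,x\,(y^{1/2}r_i)^*$ with $y^{1/2}r_i\in I$. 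So your appeal to CFP in Step 2 has no force. The entire content of the theorem is removing the multiplicity $n$, and that cannot be done without CFP: full extensions by $\sigma$-unital stable ideals lacking CFP need not be purely large, even though the bound $y\precsim x\otimes 1_n$ still holds for them.

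\textbf{Your proposed removal of $1_n$ fails.} Running the argument on $y'=\sum_j s_j y s_j^*$ only gives $y\otimes 1_n\precsim y'\precsim x\otimes 1_n$. This adds multiplicity on the left and cancels nothing on the right. The alternative (``$n$ disjoint copies under $x$ itself'', justified by corners of $I$ containing full stable pieces) presupposes that $\overline{xIx}$ contains a full stable subalgebra. So does your closing appeal to the Elliott--Kucerovsky formulation, and that is precisely the conclusion.

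\textbf{A separate error in Step 1.} You claim $Q$ can be a projection with image $1$ in the corona and $\lVert QkQ\rVert$ small. This is false in general: then $1-Q$ is a projection in $I$, so for projectionless $I$ such as $C_0(\mathbb{R})\otimes K$ you are forced to take $Q=1$ and gain nothing. Nor is a strict sum of alternate blocks $u_{\lambda_{m+1}}-u_{\lambda_m}$ a projection. This is harmless once replaced by the $Q$ above.

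\textbf{The missing idea.} Apply CFP to a projection representing $x$ itself, not $x\otimes 1_n$.
\begin{enumerate}
\item Let $X=\overline{xI}$, a Hilbert $I$-module singly generated by $xh$ for $h$ strictly positive in $I$.
\item Since $I$ is stable, $I_I\cong\mathcal{H}_I$. Kasparov's stabilisation theorem then gives a projection $P\in\mathcal{M}(I)$ with $X\cong PI$.
\item The identity $1=\sum_i r_i x r_i^*$ makes $b\mapsto (x^{1/2}r_i^*b)_{i=1}^n$ an adjointable isometry $I\to X^n\cong(P\otimes 1_n)I^n$. Hence $1\otimes e_{11}\precsim P\otimes 1_n$ in $M_n(\mathcal{M}(I))$, so $P$ is norm-full.
\item CFP, transported through $\mathcal{M}(I)\cong\mathcal{M}(I\otimes K)$, gives $P\sim 1$, hence a unitary $U\colon I\to X$.
\item For $y\in I_+$, the element $\eta=U(y^{1/2})\in\overline{xI}$ satisfies $\eta^*\eta=y$. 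So $y\sim\eta\eta^*\in\overline{xIx}$, and $y\precsim x$ in $A$ with implementing elements in $I$.
\end{enumerate}
For comparison, the paper does not reprove this statement. It defers to the proof of \cite[Theorem 3.5]{KN06b}, which rests on the Elliott--Kucerovsky link between absorbing and purely large extensions \cite{EK01}.
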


Proposition \ref{proposition: Graph CFP} ensures that Theorem \ref{theorem: Kucerovsky Ng} can be applied to the extensions involving the graph C*-algebras considered in this paper.  

\begin{proposition} \label{proposition: Graph CFP}
Let $E$ be a finite graph. Then $C^*(E)$ and $C^*(E) \otimes K$ have the corona factorisation property. 
\end{proposition}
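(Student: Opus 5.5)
The plan is to use the route the introduction already points to: finite nuclear dimension gives comparison in the Cuntz semigroup, and comparison gives the corona factorisation property. The first step is to reduce to a single algebra. By definition, the corona factorisation property of a C*-algebra $B$ only involves projections in $\mathcal{M}(B \otimes K)$. Since $K \otimes K \cong K$, we have $(C^*(E) \otimes K) \otimes K \cong C^*(E) \otimes K$, and the two stable multiplier algebras are canonically isomorphic. So $C^*(E)$ has the corona factorisation property if and only if $C^*(E) \otimes K$ does, and it suffices to treat $B = C^*(E)$. This algebra is separable and, since $E$ is finite, unital.

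Next I would show that $C^*(E)$ has finite nuclear dimension. By the discussion after \eqref{equation: Hereditary Saturated}, $C^*(E)$ is built by finitely many iterated extensions from three kinds of building blocks:
\begin{enumerate}
\item algebras stably isomorphic to finite direct sums of $K$, which have nuclear dimension $0$;
\item Kirchberg algebras, which have finite nuclear dimension (at most one);
\item algebras stably isomorphic to $C(\mathbb{T})$, which have nuclear dimension $1$.
\end{enumerate}
Nuclear dimension is invariant under stable isomorphism, or one can simply pass to full hereditary subalgebras. By \cite[Proposition 2.9]{WZ10}, $\dim_{nuc}$ of an extension is at most $\dim_{nuc}(I) + \dim_{nuc}(A/I) + 1$. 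Inducting along a finite chain of gauge-invariant ideals $0 = I_0 \subset I_1 \subset \dots \subset I_k = C^*(E)$ with gauge-simple subquotients, and using \eqref{equation: Hereditary Saturated} together with the fact that each subquotient is again a graph C*-algebra, gives $\dim_{nuc} C^*(E) < \infty$. As a sanity check, this is exactly the fact quoted in the introduction that a unital graph C*-algebra has finite nuclear dimension.

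Finally, I would pass from finite nuclear dimension to the corona factorisation property. By Robert's theorem (nuclear dimension and $n$-comparison), if $\dim_{nuc} A \le n$ then $\Cu(A)$ has the $n$-comparison property. By \cite[Theorem 5.11]{OPR12}, for a separable C*-algebra, $n$-comparison of $\Cu(A)$ implies the corona factorisation property. Applied to $A = C^*(E)$, these two results conclude the proof.

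The main obstacle is the middle step. The decomposition into building blocks holds only up to stable isomorphism and Morita equivalence, so I need to ensure each subquotient in the chain is indeed stably isomorphic to one of the three types. One way is to invoke the gauge-simple classification of each subquotient via \cite[Theorem 4.1]{BPRS00}. The bound from \cite[Proposition 2.9]{WZ10} grows with each extension, but since the chain is finite this is harmless. If citing Robert's comparison theorem is undesirable, an alternative would be Kucerovsky–Ng permanence of the corona factorisation property under suitable extensions, but I would try the nuclear-dimension route first.
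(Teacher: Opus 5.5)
Your proposal is correct and follows essentially the same route as the paper: finite nuclear dimension from the building blocks and \cite[Proposition 2.9]{WZ10}, then finite nuclear dimension $\Rightarrow$ corona factorisation property. The paper cites the second step directly as \cite[Corollary 3.5]{Rob11}, which is exactly your $n$-comparison argument unpacked (strictly, \cite[Proposition 2.17]{OPR12} is also needed to pass from $n$-comparison to the semigroup-level property before applying \cite[Theorem 5.11]{OPR12}). The only cosmetic difference is the stabilised case: you obtain it from the definition of the property via $\mathcal{M}(B \otimes K)$, whereas the paper uses \cite[Corollary 2.8 (i)]{WZ10} to see that $C^*(E) \otimes K$ also has finite nuclear dimension.
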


\begin{proof}
Since every graph C*-algebra coming from a finite graph is built by iteratively taking extensions involving AF-algebras, Kirchberg algebras and C*-algebras stably isomorphic to $C(\mathbb{T})$ then by \cite[Proposition 2.9]{WZ10}, any graph C*-algebra associated to a finite graph must have finite nuclear dimension. By \cite[Corollary 2.8 (i)]{WZ10}, the stabilisation of such a graph C*-algebra also has finite nuclear dimension. Thus by \cite[Corollary 3.5]{Rob11}, a graph C*-algebra associated to a finite graph and its stabilisation have the corona factorisation property. 
\end{proof}

\subsection{Results on purely infinite C*-algebras}
\label{subsec: PI}

From the three fundamental building blocks of a graph C*-algebra, a gauge-simple graph C*-algebra is either simple purely infinite or has stable rank one. Since pure infiniteness and stable finiteness are diametrically opposed properties, we require different sets of tools to handle these two types of gauge-simple ideals and quotients which appear throughout the paper; see \cref{subsec: Kirchberg Quotient}, \cref{subsec: Circle Quotient} and the proof of Theorem \ref{introtheorem: Cu E Iso} at the end of \cref{subsec: Proof Main Theorem} for instance. In this subsection, we record results about purely infinite C*-algebras which we will use later in the paper. The key result we will use concerning stable rank one C*-algebras will be developed in \Cref{sec: Weak Cancellation}. \newline
\phantom{} \newline
Recall that if $A$ is a C*-algebra and $a \in A_+$ is non-zero, then $a$ is said to be \textbf{properly infinite} if 
\begin{equation*}
\begin{pmatrix}
a & 0 \\
0 & a 
\end{pmatrix} = a \oplus a \precsim a. 
\end{equation*}
In \cite[Theorem 4.16]{KR00}, Kirchberg and Rørdam provide a useful characterisation of pure infiniteness in terms of properly infinite elements.

\begin{theorem}[{\cite[Theorem 4.16]{KR00}}] \label{theorem: Purely Infinite Properly Infinite}
Let $A$ be a C*-algebra. Then $A$ is purely infinite if and only if every non-zero positive element of $A$ is properly infinite. 
\end{theorem}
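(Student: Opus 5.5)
The plan is to prove both implications directly from the Kirchberg–Rørdam definition of pure infiniteness. Recall that $A$ is \emph{purely infinite} if $A$ has no non-zero abelian quotients (equivalently, no characters) and, whenever $a, b \in A_+$ and $a$ lies in the closed two-sided ideal generated by $b$, we have $a \precsim b$. Throughout, I would use Rørdam's lemma (\cite[Proposition 2.4]{Ror92}): to prove $a \precsim b$, it suffices to prove $(a-\epsilon)_+ \precsim b$ for every $\epsilon > 0$.

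\emph{Proper infiniteness implies pure infiniteness.}
\begin{enumerate}
\item First I rule out characters. Suppose $\chi$ is a character of $A$, and choose $a \in A_+$ with $\chi(a) = 1$. Then $a \neq 0$, so $a \oplus a \precsim a$ by hypothesis. Applying $\chi \otimes \mathrm{id}$, which preserves Cuntz subequivalence, and passing to $\Cu(\mathbb{C}) \cong \overline{\mathbb{N}}$, where $[\,\cdot\,]$ is the rank, gives $2 \leq 1$. This is a contradiction.
\item For comparison, let $a$ lie in the ideal generated by $b$; we may assume $b \neq 0$. Then $a$ is a norm limit of finite sums $\sum_{i=1}^n x_i b y_i$. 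By the standard estimate \cite[Lemma 2.5]{KR00}, for each $\epsilon > 0$ there are $n$ and $z_i$ with $(a-\epsilon)_+ \precsim \sum_{i} z_i^* b z_i \precsim b^{\oplus n}$.
\item Iterating $b \oplus b \precsim b$ gives $b^{\oplus n} \precsim b$, so $(a-\epsilon)_+ \precsim b$. Rørdam's lemma then gives $a \precsim b$.
\end{enumerate}
This direction is routine.

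\emph{Pure infiniteness implies proper infiniteness.} Fix a non-zero $a \in A_+$ and set $B = \overline{aAa}$.
\begin{enumerate}
\item I would first show that $B$ again has no characters. A character of $B$ would give a hereditary subalgebra of a quotient of $A$ that is one-dimensional. Using comparison, this should force an abelian quotient of $A$, which is excluded.
\item The key construction is a pair of mutually orthogonal positive elements $b, c \in B$, each generating the same ideal of $A$ as $a$. I would try to obtain them from a nilpotent-type element: since no quotient of $B$ is abelian, there should be $x \in B$ whose ``source'' $x^*x$ and ``range'' $xx^*$ can be cut down by functional calculus to be orthogonal. Both pieces should then be full in $\overline{AaA}$, after using comparison to enlarge them if necessary.
\item Given such $b$ and $c$, pure infiniteness gives $(a-\epsilon)_+ \precsim b$ and $(a-\epsilon)_+ \precsim c$. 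Hence
\[
(a-\epsilon)_+ \oplus (a-\epsilon)_+ \precsim b \oplus c \sim b + c \precsim a,
\]
using orthogonality of $b$ and $c$ and the fact that $b, c \in B$.
\item Letting $\epsilon \to 0$, via Rørdam's lemma applied in $M_2(A)$, yields $a \oplus a \precsim a$.
\end{enumerate}

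I expect the main obstacle to be step 2 of this direction: producing two orthogonal elements in $B$ that are each full in the ideal generated by $a$, using only the absence of characters. If a direct construction fails, I would instead show that $M_2(A)$ is purely infinite. Then $a \oplus a$ lies in the ideal generated by $a \oplus 0$, and pure infiniteness of $M_2(A)$ gives $a \oplus a \precsim a \oplus 0$ at once. For that route, the heavy lifting moves to the stability of pure infiniteness under passing to matrix algebras.
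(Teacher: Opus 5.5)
Your first direction (proper infiniteness $\Rightarrow$ pure infiniteness) is fine, and it is the standard argument. The gap is Step~2 of the converse, which carries all the content of the theorem. A single square-zero $x\in B$ does give $x^*x\perp xx^*$, but nothing makes $x^*x$ full in $\overline{AaA}$. Already for $A=\mathcal{O}_2\oplus\mathcal{O}_2$ and $a=1$, $x$ may live in one summand. Comparison cannot ``enlarge'' it either: it only produces $u\precsim v$ for $u$ already in the ideal generated by $v$, and never enlarges that ideal. The absence of characters only says that the ideal generated by \emph{all} such $x^*x$ is everything. Merging finitely many square-zero elements into one orthogonal pair with large ideal is exactly the non-trivial point, and your outline gives no mechanism for it. The fallback does not help. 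Comparison in $M_2(A)$ for the pair $a\oplus a$, $a\oplus 0$ \emph{is} the desired conclusion, while the definition only gives comparison for elements of $A$; so ``show $M_2(A)$ is purely infinite'' just renames the problem. (Minor point on Step~1: when the one-dimensional corner $p$ generates a copy of $K(H)$ with $\dim H\geq 2$, what is contradicted is comparison, via $p+q\precsim p$, not the absence of abelian quotients.)

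The missing idea is a local-to-global (maximality) argument, which avoids fullness altogether.
\begin{itemize}
\item Put $S_a=\{y\in A_+ : a\oplus y\precsim a\}$. It is closed under sums (since $a\oplus y\oplus z\precsim a\oplus z\precsim a$), downward closed under $\precsim$, and norm closed by R{\o}rdam's lemma. Hence $S_a=(J_a)_+$ for a closed ideal $J_a$, and $a$ is properly infinite iff $a\in J_a$.
\item Suppose $a\notin J_a$ and let $\pi\colon A\to A/J_a$. Comparison passes to $A/J_a$: if $u\in\overline{AvA}+J_a$, then $(u-\epsilon)_+\precsim v+w$ for some $w\in(J_a)_+$. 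So $A/J_a$ is purely infinite, and by your Step~1 the non-zero algebra $\pi(B)$ is non-commutative. It therefore contains a square-zero $\bar y$ of norm one.
\item Lift $\bar y^*\bar y-\bar y\bar y^*$ to a self-adjoint element of $B$ and take its positive and negative parts. This gives $b\perp c$ in $B_+$ with $c\in\overline{AbA}+J_a$ and $\lVert\pi(c)\rVert=1$.
\item With $h(t)=\min\{1,2t\}$ and $g(t)=(t-\tfrac12)_+$, put $e=g(c)$ and $a'=(1-h(c))a(1-h(c))\in B$. Then $a'\perp e$ and $\pi(e)\neq0$.
\item Modulo $\overline{Aa'A}+J_a$ one has $a\equiv h(c)ah(c)$. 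Since $b\perp h(c)$, this forces $b\equiv0$, hence $c\equiv0$, hence $a\equiv0$. So $a\in\overline{Aa'A}+J_a$, and comparison gives $(a-\epsilon)_+\precsim a'\oplus w$ for some $w\in(J_a)_+$.
\item Therefore $(a-\epsilon)_+\oplus e\precsim(a'+e)\oplus w\precsim a\oplus w\precsim a$ for every $\epsilon>0$. By R{\o}rdam's lemma $e\in S_a\subseteq J_a$, contradicting $\pi(e)\neq0$.
\end{itemize}
So one square-zero element per quotient suffices. This is essentially the mechanism behind Kirchberg--R{\o}rdam's proof: every non-zero positive element of a purely infinite algebra is infinite, combined with the quotient criterion for proper infiniteness. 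The paper does not prove the theorem itself; it simply quotes it from them.
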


In \autoref{subsec: Kirchberg Quotient}, we will consider purely large extensions with a purely infinite quotient. A key lemma we will use to understand such extensions is the following slight generalisation of \cite[Lemma 4.14]{KR00}.

\begin{lemma}[from {\cite[Lemma 4.14]{KR00}}] \label{lemma: KR Lemma}
Let
\begin{equation*}
\begin{tikzcd}
0 \arrow[r] & I \arrow[r] & A \arrow[r] & B \arrow[r] & 0 
\end{tikzcd}
\end{equation*}
be an extension of C*-algebras. Assume that all non-zero positive elements in $B$ are properly infinite. If $a \in A_+$ is non-zero, $n \in \{2, 3, \dots \}$ and $\epsilon > 0$ then there exist mutually orthogonal, positive elements $a_1, a_2, \dots, a_n \in \overline{aAa}$ and a positive element $c \in \overline{a I a}$ such that $a_1 \sim a_2 \dots \sim a_n$ and $(a - \epsilon)_+ \precsim a_j \oplus c$ for $j \in \{1, 2, \dots, n \}$. 
\end{lemma}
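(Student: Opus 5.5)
The plan is to reduce to the quotient $B$, produce the $n$ orthogonal equivalent pieces there using proper infiniteness, lift them back into $\overline{aAa}$ by projectivity of a cone, and absorb the lifting error into an element of $\overline{aIa}$.

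\textbf{Trivial case.} Let $\pi \colon A \to B$ denote the quotient map. If $a \in I$, take $a_1 = \dots = a_n = 0$ and $c = a \in \overline{aIa}$. Then $(a-\epsilon)_+ \precsim a = a_j \oplus c$, and all the $a_j$ are trivially Cuntz equivalent.

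\textbf{Splitting $b$ in $B$.} Assume from now on that $a \notin I$, and put $b = \pi(a)$, a non-zero positive element of $B$. By hypothesis $b$ is properly infinite, so iterating $b \oplus b \precsim b$ gives $b \otimes 1_n \precsim b$ in $M_n(B)$. Fix $\delta>0$, to be chosen small compared with $\epsilon$. Approximating $(b-\delta)_+\otimes 1_n$ by some $v^*(b\otimes e_{11})v$, I would produce, in the usual way, an element $z$ with two properties:
\begin{itemize}
\item $z z^* \in \overline{bBb}$;
\item $z^*z$ is a functional-calculus perturbation of $(b-\delta)_+ \otimes 1_n$; concretely, $z=b^{1/2}v_1$ for the relevant row $v_1$ of $v$, corrected by functional calculus.
\end{itemize}
From $z$ I would build a $*$-homomorphism $\psi \colon C_0((0,1]) \otimes M_n \to \overline{bBb}$. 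Equivalently, this is an order zero c.p.c.\ map $M_n \to \overline{bBb}$, sending $\mathrm{id}\otimes e_{ij}$ to (a normalised version of) $z\,(g((b-\delta)_+)\otimes e_{ij})\,z^*$. The elements $b_j = \psi(\mathrm{id}\otimes e_{jj})$ are then mutually orthogonal, pairwise Cuntz equivalent via $\psi(\mathrm{id}\otimes e_{ij})$, and satisfy $(b-\epsilon/2)_+ \precsim b_j$ for each $j$, provided $\delta$ is small.

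\textbf{Lifting to $A$.} Since $\pi$ maps $\overline{aAa}$ onto $\overline{bBb}$, and the cone $C_0((0,1])\otimes M_n$ is projective (Loring), $\psi$ lifts to a $*$-homomorphism $\tilde\psi\colon C_0((0,1])\otimes M_n \to \overline{aAa}$. Set $a_j = \tilde\psi(\mathrm{id}\otimes e_{jj})$. These are mutually orthogonal positive elements of $\overline{aAa}$, and they are pairwise Cuntz equivalent via $\tilde\psi(\mathrm{id}\otimes e_{ij})$. I expect this step, arranging that the $a_j$ are simultaneously orthogonal and equivalent, to be the main obstacle. The reason for lifting the whole order zero structure rather than the individual $b_j$ is that separate lifts would lose the equivalences.

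\textbf{Transferring the subequivalence to $A$.} From $(b-\epsilon/2)_+ \precsim \pi(a_j)$ there is $r\in B$ with $\|(b-\epsilon/2)_+ - r\pi(a_j)r^*\|$ small. Lift $r$ to $s\in A$ and compress by $a^{1/k}$ on both sides to stay inside $\overline{aAa}$. Then
\[
(a-\epsilon/2)_+ \;\le\; s a_j s^* + i + \eta
\]
for a small $\eta>0$ and a self-adjoint $i \in \overline{aIa}$. Let $c = i_+ \in \overline{aIa}$. Applying \cite[Lemma 2.5]{KR00}-type estimates then gives
\[
(a-\epsilon)_+ \;\precsim\; s a_j s^* + c \;\precsim\; a_j \oplus c .
\]
Since the $a_j$ are Cuntz equivalent and $c$ does not depend on $j$, this holds for every $j$. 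The remaining work is only the $\epsilon$–$\delta$ bookkeeping.
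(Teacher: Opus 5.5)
Your proof is correct and follows the same route as the argument the paper defers to, namely the proof of \cite[Lemma 4.14]{KR00} with $3$ replaced by $n$. You use proper infiniteness of $\pi(a)$ to obtain $n$ orthogonal, pairwise equivalent elements of $\overline{\pi(a)B\pi(a)}$ dominating $(\pi(a)-\epsilon/2)_+$, lift them simultaneously into $\overline{aAa}$ by projectivity of $C_0((0,1])\otimes M_n$, and pull the subequivalence back to $A$ at the cost of a positive error $c\in\overline{aIa}$, which can be taken independent of $j$ because the $a_j$ are equivalent. The bookkeeping you left open is routine: if $x\le y+\eta$, then $\mu\, g(x)^2\le g(x)yg(x)$ for any $0\le g\le 1$ supported in $[\eta+\mu,\infty)$, so $(x-\eta-\mu)_+\precsim y$ for every $\mu>0$ and hence $(x-\eta)_+\precsim y$; this gives $(a-\epsilon)_+\precsim sa_js^*+c\precsim a_j\oplus c$ once $\eta<\epsilon/2$.
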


The proof of Lemma \ref{lemma: KR Lemma} is done in exactly the same way as the proof of \cite[Lemma 4.14]{KR00}; note that \cite[Lemma 4.14]{KR00} is a special case of Lemma \ref{lemma: KR Lemma} for $n = 3$. Finally, \cite[Lemma 3.17 (ii)]{KR00} says that the Cuntz class of any properly infinite projection is automatically absorbed by any larger Cuntz class; this will be used in the proof of Theorem \ref{introtheorem: Cu E Iso}.

\begin{lemma}[{\cite[Lemma 3.17 (ii)]{KR00}}] \label{lemma: Polar Part Absorbed}
Let $A$ be a C*-algebra, $a \in A_+$ and $p$ be a projection in $A$ such that $p \precsim a$. If $p$ is properly infinite then $a \oplus p \precsim a$.
\end{lemma}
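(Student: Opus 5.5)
This lemma is \cite[Lemma 3.17 (ii)]{KR00}, and I would reprove it directly from the properly infinite relation $p \oplus p \precsim p$ together with projection tricks. The key point is that for a projection, Cuntz subequivalence upgrades to an exact statement. Since $p \precsim a$ and $p$ is a projection, there are $\delta > 0$ and $r \in A$ with $p = r^*(a-\delta)_+ r$; this follows from \cite[Proposition 2.7]{KR00}. Taking $v = ((a-\delta)_+)^{1/2} r$ gives $v^*v = p$, so $p \sim vv^* =: q$. Here $q$ is a projection in the hereditary subalgebra $\overline{(a-\delta)_+ A (a-\delta)_+}$. I will therefore assume from the start that $p$ is a projection in $\overline{aAa}$, and in fact one with $p \le$ the hereditary subalgebra of $(a-\delta)_+$.

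Next I would split $a$ against $p$. Fix $\epsilon > 0$. By Rørdam's lemma it suffices to show $(a-\epsilon)_+ \oplus p \precsim a$ for every $\epsilon$. Because $p$ sits in the hereditary subalgebra of $a$, a standard cut-down gives $(a-\epsilon)_+ \precsim (1-p)a(1-p) \oplus p$, computed in the unitisation. More precisely, any positive $x \in \overline{aAa}$ satisfies $x \precsim pxp \oplus (1-p)x(1-p)$, since $x \le 2(pxp + (1-p)x(1-p))$. Applying this to $(a-\epsilon)_+$ gives $(a-\epsilon)_+ \precsim p \oplus b$, where $b = (1-p)(a-\epsilon)_+(1-p)$.

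The element $b$ is orthogonal to $p$ and lies in the corner $\overline{aAa}$, so $b + p \precsim a$. This holds because $b + p$ lies in $\overline{aAa}$, the hereditary subalgebra generated by $a$, and every positive element of the hereditary subalgebra generated by $a$ is Cuntz below $a$. We then have
\begin{equation*}
(a-\epsilon)_+ \oplus p \precsim b \oplus p \oplus p \precsim b \oplus p \sim b + p \precsim a,
\end{equation*}
where the middle step uses $p \oplus p \precsim p$ (proper infiniteness) and the following step uses orthogonality of $b$ and $p$. Letting $\epsilon \to 0$ completes the argument.

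The main obstacle is the first reduction: replacing $p$ by an equivalent projection genuinely inside $\overline{aAa}$. This requires the exact, not just approximate, form of Cuntz subequivalence for projections. It also requires care with unitisations when forming $(1-p)x(1-p)$, although everything stays in $\overline{aAa} \subseteq A$, so that part is harmless.
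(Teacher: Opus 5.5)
Your proof is correct. The paper gives no argument of its own here; it simply quotes \cite[Lemma 3.17 (ii)]{KR00}. Your route is a complete, self-contained verification:
\begin{enumerate}
\item move $p$, up to Murray--von Neumann equivalence, into $\overline{aAa}$;
\item pinch with $p$ and $1-p$ using $x \le 2(pxp + (1-p)x(1-p))$;
\item absorb one copy of $p$ by proper infiniteness;
\item recombine orthogonally inside $\overline{aAa}$.
\end{enumerate}

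Two minor remarks:
\begin{itemize}
\item When you invoke \cite[Proposition 2.7]{KR00} for a projection, what you actually get is $(p-\tfrac12)_+ = \tfrac12 p = r^*(a-\delta)_+ r$. You therefore need to rescale $r$ by $\sqrt{2}$ to obtain $p$ exactly.
\item The reduction to $(a-\epsilon)_+$ via R\o rdam's lemma is superfluous. Once $p \in \overline{aAa}$, the pinching inequality applies to $x = a$ itself and gives $a \precsim p \oplus (1-p)a(1-p)$. In fact $a \sim p + (1-p)a(1-p)$, since the latter lies in $\overline{aAa}$. Your chain then yields $a \oplus p \precsim (1-p)a(1-p) \oplus p \oplus p \precsim (1-p)a(1-p) + p \precsim a$ directly, with no $\epsilon$ at all.
\end{itemize}
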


\section{Stable rank one and weak cancellation}
\label{sec: Weak Cancellation}

In a C*-algebra $A$ with stable rank one, projections can be cancelled from Cuntz subequivalences in the following sense: if $a, b \in (A \otimes K)_+$, $p$ is a projection in $A \otimes K$ and $a \oplus p \precsim b \oplus p$ then $a \precsim b$. This fact was first proved in \cite[Proposition 4.2]{RW10}. More generally by \cite[Theorem 4.3]{RW10}, the Cuntz semigroup of a stable rank one C*-algebra is weakly cancellative.

\begin{definition}
Let $S$ be a Cu-semigroup. $S$ is said to be \textbf{weakly cancellative} if for $a, b, x \in S$ satisfying $a + x \ll b + x$, $a \ll b$. 
\end{definition}

See \cite[Lemma 2.5]{APRT22} for three equivalent formulations of a weakly cancellative Cu-semigroup. In particular, we will use the following equivalent characterisation of weak cancellation --- if $a, b, x \in S$ and $a + x \ll b + x$ then $a \leq b$. \newline
\phantom{} \newline
In general, graph C*-algebras usually do not have stable rank one (see \cite[Theorem 3.4]{DHS03}) and so most of the time, their Cuntz semigroups are not weakly cancellative. However, it turns out that weak cancellation is a local property --- that is, if the associated corner to a projection has stable rank one, then it can still be cancelled from Cuntz subequivalences. 

\begin{theorem} \label{theorem: SR1 Corner Positive Cancellation}
Let $A$ be a unital C*-algebra and $p \in A$ be a projection such that $pAp$ has stable rank one. Let $x, y \in \Cu(A)$. If $x + [p] \leq y + [p]$ then $x \leq y$. 
\end{theorem}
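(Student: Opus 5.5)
The plan is to follow the strategy of \cite[Proposition 4.2]{RW10} and \cite[Theorems 4.8, 4.9]{RS21}, localising every use of stable rank one to corners cut down by projections equivalent to $p$.

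\emph{Reduction.} Write $x = [a]$ and $y = [b]$ with $a, b \in (A \otimes K)_+$. After conjugating by isometries in $\mathcal{M}(A \otimes K)$, we may assume that $a$, $b$ and $P := p \otimes e_{11}$ are mutually orthogonal. Since $x = \sup_{\epsilon} [(a-\epsilon)_+]$, it suffices to show $(a-\epsilon)_+ \precsim b$ for every $\epsilon > 0$. The hypothesis reads $a + P \precsim b + P$. By Rørdam's lemma \cite[Proposition 2.4]{Ror92}, together with the usual polar-decomposition argument (\cite[Proposition 2.7]{KR00}), we obtain $\delta > 0$ and an element $z \in A \otimes K$ such that
\begin{equation*}
z^*z = (a-\epsilon)_+ + P', \qquad zz^* \in \overline{(b+P)(A\otimes K)(b+P)} =: D,
\end{equation*}
where $P'$ is a projection close to $P$, hence unitarily equivalent to $P$. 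Replacing $P$ by $P'$, write $z = z_1 + z_2$ with $z_1 = z (a-\epsilon)_+^{1/2}$-type part and $z_2 = zP$. Then $q := z_2 z_2^* \in D$ is a projection with $q \sim P$ (Murray--von Neumann), and $z_1 z_1^*$ is orthogonal to $q$. It therefore suffices to prove that every positive element of $D$ orthogonal to $q$ is Cuntz below $b$. Equivalently, inside $D$, where $P$ is a projection with $D = \overline{(b+P)D(b+P)}$, we want to ``cancel'' $q$ against $P$.

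\emph{Cancellation step.} It is enough to find a unitary $u$ in $\widetilde{D}$ (or a partial isometry inside $D$) with $uqu^* = P$. Then $u z_1 z_1^* u^*$ is orthogonal to $P$ in $D$, so it lies in $\overline{bDb}$, and hence $(a-\epsilon)_+ \sim z_1z_1^* \precsim b$. To build $u$, I would work in the corner $C := \overline{(P+q)D(P+q)}$, or more precisely in the hereditary subalgebra generated by $P$ and $q$. Because $q \sim P$, this subalgebra is contained in a corner isomorphic to a hereditary subalgebra of $M_2(pAp)$. Stable rank one passes to matrix algebras and to hereditary subalgebras, so $C$ has stable rank one. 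In a stable rank one algebra, Murray--von Neumann equivalent projections have equivalent complements inside any projection dominating both, by \cite[Theorem 2.1]{War80}-style cancellation. The remaining task is to transfer this from $C$ to $D$. Here I would imitate \cite[Theorem 4.8]{RS21}: approximate $b$ by elements $(b-\eta)_+$, use the sr1 approximation by invertibles in $C$ to obtain a partial isometry $w$ with $w^*w = q$ and $ww^* = P$, and extend $w$ by the identity on the part of $D$ orthogonal to $P + q$.

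\emph{Main obstacle.} The extension in the previous step is the hard part. $D$ itself need not have stable rank one, and $q$ need not commute with $b$, so ``the part orthogonal to $P+q$'' is not canonically defined. I would first try to replace $b$ by $(b-\eta)_+$, for which there is a gap, and use functional calculus to arrange that $q$ is almost orthogonal to $g(b)$ for suitable $g$. A small perturbation then makes them genuinely orthogonal, and the unitary $1 - P - q + w + w^*$ in $\widetilde{D}$ carries $q$ to $P$ while fixing $g(b)$. If this direct route fails, the fallback is the argument of \cite[Theorem 4.9]{RS21}. There one uses that the sr1 corner forces Cuntz subequivalence of elements below $P \oplus P$ to be implemented by approximate unitary conjugation, and this yields the required $u$ after an $\epsilon/2$-argument.
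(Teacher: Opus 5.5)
Your reduction is sound. Rørdam's lemma in fact gives $z^*z=(a-\epsilon)_+ + (1-\epsilon)P$ exactly, so no perturbed $P'$ is needed. Then $z_2=(1-\epsilon)^{-1/2}zP\in D$ is a partial isometry with $z_2^*z_2=P$ and $z_2z_2^*=q$, and $c_1:=z(1-P)z^*\in D$ is orthogonal to $q$ and Cuntz equivalent to $(a-\epsilon)_+$. You also correctly reduce everything to producing a unitary $u\in\widetilde D$ with $uqu^*=P$, since positive elements of $D$ orthogonal to $P$ lie in $\overline{b(A\otimes K)b}$. But this unitary is never constructed, and it is the only place where stable rank one can enter.

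The mechanism you propose does not work. The element $1-P-q+w+w^*$ is a unitary only when $q\perp P$. Moreover, the partial isometry $w$ with $w^*w=q$ and $ww^*=P$, which you intend to extract from stable rank one, is just $z_2^*$, and it exists with no hypothesis at all. If $q\perp P$ could be arranged, the symmetry $1-P-q+z_2+z_2^*$ would prove the statement with no assumption on $pAp$. That is false: take $A=\mathcal{O}_2$, $p=1$, $x=[1]$, $y=0$. The whole difficulty lies in the case where $q$ and $P$ overlap.

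Nothing in the setup makes $q$ almost orthogonal to some $g(b)$; for instance, $q$ may lie inside $\overline{b(A\otimes K)b}$. The fallback to \cite[Theorem 4.9]{RS21} is not specific enough to fill the gap.

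The missing observation is that there is no extension problem. Since $z_2=qz_2P\in C$, you have $q\sim P$ inside $C$. As you note, $C$ has stable rank one, so its unitization $\widetilde C$ is unital of stable rank one and therefore has cancellation of projections \cite{Rie83}. Hence $1-q\sim 1-P$ in $\widetilde C$, which gives a unitary $v=\lambda 1+c\in\widetilde C$ with $vqv^*=P$.

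The inclusion $C\subseteq D$ extends to a unital $*$-homomorphism $\widetilde C\to\widetilde D$. So $u=\lambda 1+c$ is already a unitary of $\widetilde D$ with $uqu^*=P$; it simply acts as the scalar $\lambda$ on everything orthogonal to $C$. Then $uc_1u^*\in D$ is orthogonal to $P$, and so $(a-\epsilon)_+\sim c_1\precsim b$.

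Equivalently, you can cancel directly in $\widetilde D$. Use $P\widetilde D\oplus(1-P)\widetilde D\cong \widetilde D\cong P\widetilde D\oplus(1-q)\widetilde D$ together with $P\widetilde DP=P(A\otimes K)P\cong pAp$. This is the paper's argument in Hilbert-module language. There, the Schur-complement trick cancels $pA$ from $M\oplus pA\cong N\oplus pA$. Dupr\'e--Fillmore then shows that the image of $\{0\}\oplus pA$ in $N\oplus pA$ is complemented, with complement isomorphic to $N$.

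With this step supplied, your proof becomes a valid positive-element counterpart of the paper's.
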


A similar phenomenon can be observed for Murray-von Neumann subequivalences in the proof of \cite[Theorem 7.1]{AMP07} which follows from the cancellation of modules result in \cite[Theorem 2.1]{War80}. In the proof of Theorem \ref{introtheorem: Cu E Iso}, Theorem \ref{theorem: SR1 Corner Positive Cancellation} will be used to establish surjectivity of the map $\Delta_E$ which we need to prove is a Cu-isomorphism. Specifically, it handles a case where the graph C*-algebra has a gauge-simple, stable rank one ideal. \newline
\phantom{} \newline
The two main results we will prove in this section are Theorem \ref{theorem: SR1 Corner Positive Cancellation} and Theorem \ref{theorem: General Weak Cancellation}. Theorem \ref{theorem: General Weak Cancellation} generalises Theorem \ref{theorem: SR1 Corner Positive Cancellation} in the same way \cite[Theorem 4.3]{RW10} generalises \cite[Proposition 4.2]{RW10}, allowing for the cancellation of the Cuntz class of a positive element if the hereditary subalgebra it generates has stable rank one. \newline
\phantom{} \newline
The proof of Theorem \ref{theorem: SR1 Corner Positive Cancellation} is inspired by \cite[Theorem 4.8]{RS21} and \cite[Theorem 4.9]{RS21} and proceeds from the countably generated right Hilbert module picture of the Cuntz semigroup of a C*-algebra; see \cite{CEI08} and \cite[Chapter 4]{APT11}. Recall that this picture of the Cuntz semigroup is equivalent to the positive element picture of the Cuntz semigroup used throughout this paper; see for instance, \cite[Theorem 4.33]{APT11}. If $M$ and $N$ are countably generated right Hilbert $A$-modules, then $M$ is Cuntz subequivalent to $N$ if any compactly contained submodule of $M$ (see \cite[Page 162]{CEI08} for the definition) is isometrically isomorphic to a compactly contained submodule of $N$. In particular, if $M$ is a Hilbert submodule of $N$, then $M$ is Cuntz below $N$; see \cite[Lemma 4.8]{APT11}. \newline
\phantom{} \newline
An important ingredient of the proof is that if $A$ is a unital C*-algebra and $N$ is an arbitrary (right) Hilbert $A$-module, then any Hilbert $A$-submodule $M \subseteq N$ which is a direct summand of $A^n$ for some $n \in \mathbb{N}$ sits inside $N$ in the nicest possible manner. 

\begin{lemma}[{\cite[Proposition 1]{DF81}}] \label{lemma: Dupre Fillmore}
Let $A$ be a unital C*-algebra, $N$ be a (right) Hilbert $A$-module and $M$ be a Hilbert $A$-submodule of $N$. If there exists $n \in \mathbb{N}$ such that $M$ is a direct summand of $A^n$ then $N = M \oplus M^{\bot}$. 
\end{lemma}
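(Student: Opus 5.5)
The plan is to build an explicit self-adjoint idempotent on $N$ whose range is $M$, using a finite ``frame'' for $M$ inherited from $A^n$. First I fix the meaning of the hypothesis. I read ``$M$ is a direct summand of $A^n$'' as saying that $M$ is unitarily isomorphic, as a Hilbert $A$-module, to $pA^n$ for some projection $p \in M_n(A)$. If the hypothesis only gives an algebraic direct summand, I would first reduce to this case. The reduction uses the standard fact that a finitely generated projective module over a unital C*-algebra admits an idempotent, hence a projection $p$, with $M \cong pA^n$. A polar decomposition argument then upgrades this to a unitary (inner-product preserving) isomorphism. I expect this reduction to be the only delicate point. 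The rest is algebra.

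Next I produce the frame. Let $e_1, \dots, e_n$ be the standard basis of $A^n$ and put $\eta_i = p e_i \in pA^n$. For $x \in pA^n$ we have $x = px = \sum_i p e_i \langle e_i, x \rangle$. Since $\langle e_i, x\rangle = \langle e_i, px \rangle = \langle pe_i, x\rangle$, this gives
\begin{equation*}
x = {\sum}_{i=1}^n \eta_i \langle \eta_i, x \rangle \qquad \text{for all } x \in pA^n.
\end{equation*}
Transporting through the unitary isomorphism $pA^n \cong M$ gives elements $\xi_1, \dots, \xi_n \in M$ with $m = \sum_i \xi_i \langle \xi_i, m\rangle$ for every $m \in M$. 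The inner products here may be computed in $N$, because $M$ is a Hilbert submodule of $N$ and the isomorphism preserves inner products.

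Now I define $P \colon N \to N$ by $P(z) = \sum_{i=1}^n \xi_i \langle \xi_i, z \rangle$. This map is $A$-linear and bounded, and its range lies in $M$. It is self-adjoint, since
\begin{equation*}
\langle Pz, w\rangle = {\sum}_i \langle z, \xi_i\rangle \langle \xi_i, w\rangle = \langle z, Pw\rangle.
\end{equation*}
By the frame identity, $P$ restricts to the identity on $M$, so $P^2 = P$ and $P(N) = M$.

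Finally I decompose. For $z \in N$, write $z = Pz + (z - Pz)$. For any $m \in M$ we have $\langle m, z - Pz\rangle = \langle m, z\rangle - \langle Pm, z\rangle = 0$, using self-adjointness of $P$ and $Pm = m$. Hence $z - Pz \in M^\perp$. We also have $M \cap M^\perp = \{0\}$, since $\langle m, m\rangle = 0$ forces $m = 0$. Therefore $N = M \oplus M^\perp$.
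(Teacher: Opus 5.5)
The paper gives no proof of this lemma. It is quoted from Dupré--Fillmore \cite[Proposition 1]{DF81} and used as a black box in the proof of Theorem \ref{theorem: SR1 Corner Positive Cancellation}. Your argument is a correct, self-contained substitute for that citation.

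The heart of your proof is the frame identity $m=\sum_{i=1}^n\xi_i\langle\xi_i,m\rangle$ on $M$. This is in effect a proof that $M\cong pA^n$ is self-dual. The projection $P=\sum_{i=1}^n\xi_i\langle\xi_i,\cdot\,\rangle$ you build is the explicit form of the usual Riesz-type argument: represent the module map $m\mapsto\langle z,m\rangle$ on $M$ by some $m_0\in M$, so that $z-m_0\in M^\perp$ (and indeed $m_0=Pz$). Your version has the advantage of exhibiting the complementing projection as a finite-rank adjointable operator on all of $N$. In the paper's only application ($n=1$, $M=\phi(\{0\}\oplus pA)$ with $\phi$ an isometric embedding), it reduces to the single vector $\xi=\phi(0\oplus p)$ and $Pz=\xi\langle\xi,z\rangle$.

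If you keep the reduction from an algebraic summand, make one point explicit. Polar decomposition, as in \cite[Proposition 3.8]{Lan95}, needs the isomorphism to be adjointable, and that is not automatic for Hilbert modules. Here it does hold. The frame identity in $pA^n$ gives $Tx=\sum_i T\eta_i\langle\eta_i,x\rangle$ for any $A$-linear $T\colon pA^n\to M$. Such a $T$ is bounded with adjoint $y\mapsto\sum_i\eta_i\langle T\eta_i,y\rangle$. Its inverse is then adjointable by the closed-range theorem for Hilbert modules.
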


\begin{proof}[Proof of Theorem \ref{theorem: SR1 Corner Positive Cancellation}]
First, we prove that if $M$ and $N$ are countably generated (right) Hilbert $A$-modules and $M \oplus pA \cong N \oplus pA$ then $M \cong N$ as Hilbert $A$-modules. Let $\mathcal{B}(M, N)$ denote the set of bounded, adjointable operators from $M$ to $N$; if $M = N$ then we write $\mathcal{B}(M)$ instead of $\mathcal{B}(M, M)$. Let $V: pA \oplus M \rightarrow pA \oplus N$ be an isomorphism of (right) Hilbert $A$-modules. Write $V$ in matrix form so that 
\begin{equation*}
V = 
\begin{pmatrix}
V_{11} & V_{12} \\
V_{21} & V_{22}
\end{pmatrix}
\in 
\begin{pmatrix}
\mathcal{B}(pA) & \mathcal{B}(M, pA) \\
\mathcal{B}(pA, N) & \mathcal{B}(M, N)
\end{pmatrix}.
\end{equation*}
Observe that $\mathcal{B}(pA) \cong pAp$ which has stable rank one by assumption. Hence, by replacing $V_{11}$ with an invertible operator $R \in \mathcal{B}(pA)$ satisfying $\lVert V_{11} - R \rVert < 1$, we may assume without loss of generality that $V$ is an invertible adjointable operator such that $V_{11}$ is itself invertible. By direct computation,
{ \small
\begin{equation} \label{equation: Quasidet Decomp}
\begin{pmatrix}
\text{id}_{pA} & 0 \\
- V_{21} V_{11}^{-1} & \text{id}_N
\end{pmatrix}
V
\begin{pmatrix}
\text{id}_{pA} & - V_{11}^{-1} V_{12} \\
0 & \text{id}_M
\end{pmatrix} = 
\begin{pmatrix}
V_{11} & 0 \\
0 & V_{22} - V_{21} V_{11}^{-1} V_{12}
\end{pmatrix}.
\end{equation}}
Since the LHS of \eqref{equation: Quasidet Decomp} is invertible and adjointable, the RHS of \eqref{equation: Quasidet Decomp} is also an invertible adjointable operator from $pA \oplus M$ to $pA \oplus N$. Consequently, $V_{22} - V_{21} V_{11}^{-1} V_{12}$ is an invertible adjointable operator from $M$ to $N$ and by \cite[Proposition 3.8]{Lan95}, $M \cong N$ as Hilbert $A$-modules. Combining this result with Lemma \ref{lemma: Dupre Fillmore}, we see that if $N$ is a countably generated Hilbert $A$-module, $N_1$ and $N_2$ are Hilbert $A$-submodules of $N$ both isomorphic to $pA$ and $pAp$ has stable rank one then $N_1^{\bot} \cong N_2^{\bot}$. \newline
\phantom{} \newline
Now let $x = [M]$ and $y = [N]$ where $M$ and $N$ are countably generated (right) Hilbert $A$-modules. Let $E$ be a compactly contained submodule of $M$. Since $pA$ is compactly contained in itself then $E \oplus pA$ is a compactly contained submodule of $M \oplus pA$. Using the assumption that $[M] + [pA] \leq [N] + [pA]$, there exists an embedding $\phi: E \oplus pA \rightarrow N \oplus pA$. Observe that $\phi(\{ 0 \} \oplus pA)$ is a Hilbert submodule of $N \oplus pA$ which is isomorphic to $pA$. So by the previous result, $\phi(\{ 0 \} \oplus pA)^{\bot} \cong N$ as Hilbert $A$-modules. Since $\phi(E \oplus \{ 0 \}) \subseteq \phi(\{ 0 \} \oplus pA)^{\bot}$ then by \cite[Lemma 4.8]{APT11}, $[E] \leq [N]$. By passing to the supremum of all compactly contained submodules of $M$, $[M] \leq [N]$ as required. 
\end{proof}

\begin{remark} \label{remark: Non Unital SR1 Corner}
It is easy to see that Theorem \ref{theorem: SR1 Corner Positive Cancellation} generalises to the non-unital case. If $A$ is non-unital, then apply Theorem \ref{theorem: SR1 Corner Positive Cancellation} to the minimal unitisation $\tilde{A}$, and use the fact that $\Cu(A)$ is an ideal of $\Cu(\tilde{A})$. 
\end{remark}

We now turn to the statement and proof of Theorem \ref{theorem: General Weak Cancellation}. In order to upgrade Theorem \ref{theorem: SR1 Corner Positive Cancellation} to Theorem \ref{theorem: General Weak Cancellation}, we need to replace the projection in Theorem \ref{theorem: General Weak Cancellation} with a positive element. By a routine application of axiom (O5) for Cu-semigroups (see \cite[Definition 4.1]{APT18}), we are able to cancel Cuntz classes in the Cuntz semigroup of a C*-algebra if they are below the Cuntz class of a projection whose associated corner has stable rank one. 

\begin{lemma} \label{lemma: Bounded Projection Cancellation}
Let $A$ be a C*-algebra, $p \in A$ be a projection such that $pAp$ has stable rank one and $x, y, z \in \Cu(A)$. If $x + z \ll y + z$ and $z \leq [p]$ then $x \leq y$. 
\end{lemma}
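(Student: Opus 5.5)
Since $z \leq [p]$ and $z$ is only below a projection class (not equal to one), we cannot apply Theorem \ref{theorem: SR1 Corner Positive Cancellation} directly. We will use axiom (O5) to complete $z$ to $[p]$ up to a small error, and then cancel $[p]$.

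\textbf{Step 1: interpolate.} From $x + z \ll y + z$, use (O2) to choose $z' \ll z$ with $x + z \ll y + z'$. This is possible since $y + z = \sup_n (y + z_n)$ for a rapidly increasing sequence $z_n$ with supremum $z$, by (O4) and the definition of $\ll$. Similarly choose $x' \ll x$ and note $x + z' \leq x + z \ll y + z'$; in particular $x' + z' \ll y + z'$.

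\textbf{Step 2: apply (O5).} We have $z' \ll z \leq [p]$. Axiom (O5) for $\Cu(A)$ says: if $a' \ll a \leq b$ (taking the general form with $a+c \le b$, here with $c = 0$), then there is $w \in \Cu(A)$ with $a' + w \leq b \leq a + w$. Applying it with $a' = z'$, $a = z$ and $b = [p]$, we obtain $w$ with
\[
z' + w \leq [p] \leq z + w.
\]

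\textbf{Step 3: cancel $[p]$.} Adding $w$ to Step 1, we would like
\[
x + [p] \leq x + z + w \leq y + z' + w \leq y + [p].
\]
The middle inequality needs care: Step 1 gives $x + z \leq y + z'$, which is fine. To use $z'$ on the left in the first inequality, I would rearrange the choices: first pick $z''\ll z' \ll z$ with $x+z\le y+z''$ (possible by the same sup argument), then apply (O5) to $z'' \ll z' \leq [p]$, giving $z''+w\le[p]\le z'+w$. Then
\[
x+[p]\le x+z'+w\le x+z+w\le y+z''+w\le y+[p].
\]
Now Theorem \ref{theorem: SR1 Corner Positive Cancellation}, via Remark \ref{remark: Non Unital SR1 Corner} if $A$ is not unital, gives $x \leq y$.

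The main obstacle is bookkeeping the order of the $\ll$-choices so that the (O5) output $w$ sits between the two approximants of $z$ used on each side, as arranged above. One must also confirm that (O5) holds in $\Cu(A)$ for a C*-algebra $A$ (a known result of Rørdam–Winter/APT) in the form with $c = 0$.
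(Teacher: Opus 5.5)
Your argument is correct and is essentially the paper's proof: choose $z' \ll z$ with $x+z \le y+z'$, apply (O5) to $z' \ll z \le [p]$ to get $w$ with $z'+w \le [p] \le z+w$, and then cancel $[p]$ using Theorem~\ref{theorem: SR1 Corner Positive Cancellation} together with Remark~\ref{remark: Non Unital SR1 Corner}. Your first chain $x+[p] \le x+z+w \le y+z'+w \le y+[p]$ is already valid as written and is exactly the paper's computation, so the extra approximant $z''$ and the auxiliary $x' \ll x$ are unnecessary.
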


\begin{proof}
By writing $z$ as the supremum of a rapidly increasing sequence, there exists $z' \in \Cu(A)$ such that $z' \ll z$ and $x + z \leq y + z'$. Recall that the Cuntz semigroup of any C*-algebra satisfies (O5); see \cite[Proposition 4.6]{APT18} for the proof. By applying (O5) to the inequality $z' \ll z \leq [p]$, there exists $c \in \Cu(A)$ such that $z' + c \leq [p] \leq z + c$. We compute directly that $$x + [p] \leq (x + z) + c \leq y + (z' + c) \leq y + [p].$$ By Remark \ref{remark: Non Unital SR1 Corner}, $x \leq y$. 
\end{proof}

The key to removing the projection entirely in Lemma \ref{lemma: Bounded Projection Cancellation} is \cite[Proposition 2.8]{APRT22}. Given a stable C*-algebra $A$ and a positive element $a \in A$, the proof of \cite[Proposition 2.8]{APRT22} constructs a C*-algebra $B$ and a projection $p_a \in B$ satisfying specific properties. Below, we state the properties of the construction in \cite[Proposition 2.8]{APRT22} we need for the proof of Theorem \ref{theorem: General Weak Cancellation}. 

\begin{proposition}[{\cite[Proposition 2.8]{APRT22}}] \label{proposition: SR1 Envelope}
Let $A$ be a stable C*-algebra and $a \in A_+$. Then there exist a C*-algebra $B$ and a projection $p_a \in B$ such that 
\begin{enumerate}
\item $A$ is a closed, two-sided ideal of $B$, 
\item the quotient $B/A$ is isomorphic to $\mathbb{C}$ and its unit is the image of $p_a$ in $B/A$,
\item if $x \in \Cu(A)$ then $x \leq [a]$ in $\Cu(A)$ if and only if $x \leq [p_a]$ in $\Cu(B)$,
\item as right Hilbert $A$-modules, $p_a A \cong \overline{aA}$. 
\end{enumerate}
\end{proposition}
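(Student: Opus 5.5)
The plan is to realise $p_a$ as a multiplier projection of $A$ and to take for $B$ the C*-algebra generated by $A$ and $p_a$, with a formal unit adjoined so that the quotient is always $\mathbb{C}$.

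\emph{Step 1 (construction of $P$).} Consider the countably generated right Hilbert $A$-module $\overline{aA}$. By Kasparov's stabilisation theorem, $\overline{aA} \oplus \ell^2(A) \cong \ell^2(A)$. Since $A$ is stable, we also have $A_A \cong (A \otimes K)_{A\otimes K}$, and this should identify the standard module $\ell^2(A)$ with $A_A$; this identification is the step to check carefully. Combining the two isomorphisms, we would obtain an adjointable isometry $V: \overline{aA} \to A_A$, so that $P = VV^* \in \mathcal{B}(A_A) = \mathcal{M}(A)$ is a projection with $PA = V(\overline{aA}) \cong \overline{aA}$. I expect this to be the main obstacle: choosing the isomorphism $\ell^2(A) \cong A_A$ compatibly with stability, and checking that the resulting isometry really is adjointable.

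\emph{Step 2 (the algebra $B$).} Define
$$B = \{ (x + \lambda P, \lambda) \; \vert \; x \in A, \; \lambda \in \mathbb{C} \} \subseteq \mathcal{M}(A) \oplus \mathbb{C}.$$
This is a $*$-subalgebra, and it is closed because $A \oplus 0$ is a closed ideal of finite codimension in it. We set $p_a = (P, 1)$, which is a projection. Adjoining the second coordinate guarantees that $B/A \cong \mathbb{C}$ even if $P$ happens to lie in $A$. The map $A \ni x \mapsto (x, 0)$ identifies $A$ with a closed two-sided ideal of $B$, since $A$ is an ideal in $\mathcal{M}(A)$. The image of $p_a$ in $B/A \cong \mathbb{C}$ is $1$. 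This gives (1) and (2).

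\emph{Step 3 (property (4)).} Inside $B$, we have $p_a (x, 0) = (Px, 0)$. Hence $p_a A = PA \cong \overline{aA}$ as right Hilbert $A$-modules, by Step 1.

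\emph{Step 4 (property (3)).} The condition $x \leq [p_a]$ in $\Cu(B)$ means that $x$ lies in the image of $\Cu(p_a B p_a)$, since hereditary subalgebras induce order-embeddings onto downward closed sets. If moreover $x \in \Cu(A)$, which is an ideal of $\Cu(B)$, then $x$ lies in the image of $\Cu(p_a B p_a \cap A) = \Cu(p_a A p_a)$. Here $p_a A p_a = PAP \cong \mathcal{K}(PA) \cong \mathcal{K}(\overline{aA}) \cong \overline{aAa}$, which is $\sigma$-unital with strictly positive element $a$. Hence $\Cu(p_a A p_a)$ has largest element corresponding to $[a]$, so $x \leq [a]$ in $\Cu(A)$.

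Conversely, $x \leq [a] = [h] \leq [p_a]$, where $h$ is a strictly positive element of $p_a A p_a$; and $\Cu(A) \to \Cu(B)$ is an order-embedding onto an ideal by Theorem \ref{theorem: CRS}. This proves (3).
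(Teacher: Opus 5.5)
Your Steps 1--3 are fine. The point you flag as the main obstacle is routine. Stability gives isometries $s_n \in \mathcal{M}(A)$ with $s_m^* s_n = \delta_{mn}$ and $\sum_n s_n s_n^* = 1$ strictly, so $x \mapsto (s_n^* x)_n$ is a unitary $A_A \to \ell^2(A)$. Composing with Kasparov's theorem then gives the adjointable isometry $V$.

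The genuine gap is in the forward implication of (3). The condition $x \leq [p_a]$ does not \emph{mean} that $x$ lies in the image of $\Cu(p_a B p_a)$; it only implies it. That image is the whole ideal $\{ y : y \leq \infty \cdot [p_a] \}$, so the quantitative bound is thrown away. After intersecting with $\Cu(A)$ you only know $x \in \Cu(p_a A p_a)$, i.e.\ $x \leq \infty \cdot [a]$. Your claim that $\Cu(p_a A p_a)$ has largest element $[a]$ is false: since $\Cu(D)$ is computed in $D \otimes K$, its largest element is $\infty \cdot [a]$. Concretely, take $A = K$ and $a = e_{11}$, so $p_a A p_a \cong \mathbb{C}$ and $\Cu(p_a A p_a) \cong \overline{\mathbb{N}}$. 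The element $x = 2 \cdot [e_{11}]$ lies in $\Cu(A)$ and in the image of $\Cu(p_a A p_a)$, so every step after your first sentence applies to it. Yet $x \not\leq [a]$.

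The repair is to keep the bound at the level of elements.
\begin{itemize}
\item Write $x = [c]$ with $c \in (A \otimes K)_+$ and $c \precsim p_a \otimes e_{11}$ in $B \otimes K$.
\item By R{\o}rdam's lemma, for each $\epsilon > 0$ there is $s$ with $(c - \epsilon)_+ = s s^*$ and $s^* s \in p_a B p_a \otimes e_{11}$.
\item Since $s s^* \in A \otimes K$, also $s \in A \otimes K$. Hence $s^* s \in (p_a B p_a \cap A) \otimes e_{11} = p_a A p_a \otimes e_{11}$.
\item So $(c - \epsilon)_+$ is Cuntz equivalent to a positive element of $p_a A p_a$ itself, not of its stabilisation. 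It is therefore Cuntz below a strictly positive element $h$ of $p_a A p_a \cong PAP$.
\item Letting $\epsilon \to 0$ gives $x \leq [h]$.
\end{itemize}

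Note also that $[h] = [a]$ in $\Cu(A)$ does not follow from the abstract isomorphism $PAP \cong \overline{aAa}$. Abstractly isomorphic hereditary subalgebras can have inequivalent strictly positive elements, e.g.\ $e_{11} \oplus 0$ and $0 \oplus e_{11}$ in $K \oplus K$. Instead, use $\overline{hA} = PA \cong \overline{aA}$ together with the Hilbert module picture of $\Cu(A)$. This also justifies the equality $[a] = [h]$ used in your reverse implication.
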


We are now ready to state and prove Theorem \ref{theorem: General Weak Cancellation}.

\begin{theorem} \label{theorem: General Weak Cancellation}
Let $A$ be a C*-algebra and $x, y \in \Cu(A)$. Let $a \in (A \otimes K)_+$ be such that $\overline{a (A \otimes K) a}$ has stable rank one. If $x + [a] \ll y + [a]$ then $x \leq y$. 
\end{theorem}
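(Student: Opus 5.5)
The strategy is to reduce to Lemma \ref{lemma: Bounded Projection Cancellation} by means of the construction in Proposition \ref{proposition: SR1 Envelope}. Because $\Cu(A) \cong \Cu(A \otimes K)$ and $a$ already lies in $A \otimes K$, we may replace $A$ by the stable algebra $A \otimes K$. After this replacement $x, y \in \Cu(A)$, $a \in A_+$, $\overline{aAa}$ has stable rank one, and $x + [a] \ll y + [a]$ holds in $\Cu(A)$. Proposition \ref{proposition: SR1 Envelope} then provides a C*-algebra $B$ containing $A$ as an ideal, with $B/A \cong \mathbb{C}$, together with a projection $p_a \in B$ satisfying $p_a A \cong \overline{aA}$ as Hilbert $A$-modules. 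We view $\Cu(A)$ as an ideal of $\Cu(B)$; this is legitimate by Theorem \ref{theorem: CRS}, since $A$ is $\sigma$-unital if we first pass to the separable case, or one can argue directly.

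The first step is to identify $[a]$ with $[p_a]$ in $\Cu(B)$. By property (3) applied to $x = [a]$, we get $[a] \le [p_a]$. For the reverse inequality we want $[p_a] \le [a]$, but $p_a \notin A$, so it does not suffice to note that $[p_a A] = [\overline{aA}] = [a]$ as Hilbert $A$-modules. Instead, $x + [a] \ll y + [a]$ gives $x + [a] \le y + [a]$, and Lemma \ref{lemma: Bounded Projection Cancellation} only requires $z \le [p_a]$. We therefore apply that lemma in $\Cu(B)$ with $z = [a]$ and projection $p_a$. The way-below relation $x + [a] \ll y + [a]$ in $\Cu(A)$ persists in $\Cu(B)$ because $\Cu(A)$ is an ideal of $\Cu(B)$: an increasing sequence in $\Cu(B)$ whose supremum dominates $y + [a] \in \Cu(A)$ can be cut down into $\Cu(A)$. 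If that is awkward, we instead pick $z' \ll [a]$ with $x + [a] \le y + z'$ inside $\Cu(A)$ and rerun the (O5) argument directly.

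The main obstacle is verifying that $p_a B p_a$ has stable rank one. There is an extension $0 \to p_a A p_a \to p_a B p_a \to \mathbb{C} \to 0$, which follows from property (2) since the image of $p_a$ is the unit of $B/A \cong \mathbb{C}$. Moreover $p_a A p_a \cong \mathcal{K}(p_a A) \cong \mathcal{K}(\overline{aA}) \cong \overline{aAa}$ by property (4), so $p_a B p_a$ is the unitisation of $\overline{aAa}$ (it is unital, with quotient $\mathbb{C}$ by the ideal). Since $\overline{aAa}$ has stable rank one, so does its unitisation, by the definition of stable rank for non-unital algebras. Lemma \ref{lemma: Bounded Projection Cancellation} then yields $x \le y$ in $\Cu(B)$, hence also in $\Cu(A)$, because the inclusion $\Cu(A) \hookrightarrow \Cu(B)$ is an order embedding (again by Theorem \ref{theorem: CRS}, or because $A$ is hereditary in $B$).
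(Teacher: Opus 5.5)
Your argument is correct and is essentially the paper's proof: you pass to the stable algebra, take the envelope $B$ and projection $p_a$ from Proposition~\ref{proposition: SR1 Envelope}, apply Lemma~\ref{lemma: Bounded Projection Cancellation} in $\Cu(B)$ with $z=[a]\le[p_a]$, and pull $x\le y$ back to $\Cu(A)$. The only small difference is that you get stable rank one of $p_aBp_a$ by identifying it directly as the unitisation of $p_aAp_a\cong\overline{aAa}$, whereas the paper invokes Lin--R{\o}rdam's extension result \cite[Proposition 4]{LR95}; if $\overline{aAa}$ happens to be unital, read the unitisation as $\overline{aAa}\oplus\mathbb{C}$, which still has stable rank one.
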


\begin{proof}
By Proposition \ref{proposition: SR1 Envelope}, there exist a C*-algebra $B$ and a projection $p_a \in B$ satisfying the listed properties in Proposition \ref{proposition: SR1 Envelope}. In particular, $B$ fits into the extension
\begin{equation} \label{equation: Envelope Extension}
\begin{tikzcd}
0 \arrow[r] & A \otimes K \arrow[r] & B \arrow[r, "\pi"] & \mathbb{C} \arrow[r] & 0.
\end{tikzcd}
\end{equation}
Since $\overline{a (A \otimes K)} \cong p_a (A \otimes K)$ as right Hilbert $(A \otimes K)$-modules then $\overline{(A \otimes K) a}$ is isomorphic to $(A \otimes K) p_a$ and $$p_a (A \otimes K) p_a = p_a (A \otimes K) \cap (A \otimes K) p_a \cong \overline{a (A \otimes K)} \cap \overline{(A \otimes K) a} = \overline{a (A \otimes K) a}.$$ So by assumption, the hereditary subalgebra $p_a (A \otimes K) p_a$ has stable rank one. Now compress the extension in \eqref{equation: Envelope Extension} by $p_a$ to produce the following extension:
\begin{equation} \label{equation: Compressed Extension}
\begin{tikzcd}
0 \arrow[r] & p_a (A \otimes K) p_a \arrow[r] & p_a B p_a \arrow[r, "\pi"] & \mathbb{C} \arrow[r] & 0.
\end{tikzcd}
\end{equation}
Note that $\pi(p_a) \mathbb{C} \pi(p_a) = \mathbb{C}$ because the unit of the quotient $B/(A \otimes K) \cong \mathbb{C}$ is $\pi(p_a)$. Since the ideal and quotient of the extension in \eqref{equation: Compressed Extension} both have stable rank one then by \cite[Proposition 4]{LR95}, $p_a B p_a$ must also have stable rank one. \newline
\phantom{} \newline
Now since $\Cu(A)$ is an ideal of $\Cu(B)$ then $x + [a] \ll y + [a]$ in $\Cu(B)$. By Proposition \ref{proposition: SR1 Envelope}, $[a] \leq [p_a]$ in $\Cu(B)$. Hence by Lemma \ref{lemma: Bounded Projection Cancellation}, $x \leq y$ in $\Cu(B)$. Since $x, y \in \Cu(A)$, we conclude that $x \leq y$ in $\Cu(A)$. This completes the proof. 
\end{proof}

Observe that in the statement of Theorem \ref{theorem: General Weak Cancellation}, if $A$ has stable rank one, then $A \otimes K$ also has stable rank one by \cite[Theorem 3.6]{Rie83}. By combining \cite[Corollary 2.6]{Bro77} with \cite[Theorem 4.4]{Rie83}, any hereditary subalgebra of $A \otimes K$ also has stable rank one. Subsequently, we recover weak cancellation of $\Cu(A)$. 

\section{Two applications of moves for graph C*-algebras}
\label{sec: Refined Moves}

This section details two separate and important applications of the moves (O), (I-), (S) and (CO) defined in \cref{subsec: Moves}. \Cref{subsec: Standing Assumptions} demonstrates that in the overall computation of the Cuntz semigroup of a graph C*-algebra coming from a finite graph, we may assume without loss of generality that the underlying graph satisfies standing assumptions \ref{property: S1}, \ref{property: S2} and \ref{property: S3}. The standing assumptions render Hong and Szymański's analysis of the entire ideal lattice of a graph C*-algebra in \cite{HS04} more tractable than in the general case; see Corollary \ref{corollary: Auto Saturated} and Corollary \ref{corollary: Maximal Tails Standing Assumptions} for example. This is important for establishing a bijection from the lattice of ideals of $C^*(E)$ to to the lattice of ideals of the abstract Cu-semigroup $\Cu_E$ defined in Definition \ref{definition: Cu E}; see \cref{subsec: Cu E Ideal Lattice}. \newline
\phantom{} \newline
The second application of the moves in \cref{subsec: QDS} will be invoked in \cref{subsec: Circle Quotient} to establish the pure largeness needed to prove the difficult special case of Theorem \ref{introtheorem: General Cuntz Krieger} where the vertex $v$ supports a single loop. 

\subsection{The standing assumptions on a finite graph}
\label{subsec: Standing Assumptions}

Recall from the introduction the standing assumptions for a finite graph, which we list below once more for convenience:
\begin{enumerate}
\item[\crtcrossreflabel{(S1)}] $E$ has no regular sinks.
\item[\crtcrossreflabel{(S2)}] $E$ has no regular transitional vertices.
\item[\crtcrossreflabel{(S3)}] if a vertex $v$ supports a single cycle $\mu$ then $\mu$ is a loop (has length one).
\end{enumerate}
By applying moves (CO) and (S) (see Definition \ref{definition: CO} and Definition \ref{definition: S} respectively) to an arbitrary finite graph, we now prove the following proposition.

\begin{proposition} \label{proposition: Collapse Graph}
Let $E$ be a finite graph. Then there exists a finite graph $F$ satisfying the standing assumptions \ref{property: S1}, \ref{property: S2} and \ref{property: S3} such that $C^*(E) \otimes K \cong C^*(F) \otimes K$. In particular, the Cuntz semigroup $\Cu(C^*(E))$ is isomorphic to $\Cu(C^*(F))$. 
\end{proposition}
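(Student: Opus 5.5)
We transform $E$ by a finite sequence of moves (S) and (CO), and then apply Proposition \ref{proposition: Refined Moves Stable Iso}. The induction is on the quantity $N(E) = \lvert E^0 \rvert$. Both moves delete exactly one vertex. Move (CO) creates only finitely many new edges, namely $\lvert s^{-1}(w) \rvert \cdot \lvert r^{-1}(w) \rvert$ of them, and move (S) deletes edges. So each move produces another finite graph with strictly fewer vertices, and the process terminates.

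The algorithm runs as follows. While $E$ has a regular sink, remove it with move (S). While $E$ has a regular transitional vertex $w$, remove it with move (CO). This is legitimate: $w$ is regular, and it supports no cycle, so in particular it supports no loop. While some vertex $v$ supports exactly one cycle $\mu$ of length $n \geq 2$, choose a vertex $w \neq v$ on $\mu$ and try to collapse it.

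If every vertex on $\mu$ other than $v$ already supports a single loop, the edge $\mu_i$ into it would lie on two cycles through $v$. That is impossible, so some vertex $w \neq v$ on $\mu$ has no loop. This $w$ is regular: it receives the edge of $\mu$ ending at $w$, and $E$ is finite. Hence (CO) applies to $w$. After collapsing, the cycles through $v$ correspond bijectively to those before, because collapsing merges paths $fe$ through $w$ into $[ef]$. So $v$ still supports exactly one cycle, now of length $n-1$.

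Each move strictly decreases $N(E)$, so we never loop forever. When no move applies, $F$ satisfies (S1), (S2) and (S3).

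The main obstacle is making this last step precise. Three points need checking:
\begin{enumerate}
\item No vertex $w \neq v$ on $\mu$ can support a loop: a loop at $w$ combined with $\mu$ would give a second cycle through $v$. So any $w \neq v$ on $\mu$ is a valid (CO)-target.
\item After a collapse, the sets of cycles through each remaining vertex are in bijection with the cycles through that vertex before the collapse, up to shortening the cycles that passed through $w$. In particular, the property ``supports a single cycle'' is preserved for the remaining vertices, and no new cycles are introduced.
\item Vertices that become sinks or transitional after a move are still handled, because the loop simply re-checks all three conditions.
\end{enumerate}
For point 2, I would argue directly from Definition \ref{definition: CO}. Paths in $E_{\text{CO}}$ correspond bijectively to paths in $E$ that neither start nor end at $w$, since every occurrence of $w$ is sandwiched between an incoming and an outgoing edge, and $w$ is not a source, as it is regular. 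A cycle at $v$ through $w$ never begins or ends at $w$ when $v \neq w$.

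With termination and the end-state conditions established, Proposition \ref{proposition: Refined Moves Stable Iso} gives $C^*(E) \otimes K \cong C^*(F) \otimes K$. Hence $\Cu(C^*(E)) \cong \Cu(C^*(F))$, since $\Cu(A) = \Cu(A \otimes K)$.
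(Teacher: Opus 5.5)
Your proposal is correct and follows essentially the paper's proof. Both remove regular sinks with (S), collapse regular transitional vertices and the non-base vertices of single cycles of length at least two with (CO), and then invoke Proposition~\ref{proposition: Refined Moves Stable Iso}; your version also makes explicit what the paper leaves implicit, namely termination (each move deletes one vertex) and the fact that (CO) preserves which vertices support one, several or no cycles. One small correction: the bijection between paths in $E_{\text{CO}}$ and paths in $E$ that neither start nor end at $w$ holds because $w$ supports no loop, so interior visits to $w$ are isolated; it does not depend on $w$ being a non-source.
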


\begin{proof}
Assume that $E$ is a finite graph. First, repeatedly apply move (S) to $E$ until $E$ has no regular sinks. Next, suppose that $v \in E^0$ supports a single cycle $\mu$ with length strictly greater than one. We repeatedly apply move (CO) to $\mu$ until it becomes a loop. Finally, we remove all regular transitional vertices from $E$ by applying move (CO) to them. In the end, we obtain a finite graph $F$ satisfying the standing assumptions. By Proposition \ref{proposition: Refined Moves Stable Iso}, $C^*(E)$ and $C^*(F)$ are stably isomorphic. 
\end{proof}

\begin{example}
Suppose that $E$ is the following graph:
\begin{equation*}
\begin{tikzpicture}[baseline=(current  bounding  box.center)] 
\node (1) at (-2, 0) {$a$};
\node (2) at (0, 0) {$b$};
\node (3) at (2, 0) {$c$};
\node (4) at (4, 0) {$d$};
\node (5) at (6, 0) {$e$};
\node (6) at (-4, 0) {$f$};
\node (7) at (0, 2) {$g$};
\draw[->] (2) to (1);
\draw[->] (2) to (3);
\draw[->] (4) to (3);
\draw[->] (5) to (4);
\draw[->] (1) to [bend right = 20] (6);
\draw[->] (6) to [bend right = 20] (1);
\draw[->] (2) to [bend right = 30] (7);
\draw[->] (7) to [bend right = 30] (2);
\draw[->] (1) to [out = 45, in = 135, looseness = 10] (1);
\end{tikzpicture}
\end{equation*}
By applying the procedure outlined in the proof of Proposition \ref{proposition: Collapse Graph}, we obtain the following graph $F$: 
\begin{equation*}
\begin{tikzpicture}[baseline=(current  bounding  box.center)] 
\node (1) at (-2, 0) {$a$};
\node (2) at (0, 0) {$b$};
\node (5) at (2, 0) {$e$};
\node (6) at (-4, 0) {$f$};
\draw[->] (2) to (1);
\draw[->] (1) to [bend right = 20] (6);
\draw[->] (6) to [bend right = 20] (1);
\draw[->] (1) to [out = 45, in = 135, looseness = 10] (1);
\draw[->] (2) to [out = 45, in = 135, looseness = 10] (2);
\end{tikzpicture}
\end{equation*}
It is easy to see that $F$ satisfies the standing assumptions. Note that vertex $e$ does not emit or receive any edges; such a vertex is nevertheless allowed by standing assumption \ref{property: S1} because it is singular. By Proposition \ref{proposition: Collapse Graph}, $C^*(E)$ and $C^*(F)$ are stably isomorphic. 
\end{example}

With the standing assumptions \ref{property: S1}, \ref{property: S2} and \ref{property: S3}, the saturated condition appearing in \eqref{equation: Hereditary Saturated} becomes superfluous.

\begin{corollary} \label{corollary: Auto Saturated}
Let $E$ be a finite graph satisfying the standing assumptions and $H \subseteq E^0$. If $H$ is hereditary then it is automatically saturated.
\end{corollary}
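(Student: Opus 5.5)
Let $H \subseteq E^0$ be hereditary. To show that $H$ is saturated, we take a vertex $v \in E^0$ with $0 < \lvert r^{-1}(v) \rvert$ and $s(e) \in H$ for every $e \in r^{-1}(v)$, and argue that $v \in H$. The plan is a case split on the type of $v$. Since $E$ is finite, $v$ is not an infinite receiver. Since $v$ receives an edge, it is not a source. So $v$ is a regular vertex, and we use the standing assumptions to see what kind of regular vertex it can be.

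\emph{Case 1: $v$ is a sink.} Then $v$ is a regular sink, which \ref{property: S1} rules out.

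\emph{Case 2: $v$ is not a sink and supports no cycle.} Then $v$ is a regular transitional vertex, which \ref{property: S2} rules out.

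\emph{Case 3: $v$ supports a cycle $\mu$.} This is the only case that remains. The last edge $\mu_n$ of $\mu$ satisfies $r(\mu_n) = s(\mu_1) = v$, so by hypothesis $s(\mu_n) \in H$. The cycle $\mu$ also contains a path from $s(\mu_n)$ back to $v$, namely $\mu_n$ itself. In the paper's convention, hereditary means that $x \geq y$ and $y \in H$ imply $x \in H$. With that convention we would need $v \geq s(\mu_n)$ together with $s(\mu_n) \in H$ to conclude $v \in H$. Here $v \geq s(\mu_n)$ holds via the path $\mu_{n-1} \cdots \mu_1$, or trivially if $n = 1$, since $\mu$ returns to $v$. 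Hence $v \in H$.

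Under \ref{property: S3} the single-cycle subcase is simpler still: $\mu$ is then a loop at $v$, so $v$ itself is the source of an edge into $v$, and the hypothesis gives $v \in H$ directly without using heredity at all. For a vertex supporting several cycles, the argument just given along any one of the cycles works.

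The only delicate point is the direction of the heredity convention, and the cycle case handles both directions: $v$ and $s(\mu_n)$ each reach the other along $\mu$. Cases 1 and 2 are vacuous by \ref{property: S1} and \ref{property: S2}, so the corollary follows.
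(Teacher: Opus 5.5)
Your proof is correct and is essentially the paper's argument: (S1) and (S2) force $v$ to support a cycle. The edge of that cycle entering $v$ has source in $H$ and is reachable from $v$, so heredity gives $v \in H$.
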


\begin{proof}
Assume that $E$ is a finite graph satisfying the standing assumptions and $H$ is a hereditary subset of $E^0$. To see that $H$ is saturated, suppose that $v \in E^0$ is such that $0 < \lvert r^{-1}(v) \rvert < \infty$ and $$\{ s(e) \; \vert \; e \in E^1, \; r(e) = v \} \subseteq H.$$ By assumption, $v$ is not a source vertex. Since $E$ satisfies standing assumptions \ref{property: S1} and \ref{property: S2}, $v$ supports at least one cycle. So there exists an edge $f$ such that $r(f) = v$ and $v \geq s(f)$. Now since $s(f) \in H$ and $v \geq s(f)$ then $v \in H$ and $H$ is saturated. 
\end{proof}

Recall from the comment after Definition \ref{definition: Maximal Tail Tau} that the set of primitive non-gauge-invariant ideals of a unital graph C*-algebra $C^*(E)$ is in bijective correspondence with the set $\mathcal{MT}_{\tau}(E) \times \mathbb{T}$. We will now show that in the presence of the standing assumptions, the elements of $\mathcal{MT}_\tau(E)$ correspond to vertices which support a single loop. First, if $E$ is a graph and $v \in E^0$, then define
\begin{equation*} 
R(v) = \{ x \in E^0 \; \vert \; v \geq x \}. 
\end{equation*}

\begin{corollary} \label{corollary: Maximal Tails Standing Assumptions}
Let $E$ be a finite graph satisfying the standing assumptions. Then 
\begin{equation*}
\mathcal{MT}_\tau(E) = \{ R(v) \; \vert \; \text{$v \in E^0$ supports a single loop} \}. 
\end{equation*}
\end{corollary}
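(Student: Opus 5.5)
We prove the two inclusions separately, using only the definitions \ref{property: M1}--\ref{property: M3} and the standing assumptions. First, some conventions. For a maximal tail $M$ and a cycle $\mu$ lying in $M$ (all ranges in $M$), $\mu$ has no entry in $M$ means the following. Whenever $r(e) = r(\mu_i)$ and $s(e) \in M$, we have $e = \mu_i$.

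\emph{Inclusion $\supseteq$.} Let $v$ support a single loop $e$ and set $M = R(v)$. Condition \ref{property: M1} holds by transitivity of $\geq$. Condition \ref{property: M3} holds because $v \geq x$ for every $x \in M$, so $y = v$ works. For \ref{property: M2}, take $x \in M$ that is not a source.
\begin{itemize}
\item If $x = v$, the loop $e$ has $s(e) = v \in M$.
\item If $x \neq v$, pick a path from $v$ to $x$; its last edge has source in $M$.
\end{itemize}
So $M \in \mathcal{MT}(E)$. The loop $e$ lies in $M$. Suppose an edge $f \neq e$ had $r(f) = v$ and $s(f) \in M$. Then $v \geq s(f)$, and composing a path from $v$ to $s(f)$ with $f$ gives a cycle at $v$ different from $e$. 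This contradicts the assumption that $v$ supports a single cycle. Hence $e$ has no entry in $M$, and $M \in \mathcal{MT}_\tau(E)$.

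\emph{Inclusion $\subseteq$.} Let $M \in \mathcal{MT}_\tau(E)$, and let $\mu$ be a cycle in $M$ with no entry in $M$. Let $v = r(\mu_1)$, a vertex of $\mu$.

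First, $v$ supports only one cycle. Any cycle through $v$ other than $\mu$ lies in $M$ by \ref{property: M1}. Following it back from $v$, it must at some point leave $\mu$. The first edge where it does so enters a vertex of $\mu$ from a source in $M$ and differs from the corresponding $\mu_i$. That edge is an entry of $\mu$ in $M$, which is impossible. By \ref{property: S3}, $\mu$ is therefore a loop at $v$.

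Next, we show $M = R(v)$. The inclusion $R(v) \subseteq M$ is \ref{property: M1}. For the converse, let $x \in M$. By \ref{property: M3} there is $y \in M$ with $y \geq x$ and $y \geq v$; it suffices to show $v \geq y$. Take a path from $y$ to $v$. If $y \neq v$, its last edge $f$ satisfies $r(f) = v$ and $s(f) \in M$ by \ref{property: M1}. Since $\mu$ has no entry in $M$, $f$ must be the loop itself, so $s(f) = v$. Peeling off such loop edges one at a time, we reach a shorter path from $y$ to $v$. Eventually either the path is the trivial one (so $y = v$), or it consists only of loop edges at $v$ (so again $y = v$). Thus $y = v$, and $x \in R(v)$.

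The main obstacle is the step in the second inclusion showing $v$ supports only one cycle. It needs a careful argument that any second cycle through $v$ produces an entry of $\mu$ in $M$, which requires tracking where the cycle departs from $\mu$. Condition \ref{property: M2} is not needed for this direction.
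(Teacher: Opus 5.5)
Your proof is correct, but it is more elementary and self-contained than the paper's.

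\textbf{First inclusion.} The paper obtains \ref{property: M2} for $R(v)$ from \ref{property: S1} and \ref{property: S2}, using that every vertex receiving an edge supports a cycle. Your observation needs neither: the last edge of a path from $v$ to $x \neq v$ has source in $R(v)$, which works in any graph once $v$ carries a loop. You also spell out, via a second cycle at $v$, why the loop has no entry in $R(v)$; the paper only asserts this ``by definition of $R(v)$''.

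\textbf{Reverse inclusion.} The paper quotes \cite[Lemma 2.1]{HS04} twice: once to obtain a cycle at $v$ without entries in $M$, and once for the identity $E^0 \backslash M = E^0 \backslash R(v)$. Its text also applies \ref{property: S3} before uniqueness of the cycle at $v$ is spelled out. You instead derive uniqueness of the cycle directly from the no-entry condition, and only then apply \ref{property: S3}. You then obtain $M = R(v)$ from \ref{property: M1}, \ref{property: M3} and the no-entry condition alone.

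\textbf{Comparison.} The paper's version is brief because it outsources the structure theory of maximal tails to \cite{HS04}. Yours shows the corollary is a purely combinatorial fact that uses, among the standing assumptions, only \ref{property: S3}.

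\textbf{Points to tighten when writing up.}
\begin{enumerate}
\item Take a shortest path from $v$ to $s(f)$, so that the closed path ending in $f$ does not return to $v$ early. Cycles in this paper may not revisit their base vertex.
\item In the backward comparison of a second cycle with $\mu$, state explicitly what happens if one path runs out before a divergence occurs: one of the two cycles then revisits $v$ prematurely, contradicting the definition of a cycle.
\item Either take $v = s(\mu_1)$, or note that the no-entry condition forces the vertices of $\mu$ to be distinct. In the latter case $\mu$ rotated to start at $r(\mu_1)$ is still a cycle based there.
\end{enumerate}
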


\begin{proof}
First, assume that $v \in E^0$ supports a single loop. The set $R(v)$ satisfies properties \ref{property: M1} and \ref{property: M3} by definition. Now suppose that $w \in R(v)$ and $0 < \lvert r^{-1}(w) \rvert < \infty$. By the standing assumptions on $E$, $w$ supports at least one cycle and consequently, $R(v)$ satisfies property \ref{property: M2} and is a maximal tail. Moreover, the single loop based at $v$ cannot have an entry lying in $R(v)$ by definition of $R(v)$. So $R(v) \in \mathcal{MT}_\tau(E)$. \newline
\phantom{} \newline
Conversely, assume that $M \in \mathcal{MT}_\tau(E)$. By \cite[Lemma 2.1]{HS04}, there exist $v \in M$ and a cycle $\mu = \mu_m \dots \mu_2 \mu_1$ based at $v$ which has no entries in $M$; that is, if $e \in E^1$ does not lie on $\mu$ and $r(e)$ lies on $\mu$ then $s(e) \not \in M$. By standing assumption \ref{property: S3}, $m = 1$ and since $\mu$ has no entries in $M$ then it must be the unique cycle based at $v$. Using \cite[Lemma 2.1]{HS04} once more, we have
\begin{equation*}
E^0 \backslash M = \big\{ x \in E^0 \backslash \{ v \} \; \vert \; v \not \geq x \big\} = E^0 \backslash R(v). 
\end{equation*}
So $M = R(v)$. 
\end{proof}

Corollary \ref{corollary: Maximal Tails Standing Assumptions} says that with the standing assumptions, a primitive non-gauge-invariant ideal is simply given by a vertex supporting a single loop together with an element of $\mathbb{T}$. This motivates the following definition. 

\begin{definition} \label{definition: Primitive Non Gauge Invariant}
Let $E$ be a finite graph satisfying the standing assumptions and $v \in E^0$ be a vertex supporting a single loop $g$. Let $t \in \mathbb{T}$. Define $\mathcal{R}_{v, t}$ to be the primitive non-gauge-invariant ideal generated by the gauge-invariant ideal $I_{E^0 \backslash R(v)}$ and $s_g - t p_v$. \newline
\phantom{} \newline
More generally, let $K \subseteq \mathbb{T}$ be closed. Define $\mathcal{R}_{v, K}$ to be the non-gauge-invariant ideal $$\mathcal{R}_{v, K} = {\bigcap}_{t \in K} \mathcal{R}_{v, t}.$$
\end{definition}

In the situation of Definition \ref{definition: Primitive Non Gauge Invariant}, the bijective correspondence in \cite[Corollary 2.12]{HS04} maps the pair $(R(v), t) \in \mathcal{MT}_{\tau}(E) \times \mathbb{T}$ to the primitive ideal $\mathcal{R}_{v, t}$. Moreover, we have by \cite[Lemma 2.6]{HS04} the inclusion of ideals
\begin{equation} \label{equation: Sandwich}
I_{E^0 \backslash R(v)} \subseteq \mathcal{R}_{v, t} \subseteq I_{(E^0 \backslash R(v)) \sqcup \{ v \}}.
\end{equation}
In particular, $I_{E^0 \backslash R(v)}$ is the largest gauge-invariant ideal contained by $\mathcal{R}_{v, t}$ and $I_{(E^0 \backslash R(v)) \sqcup \{ v \}}$ is the smallest gauge-invariant ideal containing $\mathcal{R}_{v, t}$. \newline
\phantom{} \newline
Later on, we will need to consider strictly positive elements of appropriate corners of non-gauge-invariant ideals, such as the ones from Definition \ref{definition: Primitive Non Gauge Invariant}. In Definition \ref{definition: Delta E}, we construct a map $\Delta_E$ from the Cu-semigroup $\Cu_E$, defined in \cref{subsec: Cu E}, to $\Cu(C^*(E))$. To ensure that $\Delta_E$ is order preserving, we need to choose strictly positive elements of corners of non-gauge-invariant ideals --- rather than the ideals themselves --- so that they are Cuntz subequivalent to the appropriate sums of vertex projections. Example \ref{example: Why Corner} below illustrates this difference. 

\begin{example} \label{example: Why Corner}
Let $E$ be the following graph which satisfies the standing assumptions:
\begin{equation*}
\begin{tikzpicture}[baseline=(current  bounding  box.center)] 
\node (1) at (0, 0) {$v$};
\node (2) at (3, 0) {$w$};
\draw[->] (1) to (2);
\draw[->] (1) to [out = 45, in = 135, looseness = 10] (1);
\draw[->] (2) to [out = 45, in = 135, looseness = 10] (2);
\end{tikzpicture}
\end{equation*}
Let $U \subseteq \mathbb{T}$ be non-empty and open. The gauge-invariant ideal $I_{\{ v \}}$ is isomorphic to $C(\mathbb{T}) \otimes K$. Hence, the ideal $\mathcal{R}_{v, \mathbb{T} \backslash U}$ is also stable and the Cuntz class of a strictly positive element in this ideal is the function $\infty \cdot \chi_U$ in $\Lsc(\mathbb{T}, \overline{\mathbb{N}})$. This is not Cuntz below $[p_v] = \chi_{\mathbb{T}}$. \newline
\phantom{} \newline
On the other hand, the corner $p_v \mathcal{R}_{v, \mathbb{T} \backslash U} p_v$ is an ideal of $p_v I_{\{ v \}} p_v \cong C(\mathbb{T})$ by \eqref{equation: Full Corner Iso}. Under this isomorphism, $p_v \mathcal{R}_{v, \mathbb{T} \backslash U} p_v$ is sent to the ideal $C_0(U)$ and the Cuntz class of a strictly positive element in this corner is the function $\chi_U$, which is Cuntz below $[p_v] = \chi_{\mathbb{T}}$ in $\Lsc(\mathbb{T}, \overline{\mathbb{N}})$. 
\end{example}

Before we introduce our notation for strictly positive elements, we first define for an arbitrary graph $E$ and a vertex $v \in E^0$ the set
\begin{equation*}
L(v) = \{ x \in E^0 \; \vert \; x \geq v \}
\end{equation*}
which is the smallest hereditary subset of $E^0$ containing $v$.

\begin{definition} \label{definition: Strictly Positive}
Let $E$ be a finite graph satisfying the standing assumptions and $v \in E^0$ support a single loop. Let $U \subseteq \mathbb{T}$ be non-empty and open. If $U$ is proper then define $h_{v, U}$ to be a strictly positive element of the hereditary subalgebra $$p_{L(v)} \big( I_{L(v)} \cap \mathcal{R}_{v, \mathbb{T} \backslash U} \big) p_{L(v)}.$$ Otherwise, define $h_{v, \mathbb{T}} = p_{L(v)}$. 
\end{definition}

By Definition \ref{definition: Strictly Positive}, $h_{v, \mathbb{T}}$ is a strictly positive element of the hereditary subalgebra $p_{L(v)} I_{L(v)} p_{L(v)}$, which is isomorphic to $C^*(L(v))$ by \eqref{equation: Full Corner Iso}. Subsequently, if $U, V \subseteq \mathbb{T}$ are non-empty and open and $U \subseteq V$, then $h_{v, U} \precsim h_{v, V} \precsim p_{L(v)}$. \newline
\phantom{} \newline
Finally by \eqref{equation: Hereditary Saturated}, Corollary \ref{corollary: Auto Saturated} and Corollary \ref{corollary: Maximal Tails Standing Assumptions}, any ideal $J$ of $C^*(E)$ where $E$ is a finite graph satisfying the standing assumptions can be written as
\begin{equation} \label{equation: Ideal Form}
J = I_H \cap {\bigcap}_{i = 1}^n \mathcal{R}_{v_i, K_i}
\end{equation}
where $H$ is a hereditary subset of $E^0$ and for $i \in \{1, 2, \dots, n \}$, $v_i \in E^0$ supports a single loop and $K_i$ is a closed subset of $\mathbb{T}$. If $J$ is a non-gauge-invariant ideal, then we may assume $J$ satisfies the following properties:
\begin{enumerate}
\item[{\crtcrossreflabel{(I1)}[property: I1]}] The vertices $v_1, \dots, v_n \in E^0$ are all distinct.
\item[{\crtcrossreflabel{(I2)}[property: I2]}] $K_1, \dots, K_n$ are all proper, non-empty, closed subsets of $\mathbb{T}$. 
\item[{\crtcrossreflabel{(I3)}[property: I3]}] If $j \in \{1, 2, \dots, n \}$ then $v_j \in H \cap \bigcap_{i = 1, \; i \neq j}^n (E^0 \backslash R(v_i))$.
\end{enumerate}
Let us justify property \ref{property: I3} further. Suppose that there exists $j \in \{1, 2, \dots, n \}$ such that $v_j \not \in H \cap \bigcap_{i = 1, \; i \neq j}^n (E^0 \backslash R(v_i))$. Note that $v_j \not \in E^0 \backslash R(v_j)$. By \eqref{equation: Sandwich}, 
\begin{IEEEeqnarray*}{rCl}
J & \subseteq & I_H \cap {\bigcap}_{i = 1, \; i \neq j}^n \mathcal{R}_{v_i, K_i} \cap I_{(E^0 \backslash R(v_j)) \sqcup \{ v_j \}}
\\
& = & I_H \cap {\bigcap}_{i = 1, \; i \neq j}^n \big( I_{E^0 \backslash R(v_i)} \cap \mathcal{R}_{v_i, K_i} \big) \cap I_{(E^0 \backslash R(v_j)) \sqcup \{ v_j \}} 
\\
& = & I_H \cap {\bigcap}_{i = 1, \; i \neq j}^n \mathcal{R}_{v_i, K_i} \cap I_{E^0 \backslash R(v_j)}
\\
& \subseteq & I_H \cap {\bigcap}_{i = 1}^n \mathcal{R}_{v_i, K_i} = J. 
\end{IEEEeqnarray*}
Therefore, 
\begin{equation*}
J = I_{H \cap \; (E^0 \backslash R(v_j))} \cap {\bigcap}_{i = 1, \; i \neq j}^n \mathcal{R}_{v_i, K_i} 
\end{equation*}
which shows that we can assume without loss of generality that $J$ satisfies property \ref{property: I3}. Assuming properties \ref{property: I1}, \ref{property: I2} and \ref{property: I3} will significantly simplify our treatment of non-gauge-invariant ideals in \cref{subsec: Cu E Ideal Lattice}. 

\subsection{Quantum double suspension graphs}
\label{subsec: QDS}

Let $E$ be a finite graph and suppose that there exists a vertex $v \in E^0$ supporting a single loop such that if $w \in E^0$ then $w \geq v$. Then the set 
\begin{equation*}
H = \{ x \in E^0 \; \vert \; v \not \geq x \} = E^0 \backslash \{ v \}
\end{equation*}
is hereditary and saturated. By \cite[Lemma 1.6]{DHS03}, there exists a row-finite graph $E_H$ such that $I_H \cong C^*(E_H)$. Applying \cite[Theorem 3.2]{Tom04} to $E_H$, we find that $I_H$ is stable. Thus by \cite[Theorem 4.1]{BPRS00}, $C^*(E)$ fits into the extension
\begin{equation*}
\begin{tikzcd}
0 \arrow[r] & C^*(H) \otimes K \arrow[r] & C^*(E) \arrow[r] & C(\mathbb{T}) \arrow[r] & 0. 
\end{tikzcd}
\end{equation*} 
In this subsection, we will show with the above assumption on $E$ that $C^*(E)$ is stably isomorphic to the graph C*-algebra of a \emph{quantum double suspension graph}, first defined in \cite[Section 6]{HS02}. Informally, the quantum double suspension of a finite graph $F$ is a finite graph $S^2 F$ formed by connecting every vertex in $F$ to a new vertex supporting a single loop with a single edge. The point is that in order to prove Theorem \ref{introtheorem: General Cuntz Krieger} in the case where the distinguished vertex $v$ supports a single loop, it suffices by invariance of the Cuntz semigroup under stabilisation to prove it for graph C*-algebras of quantum double suspension graphs, which is made accessible by the concrete realisations of the associated Busby invariant from $C(\mathbb{T})$ to the corona algebra $\mathcal{Q}(C^*(F) \otimes K)$ and the isomorphism $I_F \cong C^*(F) \otimes K$ from \cite[Proposition 6.2]{HS02}.

\begin{definition}[{\cite[Page 185]{HS02}}] \label{definition: QDS}
Let $E$ be a finite graph. The \textbf{quantum double suspension} of $E$, denoted by $S^2 E$, is the finite graph given by connecting all vertices $w \in E^0$ with a single edge $f_w$ to a new vertex $v$ supporting a single loop $g$ so that the vertices and edges of $S^2 E$ are
\begin{equation*}
(S^2 E)^0 = E^0 \cup \{ v \} \qquad \text{and} \qquad (S^2 E)^1 = E^1 \cup \{ f_w \; \vert \; w \in E^0 \} \cup \{ g \}
\end{equation*}
respectively. Furthermore, the source and range maps of $S^2 E$ extend those of $E$ and satisfy
\begin{equation*}
s_{S^2 E}(e) = 
\begin{cases}
w, \text{ if $w \in E^0$ and $e = f_w$,} \\
v, \text{ if $e = g$,}
\end{cases}
\end{equation*}
and
\begin{equation*}
r_{S^2 E}(e) = 
\begin{cases}
v, \text{ if $w \in E^0$ and $e = f_w$,} \\
v, \text{ if $e = g$}
\end{cases}
\end{equation*}
respectively.
\end{definition}

\begin{proposition} \label{proposition: QDS Forcing}
Let $E$ be a finite graph and suppose that there exists $v \in E^0$ such that $v$ supports a single loop and if $w \in E^0$ then $w \geq v$. Let $H = E^0 \backslash \{ v \}$. Then there exists a finite graph $F$ such that $C^*(E) \otimes K \cong C^*(S^2 F) \otimes K$ and $C^*(H) \otimes K \cong C^*(F) \otimes K$.
\end{proposition}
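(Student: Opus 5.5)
The plan is to take $F$ to be a suitably modified version of the restriction of $E$ to $H$. I will normalise the edges running from $H$ into $v$ by moves, so that the resulting graph is literally a quantum double suspension. Proposition \ref{proposition: Refined Moves Stable Iso} then gives the stable isomorphisms.

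First I record the structure of $E$. Since $H=\{x \mid v\not\geq x\}=E^0\backslash\{v\}$ and $v$ supports a single loop $g$, the only edge with source $v$ is $g$; otherwise $v$ would reach some vertex of $H$. Since $H$ is hereditary, every edge with range in $H$ has source in $H$. Hence the restriction $E_H$ (vertices $H$, edges $r^{-1}(H)$) is a finite graph. For $w \in H$ we have $r_E^{-1}(w)=r_{E_H}^{-1}(w)$, so $C^*(H)=C^*(E_H)$ with exactly the same Cuntz–Krieger relations. The graph $E$ is therefore $E_H$ together with the vertex $v$, the loop $g$, and $n_w\geq 0$ edges $w\to v$ for each $w\in H$. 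The hypothesis that every vertex reaches $v$ says that every $w\in H$ has a path to some $u$ with $n_u\geq 1$. The graph $E$ is $S^2E_H$ exactly when $n_w=1$ for all $w$. So the task is to make every $n_w$ equal to $1$, using moves that change $E_H$ only up to stable isomorphism and that never touch $v$ or $g$.

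The key steps, in order, are as follows.

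(1) Reduce multiplicities $n_w\geq 2$. When $w$ is regular in $E$, I apply move (I-) at $w$ to a partition of $s^{-1}(w)$ into pieces. Each edge $w\to v$ is placed in its own piece, and the edges with range in $H$ are placed in one further piece (possibly empty). This produces copies $w^i$, each receiving copies of all edges into $w$ and each emitting exactly one edge to $v$, except the copy carrying the $H$-edges, which emits none to $v$. The same partition applied to $E_H$ shows that the new restriction to $H$ is an (I-)-image of $E_H$, so it remains stably isomorphic to $E_H$. When $w$ is a source, (I-) is not available. I would instead first apply move (O) at $v$ or at range vertices to reroute, or argue directly with the projections $p_w$ that a source emitting $k$ edges can be split into $k$ sources with one edge each while keeping Morita equivalence. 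I will treat the source case separately.

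(2) Eliminate vertices with $n_w=0$. For such $w$ there is a path $w\to\dots\to u$ with $n_u\geq1$. I would try to use (O) and (I-) at the intermediate vertices, together with the edge $u\to v$, to manufacture an edge from $w$ to $v$. Failing that, I would use move (CO) on collapsible intermediate vertices, which merges length-two paths through $u$ into direct edges $w\to v$. Each time, I check that the same move restricted to $H$ is again an allowed move, so that $C^*(H)\otimes K$ is preserved at each stage. After step (2) I return to step (1) to fix any multiplicities greater than one that have been created.

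(3) Iterate until $n_w=1$ for every vertex. Since every move is compatible with the restriction to $H$, the final restriction $F$ satisfies $C^*(F)\otimes K\cong C^*(H)\otimes K$, and the final graph equals $S^2F$. Proposition \ref{proposition: Refined Moves Stable Iso} then yields both stable isomorphisms.

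I expect step (2), together with the source case in step (1), to be the main obstacle. The moves available only split or merge existing edges and vertices; they do not obviously create an edge out of a path, and termination of the iteration has to be controlled. If the combinatorial route fails, my fallback is classification. Both $C^*(E)$ and $C^*(S^2F)$ are full extensions of $C(\mathbb{T})$ by the stable ideal $C^*(F)\otimes K$, which has the corona factorisation property by Proposition \ref{proposition: Graph CFP}. By Theorem \ref{theorem: Kucerovsky Ng} these extensions are purely large, hence absorbing, and so they are determined up to stable isomorphism by their classes in $\mathrm{Ext}(C(\mathbb{T}),C^*(F)\otimes K)$. These classes are computed from the $K$-theoretic index data, namely the vector $(n_w)_w$ modulo the image of the adjacency matrix of $F$. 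One would then choose $F$, via (I-)-splittings of $E_H$, so that this class matches that of $S^2F$.
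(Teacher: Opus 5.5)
Your reduction is correct: the only edge leaving $v$ is $g$, so $E$ is $E_H$ together with $v$, $g$ and $n_w$ edges $w\to v$, and it suffices to reach $n_w=1$ for every $w$ by moves that restrict to moves on the $H$-part. However, the two steps that make up the proof are only stated as intentions, and as described they fail.

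\textbf{Step (2).} You mention (CO), but you have no procedure when the vertex you want to shortcut through supports loops, since (CO) is then forbidden. You also give no termination argument. The missing idea, which is the paper's Steps 1--2, is as follows.
\begin{itemize}
\item Pick a vertex $u$ that emits an edge to $v$ and receives an edge from a vertex that does not. Such a $u$ exists: take the last vertex not emitting to $v$ on a path to $v$.
\item If $u$ supports no loop, apply (CO) at $u$.
\item If $u$ supports loops, first apply (I-) at $u$ with the partition $L_u\sqcup(s^{-1}(u)\setminus L_u)$, where $L_u$ is the set of loops at $u$. This splits $u$ into $u_1$, carrying the loops, and a loop-free $u_2$, carrying all other edges including those to $v$. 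Then collapse $u_2$.
\end{itemize}
Afterwards $u_1$ and every predecessor of $u$ emit an edge to $v$. So the number of vertices emitting no edge to $v$ strictly decreases, which gives termination.

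\textbf{Step (1).} There are two problems with reducing multiplicities by (I-) at $w$.
\begin{itemize}
\item It is unavailable when $w$ is a source. You leave this case open, and it genuinely occurs, e.g.\ a source with two edges to $v$.
\item Your partition (each edge to $v$ alone, the $H$-edges in one further piece) creates a vertex $w^0$ with no edge to $v$ whenever $w$ emits into $H$. Repairing $w^0$ by (CO) can recreate multiplicity. If $w$ has two loops and two edges to $v$, then $w^0$ keeps two loops and emits two loop copies to each $w^i$. Collapsing $w^1$ then gives $w^0$ two edges to $v$, and the same configuration reappears with more vertices. So your alternation of (1) and (2) need not terminate. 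For regular $w$ you could avoid this by putting one edge to $v$ into the piece with the $H$-edges, but sources remain.
\end{itemize}

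The paper does all multiplicity reduction with a single move (O) at $v$, once every vertex emits at least one edge to $v$. Choose one edge $h_w$ from each $w$. Put $g$ together with all the $h_w$ in one block, and the surplus edges from each $w$ in further blocks. Then:
\begin{itemize}
\item the vertex $y$ for the block containing $g$ keeps a single loop and receives exactly one edge from every other vertex;
\item each new vertex $x_i$ emits exactly one edge, a copy of $g$, to $y$.
\end{itemize}
So the graph is $S^2F$. Inside $F$ the $x_i$ are regular sinks, and removing them by (S) recovers the modified $H$. This treats sources and regular vertices uniformly.

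\textbf{The Ext fallback.} This does not close the gap. It presupposes that $C^*(E)$ is a full extension, which is not evident for general $E$. The paper verifies fullness only for quantum double suspensions, via the explicit Busby invariant factoring through $\mathcal{Q}(K)$ (Lemma \ref{lemma: QDS Purely Large}), and this proposition is exactly what transfers the conclusion to general $E$. In addition, the computation of the unital extension class and the choice of $F$ are left unspecified.
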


\begin{proof}
We first modify $E$ so that every vertex in $E^0 \backslash \{ v \}$ emits at least one edge to $v$. \newline
\phantom{} \newline
\textbf{Step 1: Introducing edges to $v$.} \newline
\phantom{} \newline
Assume that $w \in E^0 \backslash \{ v \}$ emits edges to $v$. If $w$ does not receive any edges, then $w$ is a source vertex and there is nothing to do. If $w$ receives edges then there are two cases to consider. First suppose that $w$ does not support a loop. By applying move (CO) to the vertex $w$, we produce in the resulting graph edges whose sources lie in the set $\{ s(e) \; \vert \; e \in E^1, \; r(e) = w \}$ and whose ranges are $v$. \newline
\phantom{} \newline
Now if $w$ supports at least one loop, then let $L_w$ be the set of all loops based at $w$. We apply move (I-) to the vertex $w$ with the following partition of $s^{-1}(w)$:
\begin{equation} \label{equation: QDS I- Partition}
s^{-1}(w) = L_w \sqcup \big(s^{-1}(w) \backslash L_w \big).
\end{equation}
The vertex $w$ is replaced with two vertices $w_1$ and $w_2$. There are $k$ loops based at $w_1$ and $k$ edges from $w_1$ to $w_2$. On the other hand, there are no loops based at $w_2$ because there are no loops in the set $s^{-1}(w) \backslash L_w$. Moreover, the move (I-) introduces edges whose sources lie in the set
\begin{equation} \label{equation: I- Set}
\{ s(e) \; \vert \; e \in E^1, \; s(e) \neq w, \; r(e) = w \}
\end{equation}
and whose ranges are either $w_1$ or $w_2$.  \newline
\phantom{} \newline
Now since $w_2$ does not support any loops, we may apply (CO) to it. The edges whose sources lie in the set given in \eqref{equation: I- Set} and whose ranges are $w_2$ induce a new set of edges whose sources lie in the same set and whose ranges are now $v$. Similarly, the $k$ edges from $w_1$ to $w_2$ induce new edges from $w_1$ to $v$. Hence, we obtain a graph such that the vertices in the set $$\{ s(e) \; \vert \; e \in E^1, \; s(e) \neq w, \; r(e) = w \} \cup \{ w_1 \}$$ all emit edges to $v$. \newline
\phantom{} \newline
\textbf{Step 2:} We repeatedly apply step 1 to all vertices $w \in E^0 \backslash \{ v \}$ which emit edges to $v$, but receive edges from vertices which themselves do not emit an edge to $v$. The result is a graph where every vertex which emitted an edge to $w$ but previously did not emit an edge to $v$, now emits edges to $v$. The vertex $w$ is either deleted from the graph or replaced by a vertex which still emits edges to $v$. With each application of step 1, the number of vertices which do not emit edges to $v$ decreases. Since $E$ has finitely many vertices then this process must eventually terminate. We obtain a graph $F'$ where every vertex $w \in (F')^0 \backslash \{ v \}$ emits at least one edge to $v$. \newline
\phantom{} \newline
\textbf{Step 3: Ensuring every vertex emits exactly one edge to $v$.} \newline
\phantom{} \newline
Let $g$ denote the single loop based at $v$. If $w \in (F')^0 \backslash \{ v \}$ then let
\begin{equation*}
(F')_{w}^1 = \{ e \in E^1 \; \vert \; \text{$s(e) = w$ and $r(e) = v$} \} 
\end{equation*}
and $h_w$ be an edge in $(F')_{w}^1$. Apply move (O) to $v$ with the following partition of $r^{-1}(v)$:
\begin{equation} \label{equation: QDS O Partition}
{\bigsqcup}_{w \in (F')^0 \backslash \{ v \}, \; \lvert (F')_w^1 \rvert > 1} \big((F')_{w}^1 \backslash \{ h_w \} \big) \sqcup \big\{ g, h_w \; \vert \; w \in (F')^0 \backslash \{ v \} \big\}. 
\end{equation}
The idea behind this partition is that the set containing the single loop $g$ takes exactly one edge from all the sets $(F')_{w}^1$. Applying move (O) with this partition replaces $v$ with vertices $\{ x_1, x_2, \dots, x_r \} \sqcup \{ y \}$ such that for $i \in \{1, 2, \dots, r \}$, $x_i$ emits exactly one edge to $y$. The vertex $y$ receives exactly one edge from every vertex $w \in (F')^0 \backslash \{ v \}$ and supports a single loop. So if $G$ is the resulting graph whose vertices are
\begin{equation*}
G^0 = \big( (F')^0 \backslash \{ v \} \big) \sqcup \{ x_1, \dots, x_r \} \sqcup \{ y \}
\end{equation*}
and $F$ is the subgraph of $G$ whose vertices are $G^0 \backslash \{ y \}$ then $G = S^2 F$. By Proposition \ref{proposition: Refined Moves Stable Iso}, $C^*(E)$ and $C^*(S^2 F)$ are stably isomorphic. To see that $C^*(H)$ and $C^*(F)$ are also stably isomorphic, we track how the application of moves to $E$ in each step affects $H$. \newline
\phantom{} \newline
In step 2, the repeated application of step 1 consists of moves which all happen within $H$. Let $H'$ be the finite graph obtained from $H$ after the application of such moves. The application of move (O) in step 3 produces extra vertices which emit exactly one edge to a vertex supporting a single loop. Within the subgraph $F$ of $S^2 F$, these new vertices are regular sinks. By applying move (S) to remove them, we recover the finite graph $H'$ from $F$. Thus by Proposition \ref{proposition: Refined Moves Stable Iso}, $C^*(F)$ is stably isomorphic to $C^*(H')$ and hence, stably isomorphic to $C^*(H)$. This completes the proof. 
\end{proof}

\begin{example}
To illustrate the steps in the proof of Proposition \ref{proposition: QDS Forcing}, suppose that $E$ is the graph
\begin{equation*}
\begin{tikzpicture}[baseline=(current  bounding  box.center)] 
\node (1) at (0, 0) {$x$};
\node (2) at (3, 0) {$w$};
\node (3) at (6, 0) {$v$};
\draw[->] (1) to [bend left = 20] (2);
\draw[->] (1) to [bend right = 20] (2);
\draw[->] (2) to [out = 45, in = 135, looseness = 10] (2);
\draw[->] (2) to [out = 225, in = 315, looseness = 10] (2);
\draw[->] (2) to (3);
\draw[->] (3) to [out = 45, in = 135, looseness = 10] (3);
\end{tikzpicture}
\end{equation*}
Applying move (I-) to the vertex $w$ with the partition in \eqref{equation: QDS I- Partition} produces the graph \vspace{-1 em}
\begin{equation} \label{equation: QDS Example 1}
\begin{tikzpicture}[baseline=(current  bounding  box.center)] 
\node (1) at (0, 0) {$x$};
\node (2) at (3, 1) {$w_1$};
\node (4) at (3, -1) {$w_2$};
\node (3) at (6, 0) {$v$};
\draw[->] (1) to node[midway, above] {(2)} (2);
\draw[->] (1) to node[midway, below] {(2)} (4);
\draw[->] (2) to node[midway, right] {(2)} (4);
\draw[->] (4) to (3);
\draw[->] (2) to [out = 15, in = 75, looseness = 10] (2);
\draw[->] (2) to [out = 90, in = 150, looseness = 10] (2);
\draw[->] (3) to [out = 45, in = 135, looseness = 10] (3);
\end{tikzpicture}
\end{equation}
In graph \ref{equation: QDS Example 1}, the numbers labelling the edges denote their multiplicities. Next, applying move (CO) to the vertex $w_2$ in graph \eqref{equation: QDS Example 1} produces the graph \vspace{-1 em}
\begin{equation} \label{equation: QDS Example 2}
\begin{tikzpicture}[baseline=(current  bounding  box.center)] 
\node (1) at (0, -1) {$x$};
\node (2) at (0, 1) {$w_1$};
\node (3) at (3, 0) {$v$};
\draw[->] (1) to node[midway, left] {(2)} (2);
\draw[->] (1) to node[midway, below] {(2)} (3);
\draw[->] (2) to node[midway, above] {(2)} (3);
\draw[->] (2) to [out = 15, in = 75, looseness = 10] (2);
\draw[->] (2) to [out = 90, in = 150, looseness = 10] (2);
\draw[->] (3) to [out = 45, in = 135, looseness = 10] (3);
\end{tikzpicture}
\end{equation}
Finally, applying move (O)  to the vertex $v$ in graph \ref{equation: QDS Example 2} with the partition in \eqref{equation: QDS O Partition} produces the following graph, which as promised by Proposition \ref{proposition: QDS Forcing}, is a quantum double suspension graph: \vspace{-1 em}
\begin{equation*} 
\begin{tikzpicture}[baseline=(current  bounding  box.center)] 
\node (1) at (0, -1) {$x$};
\node (2) at (0, 1) {$w_1$};
\node (3) at (4, -1) {$x_2$};
\node (4) at (4, 1) {$x_1$};
\node (5) at (6, 0) {$y$};
\draw[->] (1) to node[midway, left] {(2)} (2);
\draw[->] (2) to (4);
\draw[->] (1) to (3);
\draw[->] (1) to (5);
\draw[->] (2) to (5);
\draw[->] (3) to (5);
\draw[->] (4) to (5);
\draw[->] (2) to [out = 15, in = 75, looseness = 10] (2);
\draw[->] (2) to [out = 90, in = 150, looseness = 10] (2);
\draw[->] (5) to [out = 315, in = 45, looseness = 10] (5);
\end{tikzpicture}
\end{equation*}
\end{example} 

\section{The Cu-semigroup $\Cu_E$ associated to a finite graph $E$}
\label{sec: Cu E}

In this section, we introduce and study the central object of this paper: a Cu-semigroup $\Cu_E$ associated to a finite graph $E$ satisfying the standing assumptions. The definition of $\Cu_E$ is based on a finite, acyclic graph $S(E)$ called the \emph{skeleton} of $E$, which retains only the graphical information from $E$ that is necessary to describe the Cuntz semigroup of $C^*(E)$. \newline
\phantom{} \newline
In \cref{subsec: Cu E Ideal Lattice}, we utilise the results on the ideals of $C^*(E)$ from \cref{subsec: Standing Assumptions} to prove that there is a bijection from the ideal lattice of $C^*(E)$ to the ideal lattice of $\Cu_E$. This observation is the primary motivation behind the construction of the map $\Delta_E: \Cu_E \rightarrow \Cu(C^*(E))$ in Definition \ref{definition: Delta E}; see Remark \ref{remark: Delta E From Singleton} for further details. In \cref{subsec: Exact Sequence}, we show that $\Cu_E$ fits into a short exact sequence of positively ordered monoids, mirroring the one induced by writing $C^*(E)$ as an extension by a gauge-invariant ideal. The Cu-morphisms comprising the short exact sequence in \eqref{equation: Cu E SES} will be extensively used in the proof of Theorem \ref{introtheorem: Cu E Iso}; see Lemma \ref{lemma: Ideal Square} and Lemma \ref{lemma: Quotient Square} in particular. 

\subsection{The skeleton $S(E)$ of a finite graph $E$}

Let $F$ be a finite graph. The following structural features of $F$ make no difference to $\Cu(C^*(F))$:
\begin{enumerate}
\item If $v \in F^0$ is either a regular sink or a regular transitional vertex, then $[p_v]$ is a linear combination of Cuntz classes $[p_w]$ such that $w$ emits an edge to $v$. Thus, $[p_v]$ does not contribute any novel Cuntz classes to $\Cu(C^*(F))$. By Proposition \ref{proposition: Collapse Graph}, such vertices can be removed from $F$ without changing $\Cu(C^*(F))$. This is why we may assume that a finite graph $E$ satisfies the standing assumptions in order to define $\Cu_E$. 
\item If $v, w \in F^0$ are vertices that lie on the same cycle, then by the Cuntz-Krieger relations, $[p_v] = [p_w]$ because $v$ and $w$ admit finite paths to each other. Hence, after we reduce to the case where $F$ satisfies the standing assumptions using Proposition \ref{proposition: Collapse Graph}, we will further simplify $F$ by collapsing all cycles, including their vertices, to a single vertex.
\item Let $n \geq 1$ and $e_1, e_2, \dots, e_n \in F^1$ be distinct edges with the same source and range vertices $v$ and $w$ respectively. Suppose also that $v \neq w$ and $w$ supports at least one cycle. By the Cuntz-Krieger relations, if $i \in \{1, 2, \dots, n \}$ then $[p_{w}] + [s_{e_i} s_{e_i}^*] = [p_{w}]$ and $[p_w] + \infty \cdot [p_v] = [p_w]$. Notice that this holds regardless of the value of $n$. So similarly to the second reduction, we will collapse such edges to a single edge. 
\end{enumerate}
By applying the second and third reductions to a finite graph $E$ satisfying the standing assumptions, we obtain a graph with no cycles which together with a colouring of the vertices with three colours, we call the skeleton of $E$. By construction, the skeleton of $E$ will retain only the information from $E$ that is essential to the Cuntz semigroup of $C^*(E)$ and in \cref{subsec: Cu E}, will be used to defined $\Cu_E$. 

\begin{definition} 
A \textbf{skeleton} is a pair $(S, c)$ where $S$ is a finite, acyclic graph such that there exists at most one edge between any pair of distinct vertices in $S$ and $c$ is a function from $S^0$ to $\{0, 1, \infty \}$. 
\end{definition}

Throughout the paper, we will more simply refer to a skeleton $(S, c)$ by its underlying finite graph $S$. Now to a finite graph $E$ satisfying the standing assumptions, we associate to it a skeleton by carrying out the reductions outlined for the second and third structural features of a finite graph which are redundant to the Cuntz semigroup. 

\begin{definition} \label{definition: Skeleton Graph}
Let $E$ be a finite graph satisfying the standing assumptions. Define an equivalence relation $\sim_E$ on $E^0$ by setting for $v, w \in E^0$, $v \sim_E w$ if and only if $v \geq w$ and $w \geq v$. That is, $v$ and $w$ both belong to the same strongly connected component in $E$. The \textbf{skeleton} of $E$, denoted by $S(E)$, is a graph defined by the following data:
\begin{enumerate}
\item The vertices $S(E)^0$ are elements of the set $E^0/\sim_E$, which will be denoted by $\check{v}$ for $v \in E^0$. 
\item If $\check{v}, \check{w} \in S(E)^0$ are distinct then connect $\check{v}$ to $\check{w}$ by a single edge if and only if there exist $v', w' \in E^0$ and $e \in E^1$ such that $v \sim_E v'$, $w \sim_E w'$, $s(e) = v'$ and $r(e) = w'$ in the graph $E$. 
\end{enumerate}
The graph $S(E)$ is also accompanied by the function $c: S(E)^0 \rightarrow \{0, 1, \infty \}$ defined by
\begin{equation*}
c(\check{v}) = 
\begin{cases}
0, \text{ if $v \in E^0$ is a source,} \\
1, \text{ if $v \in E^0$ supports a single loop,} \\
\infty, \text{ if $v \in E^0$ supports more than one cycle.} 
\end{cases}
\end{equation*}
\end{definition}

By construction, the pair $(S(E), c)$ is a skeleton. Observe that if $v, w \in E^0$ then $v \geq w$ if and only if $\check{v} \geq \check{w}$ in $S(E)$. Hence, there is a bijective correspondence
\begin{equation} \label{equation: Hereditary Skeleton}
\begin{matrix}
\big\{ \text{Hereditary subsets of $E^0$} \big\} & \longleftrightarrow & \big\{ \text{Hereditary subsets of $S(E)^0$}  \big\} \\
H & \mapsto & \langle H \rangle = \{ \check{v} \in S(E)^0 \; \vert \; v \in H \} \\
S^{\dagger} & \mapsfrom & S
\end{matrix}
\end{equation}
where for a hereditary subset $S$ of $S(E)^0$,
\begin{equation} \label{equation: Dagger}
S^{\dagger} = \{ x \in E^0 \; \vert \; \text{There exists $y \in E^0$ such that $\check{y} \in S$ and $x \geq y$} \}.
\end{equation}
Subsequently, the hereditary subsets of $S(E)^0$ recover the gauge-invariant ideal lattice of $C^*(E)$ by \eqref{equation: Hereditary Saturated} and Corollary \ref{corollary: Auto Saturated}. Furthermore, we will show that the number of vertices of $S(E)$ corresponds directly to the size of the gauge-invariant ideal lattice of $C^*(E)$. This fact facilitates the core strategy for proving Theorem \ref{introtheorem: Cu E Iso} --- by induction over the gauge-invariant ideal lattice of $C^*(E)$. 

\begin{definition} \label{definition: Gauge Length}
Let $E$ be a finite graph and $n > 0$. The graph C*-algebra $C^*(E)$ is said to have \textbf{gauge length} $n$ if there exists an ascending chain of gauge-invariant ideals
\begin{equation*}
I_0 = \{ 0 \} \subseteq I_1 \subseteq \dots \subseteq I_{n-1} \subseteq I_n = C^*(E)
\end{equation*}
such that if $j \in \{0, 1, \dots, n-1 \}$ and $H_j$ is the hereditary subset corresponding to $I_j$ via \eqref{equation: Hereditary Saturated} then the quotient $C^*(H_{j+1} \backslash H_j)$ is gauge-simple. 
\end{definition}

\begin{lemma} \label{lemma: Gauge Simple}
Let $E$ be a finite graph satisfying the standing assumptions. Then $C^*(E)$ is gauge-simple (equivalently, has gauge length one) if and only if $\lvert S(E)^0 \rvert = 1$. 
\end{lemma}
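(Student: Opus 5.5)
By \eqref{equation: Hereditary Saturated} and Corollary \ref{corollary: Auto Saturated}, gauge-invariant ideals of $C^*(E)$ correspond bijectively to hereditary subsets of $E^0$. By \eqref{equation: Hereditary Skeleton}, hereditary subsets of $E^0$ correspond bijectively to hereditary subsets of $S(E)^0$. So $C^*(E)$ is gauge-simple if and only if the only hereditary subsets of $S(E)^0$ are $\emptyset$ and $S(E)^0$. The plan is therefore to show that this happens exactly when $\lvert S(E)^0 \rvert = 1$, using that $S(E)$ is acyclic.

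If $\lvert S(E)^0 \rvert = 1$, the only subsets of $S(E)^0$ are $\emptyset$ and $S(E)^0$, so there is nothing to prove. Equivalently on the level of $E$: every vertex is $\sim_E$-equivalent to every other, so a nonempty hereditary $H \subseteq E^0$ containing some $y$ contains every $x$ with $x \geq y$, which is all of $E^0$.

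Conversely, suppose $\lvert S(E)^0 \rvert \geq 2$. Since $S(E)$ is finite and acyclic, the relation $\geq$ on $S(E)^0$ is a partial order: antisymmetry holds because two distinct classes with paths in both directions would produce a cycle in $S(E)$. Pick a minimal element $\check{w}$, that is, a vertex whose only successor under $\geq$ is itself (for instance a sink of $S(E)$). Then the set $L(\check{w}) = \{ \check{x} \mid \check{x} \geq \check{w} \}$ is hereditary and contains $\check{w}$, so it is nonempty.

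The key point is to check that this set is proper. If it were all of $S(E)^0$, then every class would reach $\check{w}$. Choose the element $\check{u}$ to be maximal instead; there are at least two elements, so a maximal $\check{u}$ can be taken distinct from $\check{w}$ whenever the poset is not a chain ending at $\check{w}$. The cleaner choice is to take $\check{w}$ \emph{maximal} (a source of $S(E)$) and use the complement instead. Then $S(E)^0 \backslash \{ \check{w} \}$ is hereditary: if $\check{x} \geq \check{y}$ with $\check{y} \neq \check{w}$, then $\check{x} \neq \check{w}$ would fail only if $\check{w} \geq \check{y}$, which is allowed. So that complement is not the right set either.

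The correct choice is $H = \{ \check{x} \mid \check{x} \not\leq \ldots \}$; more directly, take $\check{w}$ maximal and set $H = S(E)^0 \backslash \{\check{w}\}$. This is hereditary because, for $\check{x} \geq \check{y} \in H$, having $\check{x} = \check{w}$ would still be permitted by maximality, so the case needs checking. Instead, take $\check{w}$ minimal and $H = \{\check{w}\}$ upward-closed... The definition of hereditary used in the paper is upward-closed under $\geq$ (if $x \geq y \in H$ then $x \in H$). So take $\check{w}$ maximal, that is, nothing is $\geq \check{w}$ except itself. Then $\{\check{w}\}$ is hereditary: any $\check{x} \geq \check{w}$ must equal $\check{w}$. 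It is nonempty, and it is proper because $\lvert S(E)^0 \rvert \geq 2$.

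Pulling $\{\check{w}\}$ back via $^{\dagger}$ gives a proper, nonempty hereditary (hence saturated) subset of $E^0$, and therefore a nontrivial gauge-invariant ideal. So $C^*(E)$ is not gauge-simple.

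The only delicate step is the antisymmetry of $\geq$ on $S(E)^0$, which guarantees a maximal element exists. This follows from the definition of $\sim_E$ as strongly connected components, so I expect no real obstacle.
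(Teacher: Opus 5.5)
Your argument is correct, but the final version sits under several abandoned attempts that you should delete. Some of them are false as stated. $L(\check w)$ for a minimal $\check w$ need not be proper. $S(E)^0 \backslash \{\check w\}$ is not hereditary when $\check w$ is maximal; it is hereditary when $\check w$ is minimal. The surviving argument is sound: for a maximal $\check w$ of the finite poset $(S(E)^0, \geq)$ the singleton $\{\check w\}$ is hereditary, so $\{\check w\}^\dagger$ is a proper, nonempty hereditary subset of $E^0$, and it is saturated by Corollary~\ref{corollary: Auto Saturated}.

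Your route differs from the paper's in both directions.

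For $\lvert S(E)^0 \rvert = 1 \Rightarrow$ gauge-simple, the paper does not use the lattice of hereditary sets. It splits on $c(\check v) \in \{0, 1, \infty\}$ and identifies $C^*(E)$ as $\mathbb{C}$, $C(\mathbb{T})$ or a simple purely infinite algebra. Your combinatorial argument is shorter and needs no external results. The paper's case analysis, however, is reused later: Lemma~\ref{lemma: Base Case} cites ``the proof of'' this lemma for exactly that identification.

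For the converse, the paper avoids choosing an extremal element. Given two distinct classes, one fails to reach the other, say $\check v \not\geq \check w$. Then $v \notin L(w)$, so $L(w)$ is already a proper nonempty hereditary set. Your maximal-element version also works, since antisymmetry of $\geq$ on $S(E)^0$ follows directly from the definition of $\sim_E$. It is just slightly more machinery than the statement needs.
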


\begin{proof}
First assume that $S(E)^0 = \{ \check{v} \}$. If $c(\check{v}) = 0$ then by standing assumptions \ref{property: S1} and \ref{property: S2}, $v$ is a source vertex in $E$. Moreover, $v$ is the unique vertex of $E$ and consequently, $C^*(E) \cong \mathbb{C}$ is simple. If $c(\check{v}) = 1$ then $v$ supports a single loop in $E$. Since $S(E)^0 = \{ \check{v} \}$ then $v$ is again the unique vertex in $E$ and so $C^*(E) \cong C(\mathbb{T})$ is gauge-simple. Finally, if $c(\check{v}) = \infty$ then $v$ supports multiple cycles in $E$ and thus, $E$ cannot consist of a single cycle. Since every vertex $w \in E^0$ satisfies $v \geq w$ and $w \geq v$ then by \cite[Proposition 4.2]{Rae05} and \cite[Remark 5.6]{BPRS00}, $C^*(E)$ is simple purely infinite. In all cases, we have that $C^*(E)$ is gauge-simple. \newline
\phantom{} \newline
Conversely, assume that $\lvert S(E)^0 \rvert > 1$. Let $\check{v}, \check{w} \in S(E)^0$ be distinct. By definition of $S(E)$, either $\check{v} \not \geq \check{w}$ or $\check{w} \not \geq \check{v}$ in $E$. So either $L(v)$ or $L(w)$ is a proper hereditary subset of $E^0$. Hence, $C^*(E)$ is not gauge-simple. 
\end{proof}

Note that without the standing assumptions, Lemma \ref{lemma: Gauge Simple} is not true. For instance, if $E$ consists of a single edge between two vertices then $\lvert S(E)^0 \rvert = 2$, but $C^*(E) \cong M_2(\mathbb{C})$ is gauge-simple.

\begin{lemma} \label{lemma: Gauge Length}
Let $E$ be a finite graph satisfying the standing assumptions. Then the gauge length of $C^*(E)$ is $\lvert S(E)^0 \rvert$. 
\end{lemma}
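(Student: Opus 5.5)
We argue by induction on $\lvert S(E)^0 \rvert$, building a chain of gauge-invariant ideals one skeleton vertex at a time. By \eqref{equation: Hereditary Saturated}, Corollary \ref{corollary: Auto Saturated} and the correspondence \eqref{equation: Hereditary Skeleton}, gauge-invariant ideals of $C^*(E)$ correspond bijectively to hereditary subsets of $S(E)^0$. So a chain of gauge-invariant ideals is the same as a chain of hereditary subsets $\emptyset = S_0 \subseteq S_1 \subseteq \dots \subseteq S_n = S(E)^0$ of the finite acyclic graph $S(E)$, and the statement reduces to two points. First, such a chain exists with $n = \lvert S(E)^0 \rvert$ and all successive subquotients gauge-simple. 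Second, any chain with gauge-simple subquotients has exactly this length, which gives well-definedness of the gauge length.

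For existence, I would use that $S(E)$ is acyclic. The relation $\geq$ on $S(E)^0$ is then a partial order, so we may list $S(E)^0 = \{\check v_1, \dots, \check v_n\}$ as a linear extension in which each $\check v_{j+1}$ is minimal for $\geq$ among the remaining vertices. Here "hereditary" means closed upward under $\geq$, i.e. if $x \geq y \in H$ then $x \in H$. Put $S_j = \{\check v_1, \dots, \check v_j\}$ with the order chosen so that each $S_j$ is hereditary: we add first the vertices that nothing remaining lies above, i.e. sources of $S(E)$ among the remaining vertices. Set $H_j = S_j^\dagger$ as in \eqref{equation: Dagger}; each $H_j$ is hereditary in $E^0$ and $H_{j+1} \setminus H_j$ is exactly one $\sim_E$-class $[v_{j+1}]$. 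The subgraph on $H_{j+1} \setminus H_j$ has vertex set a single strongly connected component. I would then check that this restricted graph still satisfies the standing assumptions, so that Lemma \ref{lemma: Gauge Simple} applies and gives gauge-simplicity. Source, single-loop and multiple-cycle classes are preserved by restriction, no regular sinks appear since a singleton class that is not a source supports a loop, and no transitional vertices appear.

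The subtle point is the identification of the quotient: $C^*(H_{j+1}\setminus H_j)$ in Definition \ref{definition: Gauge Length} must be understood as the subquotient $I_{H_{j+1}}/I_{H_j}$, up to Morita equivalence. I would justify this by citing \cite[Theorem 4.1]{BPRS00} for the quotient by $I_{H_j}$ and \eqref{equation: Full Corner Iso} for the ideal. A vertex of a singleton class that is a source in $E$ may become a source in the restricted graph, but then its $C^*$-algebra is $\mathbb{C}$, still gauge-simple. This is the step I expect to need care: making sure that restricting to a strongly connected component of a graph with the standing assumptions yields one of the three gauge-simple building blocks. Here I would use that, inside a single class, cycles and loops are unchanged by deleting edges going to other classes.

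For uniqueness of length, any chain with gauge-simple subquotients corresponds to hereditary subsets $S_j$. If some difference $S_{j+1}\setminus S_j$ contained two skeleton vertices, then by the converse direction of Lemma \ref{lemma: Gauge Simple}, applied to the restricted graph, the subquotient would have a proper nontrivial hereditary subset and fail to be gauge-simple. Every difference is nonempty since the subquotients are nonzero, so each step adds exactly one vertex and $n = \lvert S(E)^0 \rvert$.
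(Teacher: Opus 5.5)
Your proof is correct, but it is organised differently from the paper's. The paper inducts on the gauge length. From a chain of length $n+1$ it restricts to the hereditary subgraph $H_n$ and gets $\lvert S(H_n)^0\rvert = n$ from the inductive hypothesis. It gets $\lvert S(E\backslash H_n)^0\rvert = 1$ from Lemma \ref{lemma: Gauge Simple} and adds the two counts. This shows every admissible chain has length $\lvert S(E)^0\rvert$, but tacitly assumes such a chain exists.

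You instead work directly in the lattice of hereditary subsets of $S(E)^0$ (despite announcing an induction, you never use one). A topological sort of the acyclic skeleton gives existence. The converse half of Lemma \ref{lemma: Gauge Simple} forces each step of any admissible chain to add exactly one skeleton vertex, which gives uniqueness. This buys existence as well as well-definedness, and identifies maximal chains of gauge-invariant ideals with linear extensions of the reachability order on $S(E)$.

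Three points need tidying.
\begin{enumerate}
\item Under the paper's convention (hereditary sets are closed under predecessors), the vertex added at each step must be maximal for $\geq$, meaning nothing remaining reaches it. Your ``sources first'' clarification says exactly this; ``minimal'' is the wrong direction.
\item In the uniqueness step the restricted graph $F_j$ on $H_{j+1}\backslash H_j$ may contain several classes, and Lemma \ref{lemma: Gauge Simple} fails without the standing assumptions. So you must check two things: that $F_j$ satisfies them, and that $S(F_j)$ has as many vertices as $S_{j+1}\backslash S_j$. Both are easy.
  \begin{itemize}
  \item $H_{j+1}\backslash H_j$ is path-convex in $E$: if $x \geq u \geq y$ with $x,y$ in it, so is $u$. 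Hence $\sim_{F_j}$ is the restriction of $\sim_E$.
  \item By \ref{property: S1} and \ref{property: S2}, every vertex of $E$ is a source or supports a cycle. Both properties pass to $F_j$, and so does the set of cycles at each vertex, so \ref{property: S3} is inherited as well.
  \end{itemize}
  The paper uses the same facts implicitly for $H_n$ and $E\backslash H_n$.
\item Non-emptiness of each step rests on the convention that gauge-simple algebras are non-zero. The paper also uses this implicitly.
\end{enumerate}
Finally, in the existence step you can bypass the standing assumptions entirely. A single strongly connected component has no proper non-empty hereditary subsets, so it is gauge-simple outright.
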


\begin{proof}
We proceed by induction on the gauge length of $C^*(E)$. The base case where $C^*(E)$ has gauge length one is done by Lemma \ref{lemma: Gauge Simple}. Now suppose that the statement is true for graphs $E$ such that $C^*(E)$ has gauge length $n > 0$. Assume that $C^*(E)$ has gauge length $n + 1$. There exists an ascending chain of gauge-invariant ideals
\begin{equation*}
\{ 0 \} = I_0 \subseteq I_1 \subseteq \dots \subseteq I_n \subseteq I_{n+1} = C^*(E)
\end{equation*}
and its associated ascending chain of hereditary subsets
\begin{equation*}
\emptyset = H_0 \subseteq H_1 \subseteq \dots \subseteq H_n \subseteq H_{n+1} = E^0
\end{equation*}
such that if $j \in \{0, 1, \dots, n \}$ then $C^*(H_{j+1} \backslash H_j)$ is gauge-simple. Restricting to the subgraph $H_n$, we have the ascending chain of gauge-invariant ideals
\begin{equation*}
\{ 0 \} = I_0 \subseteq I_1 \subseteq \dots \subseteq I_{n-1} \subseteq C^*(H_n).
\end{equation*}
So $C^*(H_n)$ has gauge length $n$ and by the inductive hypothesis, $\lvert S(H_n)^0 \rvert = n$. Now by assumption, the quotient $C^*(E \backslash H_n)$ is gauge-simple and subsequently $\lvert S(E \backslash H_n)^0 \rvert = 1$ by the base case. Therefore, 
\begin{equation*}
\lvert S(E)^0 \rvert = \lvert S(H_n)^0 \rvert +  \lvert S(E \backslash H_n)^0 \rvert = n + 1.
\end{equation*}
This completes the induction. 
\end{proof}

In the following definition, we isolate specific vertices of hereditary subsets of a skeleton which are critical to understanding the order and the way-below relation on $\Cu_E$ in the next subsection.

\begin{definition}
Let $(S, c)$ be a skeleton and $H$ be a hereditary subset of $S$. We say that a vertex $v \in H$ is \textbf{terminal} (or a terminal vertex for $H$) if for $w \in H$ satisfying $v \geq w$, $v = w$. The set of vertices in $H$ which are terminal is denoted by $H_t$. 
\end{definition} 

The following lemma provides a useful connection between a hereditary subset of a skeleton and its terminal vertices. 

\begin{lemma} \label{lemma: Hereditary Terminal}
Let $S$ be a skeleton and $H$ be a hereditary subset of $S$. For $v \in S^0$, let $L_S(v)$ be the hereditary subset $\{ x \in S^0 \; \vert \; x \geq v \}$. Then
\begin{equation*}
H = {\bigcup}_{v \in H_t} L_S(v). 
\end{equation*}
\end{lemma}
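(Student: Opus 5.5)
The plan is to prove the two inclusions separately, using only the definitions of a hereditary subset, of $L_S(v)$ and of terminal vertices, together with the fact that $S$ is a finite acyclic graph.

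For the inclusion $\bigcup_{v \in H_t} L_S(v) \subseteq H$, let $v \in H_t$ and $x \in L_S(v)$. Then $v \in H$ and $x \geq v$. Since $H$ is hereditary, in the convention of the paper where $x \geq y$ and $y \in H$ imply $x \in H$, we get $x \in H$.

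For the reverse inclusion, let $x \in H$. We must find a terminal vertex $v \in H_t$ with $x \geq v$. Consider the set
\begin{equation*}
D(x) = \{ w \in H \; \vert \; x \geq w \},
\end{equation*}
which is non-empty because $x \geq x$ via the path of length zero. We choose $v \in D(x)$ that is minimal for $\geq$. Such a $v$ exists because $D(x)$ is finite and, since $S$ is acyclic, $\geq$ is a partial order on $S^0$. Antisymmetry is the key point here: if $a \geq b$, $b \geq a$ and $a \neq b$, then concatenating the two paths gives a closed path through distinct vertices, which contains a cycle and contradicts acyclicity.

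It remains to check that $v$ is terminal. Suppose $w \in H$ and $v \geq w$. By transitivity $x \geq w$, so $w \in D(x)$, and minimality of $v$ forces $w = v$. Hence $v \in H_t$ and $x \in L_S(v)$.

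The only step requiring care is the existence of a minimal element, that is, ruling out an infinite descending chain or a loop back. Finiteness of $S$ together with antisymmetry from acyclicity handles this. Concretely, one could instead follow edges out of $x$ inside $H$, which stays in $H$ because every vertex on such a path is $\geq$ its end... but following edges forward only guarantees $x \geq$ each vertex, not membership in $H$. So we argue via $D(x) \subseteq H$ by definition rather than by heredity, which avoids the issue entirely.
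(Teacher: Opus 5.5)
Your argument is correct and is essentially the paper's: the paper argues by contradiction, using non-terminality repeatedly to build an infinite chain $x \geq w_1 \geq w_2 \geq \cdots$ of pairwise distinct vertices of $H$. That chain argument is just the unpacked form of your choice of a $\geq$-minimal element of the finite set $D(x)$, with acyclicity of $S$ supplying antisymmetry in both versions, and your closing aside about following edges forward can be dropped because defining $D(x)$ as a subset of $H$ already handles membership in $H$.
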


\begin{proof}
The inclusion $\bigcup_{v \in H_t} L_S(v)\subseteq H$ follows from the assumption that $H$ is hereditary. To see that the reverse inclusion holds, assume that $w \in H$. We claim that there exists $v \in H_t$ such that $w \geq v$. Suppose for the sake of contradiction that $w$ does not admit a finite path to any terminal vertex for $H$. Then $w$ is not a terminal vertex for $H$ and so, there exists $w_1 \in H$ such that $w \geq w_1$ and $w_1 \not \geq w$. By our assumption on $w$, $w_1$ is also not terminal. \newline
\phantom{} \newline
By repeating this argument indefinitely, there exist vertices $w_1, w_2, \dots$ in $H$ which are not terminal vertices for $H$ such that
\begin{equation*}
w \geq w_1 \geq w_2 \geq w_3 \geq \dots
\end{equation*}
and $w_{i+1} \not \geq w_i$ for $i \in \{1, 2, \dots \}$. In particular, the vertices $w, w_1, w_2, \dots$ are all pairwise distinct and hence, we have constructed a path of infinite length in $H$, contradicting the fact that $S$ has finitely many vertices. Thus, there exists a terminal vertex $x \in H$ such that 
\begin{equation*}
w \in L_S(x) \subseteq {\bigcup}_{v \in H_t} L_S(v). 
\end{equation*}
So $H \subseteq \bigcup_{v \in H_t} L_S(v)$. 
\end{proof}

\subsection{Definition of $\Cu_E$ and proof that it is a Cu-semigroup}
\label{subsec: Cu E}

Let $(S, c)$ be a skeleton and $H$ be a hereditary subset of $S$. Define $H_\kappa$ to be the set
\begin{equation*}
H_\kappa = \{ x \; \vert \; x \in H, \; c(x) \neq 1 \} \sqcup \big\{ \mathbb{T}_{x} \; \vert \; x \in H, \; c(x) = 1 \big\}.
\end{equation*}
Intuitively, if $E$ is a finite graph satisfying the standing assumptions, then $\Cu_E$ consists of functions from $\langle E \rangle_\kappa = (S(E)^0)_\kappa$ to $\overline{\mathbb{N}}$ satisfying properties which reflect certain properties of $\Cu(C^*(E))$. In particular, if $f \in \Cu_E$, $f(\check{v})$ is non-zero, $\check{w} \geq \check{v}$ and $\check{w} \neq \check{v}$, then we will insist that $f(\check{w}) = \infty$; this property corresponds to the generalisation of the Cuntz-Krieger relations for $\Cu(C^*(E))$ which come from applying Theorem \ref{introtheorem: General Cuntz Krieger} to subgraphs of $C^*(E)$ satisfying the necessary assumptions. \newline
\phantom{} \newline
More generally, we will associate to a skeleton $S$ a Cu-semigroup $\Cu_S$. To be clear about notation in the definition which follows, if $f \in \Cu_S$, $v \in S^0$ and $c(v) = 1$, then $f(v)$ refers to the restriction of $f$ to $\mathbb{T}_{v}$. When we write $f(v) = 0$, we mean that $f(v)$ is equal to the zero function on $\mathbb{T}_{v}$. Similarly, $f(v) = \infty$ means that $f(v)$ is equal to the constant, infinite-valued function on $\mathbb{T}_{v}$. 

\begin{definition} \label{definition: Cu E}
Let $(S, c)$ be a skeleton. Define $\Cu_S$ to be the set of functions $f: S_\kappa \rightarrow \overline{\mathbb{N}}$ which satisfy the following properties: 
\begin{enumerate}
\item If $v \in S^0$ and $c(v) = 1$ then $f(v) \in \Lsc(\mathbb{T}, \overline{\mathbb{N}})$. 
\item If $v \in S^0$ and $c(v) = \infty$ then $f(v) \in \{0, \infty \}$. 
\item If $v, w \in S^0$, $w \geq v$, $w \neq v$ and $f(v) \neq 0$ then $f(w) = \infty$. 
\end{enumerate}
In the special case where $(S, c)$ is the skeleton of a finite graph $E$ satisfying the standing assumptions, we denote $\Cu_{S(E)}$ more simply by $\Cu_E$. 
\end{definition}

Observe that if $(S, c)$ is a skeleton and $f \in \Cu_S$, then by the third property in Definition \ref{definition: Cu E}, the \textbf{vertex support} of $f$, denoted by
\begin{equation*}
\supp_0(f) = \{ v \in S^0 \; \vert \; f(v) \neq 0 \},
\end{equation*}
is a hereditary subset of $S^0$. Equipped with the pointwise addition and order, it follows from direct computation that $\Cu_S$ is a positively ordered monoid. In particular, $\Cu_S$ is closed under addition because the sum of two functions in $\Cu_S$ remain 0 and $\infty$ at precisely the points in $S_\kappa$ needed for the sum to lie in $\Cu_S$ by Definition \ref{definition: Cu E}. \newline
\phantom{} \newline
Lemma \ref{lemma: Order Alternative} is an alternative characterisation of the pointwise order on $\Cu_S$, which says that the order on $\Cu_S$ only depends on certain terminal vertices of the vertex support of the larger function.

\begin{lemma} \label{lemma: Order Alternative}
Let $(S, c)$ be a skeleton and $f, g \in \Cu_S$. Then $f \leq g$ if and only if the following statements are satisfied:
\begin{enumerate}
\item $\supp_0(f) \subseteq \supp_0(g)$. 
\item Suppose that $w \in \supp_0(g)$ is a terminal vertex such that $c(w) \neq \infty$. Then $f(w) \leq g(w)$. 
\end{enumerate}
\end{lemma}

\begin{proof}
The ``only if" direction follows from the fact that the order on $\Cu_S$ is pointwise. Conversely, assume that $f, g \in \Cu_S$ satisfy the two properties in the statement of the lemma. Assume that $w \in S^0$. If $w \in \supp_0(g)$ is terminal and $c(w) \neq \infty$ then by the second assumption, $f(w) \leq g(w)$. Otherwise, $g(w) \in \{0, \infty \}$ and since $\supp_0(f) \subseteq \supp_0(g)$ then $f(w) \leq g(w)$. Using the fact that the orders on $\overline{\mathbb{N}}$ and $\Lsc(\mathbb{T}, \overline{\mathbb{N}})$ are both pointwise, $f \leq g$ pointwise. 
\end{proof}

We would like a characterisation of the way-below relation $\ll$ on $\Cu_S$ which is similar to Lemma \ref{lemma: Order Alternative}. We may speak of the way-below relation in this context because $\Cu_S$ satisfies \ref{axiom: O1}. Indeed, if $\{ h_n \}_{n > 0}$ is an increasing sequence in $\Cu_S$, then its supremum is simply the pointwise supremum. 

\begin{lemma} \label{lemma: Way Below}
Let $(S, c)$ be a skeleton and $f, g \in \Cu_S$. Then $f \ll g$ if and only if the following statements are satisfied:
\begin{enumerate}
\item $\supp_0(f) \subseteq \supp_0(g)$. 
\item Suppose that $w \in \supp_0(g)$ is a terminal vertex satisfying $c(w) \neq \infty$. Then $f(w) \ll g(w)$. 
\end{enumerate}
\end{lemma}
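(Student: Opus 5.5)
We prove both directions directly from the definition of $\ll$. Throughout we use that suprema of increasing sequences in $\Cu_S$ are computed pointwise, as noted before the statement, and that $\supp_0$ of any element of $\Cu_S$ is hereditary. We also use the standard descriptions of $\ll$ in the components. In $\overline{\mathbb{N}}$, $a \ll b$ iff $a \leq b$ and $a < \infty$. In $\{0,\infty\}$ every element is compact. In $\Lsc(\mathbb{T},\overline{\mathbb{N}})$, $\ll$ is the way-below relation of that Cu-semigroup (\cite{Rob13b}). We will not need to unpack the last one; we only use it abstractly as a way-below relation.

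\emph{Only if.} Assume $f \ll g$. Then $f \leq g$, so $\supp_0(f) \subseteq \supp_0(g)$.

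Fix a terminal vertex $w \in \supp_0(g)$ with $c(w) \neq \infty$. Let $\{ h_n \}$ be an increasing sequence in the component at $w$ ($\overline{\mathbb{N}}$ or $\Lsc(\mathbb{T},\overline{\mathbb{N}})$) with $g(w) \leq \sup_n h_n$. Define $H_n \in \Cu_S$ by $H_n(w) = h_n$ and $H_n(x) = g(x)$ for $x \neq w$. The one point that needs care, and the main obstacle, is checking that $H_n$ satisfies property (3) of Definition \ref{definition: Cu E}.
\begin{enumerate}
\item Take $v = w$. If $h_n \neq 0$, then every $w' \geq w$ with $w' \neq w$ satisfies $g(w') = \infty$, because $g(w) \neq 0$ and $g \in \Cu_S$. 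So the condition holds.
\item Take $v \neq w$ with $H_n(v) = g(v) \neq 0$ and $w \geq v$. Then $v \in \supp_0(g)$, and $w \geq v$ with $v \neq w$ contradicts terminality of $w$. So no such $v$ exists.
\item All other instances of (3) involve only values of $g$, so they hold because $g \in \Cu_S$.
\end{enumerate}
The sequence $\{ H_n \}$ is increasing with pointwise supremum at least $g$. By $f \ll g$ there is $m$ with $f \leq H_m$, and in particular $f(w) \leq h_m$. Hence $f(w) \ll g(w)$.

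\emph{If.} Assume conditions (1) and (2), and let $\{ h_n \}$ be increasing in $\Cu_S$ with $g \leq \sup_n h_n$. Since $S^0$ is finite, it suffices to find, for each $v \in \supp_0(f)$, an index $m_v$ such that $f(v) \leq h_n(v)$ for all $n \geq m_v$; we then take $m = \max_v m_v$. Fix $v \in \supp_0(f) \subseteq \supp_0(g)$. There are three cases.
\begin{enumerate}
\item If $v$ is terminal in $\supp_0(g)$ and $c(v) \neq \infty$, then $f(v) \ll g(v) \leq \sup_n h_n(v)$, which gives $m_v$ directly.
\item If $c(v) = \infty$, then $g(v) = \infty$. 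Compactness of $\infty$ in $\{0,\infty\}$ gives $h_n(v) = \infty \geq f(v)$ eventually.
\item If $v$ is not terminal in $\supp_0(g)$, apply Lemma \ref{lemma: Hereditary Terminal} to the hereditary set $\supp_0(g)$. This gives a terminal $u \in \supp_0(g)$ with $v \geq u$ and $v \neq u$. Since $\sup_n h_n(u) \geq g(u) \neq 0$, and the supremum is pointwise (also pointwise on $\mathbb{T}_u$ when $c(u) = 1$), some $h_{m_v}(u) \neq 0$. Then property (3) of Definition \ref{definition: Cu E} forces $h_n(v) = \infty \geq f(v)$ for all $n \geq m_v$.
\end{enumerate}
Taking $m = \max_v m_v$ gives $f \leq h_m$ pointwise, so $f \ll g$.
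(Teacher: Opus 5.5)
Your proof is correct. The ``only if'' direction is the same as the paper's: you modify $g$ only at the terminal vertex $w$ and test $f \ll g$ against the resulting sequence. Your explicit check of property (3) of Definition \ref{definition: Cu E}, which the paper only asserts, also shows that the paper's side condition $n_1 \neq 0$ is unnecessary. Terminality of $w$ keeps $H_n$ in $\Cu_S$ even when $h_n = 0$.

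For the ``if'' direction your route is different.

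\begin{itemize}
\item The paper first uses that $S^0$ has only finitely many hereditary subsets. This gives an index beyond which $\supp_0(a_m)$ has stabilised to $\supp_0(a) \supseteq \supp_0(g)$. It then invokes Lemma \ref{lemma: Order Alternative}, so that only terminal vertices of $\supp_0(a)$ need checking. Implicitly, such a vertex lying in $\supp_0(g)$ is also terminal for $\supp_0(g)$, so hypothesis (2) applies to it.
\item You argue vertex by vertex instead:
  \begin{itemize}
  \item terminal vertices of $\supp_0(g)$ with $c \neq \infty$ come from hypothesis (2);
  \item vertices with $c = \infty$ come from compactness of $\infty$ in $\{0,\infty\}$;
  \item for a non-terminal vertex $v$ of $\supp_0(g)$, you pick a terminal $u$ below it via Lemma \ref{lemma: Hereditary Terminal}, and property (3) forces $h_n(v) = \infty$ as soon as $h_n(u) \neq 0$.
  \end{itemize}
\end{itemize}

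Your version is more elementary and self-contained. It needs neither the stabilisation of supports nor Lemma \ref{lemma: Order Alternative}, and it shows explicitly why non-terminal vertices impose no condition. The paper's version is more structural: it reduces the comparison to terminal vertices of the larger support, which is exactly the shape of the characterisation being proved. Both arguments ultimately rest on finiteness of $S^0$.
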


\begin{proof}
Assume that $f, g \in \Cu_S$ and $f \ll g$. Then $f \leq g$ and by Lemma \ref{lemma: Order Alternative}, $\supp_0(f) \subseteq \supp_0(g)$. To see that the second property is satisfied, assume that $w \in \supp_0(g)$ is a terminal vertex satisfying $c(w) = 0$. Then $g(w) \in \overline{\mathbb{N}} \backslash \{ 0 \}$. Let $\{ n_k \}_{k > 0}$ be an increasing sequence in $\overline{\mathbb{N}}$ such that $n_1 \neq 0$ and $g(w) \leq \sup_{k > 0} n_k$. If $k \in \{1, 2, \dots \}$ then the function 
\begin{equation*}
\begin{matrix}
h_k: & S_{\kappa} & \longrightarrow & \overline{\mathbb{N}} \\
& v & \longmapsto & 
\begin{cases}
n_k, \text{ if $v = w$}, \\
g(v), \text{ otherwise}.
\end{cases}
\end{matrix}
\end{equation*}
gives an increasing sequence $\{ h_k \}_{k}$ in $\Cu_S$. Let $h = \sup_{k > 0} h_k$ and $n = \sup_{k > 0} n_k$. Since the supremum in $\Cu_S$ is given pointwise, $${\sup}_{k > 0} \; h_k = h \geq g.$$ Now using that $f \ll g$, there exists $\ell \in \{1, 2, \dots \}$ such that $f \leq h_\ell$. In particular, $f(w) \leq h_\ell(w) = n_\ell$. So $f(w) \ll g(w)$ in $\overline{\mathbb{N}}$. An analogous argument handles the case where $c(w) = 1$. This proves the ``only if" direction of the lemma. \newline
\phantom{} \newline
Conversely, assume that the two properties in the statement of the lemma are satisfied. Let $\{ a_m \}_{m > 0}$ be an increasing sequence in $\Cu_S$ with supremum $a$ and suppose that $a \geq g$. Then we have an ascending chain 
\begin{equation*}
\supp_0(a_1) \subseteq \supp_0(a_2) \subseteq \supp_0(a_3) \subseteq \dots
\end{equation*}
of hereditary subsets of $S^0$. Since $S$ has finitely many vertices then there are finitely many hereditary subsets of $S^0$. Hence, the ascending chain must terminate and there exists $m' \in \{1, 2, \dots \}$ such that for $j \geq 0$, $$\supp_0(g) \subseteq \supp_0(a) = \supp_0(a_{m' + j}) = \supp_0(a_{m'}).$$ Now assume that $w \in \supp_0(a)$ is a terminal vertex such that $c(w) \neq \infty$. If $w \not \in \supp_0(g)$ then $w \not \in \supp_0(f)$ and so for $\ell \geq m'$, $f(w) = 0 \leq a_\ell(w)$. Otherwise, if $w \in \supp_0(g)$ then by the second assumption, $$f(w) \ll g(w) \leq a(w)$$ in $\overline{\mathbb{N}}$ or $\Lsc(\mathbb{T}, \overline{\mathbb{N}})$ and there exists $\ell' \geq m'$ such that $f(w) \leq a_{\ell'}(w)$. Consequently, there exists $\ell \geq m'$ large enough such that if $w \in \supp_0(a) = \supp_0(a_\ell)$ is terminal  and $c(w) \neq \infty$ then $f(w) \leq a_{\ell}(w)$. Since $\supp_0(f) \subseteq \supp_0(g) \subseteq \supp_0(a) = \supp_0(a_\ell)$ then by Lemma \ref{lemma: Order Alternative}, $f \leq a_\ell$ and thus, $f \ll g$ in $\Cu_S$. This completes the proof. 
\end{proof}

With Lemma \ref{lemma: Way Below}, we can now check that $\Cu_S$ satisfies \ref{axiom: O3}.

\begin{lemma} \label{lemma: Axiom O3}
Let $(S, c)$ be a skeleton and $f, f', g, g' \in \Cu_S$. If $f' \ll f$ and $g' \ll g$ then $f' + g' \ll f + g$. That is, $\Cu_S$ satisfies \ref{axiom: O3}. 
\end{lemma}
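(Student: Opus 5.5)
We verify the two conditions of Lemma \ref{lemma: Way Below} for the pair $f' + g'$ and $f + g$.

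First, the supports. Since $f' \ll f$ and $g' \ll g$, Lemma \ref{lemma: Way Below} gives $\supp_0(f') \subseteq \supp_0(f)$ and $\supp_0(g') \subseteq \supp_0(g)$. Values lie in $\overline{\mathbb{N}}$, $\{0,\infty\}$ or $\Lsc(\mathbb{T}, \overline{\mathbb{N}})$, where a sum is zero exactly when both summands are zero. Hence $\supp_0(h_1 + h_2) = \supp_0(h_1) \cup \supp_0(h_2)$ for all $h_1, h_2 \in \Cu_S$. It follows that $\supp_0(f'+g') \subseteq \supp_0(f+g)$, which is the first condition.

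For the second condition, let $w \in \supp_0(f+g)$ be terminal with $c(w) \neq \infty$. We must show $f'(w) + g'(w) \ll f(w) + g(w)$ in $\overline{\mathbb{N}}$ or $\Lsc(\mathbb{T}, \overline{\mathbb{N}})$. We split into cases according to whether $w$ lies in $\supp_0(f)$, in $\supp_0(g)$, or in both.

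The key observation is that terminality passes to the smaller hereditary set. If $w \in \supp_0(f) \subseteq \supp_0(f+g)$ is terminal for $\supp_0(f+g)$, then it is terminal for $\supp_0(f)$. Any $u \in \supp_0(f)$ with $w \geq u$ lies in $\supp_0(f+g)$, so $u = w$. Applying Lemma \ref{lemma: Way Below} to $f' \ll f$ then gives $f'(w) \ll f(w)$.

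If instead $w \notin \supp_0(f)$, then $f(w) = 0$. Since $\supp_0(f') \subseteq \supp_0(f)$, also $f'(w) = 0$, and trivially $f'(w) \ll f(w)$. The same two arguments apply to $g$. In every case we obtain
\begin{equation*}
f'(w) \ll f(w) \quad \text{and} \quad g'(w) \ll g(w).
\end{equation*}
We then conclude using that $\overline{\mathbb{N}}$ and $\Lsc(\mathbb{T}, \overline{\mathbb{N}})$ satisfy \ref{axiom: O3}. For $\overline{\mathbb{N}}$ this is elementary, since $x \ll y$ iff $x < \infty$ and $x \leq y$. For $\Lsc(\mathbb{T}, \overline{\mathbb{N}})$ it holds because it is isomorphic to $\Cu(C(\mathbb{T}))$ by \cite[Theorem 1.1]{Rob13b}, and Cuntz semigroups are Cu-semigroups by \cite[Theorem 1]{CEI08}. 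Lemma \ref{lemma: Way Below} then yields $f' + g' \ll f + g$.

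The main point needing care is the terminality transfer, which is what makes the case analysis go through. A vertex terminal for $\supp_0(f+g)$ must be terminal for $\supp_0(f)$ whenever it belongs to it, and this uses only that $\supp_0(f) \subseteq \supp_0(f+g)$. The remaining steps are routine.
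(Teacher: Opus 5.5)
Your proof is correct and follows the paper's argument. Both verify the two conditions of Lemma \ref{lemma: Way Below} for $f'+g'$ and $f+g$, using that a terminal vertex of $\supp_0(f+g)$ lying in $\supp_0(f)$ (or $\supp_0(g)$) is terminal there, and then invoke (O3) in $\overline{\mathbb{N}}$ or $\Lsc(\mathbb{T}, \overline{\mathbb{N}})$; you simply make explicit the case $w \notin \supp_0(f)$, where $f'(w)=f(w)=0$, and the justification of (O3) in these two semigroups, which the paper leaves implicit.
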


\begin{proof}
Assume that $f, f', g, g' \in \Cu_S$ are such that $f' \ll f$ and $g' \ll g$. Then $f' + g' \leq f + g$ pointwise and $\supp_0(f' + g') \subseteq \supp_0(f + g)$. Next, assume that $x \in \supp_0(f + g) = \supp_0(f) \cup \supp_0(g)$ is a terminal vertex for $\supp_0(f + g)$ such that $c(x) \neq \infty$. Then either $x\in \supp_0(f)$ is terminal or $x \in \supp_0(g)$ is terminal. In either case, we have
\begin{equation*}
(f' + g')(x) = f'(x) + g'(x) \ll f(x) + g(x) = (f + g)(x).
\end{equation*}
Hence by Lemma \ref{lemma: Way Below}, $f' + g' \ll f + g$ in $\Cu_S$.
\end{proof}

To verify that $\Cu_S$ satisfies \ref{axiom: O2}, we will rely on a decomposition of an arbitrary function in $\Cu_S$ into a sum of functions with particular vertex supports. 

\begin{definition} \label{definition: Singletons}
Let $(S, c)$ be a skeleton and $v \in S^0$ be such that $c(v) = 0$ and $n \in \overline{\mathbb{N}} \backslash \{ 0 \}$. Let $L_S(v) = \{ x \in S^0 \; \vert \; x \geq v \}$. Define 
\begin{equation*}
\theta_{v, n} = n \cdot \chi_{\{ v \}} + \infty \cdot \chi_{L_S(v)_\kappa \backslash \{ v \}}. 
\end{equation*}
Next suppose that $c(v) = \infty$. Define $\theta_{v, \infty} = \infty \cdot \chi_{L_S(v)_\kappa}$. Finally, suppose that $c(v) = 1$ and $f \in \Lsc(\mathbb{T}, \overline{\mathbb{N}}) \backslash \{ 0 \}$. Define 
\begin{equation*}
\theta_{v, f} = f \vert_{\mathbb{T}_{v}} + \infty \cdot \chi_{L_S(v)_\kappa \backslash \mathbb{T}_{v}}.
\end{equation*}
\end{definition}

The functions in Definition \ref{definition: Singletons} are all elements of $\Cu_S$ with vertex support $L_S(v)$. 

\begin{lemma} \label{lemma: Singleton Decomp}
Let $(S, c)$ be a skeleton and $f \in \Cu_S$. Then
\begin{equation} \label{equation: Singleton Decomp}
f = {\sum}_{v \in \supp_0(f)_t} \theta_{v, f(v)}.
\end{equation}
\end{lemma}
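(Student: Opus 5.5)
We check the claimed equality pointwise on $S_\kappa$, using Lemma \ref{lemma: Hereditary Terminal} to organise the vertices. Since addition in $\Cu_S$ is pointwise, we evaluate both sides of \eqref{equation: Singleton Decomp} at each $w \in S^0$, or on $\mathbb{T}_w$ when $c(w) = 1$. Write $H = \supp_0(f)$. By Definition \ref{definition: Cu E}, $H$ is hereditary. For $v \in H_t$, the value $f(v)$ is a non-zero element of $\overline{\mathbb{N}}$, of $\{0,\infty\}$, or of $\Lsc(\mathbb{T},\overline{\mathbb{N}})$ according to $c(v)$. Hence each $\theta_{v,f(v)}$ is well defined by Definition \ref{definition: Singletons}, and its vertex support is $L_S(v)$.

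First, if $w \notin H$, then $f(w) = 0$. Since $H$ is hereditary and $H_t \subseteq H$, the vertex $w$ cannot lie in $L_S(v)$ for any $v \in H_t$: otherwise $w \geq v \in H$ would force $w \in H$. So every summand vanishes at $w$ and both sides equal $0$.

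Second, let $w \in H$. By Lemma \ref{lemma: Hereditary Terminal}, $w \in L_S(v)$ for at least one $v \in H_t$. There are two cases.
\begin{enumerate}
\item Suppose $w \in H_t$. Then $w$ lies in $L_S(v)$ for $v \in H_t$ only when $v = w$. Indeed, $w \geq v$ with $v \in H$ and $w$ terminal gives $w = v$. So exactly one summand contributes at $w$, namely $\theta_{w,f(w)}$. By Definition \ref{definition: Singletons} its value at $w$ is $f(w)$: the restriction $f|_{\mathbb{T}_w}$ when $c(w)=1$, and $f(w) = \infty$ when $c(w)=\infty$.
\item Suppose $w \in H \setminus H_t$. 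Then some $v \in H_t$ satisfies $w \geq v$ with $w \neq v$. Property (3) of Definition \ref{definition: Cu E}, applied with $f(v) \neq 0$, gives $f(w) = \infty$. Meanwhile $\theta_{v,f(v)}$ takes the value $\infty$ at $w$, since $w \in L_S(v)_\kappa \setminus \{v\}$ (or $\setminus \mathbb{T}_v$). Every other summand is either $0$ or $\infty$ at $w$, and $\infty + \text{anything} = \infty$, so the sum also equals $\infty$ there.
\end{enumerate}

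The only point requiring care is the terminal case (1). Uniqueness of the contributing summand there is what prevents overcounting: a finite value $f(w)$, or a finite-valued function in $\Lsc(\mathbb{T},\overline{\mathbb{N}})$, must not be added to itself. The argument is just the definition of a terminal vertex combined with heredity of $H$. Everything else is bookkeeping with $\infty$, so I expect no real obstacle.
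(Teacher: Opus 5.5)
Your proof is correct and is essentially the paper's argument. The paper shows via Lemma \ref{lemma: Hereditary Terminal} that the right-hand side has the same vertex support as $f$, then invokes Lemma \ref{lemma: Order Alternative}, which packages exactly your pointwise check: values are forced to be $\infty$ at non-terminal support vertices, and must agree at terminal ones. You have unpacked that lemma by hand, including the uniqueness of the contributing summand at terminal vertices, which the paper leaves implicit.
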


\begin{proof}
By Definition \ref{definition: Singletons},  the RHS of \eqref{equation: Singleton Decomp} is an element of $\Cu_S$, which we denote by $g$. By construction of $g$ and Lemma \ref{lemma: Hereditary Terminal},
\begin{equation*}
\supp_0(g) = {\bigcup}_{v\in \supp_0(f)_t} L_S(v) = \supp_0(f).
\end{equation*}
Lemma \ref{lemma: Order Alternative} finishes the proof.
\end{proof}

\begin{lemma} \label{lemma: Axiom O2}
Let $(S, c)$ be a skeleton and $f \in \Cu_S$. There exists a rapidly increasing sequence $\{ f_n \}_{n > 0}$ in $\Cu_S$ such that $\sup_{n > 0} f_n = f$. That is, $\Cu_S$ satisfies \ref{axiom: O2}. 
\end{lemma}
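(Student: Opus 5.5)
We will use the decomposition of Lemma \ref{lemma: Singleton Decomp} to reduce to the singleton functions $\theta_{v, f(v)}$, and combine it with axiom \ref{axiom: O3}, already established in Lemma \ref{lemma: Axiom O3}. Once every $\theta_{v, f(v)}$ is written as the supremum of a rapidly increasing sequence $\{ \theta^{(v)}_n \}_{n > 0}$ in $\Cu_S$, we set $f_n = \sum_{v \in \supp_0(f)_t} \theta^{(v)}_n$. Lemma \ref{lemma: Axiom O3} then gives $f_n \ll f_{n+1}$. Because suprema of increasing sequences in $\Cu_S$ are pointwise and addition is pointwise, $\sup_n f_n = \sum_v \theta_{v, f(v)} = f$ by Lemma \ref{lemma: Singleton Decomp}. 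So it suffices to treat $f = \theta_{v, m}$, where $m$ lies in $\overline{\mathbb{N}} \backslash \{0\}$, $\{\infty\}$ or $\Lsc(\mathbb{T}, \overline{\mathbb{N}}) \backslash \{0\}$ according to $c(v)$.

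For such an $f$ we have $\supp_0(f) = L_S(v)$, and its unique terminal vertex is $v$. We choose a rapidly increasing sequence $\{ m_n \}_{n > 0}$ with supremum $m$, all of whose terms are non-zero, in the relevant Cu-semigroup. For $c(v) = 0$ we take $m_n = \min\{n, m\}$, which is non-zero and way-below the next term in $\overline{\mathbb{N}}$; this works both for finite $m$, where the sequence is eventually constant and compact, and for $m = \infty$. For $c(v) = 1$ we use that $\Lsc(\mathbb{T}, \overline{\mathbb{N}}) \cong \Cu(C(\mathbb{T}))$ is a Cu-semigroup, so $m$ is the supremum of a rapidly increasing sequence $\{ m_n \}$. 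For $c(v) = \infty$ we take the constant sequence $\theta_{v, \infty}$. We then define $\theta^{(v)}_n = \theta_{v, m_n}$ (respectively $\theta_{v,\infty}$). The supremum is $\theta_{v, m}$ pointwise.

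To verify $\theta^{(v)}_n \ll \theta^{(v)}_{n+1}$ we use Lemma \ref{lemma: Way Below}. Both functions have support $L_S(v)$, so the only terminal vertex to check is $v$, and there we need exactly $m_n \ll m_{n+1}$. When $c(v) = \infty$ the condition on terminal vertices is vacuous, so $\theta_{v, \infty} \ll \theta_{v, \infty}$.

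The main obstacle is the case $c(v) = 1$: a rapidly increasing sequence in $\Lsc(\mathbb{T}, \overline{\mathbb{N}})$ may begin with the zero function, and then $\theta_{v, m_n}$ is undefined because it would have support strictly smaller than $L_S(v)$. We handle this by discarding the initial zero terms, which leaves the sequence rapidly increasing. This is possible because $m \neq 0$ and the supremum is pointwise, so lower semicontinuity forces $m_n \neq 0$ for all large $n$.
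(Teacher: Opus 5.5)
Your proposal is correct and follows essentially the same route as the paper. You decompose $f$ via Lemma \ref{lemma: Singleton Decomp}, approximate each $\theta_{v, f(v)}$ by a rapidly increasing sequence of singletons that are non-zero at $v$ (checked with Lemma \ref{lemma: Way Below}), and sum these using Lemma \ref{lemma: Axiom O3}. Your explicit handling of the compact case $c(v) = \infty$ and of discarding initial zero terms when $c(v) = 1$ fills in details the paper leaves implicit; for the latter, monotonicity together with $m \neq 0$ already suffices, so lower semicontinuity is not needed.
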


\begin{proof}
Assume that $f \in \Cu_S$. By Lemma \ref{lemma: Singleton Decomp},
\begin{equation*} 
f = {\sum}_{v \in \supp_0(f)_t} \theta_{v, f(v)}.
\end{equation*}
If $v \in \supp_0(f)_t$ and $c(v) = 0$ then $f(v) \neq 0$ in $\overline{\mathbb{N}}$ and there exists a rapidly increasing sequence $\{ n_{v, k} \}_{k > 0}$ such that $n_{v, 1} \neq 0$ and $\sup_{k > 0} n_{v, k} = f(v)$. Hence the sequence $\{ \theta_{v, n_{v, k}} \}_{k > 0}$ is a rapidly increasing sequence in $\Cu_S$ satisfying ${\sup}_{k > 0} \; \theta_{v, n_{v, k}}  =  \theta_{v, f(v)}$. If $c(v) = 1$ then the same argument shows that we can produce a rapidly increasing sequence in $\Cu_S$ with supremum $\theta_{v, f(v)}$. By Lemma \ref{lemma: Way Below} and Lemma \ref{lemma: Axiom O3}, we obtain a rapidly increasing sequence in $\Cu_S$ with supremum $f$. 
\end{proof}

\begin{theorem}
Let $(S, c)$ be a skeleton. Then $\Cu_S$ is a Cu-semigroup. In particular, if $E$ is a finite graph satisfying the standing assumptions then $\Cu_E$ is a Cu-semigroup. 
\end{theorem}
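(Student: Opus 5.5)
We need to verify the four axioms \ref{axiom: O1}--\ref{axiom: O4} for $\Cu_S$, together with the fact (already observed after Definition \ref{definition: Cu E}) that $\Cu_S$ is a positively ordered monoid under pointwise addition and order. Two of the axioms are already in hand: \ref{axiom: O2} is Lemma \ref{lemma: Axiom O2} and \ref{axiom: O3} is Lemma \ref{lemma: Axiom O3}. So the plan is to handle \ref{axiom: O1} and \ref{axiom: O4} directly and then cite these lemmas.

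For \ref{axiom: O1}, let $\{h_n\}_{n>0}$ be increasing in $\Cu_S$ and let $h$ be its pointwise supremum. We check that $h$ satisfies the three defining properties of Definition \ref{definition: Cu E}.
\begin{enumerate}
\item If $c(v)=1$, then $h(v)$ is a pointwise supremum of an increasing sequence in $\Lsc(\mathbb{T},\overline{\mathbb{N}})$, hence lower semicontinuous.
\item If $c(v)=\infty$, then $h(v)$ is a supremum of elements of $\{0,\infty\}$, hence lies in $\{0,\infty\}$.
\item Suppose $w\geq v$, $w\neq v$ and $h(v)\neq 0$. Since $h(v)$ is a pointwise supremum, some $h_m(v)\neq 0$ (for $c(v)=1$, some point of $\mathbb{T}_v$ has nonzero value at some stage). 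Then $h_m(w)=\infty$, so $h(w)=\infty$.
\end{enumerate}
Thus $h\in\Cu_S$. Since the order on $\Cu_S$ is pointwise, $h$ is the supremum of the sequence in $\Cu_S$, as was already noted before Lemma \ref{lemma: Way Below}.

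For \ref{axiom: O4}, suprema of increasing sequences are pointwise, and so is addition. It therefore suffices that in $\overline{\mathbb{N}}$ we have $\sup_n (a_n+b_n)=\sup_n a_n+\sup_n b_n$ for increasing sequences. This holds because both sides are monotone limits, with the value $\infty$ handled trivially. Applying this at each point of $S_\kappa$, including each point of each circle $\mathbb{T}_v$, gives the axiom.

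The statement for $\Cu_E$ is then the special case $S=S(E)$. No step is a genuine obstacle. The only point needing care is property (3) in \ref{axiom: O1}: nonvanishing of the supremum at $v$ must be detected at a finite stage of the sequence, which is immediate because suprema are taken pointwise.
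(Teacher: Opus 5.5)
Your proof is correct and follows the paper's argument: (O1) and (O4) come from the fact that suprema of increasing sequences in $\Cu_S$ are pointwise, and (O2) and (O3) are cited from Lemmas \ref{lemma: Axiom O2} and \ref{lemma: Axiom O3}. The only difference is that you explicitly check that the pointwise supremum satisfies the three conditions of Definition \ref{definition: Cu E}, in particular detecting non-vanishing at a finite stage for condition (3), whereas the paper just asserts that the supremum is pointwise.
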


\begin{proof}
Recall that $\Cu_S$ satisfies \ref{axiom: O1} because the supremum of an increasing sequence in $\Cu_S$ is simply given by the pointwise supremum. It follows directly from this that $\Cu_S$ satisfies \ref{axiom: O4}. Finally, by Lemma \ref{lemma: Axiom O2} and Lemma \ref{lemma: Axiom O3}, $\Cu_S$ satisfies \ref{axiom: O2} and \ref{axiom: O3}. So $\Cu_S$ is a Cu-semigroup. 
\end{proof}

\subsection{The ideal lattice of $\Cu_E$}
\label{subsec: Cu E Ideal Lattice}

Throughout this subsection, $E$ is a finite graph satisfying the standing assumptions. Recall from \eqref{equation: Ideal Form} that any ideal $J$ of $C^*(E)$ can be written as
\begin{equation} \label{equation: Non Gauge Invariant Form}
J = I_H \cap {\bigcap}_{i = 1}^n \mathcal{R}_{v_i, K_i}
\end{equation}
where $H$ is a hereditary subset of $E^0$, for $i \in \{1, 2, \dots, n \}$, $v_i \in E^0$ supports a single loop and $K_i$ is a closed subset of $\mathbb{T}$. If $J$ is non-gauge-invariant, then we will assume without loss of generality that it satisfies properties \ref{property: I1}, \ref{property: I2} and \ref{property: I3}. In the degenerate case where $n = 0$, the ideal in \eqref{equation: Non Gauge Invariant Form} becomes the gauge-invariant ideal $I_H$. \newline
\phantom{} \newline
Define $\mathcal{D}_J$ to be the hereditary subset
\begin{equation} \label{equation: Hereditary DJ}
\mathcal{D}_J = \Big( H \cap {\bigcap}_{i = 1}^n (E^0 \backslash R(v_i)) \Big) \sqcup \{v_1, \dots, v_n \}. 
\end{equation}
In the degenerate case where $n = 0$, we set $\mathcal{D}_J = H$. By \eqref{equation: Sandwich}, the gauge-invariant ideal $I_{\mathcal{D}_J}$ is the smallest gauge-invariant ideal containing $J$. \newline
\phantom{} \newline
In \eqref{equation: Ideal Lattice 2} and \eqref{equation: Ideal Lattice 1}, we define a map $\mathcal{I}$ which maps an ideal of $C^*(E)$ to an appropriate ideal in $\Cu_E$. We will prove that this map is injective in Lemma \ref{lemma: Non Gauge Invariant Ideal}. In the proof of Lemma \ref{lemma: Non Gauge Invariant Ideal}, we need to show that if $I$ and $J$ are two non-gauge-invariant ideals and $\mathcal{I}(I) = \mathcal{I}(J)$, then the finite sets of vertices which correspond to the non-gauge-invariant parts of $I$ and $J$ in \eqref{equation: Non Gauge Invariant Form} are equal. Lemma \ref{lemma: Maximal Tail Is Terminal} is the technical result which allows this step to be realised. 

\begin{lemma} \label{lemma: Maximal Tail Is Terminal}
Let $J$ be an ideal of $C^*(E)$ as in \eqref{equation: Non Gauge Invariant Form}. If $j \in \{1, 2, \dots, n \}$ then $\check{v}_j \in S(E)^0$ is a terminal vertex for the hereditary subset $\langle \mathcal{D}_J \rangle$ of $S(E)^0$. 
\end{lemma}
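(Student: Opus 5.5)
We argue directly from the definitions, using properties \ref{property: I1}, \ref{property: I2} and \ref{property: I3}. Fix $j \in \{1, 2, \dots, n\}$. By \eqref{equation: Hereditary DJ}, $v_j \in \mathcal{D}_J$, so $\check{v}_j \in \langle \mathcal{D}_J \rangle$. Suppose that $\check{w} \in \langle \mathcal{D}_J \rangle$ and $\check{v}_j \geq \check{w}$. We need to show $\check{w} = \check{v}_j$. By the correspondence \eqref{equation: Hereditary Skeleton}, we may take a representative $w \in \mathcal{D}_J$ with $v_j \geq w$ in $E$, so $w \in R(v_j)$.

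We split according to which part of the disjoint union \eqref{equation: Hereditary DJ} contains $w$.

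\textbf{First case.} Suppose $w \in H \cap \bigcap_{i=1}^n (E^0 \backslash R(v_i))$. In particular $w \in E^0 \backslash R(v_j)$, i.e.\ $v_j \not\geq w$. This contradicts $w \in R(v_j)$, so this case cannot occur.

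\textbf{Second case.} Otherwise $w = v_i$ for some $i$. If $i = j$, we are done. If $i \neq j$, then by \ref{property: I3} we have $v_i \in E^0 \backslash R(v_j)$, so $v_j \not\geq v_i = w$. This again contradicts $w \in R(v_j)$.

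Hence every $\check{w} \in \langle \mathcal{D}_J \rangle$ with $\check{v}_j \geq \check{w}$ satisfies $w = v_j$, so $\check{w} = \check{v}_j$, and $\check{v}_j$ is terminal.

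The main subtlety is the passage between $E$ and $S(E)$. The relation $\check{v}_j \geq \check{w}$ holds in the skeleton, and the representative $w$ of the class $\check{w}$ is only determined up to $\sim_E$. So we must check that some representative lies in $\mathcal{D}_J$ and satisfies $v_j \geq w$. This follows because $\mathcal{D}_J$ is hereditary: by \eqref{equation: Dagger}, $\langle \mathcal{D}_J \rangle^\dagger = \mathcal{D}_J$, so $\mathcal{D}_J$ is a union of $\sim_E$-classes and every representative of $\check{w}$ lies in $\mathcal{D}_J$. Moreover, $\geq$ on $S(E)$ agrees with $\geq$ on $E$ between any chosen representatives. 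With these two facts the case analysis above applies to any representative $w$.

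The hereditary property of $\mathcal{D}_J$ itself was asserted when it was defined, so it is not needed beyond this point.
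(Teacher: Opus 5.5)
Your proof is correct and follows essentially the same route as the paper: rule out the part $H \cap \bigcap_{i}(E^0 \backslash R(v_i))$ of $\mathcal{D}_J$, then use property (I3) to force $w = v_j$. The differences are cosmetic. You contradict $w \in R(v_j)$ directly instead of invoking heredity of the sets involved, and you make explicit the passage between representatives in $E$ and vertices of $S(E)$, which the paper leaves implicit.
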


\begin{proof}
In the degenerate case where $n = 0$, there is nothing to show. So assume that $n \in \{1, 2, \dots \}$ and that $J$ satisfies properties \ref{property: I1}, \ref{property: I2} and \ref{property: I3}. If $j \in \{1, 2, \dots, n \}$ then $\check{v}_j \in \langle \mathcal{D}_J \rangle$. To see that $\check{v}_j$ is terminal for $\langle \mathcal{D}_J \rangle$, let $\check{w} \in \langle \mathcal{D}_J \rangle$ and assume that $\check{v}_j \geq \check{w}$. Then $v_j \geq w$ in $E$. Now since $v_j \not \in H \cap \bigcap_{i = 1}^n (E^0 \backslash R(v_i))$ then $w \not \in H \cap \bigcap_{i = 1}^n (E^0 \backslash R(v_i))$ because the set $H \cap \bigcap_{i = 1}^n (E^0 \backslash R(v_i))$ is hereditary. Since $w \in \mathcal{D}_J$ then by \eqref{equation: Hereditary DJ}, there exists $k \in \{1, 2, \dots, n \}$ such that $w = v_k$. By property \ref{property: I3}, $$w = v_k \in H \cap {\bigcap}_{i = 1, \; i \neq k}^n (E^0 \backslash R(v_i))$$ and since the above set is hereditary, it must also contain $v_j$. Consequently, $j = k$, $v_j = v_k = w$ and $\check{v}_j$ is a terminal vertex for $\langle \mathcal{D}_J \rangle$. 
\end{proof}

We will now describe where ideals of $C^*(E)$ should be sent to in the ideal lattice of $\Cu_E$. First, define for $\check{v} \in S(E)^0$ the function
\begin{equation} \label{equation: Delta V}
\delta_{\check{v}} = 
\begin{cases}
1 \in \overline{\mathbb{N}}, \text{ if $c(\check{v}) = 0$,} \\
\chi_{\mathbb{T}} \in \Lsc(\mathbb{T}, \overline{\mathbb{N}}), \text{ if $c(\check{v}) = 1$,} \\
\infty \in \overline{\mathbb{N}}, \text{ if $c(\check{v}) = \infty$.}
\end{cases}
\end{equation}
Observe that for $\check{v} \in S(E)^0$, the function $\theta_{\check{v}, \delta_{\check{v}}}$ as defined in Definition \ref{definition: Singletons} is always an element of $\Cu_E$. Next, we define for an ideal $J$ as in \eqref{equation: Non Gauge Invariant Form} 
\begin{equation} \label{equation: Ideal Lattice 2}
\mathcal{I}(J) = \left\{ f \in \Cu_E \; \Big\vert \; 
\begin{matrix}
\supp_0(f) \subseteq \langle \mathcal{D}_J \rangle \text{ and for $i \in \{1, 2, \dots, n \}$,} \\
\supp(f(\check{v}_i)) \subseteq \mathbb{T} \backslash K_i
\end{matrix}  \right\}. 
\end{equation}
To be clear, in the degenerate case where $n = 0$ and $J = I_H$,
\begin{equation} \label{equation: Ideal Lattice 1}
\mathcal{I}(J) = \big\{ f \in \Cu_E \; \vert \; \supp_0(f) \subseteq \langle H \rangle \big\}.
\end{equation}

\begin{lemma} \label{lemma: Non Gauge Invariant Ideal}
Let 
\begin{equation*}
I = I_H \cap {\bigcap}_{i = 1}^n \mathcal{R}_{v_i, K_i} \qquad \text{and} \qquad J = I_{H'} \cap {\bigcap}_{j = 1}^q \mathcal{R}_{w_j, L_j}
\end{equation*}
be ideals of $C^*(E)$. Suppose that as ideals of $\Cu_E$, $\mathcal{I}(I) = \mathcal{I}(J)$. Then $I = J$. 
\end{lemma}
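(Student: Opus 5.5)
Assume $I$ and $J$ are written in the normal form \eqref{equation: Non Gauge Invariant Form} and satisfy properties \ref{property: I1}, \ref{property: I2} and \ref{property: I3}, and that $\mathcal{I}(I) = \mathcal{I}(J)$. The plan is to read off from the ideal $\mathcal{I}(I)$ of $\Cu_E$, in turn, the hereditary set $\langle \mathcal{D}_I \rangle$, the special vertices $\check{v}_i$, the closed sets $K_i$, and finally the hereditary part $H \cap \bigcap_i (E^0 \backslash R(v_i))$. Each of these data should be intrinsic to $\mathcal{I}(I)$, so they coincide for $I$ and $J$, and then \eqref{equation: Non Gauge Invariant Form} gives $I = J$.

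\textbf{Step 1: recover $\langle \mathcal{D}_I \rangle$.} I claim that $\langle \mathcal{D}_I \rangle = \bigcup_{f \in \mathcal{I}(I)} \supp_0(f)$. One inclusion is immediate from \eqref{equation: Ideal Lattice 2}. For the other, let $\check{x} \in \langle \mathcal{D}_I \rangle$.
\begin{itemize}
\item[(a)] If $\check{x} \neq \check{v}_i$ for all $i$, the element $\theta_{\check{x}, \delta_{\check{x}}}$ lies in $\mathcal{I}(I)$. Its support is $L(\check{x}) \subseteq \langle \mathcal{D}_I \rangle$, and it is $\infty$ at any $\check{v}_i \in L(\check{x})$. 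This needs $\infty \cdot \chi_{\mathbb{T}}$ to have support in $\mathbb{T} \backslash K_i$, which fails since $K_i \neq \emptyset$.
\item[(b)] The fix is to show that no $\check{v}_i$ lies strictly above any vertex of $\langle \mathcal{D}_I \rangle$, i.e.\ that $\check{v}_i$ is terminal in $\langle \mathcal{D}_I \rangle$. This is exactly Lemma \ref{lemma: Maximal Tail Is Terminal}. Hence $L(\check{x})$ does not contain $\check{v}_i$ unless $\check{x} = \check{v}_i$, and for $\check{x} \neq \check{v}_i$ the element $\theta_{\check{x}, \delta_{\check{x}}}$ does lie in $\mathcal{I}(I)$.
\item[(c)] For $\check{x} = \check{v}_i$, use $\theta_{\check{v}_i, \chi_{\mathbb{T} \backslash K_i}}$. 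This is nonzero by \ref{property: I2}, and by terminality again no other $\check{v}_k$ lies in $L(\check{v}_i) \backslash \{\check{v}_i\}$.
\end{itemize}
Thus $\langle \mathcal{D}_I \rangle = \langle \mathcal{D}_J \rangle$, and by \eqref{equation: Hereditary Skeleton}, $\mathcal{D}_I = \mathcal{D}_J$.

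\textbf{Step 2: recover the special vertices and the sets $K_i$.} For a terminal vertex $\check{x}$ of $\langle \mathcal{D}_I \rangle$ with $c(\check{x}) = 1$, consider the open set
\[
U_{\check{x}} = \bigcup_{f \in \mathcal{I}(I)} \supp(f(\check{x})) \subseteq \mathbb{T}.
\]
From the construction in Step 1, $U_{\check{v}_i} = \mathbb{T} \backslash K_i$, which is a proper subset of $\mathbb{T}$ by \ref{property: I2}. If instead $\check{x}$ is not one of the $\check{v}_i$, then $\theta_{\check{x}, \chi_{\mathbb{T}}} \in \mathcal{I}(I)$, so $U_{\check{x}} = \mathbb{T}$. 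Combined with Lemma \ref{lemma: Maximal Tail Is Terminal} (each $\check{v}_i$ is terminal with $c = 1$, since $v_i$ supports a single loop), this characterises
\[
\{\check{v}_1, \dots, \check{v}_n\} = \{ \check{x} \text{ terminal in } \langle \mathcal{D}_I \rangle \mid c(\check{x}) = 1, \ U_{\check{x}} \neq \mathbb{T} \}.
\]
The right-hand side depends only on $\mathcal{I}(I)$, so $\{\check{v}_i\} = \{\check{w}_j\}$ and $n = q$ by \ref{property: I1}. Since each class $\check{v}$ with $c(\check{v}) = 1$ contains a unique vertex of $E$ (the vertex supporting the single loop), after reindexing $v_i = w_i$. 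Moreover $K_i = \mathbb{T} \backslash U_{\check{v}_i} = L_i$.

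\textbf{Step 3: conclude.} Removing $\{v_1, \dots, v_n\}$ from $\mathcal{D}_I = \mathcal{D}_J$ gives
\[
H \cap \bigcap_i (E^0 \backslash R(v_i)) = H' \cap \bigcap_i (E^0 \backslash R(v_i)).
\]
The argument used to justify \ref{property: I3} shows that $I_H$ may be replaced by $I_{H \cap \bigcap_i (E^0 \backslash R(v_i))}$ without changing $I$. This uses \eqref{equation: Sandwich}, i.e.\ $\mathcal{R}_{v_i, K_i} \subseteq I_{E^0 \backslash R(v_i)} + \ldots$; more precisely, for $K_i \neq \emptyset$ we have $\mathcal{R}_{v_i, K_i} \subseteq I_{(E^0 \backslash R(v_i)) \sqcup \{v_i\}}$, and intersecting with $I_H$ yields $I = I_{\mathcal{D}_I} \cap \bigcap_i \mathcal{R}_{v_i, K_i}$. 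Hence
\[
I = I_{\mathcal{D}_I} \cap \bigcap_i \mathcal{R}_{v_i, K_i} = I_{\mathcal{D}_J} \cap \bigcap_i \mathcal{R}_{w_i, L_i} = J.
\]
I expect the main obstacle to be this last identity. It requires checking carefully, using \eqref{equation: Sandwich} and the gauge-invariant lattice correspondence \eqref{equation: Hereditary Saturated}, that $I_{\mathcal{D}_I} \cap \bigcap_i \mathcal{R}_{v_i, K_i}$ recovers $I$.
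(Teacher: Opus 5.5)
Your proposal is correct and is essentially the paper's proof. You use Lemma~\ref{lemma: Maximal Tail Is Terminal} and the test elements $\theta_{\check{x}, \delta_{\check{x}}}$, $\theta_{\check{v}_i, \chi_{\mathbb{T} \backslash K_i}}$ and $\theta_{\check{x}, \chi_{\mathbb{T}}}$ to read off $\langle \mathcal{D}_I \rangle$, the vertices $\check{v}_i$ and the sets $K_i$ from $\mathcal{I}(I)$, and then conclude with \eqref{equation: Sandwich}; your intrinsic invariants replace the paper's contradiction argument and handle the mixed gauge-invariant/non-gauge-invariant case uniformly.

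There is one slip in Step 3. You cannot replace $I_H$ by $I_{H \cap \bigcap_i (E^0 \backslash R(v_i))}$: since $I_{E^0 \backslash R(v_i)} \subseteq \mathcal{R}_{v_i, K_i}$, that would turn $I$ into that gauge-invariant ideal. The correct statement, which your "more precisely" sentence is reaching for, is $I = I_{H \cap \bigcap_i ((E^0 \backslash R(v_i)) \sqcup \{ v_i \})} \cap \bigcap_i \mathcal{R}_{v_i, K_i}$. The set identity $H \cap \bigcap_i ((E^0 \backslash R(v_i)) \sqcup \{ v_i \}) = \mathcal{D}_I$, which you left unchecked, is exactly where property \ref{property: I3} is needed.
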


\begin{proof}
First, assume that $n = q = 0$ so that $I = I_H$ and $J = I_{H'}$ are both gauge-invariant ideals of $C^*(E)$. Since $\mathcal{I}(I) = \mathcal{I}(J)$ then by \eqref{equation: Ideal Lattice 1} and \eqref{equation: Hereditary Skeleton}, $\langle H \rangle = \langle H' \rangle$ and $I = I_H = I_{H'} = J$. Observe that if $I$ is gauge-invariant and $J$ is not gauge-invariant then by \eqref{equation: Ideal Lattice 1} and \eqref{equation: Ideal Lattice 2} respectively, $\mathcal{I}(I) \neq \mathcal{I}(J)$. \newline
\phantom{} \newline
Now assume that $n, q \in \{1, 2, \dots \}$ so that $I$ and $J$ are non-gauge-invariant. By Lemma \ref{lemma: Maximal Tail Is Terminal}, $\{ \check{v}_1, \dots, \check{v}_n \} \subseteq \langle \mathcal{D}_I \rangle_t$. Using this, we define 
\begin{equation*}
g = {\sum}_{\check{x} \in \langle \mathcal{D}_I \rangle_t \backslash \{ \check{v}_1, \dots, \check{v}_n \}} \; \theta_{\check{x}, \delta_{\check{x}}}  + {\sum}_{i = 1}^n \theta_{\check{v}_i, \; \chi_{\mathbb{T} \backslash K_i}}. 
\end{equation*}
By Lemma \ref{lemma: Hereditary Terminal}, 
\begin{equation*}
\supp_0(g) = {\bigcup}_{\check{x} \in \langle \mathcal{D}_I \rangle_t} L_{S(E)}(\check{x}) = \langle \mathcal{D}_I \rangle.
\end{equation*}
Consequently, $g \in \mathcal{I}(I)$ and $\langle \mathcal{D}_I \rangle = \supp_0(g) \subseteq \langle \mathcal{D}_J \rangle$. By interchanging the roles of $H$ and $H'$ and the roles of $\{ v_1, \dots, v_n \}$ and $\{ w_1, \dots, w_q \}$, $\langle \mathcal{D}_I \rangle = \langle \mathcal{D}_J \rangle$. \newline
\phantom{} \newline
Next we claim that $\{ v_1, \dots, v_n \} = \{w_1, \dots, w_q \}$. Assume that $k \in \{1, 2, \dots, n \}$ and suppose for the sake of contradiction that there is no $j \in \{1, 2, \dots, q \}$ such that $v_k = w_j$. By Lemma \ref{lemma: Maximal Tail Is Terminal}, $\check{v}_k \in S(E)^0$ is a terminal vertex for $\langle \mathcal{D}_I \rangle = \langle \mathcal{D}_J \rangle$. Since $v_k \neq w_j$ for $j \in \{1, 2, \dots, q \}$ then by Lemma \ref{lemma: Hereditary Terminal}, $$\theta_{\check{v}_k, \; \chi_{\mathbb{T}}} \in \mathcal{I}(J) = \mathcal{I}(I).$$ Hence, $\supp(\theta_{\check{v}_k, \; \chi_{\mathbb{T}}}(\check{v}_k)) = \mathbb{T} \subseteq \mathbb{T} \backslash K_k$. This contradicts the fact that by property \ref{property: I2}, $K_k$ is a non-empty subset of $\mathbb{T}$. Hence, there exists $j \in \{1, 2, \dots, q \}$ such that $v_k = w_j$ and by a symmetric argument, $\{ v_1, \dots, v_n \} = \{ w_1, \dots, w_q \}$. Now since $\mathcal{D}_I = \mathcal{D}_J$ as hereditary subsets of $E^0$ then 
\begin{equation*}
H \cap {\bigcap}_{i = 1}^n (E^0 \backslash R(v_i)) = H' \cap {\bigcap}_{j = 1}^q (E^0 \backslash R(w_j))
\end{equation*}
and by relabelling, we may assume that $v_i = w_i$ for $i \in \{1, 2, \dots, n \}$. Now if $i \in \{1, 2, \dots, n \}$ then
\begin{equation*}
\theta_{\check{v}_i, \; \chi_{\mathbb{T} \backslash K_i}} \in \left\{ f \in \Cu_E \; \Big\vert \; 
\begin{matrix}
\supp_0(f) \subseteq \langle \mathcal{D}_I \rangle \text{ and for $i \in \{1, 2, \dots, n \}$,} \\
\supp(f(\check{v}_i)) \subseteq \mathbb{T} \backslash L_i
\end{matrix}  \right\}
\end{equation*}
and subsequently, $L_i \subseteq K_i$. By interchanging the roles of $K_i$ and $L_i$, we have $K_i = L_i$. Finally by \eqref{equation: Sandwich}, 
\begin{IEEEeqnarray*}{rCl}
I & = & I_H \cap {\bigcap}_{i = 1}^n \mathcal{R}_{v_i, K_i}
\\
& = & I_H \cap \Big({\bigcap}_{i = 1}^n I_{(E^0 \backslash R(v_i)) \sqcup \{ v_i \}} \cap {\bigcap}_{i = 1}^n \mathcal{R}_{v_i, K_i}\Big)
\\
& = & I_{H \cap \bigcap_{i = 1}^n ((E^0 \backslash R(v_i)) \sqcup \{ v_i \})} \cap {\bigcap}_{i = 1}^n \mathcal{R}_{v_i, K_i}.
\end{IEEEeqnarray*}
Using property \ref{property: I3},
\begin{equation*}
H \cap {\bigcap}_{i = 1}^n \big((E^0 \backslash R(v_i)) \sqcup \{ v_i \} \big) = \Big( H \cap {\bigcap}_{i = 1}^n (E^0 \backslash R(v_i)) \Big) \sqcup \{ v_1, \dots, v_n \} = \mathcal{D}_I.
\end{equation*}
Combining this with the equality $\mathcal{D}_I = \mathcal{D}_J$, we find that
\begin{IEEEeqnarray*}{rCl}
I & = & I_{H \cap \bigcap_{i = 1}^n ((E^0 \backslash R(v_i)) \sqcup \{ v_i \})} \cap {\bigcap}_{i = 1}^n \mathcal{R}_{v_i, K_i}
\\
& = & I_{\mathcal{D}_I} \cap {\bigcap}_{i = 1}^n \mathcal{R}_{v_i, K_i}
\\
& = & I_{\mathcal{D}_J} \cap {\bigcap}_{j = 1}^q \mathcal{R}_{w_j, L_j}
\\
& = & I_{H' \cap \bigcap_{j = 1}^q ((E^0 \backslash R(w_j)) \sqcup \{ w_j \} )} \cap {\bigcap}_{j = 1}^q \mathcal{R}_{w_j, L_j} = J.
\end{IEEEeqnarray*} 
This completes the proof.
\end{proof}

By Lemma \ref{lemma: Non Gauge Invariant Ideal}, \eqref{equation: Ideal Lattice 2} and \eqref{equation: Ideal Lattice 1} define an injective map $\mathcal{I}: \Lat(C^*(E)) \rightarrow \Lat(\Cu_E)$. We will now show that $\mathcal{I}$ is a bijection. If $f \in \Cu_E$ then define $f_\partial$ to be the set
\begin{equation} \label{equation: Function Partial}
f_\partial = \big\{ \check{v} \in \supp_0(f)_t \; \vert \; \text{$c(\check{v}) = 1$ and $\supp(f(\check{v})) \not \in \{\emptyset, \mathbb{T} \}$} \big\}. 
\end{equation}
If $f_{\partial}$ is non-empty, then the set $\{ w \in E^0 \; \vert \; \check{w} \in f_\partial \}$ can be thought of as the vertices which induce the non-gauge-invariant part of a non-gauge-invariant ideal in \eqref{equation: Non Gauge Invariant Form} --- this is essentially the content of Lemma \ref{lemma: Largest GII Below} below. We remind the reader that a hereditary subset $H$ of $S(E)^0$ induces a hereditary subset $H^\dagger$ of $E^0$, defined in \eqref{equation: Dagger}. 

\begin{lemma} \label{lemma: Largest GII Below}
Let $f \in \Cu_E$. As subsets of $E^0$, 
\begin{equation*}
\supp_0(f)^\dagger \backslash \{ w \in E^0 \; \vert \; \check{w} \in f_\partial \} = \supp_0(f)^\dagger \cap {\bigcap}_{\check{w} \in f_\partial} (E^0 \backslash R(w)). 
\end{equation*}
\end{lemma}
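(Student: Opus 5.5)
Write $D = \supp_0(f)^\dagger$ and $W = \{ w \in E^0 \; \vert \; \check{w} \in f_\partial \}$. The claim is $D \backslash W = D \cap \bigcap_{\check{w} \in f_\partial} (E^0 \backslash R(w))$. I would prove the two inclusions separately.

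First, two observations. Each $\check{w} \in f_\partial$ has $c(\check{w}) = 1$, so by the standing assumptions and the definition of $\sim_E$, the class $\check{w}$ consists of the single vertex $w$, which supports a single loop. Hence $W$ is in bijection with $f_\partial$. Also, $w \in R(w)$ for every $w$, since the loop gives a path from $w$ to itself.

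\textbf{The inclusion $\supseteq$.} Take $x$ in the right-hand side. Then $x \in D$ and, for each $w \in W$, $x \notin R(w)$. Since $w \in R(w)$, this forces $x \neq w$. So $x \notin W$, and $x \in D \backslash W$.

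\textbf{The inclusion $\subseteq$.} Take $x \in D \backslash W$ and suppose, for contradiction, that $x \in R(w)$ for some $w \in W$, so $w \geq x$ in $E$. This passes to the skeleton as $\check{w} \geq \check{x}$. Since $x \in D = \supp_0(f)^\dagger$, there is $y \in E^0$ with $\check{y} \in \supp_0(f)$ and $x \geq y$. Therefore $\check{w} \geq \check{x} \geq \check{y}$, and $\check{y} \in \supp_0(f)$. Now $\check{w}$ is terminal for $\supp_0(f)$, so $\check{w} = \check{y}$. This gives $\check{w} \geq \check{x} \geq \check{w}$, so $w \sim_E x$. As the class of $w$ is $\{w\}$, we get $x = w \in W$, contradicting $x \notin W$. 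Hence $x \notin R(w)$ for every $w \in W$, and $x$ lies in the right-hand side.

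The only delicate point is in the second inclusion. One must use the definition of $\dagger$ together with terminality of the elements of $f_\partial$ to close the chain $\check{w} \geq \check{x} \geq \check{y}$ back onto $\check{w}$. One must also check that equivalence classes with colour $1$ are singletons, which follows from the definition of $\sim_E$ together with standing assumption (S3). Everything else is unwinding definitions.
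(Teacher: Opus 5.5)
Your proof is correct and follows the paper's argument essentially verbatim. The inclusion $\supseteq$ uses that $w \in R(w)$, and the inclusion $\subseteq$ combines the definition of $\dagger$ with terminality of $\check{w}$ for $\supp_0(f)$ to force $\check{w} = \check{y}$ and hence $\check{x} = \check{w}$. The only extra step is your appeal to colour-$1$ classes being singletons to get $x = w$; this is harmless but unnecessary, since $\check{x} \in f_\partial$ already places $x$ in $\{ w \in E^0 \mid \check{w} \in f_\partial \}$ by definition.
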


\begin{proof}
First, assume that $x \in \supp_0(f)^\dagger \cap {\bigcap}_{\check{w} \in f_\partial} (E^0 \backslash R(w))$. Since $x \in R(x)$ then $\check{x} \not \in f_\partial$ and
\begin{equation*}
\supp_0(f)^\dagger \cap {\bigcap}_{\check{w} \in f_\partial} (E^0 \backslash R(w)) \subseteq \supp_0(f)^\dagger \backslash \{ w \in E^0 \; \vert \; \check{w} \in f_\partial \}. 
\end{equation*}
To see that the reverse inclusion holds, assume that $$x \not \in \supp_0(f)^\dagger \cap {\bigcap}_{\check{w} \in f_\partial} (E^0 \backslash R(w)).$$ Then either $x \not \in \supp_0(f)^\dagger$ or there exists $w \in E^0$ such that $\check{w} \in f_\partial$ and $x \in R(w)$. Observe that if $x \in \supp_0(f)^\dagger$ then there exists $y \in E^0$ such that $x \geq y$ and $\check{y} \in \supp_0(f)$. Since $x \in R(w)$ then $\check{w} \geq \check{x} \geq \check{y}$ in $S(E)$. But by definition of $f_\partial$, $\check{w}$ is a terminal vertex for $\supp_0(f)$. So $\check{w} = \check{y}$ and $\check{x} = \check{w} \in f_\partial$. Consequently, we have proved that either $x \not \in \supp_0(f)^\dagger$ or $x \in \{ w \in E^0 \; \vert \; \check{w} \in f_\partial \}$.
\end{proof}

\begin{theorem} \label{theorem: Ideal Lattice Iso}
The map $\mathcal{I}: \Lat(C^*(E)) \rightarrow \Lat(\Cu_E)$ defined by \eqref{equation: Ideal Lattice 2} is bijective. 
\end{theorem}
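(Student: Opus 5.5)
Injectivity is Lemma \ref{lemma: Non Gauge Invariant Ideal}, so the plan is to prove surjectivity. Fix an ideal $\mathcal{J}$ of $\Cu_E$. Since $\Cu_E$ is countably based (its underlying set is built from finitely many copies of $\overline{\mathbb{N}}$, $\{0,\infty\}$ and $\Lsc(\mathbb{T},\overline{\mathbb{N}})$, each countably based), \cite[Proposition 5.1.7]{APT18} gives $f \in \Cu_E$ with $\mathcal{J} = \Idl(f)$. Replacing $f$ by $\infty \cdot f$, I may assume $f = \infty \cdot f$. Then $f$ takes only the values $0$ and $\infty$, and on each $\mathbb{T}_{\check{v}}$ with $c(\check{v}) = 1$ it is $\infty \cdot \chi_{U_{\check{v}}}$ for some open $U_{\check{v}} \subseteq \mathbb{T}$. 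Consequently
\[
\mathcal{J} = \{ g \in \Cu_E \mid g \leq f \}.
\]
By Lemma \ref{lemma: Order Alternative}, this is the set of $g$ with $\supp_0(g) \subseteq \supp_0(f)$ and $\supp(g(\check{w})) \subseteq U_{\check{w}}$ for each terminal $\check{w}$ of $\supp_0(f)$ with $c(\check{w}) = 1$. For non-terminal vertices of $\supp_0(f)$, property (3) of Definition \ref{definition: Cu E} forces $f = \infty$ there.

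Next I would build the candidate ideal of $C^*(E)$ from $f$. The only terminal vertices that impose a genuine restriction are those in $f_\partial$, defined in \eqref{equation: Function Partial}: terminal vertices with $U_{\check{w}} = \mathbb{T}$ impose nothing, and a terminal vertex of the support cannot have $U = \emptyset$. Put $H = \supp_0(f)^\dagger$, which is hereditary in $E^0$ via \eqref{equation: Hereditary Skeleton}. For each $\check{w} \in f_\partial$, choose a representative $w \in E^0$; it supports a single loop. Set $K_w = \mathbb{T} \backslash U_{\check{w}}$, which is a proper, non-empty, closed subset of $\mathbb{T}$. Define
\[
J = I_H \cap \bigcap_{\check{w} \in f_\partial} \mathcal{R}_{w, K_w}.
\]
If $f_\partial = \emptyset$, then $J = I_H$, and $\mathcal{I}(J) = \mathcal{J}$ follows directly from \eqref{equation: Ideal Lattice 1} and the description of $\mathcal{J}$ above.

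In the non-gauge-invariant case I need to check properties \ref{property: I1}, \ref{property: I2} and \ref{property: I3} for this presentation, so that $\mathcal{D}_J$ is given by \eqref{equation: Hereditary DJ}.
\begin{enumerate}
\item \ref{property: I1} holds because distinct classes give distinct representatives.
\item \ref{property: I2} holds by the choice of the $K_w$.
\item For \ref{property: I3}, I would argue as in Lemma \ref{lemma: Largest GII Below}. If $w' \in R(w)$ for some $\check{w} \in f_\partial$ and $w' \in H$, then $\check{w} \geq \check{w}'$, and terminality of $\check{w}$ forces $\check{w}' = \check{w}$. So $w$ is the only vertex of $f_\partial$-type lying in $H \cap R(w)$, which gives $v_j \in H \cap \bigcap_{i \neq j} (E^0 \backslash R(v_i))$. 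One subtlety: a class $\check{w}$ with $c = 1$ might contain more than one vertex. By \ref{property: S3}, however, a single loop is a singleton strongly connected component, so this does not happen.
\end{enumerate}
Lemma \ref{lemma: Largest GII Below} then shows that
\[
\mathcal{D}_J = \Big(H \backslash \{ w \mid \check{w} \in f_\partial \}\Big) \sqcup \{ w \mid \check{w} \in f_\partial \} = H,
\]
so $\langle \mathcal{D}_J \rangle = \supp_0(f)$. Comparing \eqref{equation: Ideal Lattice 2}, whose support conditions are $\supp(g(\check{w})) \subseteq \mathbb{T} \backslash K_w = U_{\check{w}}$, with the description of $\mathcal{J}$, I conclude $\mathcal{I}(J) = \mathcal{J}$.

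I expect the main obstacle to be the bookkeeping in verifying \ref{property: I3} and the equality $\mathcal{D}_J = H$, specifically the interaction between terminality in $S(E)$ and the sets $R(w)$ in $E$. I would try to derive both directly from Lemma \ref{lemma: Largest GII Below}. A secondary point is the justification that every ideal of $\Cu_E$ is singly generated. If citing countable basedness feels too quick, I can instead construct $f$ by hand: take the pointwise supremum of $\infty \cdot g$ over all $g \in \mathcal{J}$. The values are $\{0,\infty\}$-valued with open supports, so this is an increasing supremum of countably many elements (a finite union of hereditary supports and countable unions of open sets), and hence it lies in $\mathcal{J}$.
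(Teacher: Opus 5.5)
Your proposal is correct and follows the paper's proof essentially step for step. Injectivity comes from Lemma~\ref{lemma: Non Gauge Invariant Ideal}; every ideal of $\Cu_E$ is singly generated by countable basedness and \cite[Proposition 5.1.7]{APT18}; the same ideal $I_{\supp_0(f)^\dagger} \cap \bigcap_{\check{w} \in f_\partial} \mathcal{R}_{w, \mathbb{T} \backslash \supp(f(\check{w}))}$ is built, with (I1)--(I3) checked via terminality and (S3), and $\mathcal{D}_J = \supp_0(f)^\dagger$ follows from Lemma~\ref{lemma: Largest GII Below}. Your normalisation $f = \infty \cdot f$, the explicit description of $\Idl(f)$ via Lemma~\ref{lemma: Order Alternative}, and the optional hand-built generator (using that $\mathbb{T}$ is Lindel\"of) only make explicit steps that the paper leaves to the reader.
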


\begin{proof}
By Lemma \ref{lemma: Non Gauge Invariant Ideal}, $\mathcal{I}$ is injective. To see that $\mathcal{I}$ is surjective, assume that $S$ is an ideal of $\Cu_E$. First, observe that $\Cu_E$ is a countably based Cu-semigroup (see Definition \ref{definition: Countably Based}). Indeed, if $\check{v} \in S(E)^0$ is such that $c(\check{v}) \neq \infty$ then we can use countable sup-dense subsets of $\overline{\mathbb{N}}$ or $\Lsc(\mathbb{T}, \overline{\mathbb{N}})$ to write the function $\theta_{\check{v}, f(\check{v})}$ as the supremum of a rapidly increasing sequence of functions formed from these sup-dense subsets. The claim that $\Cu_E$ is countably based then follows from Lemma \ref{lemma: Singleton Decomp} and \ref{axiom: O4}. \newline
\phantom{} \newline
Now by \cite[Proposition 5.1.7]{APT18}, there exists $f \in \Cu_E$ such that $S = \Idl(f)$. If $f_\partial = \emptyset$ then by \eqref{equation: Hereditary Skeleton} and \eqref{equation: Ideal Lattice 1}, 
\begin{equation*}
\mathcal{I}(I_{\supp_0(f)^\dagger}) = \big\{ g \in \Cu_E \; \vert \; \supp_0(g) \subseteq \supp_0(f) \big\} = \Idl(f). 
\end{equation*}
If $f_\partial \neq \emptyset$ then define the non-gauge-invariant ideal
\begin{equation*}
I_f = I_{\supp_0(f)^\dagger} \cap {\bigcap}_{\check{v} \in f_\partial} \mathcal{R}_{v, \mathbb{T} \backslash \supp(f(\check{v}))}. 
\end{equation*}
Since $f_\partial$ was assumed to be non-empty, $I_f$ satisfies property \ref{property: I2}. By definition, $I_f$ automatically satisfies property \ref{property: I1}. To see that $I_f$ satisfies property \ref{property: I3}, we need to show that if $v \in E^0$ is such that $\check{v} \in f_\partial$ then 
\begin{equation*}
v \in \supp_0(f)^\dagger \cap {\bigcap}_{\check{w} \in f_\partial, \; w \neq v } (E^0 \backslash R(w)). 
\end{equation*}
By definition of $f_\partial$, $v \in \supp_0(f)^{\dagger}$. Furthermore, if $w \in E^0$ is such that $\check{w} \in f_{\partial}$ and $v \in R(w)$ then $\check{w} \geq \check{v}$ and $\check{w} = \check{v}$ in $S(E)$ because $\check{w}$ is a terminal vertex for $\supp_0(f)$. Since $w$ supports a single loop, we in fact have $v = w$ in $E$. So by the contrapositive, if $w \neq v$ then $v \in E^0 \backslash R(w)$. Consequently, $I_f$ satisfies property \ref{property: I3}.\newline
\phantom{} \newline
Recalling the definition of $\mathcal{D}_{I_f}$ from \eqref{equation: Hereditary DJ}, we have by Lemma \ref{lemma: Largest GII Below} that 
\begin{equation*}
\mathcal{D}_{I_f} = \Big( \supp_0(f)^\dagger \cap {\bigcap}_{\check{w} \in f_\partial} (E^0 \backslash R(w)) \Big) \sqcup \{ w \in E^0 \; \vert \; \check{w} \in f_\partial \} = \supp_0(f)^\dagger. 
\end{equation*}
So by \eqref{equation: Hereditary Skeleton} and \eqref{equation: Ideal Lattice 2}, $\langle \mathcal{D}_{I_f} \rangle = \supp_0(f)$ as hereditary subsets of $S(E)^0$ and
{ \small \begin{equation*}
\mathcal{I}(I_f) = 
\left\{ g \in \Cu_E \; \Big\vert \; 
\begin{matrix}
\supp_0(g) \subseteq \supp_0(f) \text{ and for $i \in \{1, 2, \dots, n \}$,} \\
\supp(g(\check{v}_i)) \subseteq \supp(f(\check{v}_i))
\end{matrix}
\right\} = \Idl(f).
\end{equation*}}
Hence, $\mathcal{I}: \Lat(C^*(E)) \rightarrow \Lat(\Cu_E)$ is bijective. 
\end{proof}

\subsection{An exact sequence involving $\Cu_E$}
\label{subsec: Exact Sequence}

Let $E$ be a finite graph satisfying the standing assumptions and $H$ be a hereditary subset of $E^0$. Recall from Theorem \ref{theorem: CRS} that the extension
\begin{equation*} 
\begin{tikzcd}
0 \arrow[r] & I_H \arrow[r, "\iota"] & C^*(E) \arrow[r, "\pi"] & C^*(E \backslash H) \arrow[r] & 0
\end{tikzcd}
\end{equation*}
induces a short exact sequence of positively ordered monoids { \small
\begin{equation} \label{equation: Cu(C*(E)) SES}
\begin{tikzcd}
0 \arrow[r] & \Cu(C^*(H)) \arrow[r, "\Cu(\iota)"] & \Cu(C^*(E)) \arrow[r, "\Cu(\pi)"] & \Cu(C^*(E \backslash H)) \arrow[r] & 0
\end{tikzcd}
\end{equation}}
such that as Cu-semigroups, $\Cu(C^*(E \backslash H))$ is isomorphic to the quotient Cu-semigroup $\Cu(C^*(E))/\Cu(C^*(H))$. Note that by the isomorphism in \eqref{equation: Full Corner Iso}, $\Cu(I_H) \cong \Cu(C^*(H))$. In this subsection, we will demonstrate that $\Cu_E$ fits into a similar short exact sequence. Firstly, we have the inclusion map
\begin{equation} \label{equation: Inclusion}
\begin{matrix}
i_H: & \Cu_H & \hookrightarrow & \Cu_E \\
& f & \mapsto & f
\end{matrix}
\end{equation}
which is easily seen to be additive and an order-embedding. It is also readily checked that the image of $i_H$ is the ideal 
\begin{equation} \label{equation: Ideal In Terms Of Support}
\mathcal{I}(I_H) = \Idl \Big({\sum}_{\check{v} \in \langle H \rangle_t} \theta_{\check{v}, \delta_{\check{v}}} \Big) = \big\{ f \in \Cu_E \; \vert \; \supp_0(f) \subseteq \langle H \rangle \big\}. 
\end{equation}
Therefore, $i_H$ implements the Cu-isomorphism
\begin{equation*}
\Cu_H \cong \big\{ f \in \Cu_E \; \vert \; \supp_0(f) \subseteq \langle H \rangle \big\}.
\end{equation*}

\begin{remark} \label{remark: POM Iso Between Cu Semigroups}
Let $S$ and $T$ be Cu-semigroups and $\phi: S \rightarrow T$ be an isomorphism of positively ordered monoids. Since suprema of increasing sequences and the way-below relation are defined in terms of the order, $\phi$ automatically preserves both way-below and suprema of increasing sequences. So $\phi$ is a Cu-isomorphism. This observation was noted in the proof of \cite[Lemma 7.1.12]{APT18}, and justifies why $i_H$ is a Cu-isomorphism.  
\end{remark}

Our next task is to produce a Cu-morphism from $\Cu_E$ to $\Cu_{E \backslash H}$.

\begin{lemma} \label{lemma: Cu E Minus H}
There exists a surjective Cu-morphism $\rho_H: \Cu_E \rightarrow \Cu_{E \backslash H}$ such that $$\ker \rho_H = \big\{ f \in \Cu_E \; \vert \; \supp_0(f) \subseteq \langle H \rangle \big\} \cong \Cu_H.$$
\end{lemma}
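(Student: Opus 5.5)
The map $\rho_H$ will be restriction. Since $H$ is hereditary in $E^0$, the complement $E^0 \backslash H$ spans a subgraph $E \backslash H$. Its skeleton $S(E \backslash H)$ should be identified with the full subgraph of $S(E)$ on $S(E)^0 \backslash \langle H \rangle$, with the colouring $c$ restricted. Two points need checking here. First, an equivalence class under $\sim_E$ lies either entirely in $H$ or entirely outside it, and paths between vertices outside $H$ never pass through $H$; hence $\sim_E$ and $\sim_{E \backslash H}$ agree on $E^0 \backslash H$, and so does $\geq$. Second, the colour of a vertex does not change when we pass to $E \backslash H$. Cycles through $v \notin H$ stay in $E^0 \backslash H$, so the numbers of cycles and loops at $v$ are unchanged. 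A source of $E$ outside $H$ remains a source. A vertex of $E \backslash H$ that is not a source still supports a cycle, because its cycle in $E$ avoids $H$. Thus $(S(E\backslash H))_\kappa = S(E)_\kappa \backslash \langle H \rangle_\kappa$, and I define $\rho_H(f) = f|_{S(E)_\kappa \backslash \langle H\rangle_\kappa}$.

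Next I verify that $\rho_H(f) \in \Cu_{E \backslash H}$. Conditions (1) and (2) of Definition \ref{definition: Cu E} are pointwise, so they pass to the restriction. Condition (3) only involves pairs $\check w \geq \check v$ inside the complement, and the order there is inherited from $S(E)$, so it also passes. Additivity, order preservation and preservation of $0$ are immediate because all operations are pointwise. Suprema of increasing sequences are pointwise in both semigroups, so they are preserved. For way-below I plan to use Lemma \ref{lemma: Way Below}. Suppose $f \ll g$. Then $\supp_0(\rho_H f) = \supp_0(f) \backslash \langle H \rangle \subseteq \supp_0(\rho_H g)$. Moreover, a terminal vertex $w$ of $\supp_0(g) \backslash \langle H \rangle$ is also terminal for $\supp_0(g)$: if $w \geq x$ with $x \in \supp_0(g)$, then $x \notin \langle H\rangle$, since otherwise hereditariness of $\langle H \rangle$ would force $w \in \langle H\rangle$. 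Hence $f(w) \ll g(w)$ whenever $c(w) \neq \infty$, and Lemma \ref{lemma: Way Below} gives $\rho_H f \ll \rho_H g$.

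For surjectivity, take $h \in \Cu_{E\backslash H}$ and extend it by setting $f = h$ off $\langle H\rangle$ and $f = \infty$ on $\langle H \rangle_\kappa$. Conditions (1) and (2) are clear. For (3), take $w \geq v$ with $w \neq v$ and $f(v) \neq 0$. If $w \in \langle H\rangle$ then $f(w) = \infty$ by construction. If $w \notin \langle H\rangle$, then $v \notin \langle H\rangle$ because $\langle H \rangle$ is hereditary, so the condition is inherited from $h$.

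Finally, the kernel. We have $\rho_H(f) = 0$ exactly when $\supp_0(f) \subseteq \langle H\rangle$, and this set is identified with $\Cu_H$ via $i_H$ and \eqref{equation: Ideal In Terms Of Support}. The one genuinely delicate step is the compatibility of skeletons and colourings in the first paragraph. Everything else is bookkeeping with pointwise operations and Lemma \ref{lemma: Way Below}.
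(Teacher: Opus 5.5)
Your proof is correct and is essentially the paper's argument. The paper defines $\rho_H(f)=\sum_{\check v\in(\supp_0(f)\cap\langle E\backslash H\rangle)_t}\theta_{\check v,f(\check v)}$, which by Lemma \ref{lemma: Singleton Decomp} applied in $\Cu_{E\backslash H}$ is exactly your restriction of $f$; it then proves way-below preservation with Lemma \ref{lemma: Way Below}, surjectivity by an extension that is $\infty$ on part of $\langle H\rangle$, and the kernel description, just as you do. Your pointwise description makes additivity, order and suprema immediate where the paper goes through Lemma \ref{lemma: Order Alternative}, and you make explicit the compatibility of $S(E\backslash H)$ and its colouring with $S(E)$, which the paper takes for granted.
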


\begin{proof}
Assume that $f \in \Cu_E$. By Lemma \ref{lemma: Singleton Decomp}, 
\begin{equation*}
f = {\sum}_{\check{v} \in \supp_0(f)_t} \theta_{\check{v}, f(\check{v})}. 
\end{equation*}
Recall the definition of $\langle E \backslash H \rangle$ from \eqref{equation: Hereditary Skeleton}. Define $\rho_H(f)$ by
\begin{equation} \label{equation: Quotient}
\rho_H(f) = {\sum}_{\check{v} \in (\supp_0(f) \cap \langle E \backslash H \rangle)_t} \theta_{\check{v}, f(\check{v})}. 
\end{equation}
By construction, $\rho_H(f) \in \Cu_{E \backslash H}$. To see that $\rho_H$ is additive, assume that $f, g \in \Cu_E$. By Lemma \ref{lemma: Hereditary Terminal},
\begin{equation*}
\supp_0(\rho_H(f + g)) = (\supp_0(f) \cup \supp_0(g)) \cap \langle E \backslash H \rangle = \supp_0(\rho_H(f) + \rho_H(g)).
\end{equation*}
Now suppose that $\check{v} \in ((\supp_0(f) \cup \supp_0(g)) \cap \langle E \backslash H \rangle)_t$ and $c(\check{v}) \neq \infty$. Observe that if $\check{v} \in \supp_0(f) \cap \langle E \backslash H \rangle$ then by definition of a terminal vertex, $\check{v} \in (\supp_0(f) \cap \langle E \backslash H \rangle)_t$. The same statement holds when $\supp_0(f)$ is replaced with $\supp_0(g)$. In the resulting three cases, we have by \eqref{equation: Quotient} that
\begin{equation*}
\rho_H(f + g)(\check{v}) = f(\check{v}) + g(\check{v}) = \rho_H(f)(\check{v}) + \rho_H(g)(\check{v}).
\end{equation*}
So by Lemma \ref{lemma: Order Alternative}, $\rho_H(f + g) = \rho_H(f) + \rho_H(g)$. Using Lemma \ref{lemma: Order Alternative} once more, it is straightforward to check that $\rho_H$ is order preserving. The map $\rho_H$ is surjective because if $g \in \Cu_{E \backslash H}$ then 
\begin{equation*}
\rho_H \Big( {\sum}_{\check{v} \in \supp_0(g)_t} \theta_{\check{v}, g(\check{v})} \Big) = g. 
\end{equation*}
Moreover, $\ker \rho_H$ is the set of all functions in $\Cu_E$ with vertex support contained in $\langle H \rangle$ because $\langle H \rangle \cap \langle E \backslash H \rangle = \emptyset$. \newline
\phantom{} \newline
It remains to show that $\rho_H$ is a Cu-morphism. The claim that $\rho_H$ is way-below preserving is proved in a similar fashion to showing that $\rho_H$ is order preserving, using Lemma \ref{lemma: Way Below} in lieu of Lemma \ref{lemma: Order Alternative}. To see that $\rho_H$ preserves suprema of increasing sequences, let $\{ f_n \}_{n > 0}$ be an increasing sequence in $\Cu_E$ with supremum $f$. Since $\rho_H$ is order preserving, $\{ \rho_H(f_n) \}_{n > 0}$ is an increasing sequence in $\Cu_{E \backslash H}$. \newline
\phantom{} \newline
We claim that $\rho_H(f) = \sup_{n > 0} \rho_H(f_n)$. To this end, assume that $g \in \Cu_{E \backslash H}$ is such that if $n \in \{1, 2, \dots \}$ then $\rho_H(f_n) \leq g$. By viewing $\supp_0(g)_t$ as a subset of $S(E)^0$, we define $g'$ be the function
\begin{equation*}
g' = {\sum}_{\check{v} \in \supp_0(g)_t} \theta_{\check{v}, g(\check{v})} + \infty \cdot {\sum}_{\check{w} \in \langle H \rangle_t} \theta_{\check{w}, \delta_{\check{w}}} \in \Cu_E.
\end{equation*}
By Lemma \ref{lemma: Order Alternative} and \eqref{equation: Quotient}, $f_n \leq g'$ for $n \in \{1, 2, \dots \}$ and $\rho_H(g') = g$. So $f \leq g'$ and $\rho_H(f) \leq \rho_H(g') = g$. Hence, $\rho_H$ preserves suprema of increasing sequences, completing the proof. 
\end{proof}

By Lemma \ref{lemma: Cu E Minus H}, we obtain a short exact sequence of positively ordered monoids
\begin{equation} \label{equation: Cu E SES}
\begin{tikzcd}
0 \arrow[r] & \Cu_H \arrow[r, "i_H"] & \Cu_E \arrow[r, "\rho_H"] & \Cu_{E \backslash H} \arrow[r] & 0.
\end{tikzcd}
\end{equation}
Analogously to the short exact sequence in \eqref{equation: Cu(C*(E)) SES}, we will now show that the Cu-semigroups $\Cu_E / \Cu_H$ and $\Cu_{E \backslash H}$ are isomorphic. 

\begin{proposition} \label{proposition: First Iso Thm Cu E}
The Cu-semigroups $\Cu_E / \Cu_H$ and $\Cu_{E \backslash H}$ are isomorphic. 
\end{proposition}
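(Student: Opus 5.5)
The plan is to show that the surjective Cu-morphism $\rho_H$ of Lemma \ref{lemma: Cu E Minus H} induces a well-defined map $\bar{\rho}_H: \Cu_E/\Cu_H \rightarrow \Cu_{E \backslash H}$ given by $[f]_{\Cu_H} \mapsto \rho_H(f)$. We then check that $\bar\rho_H$ is an isomorphism of positively ordered monoids. By Remark \ref{remark: POM Iso Between Cu Semigroups}, this makes it a Cu-isomorphism. Here $\Cu_H$ is identified with the ideal $\{ f \in \Cu_E \mid \supp_0(f) \subseteq \langle H \rangle \}$ via $i_H$.

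The key equivalence to establish is that, for $f, g \in \Cu_E$,
\begin{equation*}
f \leq_{\Cu_H} g \iff \rho_H(f) \leq \rho_H(g).
\end{equation*}
This single statement gives well-definedness and order-preservation (the direction $\Rightarrow$) and order-embedding, hence injectivity (the direction $\Leftarrow$). Additivity of $\bar\rho_H$ is inherited from additivity of $\rho_H$, and surjectivity from surjectivity of $\rho_H$.

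For ($\Rightarrow$), suppose $f \leq g + i$ with $\supp_0(i) \subseteq \langle H \rangle$. Since $\rho_H$ is additive and order preserving and $\rho_H(i) = 0$ because $i \in \ker \rho_H$, we get $\rho_H(f) \leq \rho_H(g)$.

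For ($\Leftarrow$), take $i = \infty \cdot \sum_{\check{w} \in \langle H \rangle_t} \theta_{\check{w}, \delta_{\check{w}}}$. By Lemma \ref{lemma: Hereditary Terminal}, this is the function equal to $\infty$ on $\langle H \rangle_\kappa$ and $0$ elsewhere, and it lies in $\Cu_H$. We show $f \leq g + i$ using Lemma \ref{lemma: Order Alternative}.

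\emph{Supports.} We have $\supp_0(g+i) = \supp_0(g) \cup \langle H \rangle$. Since $\supp_0(\rho_H(f)) = \supp_0(f) \cap \langle E \backslash H \rangle$ (as computed in the proof of Lemma \ref{lemma: Cu E Minus H}), and $\langle H\rangle$, $\langle E\backslash H\rangle$ partition $S(E)^0$, the inclusion $\supp_0(\rho_H(f)) \subseteq \supp_0(\rho_H(g))$ gives $\supp_0(f) \subseteq \supp_0(g) \cup \langle H \rangle$.

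\emph{Terminal values.} Let $\check{v}$ be a terminal vertex of $\supp_0(g+i)$ with $c(\check{v}) \neq \infty$. If $\check{v} \in \langle H \rangle$, then $(g+i)(\check{v}) = \infty$ and there is nothing to check. Otherwise $\check{v} \in \langle E \backslash H \rangle \cap \supp_0(g)$. Because $\langle E\backslash H\rangle \cap \supp_0(g) \subseteq \supp_0(g+i)$, the vertex $\check{v}$ is terminal in $(\supp_0(g) \cap \langle E \backslash H \rangle)$, so $\rho_H(g)(\check{v}) = g(\check{v})$.

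For $f$ there are two cases. If $\check{v} \in \supp_0(f)$, then $\check{v}$ is terminal in $\supp_0(f) \cap \langle E\backslash H\rangle$, because $\supp_0(f)$ is contained in $\supp_0(g+i)$. Hence $\rho_H(f)(\check{v}) = f(\check{v})$, and the hypothesis gives $f(\check{v}) \leq g(\check{v})$. If $\check{v} \notin \supp_0(f)$, then $f(\check{v}) = 0$.

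Lemma \ref{lemma: Order Alternative} now yields $f \leq g + i$, so $f \leq_{\Cu_H} g$.

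The main obstacle is the bookkeeping in ($\Leftarrow$): we must verify that terminality in the larger support $\supp_0(g+i)$ passes to terminality in the smaller sets where $\rho_H$ is evaluated, so that $\rho_H(f)(\check v)=f(\check v)$ and $\rho_H(g)(\check v)=g(\check v)$. This works precisely because terminality is inherited by subsets containing the vertex. Once this is done, the remaining steps are formal.
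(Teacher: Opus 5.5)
Your proposal is correct and follows the paper's proof. The paper also uses the induced map $[f]_{\Cu_H} \mapsto \rho_H(f)$, gets well-definedness from $\ker \rho_H = \Cu_H$, proves the order-embedding by showing $f \leq g + \infty \cdot \sum_{\check{v} \in \langle H \rangle_t} \theta_{\check{v}, \delta_{\check{v}}}$ via Lemma \ref{lemma: Order Alternative}, and concludes with Remark \ref{remark: POM Iso Between Cu Semigroups}; your terminal-vertex bookkeeping just spells out the application of Lemma \ref{lemma: Order Alternative} that the paper leaves implicit.
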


\begin{proof}
By Remark \ref{remark: POM Iso Between Cu Semigroups}, it suffices to show that $\Cu_E / \Cu_H$ and $\Cu_{E \backslash H}$ are isomorphic as positively ordered monoids. By Lemma \ref{lemma: Cu E Minus H}, $\rho_H: \Cu_E \rightarrow \Cu_{E \backslash H}$ is a surjective Cu-morphism with $\ker \rho_H = \Cu_H$. Define $\widetilde{\rho_H}$ to be the map
\begin{equation*}
\begin{matrix}
\widetilde{\rho_H}: & \Cu_E/\Cu_H & \rightarrow & \Cu_{E \backslash H} \\
& [f]_{\Cu_H} & \mapsto & \rho_H(f). 
\end{matrix}
\end{equation*}
The map $\widetilde{\rho_H}$ is well-defined because $\ker \rho_H = \Cu_H$. It is also additive and order preserving because $\rho_H$ is. Now to see that $\widetilde{\rho_H}$ is an order-embedding, assume that $[f]_{\Cu_H}, [g]_{\Cu_H} \in \Cu_E / \Cu_H$ are such that $\rho_H(f) \leq \rho_H(g)$ in $\Cu_{E \backslash H}$. Define
\begin{equation*}
g' = g + \infty \cdot {\sum}_{\check{v} \in \langle H \rangle_t} \theta_{\check{v}, \delta_{\check{v}}}. 
\end{equation*}
By combining Lemma \ref{lemma: Order Alternative} with the inequality $\rho_H(f) \leq \rho_H(g)$, $f \leq g'$ in $\Cu_E$. Recalling the partial order on $\Cu_E/\Cu_H$ from Definition \ref{definition: Quotient Cu Semigroup}, we have that $[f]_{\Cu_H} \leq [g]_{\Cu_H}$ and thus, $\widetilde{\rho_H}$ is an order-embedding.
\end{proof}

\section{Reframing the Cuntz-Krieger relations using pure largeness}
\label{sec: General Cuntz Krieger}

The main goal of this section is to prove Theorem \ref{introtheorem: General Cuntz Krieger}. We remind the reader that Theorem \ref{introtheorem: General Cuntz Krieger} is an important special case of Theorem \ref{introtheorem: Cu E Iso}. For the graphs it applies to, Theorem \ref{introtheorem: General Cuntz Krieger} generalises the behaviour of Cuntz classes of vertex projections imposed by the Cuntz-Krieger relations to Cuntz classes of positive elements in the stabilisation of the graph C*-algebra. By combining Theorem \ref{introtheorem: General Cuntz Krieger} with the isomorphism in \eqref{equation: Full Corner Iso}, the various relations one obtains from the Cuntz semigroup in Theorem \ref{introtheorem: General Cuntz Krieger} can be transported to the Cuntz semigroups of graph C*-algebras coming from arbitrary finite graphs satisfying the standing assumptions. This is especially important to showing that the map $\Delta_E: \Cu_E \rightarrow \Cu(C^*(E))$ defined in Definition \ref{definition: Delta E} is a Cu-morphism; see also Lemma \ref{lemma: Same Open Subset Upstairs} and the beginning of \Cref{sec: Delta E} for more details. \newline
\phantom{} \newline
To make the statement of Theorem \ref{introtheorem: General Cuntz Krieger} more precise, our first task is to formalise the notion of the stack of two Cu-semigroups. This is done in \cref{subsec: Stack}. The proof of Theorem \ref{introtheorem: General Cuntz Krieger} is divided into two cases. The easier of the two cases is when the distinguished vertex $v$ in Theorem \ref{introtheorem: General Cuntz Krieger} supports multiple cycles. The proof in this case takes advantage of the simple purely infinite quotient, and is provided in \cref{subsec: Kirchberg Quotient}. The more difficult case where the distinguished vertex supports a single loop is done in \cref{subsec: Circle Quotient}.  

\subsection{The stack of two Cu-semigroups}
\label{subsec: Stack}

Given two Cu-semigroups $S$ and $T$, one can construct from it a Cu-semigroup $S \stack T$ whose order behaves in exactly the same way as the order of the Cuntz semigroup of a purely large extension (see Corollary \ref{corollary: Purely Large Order Stabilised}). The stack of two Cu-semigroups is the key construction we need to prove Theorem \ref{introtheorem: General Cuntz Krieger}.

\begin{definition} \label{definition: Cuntz Stack}
Let $(S, \leq_S, +_S)$ and $(T, \leq_T, +_T)$ be Cu-semigroups and suppose that $S$ has largest element $\infty_S$. The \textbf{stack} of $S$ below $T$, denoted by $S \stack T$, is the Cu-semigroup defined by the following data:
\begin{enumerate}
\item The underlying set is the disjoint union $S \sqcup (T \backslash \{ 0_T \})$ where $0_T$ is the zero element of $T$. 
\item Let $s_1, s_2 \in S$ and $t_1, t_2 \in T \backslash \{ 0_T \}$. The order on $S \stack T$ is given by $s_1 \leq s_2$ if and only if $s_1 \leq_S s_2$, $t_1 \leq t_2$ if and only if $t_1 \leq_T t_2$ and $s_1 \leq t_1$. 
\item Let $s_1, s_2 \in S$ and $t_1, t_2 \in T \backslash \{ 0_T \}$. Addition on $S \stack T$ is given by $s_1 + s_2 = s_1 +_S s_2$, $t_1 + t_2 = t_1 +_T t_2$ and $s_1 + t_1 = t_1$. 
\item The way-below relation of $S \stack T$ behaves similarly to the order.
\end{enumerate}
\end{definition}

It is straightforward to check that the stack of two Cu-semigroups is again a Cu-semigroup and that $S$ is an ideal of $S \stack T$. 

\begin{proposition} \label{proposition: Stack Characterisation}
Let
\begin{equation*}
\begin{tikzcd}
0 \arrow[r] & I \arrow[r] & A \arrow[r] & B \arrow[r] & 0
\end{tikzcd}
\end{equation*} 
be an extension of C*-algebras. Suppose that $I$ is separable so that $\Cu(I)$ has largest element $\infty_I$. Then $\Cu(A) \cong \Cu(I) \stack \Cu(B)$ if and only if for $x \in \Cu(A) \backslash \Cu(I)$, $x + \infty_I = x$. 
\end{proposition}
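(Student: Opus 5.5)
The forward direction is immediate from the definition of the stack. Suppose $\Phi:\Cu(I)\stack\Cu(B)\to\Cu(A)$ is an isomorphism of Cu-semigroups compatible with the extension. Concretely, $\Phi$ should restrict to the identity on $\Cu(I)$, viewed as an ideal of $\Cu(A)$ via Theorem \ref{theorem: CRS}, and should induce the quotient map $\Cu(\pi)$. Then any $x\in\Cu(A)\backslash\Cu(I)$ is $\Phi(t)$ for some $t\in\Cu(B)\backslash\{0\}$. The addition rule in Definition \ref{definition: Cuntz Stack} gives $\infty_I+t=t$, so $x+\infty_I=x$.

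I will therefore interpret the isomorphism $\Cu(A)\cong\Cu(I)\stack\Cu(B)$ as one compatible with the short exact sequence of Theorem \ref{theorem: CRS}. Without this, an abstract isomorphism might not send $\Cu(I)$ to $\Cu(I)$, and I would need to remark that the intended meaning is the natural one.

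For the converse, assume $x+\infty_I=x$ for every $x\in\Cu(A)\backslash\Cu(I)$. Since $I$ is separable, $\Cu(I)$ is countably based. Its largest element $\infty_I$ is the class of $s_I\otimes 1$, where $s_I$ is a strictly positive element, and $\Cu(I)=\Idl(\infty_I)$. Define
\[
\Psi:\Cu(A)\to\Cu(I)\stack\Cu(B),\qquad \Psi(x)=\begin{cases} x, & x\in\Cu(I),\\ \Cu(\pi)(x), & x\notin\Cu(I).\end{cases}
\]
This is well defined because $\Cu(\pi)(x)\neq 0$ exactly when $x\notin\Cu(I)$, by exactness in Theorem \ref{theorem: CRS}. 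The steps, in order, are as follows.

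\textbf{Injectivity and order-embedding.} Comparisons within $\Cu(I)$ are clear. If $x,y\notin\Cu(I)$ and $\Cu(\pi)(x)\le\Cu(\pi)(y)$, then Theorem \ref{theorem: CRS} gives $z\in\Cu(I)$ with $x\le y+z$. Since $z\le\infty_I$, we get $x\le y+\infty_I=y$.

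\textbf{Mixed comparisons.} If $s\in\Cu(I)$ and $x\notin\Cu(I)$, then $s\le\infty_I\le x+\infty_I=x$. This matches the stack order $s\le t$. Conversely, $x\le s$ is impossible, because $\Cu(I)$ is downward hereditary.

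\textbf{Additivity.} The only nontrivial case is the mixed sum. For $s\in\Cu(I)$ and $x\notin\Cu(I)$, we have $x\le x+s\le x+\infty_I=x$, so $x+s=x$. This matches $s+t=t$ in the stack. Two elements outside $\Cu(I)$ have sum outside $\Cu(I)$ by heredity, and $\Cu(\pi)$ is additive.

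\textbf{Surjectivity.} This follows from surjectivity of $\Cu(\pi)$ in Theorem \ref{theorem: CRS}.

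Hence $\Psi$ is an isomorphism of positively ordered monoids. By Remark \ref{remark: POM Iso Between Cu Semigroups}, it is then automatically a Cu-isomorphism.

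I expect the main obstacle to be justifying $z\le\infty_I$ for every $z\in\Cu(I)$, that is, that $\infty_I$ is the largest element. I would cite $\Cu(I)=\Idl([s_I])$ for a strictly positive $s_I\in I$, together with the fact that any $a\in(I\otimes K)_+$ is Cuntz below $s_I\otimes 1_{\mathcal M(K)}$ in the stabilisation. The remaining steps are bookkeeping against Definition \ref{definition: Cuntz Stack}.
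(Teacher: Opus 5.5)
Your proposal is correct and follows the paper's proof essentially step for step. Both use the same piecewise map (identity on $\Cu(I)$, $\Cu(\pi)$ elsewhere), use Theorem~\ref{theorem: CRS} together with the absorption $x+\infty_I=x$ to get the order-embedding, and invoke Remark~\ref{remark: POM Iso Between Cu Semigroups} to upgrade to a Cu-isomorphism; your extra bookkeeping, and your remark that the ``only if'' direction tacitly assumes the extension-compatible isomorphism, are sound clarifications of what the paper leaves implicit. The ``obstacle'' you flag is actually part of the hypothesis. Note also that $s_I\otimes 1$ lies in the multiplier algebra, not in $I\otimes K$, so $\infty_I$ is better described as $\sup_k k[s_I]$, the class of a strictly positive element of $I\otimes K$.
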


\begin{proof}
The ``only if" direction follows from Definition \ref{definition: Cuntz Stack}. Assume that $x \in \Cu(A) \backslash \Cu(I)$ and let $\pi: A \rightarrow B$ be the quotient map. Observe that if $y \in \Cu(A)$ and $\Cu(\pi)(y) \leq \Cu(\pi)(x)$ then by Theorem \ref{theorem: CRS}, $y \leq x + \infty_I = x$ in $\Cu(A)$. Hence, any non-zero element of $\Cu(B)$ lifts uniquely to an element of $\Cu(A) \backslash \Cu(I)$ and the map $\phi: \Cu(A) \rightarrow \Cu(I) \stack \Cu(B)$ defined by
\begin{equation*}
\phi(x) = 
\begin{cases}
x, \text{ viewed as an element of $\Cu(I)$ if $x \leq \infty_I$,} \\
\Cu(\pi)(x), \text{ viewed as an element of $\Cu(B)$ otherwise.}
\end{cases}
\end{equation*}
is an isomorphism of positively ordered monoids and thus, a Cu-isomorphism by Remark \ref{remark: POM Iso Between Cu Semigroups}.
\end{proof}

\begin{remark} \label{remark: Purely Large Absorption}
By Corollary \ref{corollary: Purely Large Order Stabilised}, if the Cuntz semigroup of an extension decomposes as the stack, then the extension must be purely large. However, Cuntz semigroups of C*-algebras which are purely large extensions do not always decompose as the stack of the Cuntz semigroup of the ideal below the Cuntz semigroup of the quotient. \newline
\phantom{} \newline
To illustrate this point, let $H$ be an infinite-dimensional separable Hilbert space. The C*-algebra $B(H)$ is a purely large extension of the Calkin algebra $Q(H)$ by $K(H)$ by Theorem \ref{theorem: Kucerovsky Ng}. Moreover by \cite[Theorem 6.4]{ABP11}, $$\Cu(B(H)) \cong \Cu(K(H)) \stack \Cu(Q(H)) \cong \overline{\mathbb{N}} \stack \{0, \infty \}.$$ Now let $\tilde{K}$ denote the minimal unitisation of $K(H)$, which is a purely large extension of $\mathbb{C}$ by $K$. However, $$\Cu(\tilde{K}) \not \cong \overline{\mathbb{N}} \stack \overline{\mathbb{N}}.$$ To see this, note that $\tilde{K}$ has stable rank one and thus, $\Cu(\tilde{K})$ is weakly cancellative by \cite[Theorem 4.3]{RW10}. On the other hand, by Definition \ref{definition: Cuntz Stack}, $\overline{\mathbb{N}} \stack \overline{\mathbb{N}}$ is not weakly cancellative. 
\end{remark}

By Remark \ref{remark: Purely Large Absorption}, being weakly cancellative is generally an obstruction for the Cuntz semigroup of a purely large extension with a stably finite quotient to decompose as the stack. The following proposition, communicated to the author by Vilalta, shows via an application of axiom (O5) the fundamental issue --- that Cuntz classes of the extension which descend to Cuntz classes of projections in the stably finite quotient may not have the absorption behaviour required to avoid being weakly cancellative and hence, for the Cuntz semigroup to decompose as the stack. The author is grateful to Eduard Vilalta for allowing Proposition \ref{proposition: Purely Large Axiom (O5)} to be included and used in the paper. 

\begin{proposition}[Vilalta] \label{proposition: Purely Large Axiom (O5)}
Let
\begin{equation*}
\begin{tikzcd}
0 \arrow[r] & I \arrow[r] & A \arrow[r, "\pi"] & B \arrow[r] & 0
\end{tikzcd}
\end{equation*}
be a purely large extension such that $I$ is separable and $B$ is stably finite. Let $\infty_I$ be the largest element in $\Cu(I)$ and $z \in \Cu(A) \backslash \Cu(I)$. Either $z + \infty_I = z$ or there exists a projection $p \in B \otimes K$ such that $\Cu(\pi)(z) = [p]$. 
\end{proposition}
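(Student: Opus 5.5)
The plan is to work with a rapidly increasing approximation of $z$ and use axiom (O5) to produce a complement, in the spirit of Lemma \ref{lemma: Bounded Projection Cancellation}. Write $z = \sup_n z_n$ with $z_n \ll z_{n+1}$. Since $z \notin \Cu(I)$ and $\Cu(I)$ is closed under suprema of increasing sequences, we may discard finitely many terms and assume that no $z_n$ lies in $\Cu(I)$. By Corollary \ref{corollary: Purely Large Order Stabilised}, which applies because the extension is purely large, we then have $\infty_I \leq z_n$ for every $n$.

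Next apply (O5) to the inequalities $\infty_I \ll$-approximants together with $z_n \ll z_{n+1}$. More precisely, for each $n$ use (O5) on $z_n \ll z_{n+1} \leq z$ to obtain $c_n \in \Cu(A)$ with $z_n + c_n \leq z \leq z_{n+1} + c_n$. Now split into cases according to whether $c_n$ lies in $\Cu(I)$ for some $n$.

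\textbf{Case 1: some $c_n \in \Cu(I)$.} Then $c_n \leq \infty_I \leq z_n$, so
\[ z \leq z_{n+1} + c_n \leq z_{n+1} + \infty_I. \]
I would then aim to show that $z_{n+1} + \infty_I \leq z$. This does not follow immediately. A better route is to compare images in the quotient: from $z_n + c_n \leq z$ we get $\Cu(\pi)(z_n) \leq \Cu(\pi)(z) \leq \Cu(\pi)(z_{n+1})$. Since $z_n \ll z_{n+1}$, this is only a "sandwich" statement. Iterating this over all $n$ (using a subsequence on which the $c_n$ lie in $\Cu(I)$, or arguing that once one does, all later ones can be chosen in $\Cu(I)$), we get that $\Cu(\pi)(z) \ll \Cu(\pi)(z)$, so $\Cu(\pi)(z)$ is compact. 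Because $B$ is stably finite, a compact element of $\Cu(B)$ is the class of a projection in $B \otimes K$ (Brown–Ciuperca). This yields the second alternative.

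\textbf{Case 2: every $c_n \notin \Cu(I)$.} Then Corollary \ref{corollary: Purely Large Order Stabilised} gives $\infty_I \leq c_n$ for every $n$. Hence
\[ z_n + \infty_I \leq z_n + c_n \leq z \quad \text{for all } n, \]
and taking suprema via (O4) gives $z + \infty_I \leq z$. The reverse inequality is trivial, so $z + \infty_I = z$, which is the first alternative.

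The main obstacle is Case 1: turning the existence of a single complement lying in the ideal into compactness of $\Cu(\pi)(z)$. I would try to arrange the dichotomy more cleanly, namely: either $\Cu(\pi)(z)$ is compact (and then stable finiteness gives a projection), or it is not. In the latter case, for each $n$ the element $\Cu(\pi)(z) \neq \Cu(\pi)(z_{n+1})$. I would then try to choose the (O5) complements $c_n$ so that $\Cu(\pi)(c_n) \neq 0$, which forces $c_n \notin \Cu(I)$ and reduces to Case 2. Achieving this uses the "moreover" clause of (O5), namely that $c$ may be taken with $x'' \ll c$-type control whenever $x + y \leq z$. I expect this choice of complement to be the delicate point.
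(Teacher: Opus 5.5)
Your strategy is the paper's: take (O5) complements for approximants of $z$, then split according to whether some complement lies in $\Cu(I)$. One branch uses pure largeness plus a supremum; the other uses compactness plus Brown--Ciuperca. Your Case 2 is exactly the paper's argument. The paper works with $z=[x]$ and the approximants $[(x-\epsilon)_+] \ll [(x-\epsilon/2)_+] \ll [x]$ rather than an abstract rapidly increasing sequence, which makes no difference.

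The gap is Case 1, which you leave unfinished. You describe $\Cu(\pi)(z) \leq \Cu(\pi)(z_{n+1})$ as ``only a sandwich statement'' and then propose one of two routes. The first is an iteration along a subsequence, for which you give no reason that later complements also lie in $\Cu(I)$. The second is a special choice of complements via the ``moreover'' clause of (O5), which you do not carry out. Neither is needed.

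The missing observation is that $z_{n+1} \ll z_{n+2} \leq z$ gives $z_{n+1} \ll z$, and $\Cu(\pi)$, being a Cu-morphism, preserves $\ll$. So a single $n$ with $c_n \in \Cu(I)$ suffices. Since then $\Cu(\pi)(c_n)=0$, the inequality $z \leq z_{n+1}+c_n$ gives
\[
\Cu(\pi)(z) \leq \Cu(\pi)(z_{n+1}) \ll \Cu(\pi)(z).
\]
Hence $\Cu(\pi)(z)$ is compact, and Brown--Ciuperca applies because $B$ is stably finite. This is precisely the paper's line $[\pi(x)] \leq [(\pi(x)-\epsilon/2)_+] \ll [\pi(x)]$.

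The same one-line argument removes your worry about choosing complements. If $\Cu(\pi)(z)$ is not compact, then every (O5) complement $c_n$ automatically lies outside $\Cu(I)$, and you are in Case 2 with no special choice required.
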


\begin{proof}
Assume that $x \in (A \otimes K)_+ \backslash (I \otimes K)$ and $\epsilon > 0$. Then
\begin{equation*}
[(x - \epsilon)_+] \ll \Big[ \Big(x - \frac{\epsilon}{2} \Big)_+ \Big] \ll [x]. 
\end{equation*}
Applying (O5), there exists $y_\epsilon \in \Cu(A)$ such that 
\begin{equation} \label{equation: Axiom (O5) Application}
[(x - \epsilon)_+] + y_\epsilon  \leq [x] \leq \Big[ \Big(x - \frac{\epsilon}{2} \Big)_+ \Big] + y_\epsilon. 
\end{equation}
Firstly, if there exists $\epsilon > 0$ such that $y_\epsilon \in \Cu(I)$ then by the RHS inequality of \eqref{equation: Axiom (O5) Application},
\begin{equation*}
[\pi(x)] \leq \Big[ \Big(\pi(x) - \frac{\epsilon}{2} \Big)_+ \Big] \ll [\pi(x)]. 
\end{equation*}
So $[\pi(x)] \in \Cu(B)$ is compact. Since $B$ is stably finite then by \cite[Theorem 3.5]{BC09}, there exists a projection $p \in (B \otimes K)_+$ such that $\Cu(\pi)([x]) = [\pi(x)] = [p]$. On the other hand, suppose that for $\epsilon > 0$, $y_\epsilon \in \Cu(A) \backslash \Cu(I)$. By the LHS inequality of \eqref{equation: Axiom (O5) Application} and Corollary \ref{corollary: Purely Large Order Stabilised},
\begin{equation*}
[(x - \epsilon)_+] + \infty_I \leq [(x - \epsilon)_+] + y_\epsilon \leq [x]. 
\end{equation*}
Taking the supremum over all $\epsilon > 0$, we have $[x] + \infty_I = [x]$. 
\end{proof}

\subsection{Proof of Theorem \ref{introtheorem: General Cuntz Krieger} in the Kirchberg quotient case}
\label{subsec: Kirchberg Quotient}

The next two subsections are dedicated to proving Theorem \ref{introtheorem: General Cuntz Krieger}. In this subsection, we will first tackle the case where the distinguished vertex $v$ in Theorem \ref{introtheorem: General Cuntz Krieger} supports multiple cycles; this corresponds to $C^*(E)$ having a Kirchberg quotient. Stepping outside the confines of graph C*-algebras, we will prove a more general result than what we need; namely that the Cuntz semigroup of a C*-algebra which is a purely large extension with a purely infinite quotient decomposes as the stack. The key results needed for this are Lemma \ref{lemma: KR Lemma} and the characterisation of pure largeness in Corollary \ref{corollary: Purely Large Order Stabilised} because we are dealing with arbitrary Cuntz classes, not just Cuntz classes of positive elements in the C*-algebra. 

\begin{lemma} \label{lemma: Not Ideal Properly Infinite}
Let
\begin{equation*}
\begin{tikzcd}
0 \arrow[r] & I \arrow[r] & A \arrow[r] & B \arrow[r] & 0 
\end{tikzcd}
\end{equation*}
be a purely large extension of C*-algebras. Suppose that $B$ is purely infinite. If $z \in \Cu(A) \backslash \Cu(I)$ then $2z = z$ in $\Cu(A)$. 
\end{lemma}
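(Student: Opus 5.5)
The plan is to realise $z$ by a positive element and use Lemma \ref{lemma: KR Lemma} with $n = 4$ to fit two copies of a cut-down of it orthogonally inside its own hereditary subalgebra. Pure largeness, in the form of Corollary \ref{corollary: Purely Large Order Stabilised}, will absorb the ideal part $c$ that the lemma produces.

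\textbf{Step 1: reduction to the stabilised extension.} Write $z = [a]$ with $a \in (A \otimes K)_+ \backslash (I \otimes K)$. By Theorem \ref{theorem: Purely Large Tensor Product}, the stabilised extension $0 \to I \otimes K \to A \otimes K \to B \otimes K \to 0$ is purely large. Pure infiniteness passes to $B \otimes K$ (a standard fact from \cite{KR00}). So by Theorem \ref{theorem: Purely Infinite Properly Infinite}, every non-zero positive element of $B \otimes K$ is properly infinite, and Lemma \ref{lemma: KR Lemma} applies to the stabilised extension.

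\textbf{Step 2: splitting $a$ into four pieces.} Since $z \leq 2z$ always holds, it suffices to show $2[(a-\epsilon)_+] \leq [a]$ for all small $\epsilon > 0$; taking the supremum over $\epsilon$ then gives $2z \leq z$.

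Fix $\epsilon > 0$ small enough that $(a - \epsilon)_+ \notin I \otimes K$. This is possible because $\pi(a) \neq 0$, so $(\pi(a) - \epsilon)_+ \neq 0$ for small $\epsilon$. Apply Lemma \ref{lemma: KR Lemma} with $n = 4$. This gives mutually orthogonal $a_1 \sim a_2 \sim a_3 \sim a_4$ in $\overline{a(A\otimes K)a}$ and $c \in \overline{a(I \otimes K)a}_+$ such that $(a-\epsilon)_+ \precsim a_j \oplus c$ for every $j$.

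None of the $a_j$ lie in $I \otimes K$. Indeed, if $a_1 \in I \otimes K$, then $a_j \oplus c$ would be Cuntz below an element of the ideal, hence so would $(a-\epsilon)_+$. Its image in the quotient would then vanish, contradicting the choice of $\epsilon$. The same argument applies to each $j$, which is why it matters that all the $a_j$ are Cuntz equivalent.

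\textbf{Step 3: absorbing $c$ by pure largeness.} Since $c \in (I \otimes K)_+$ and $a_2 \notin I \otimes K$, Corollary \ref{corollary: Purely Large Order Stabilised} (or directly the definition of pure largeness for the stabilised extension) gives $c \precsim a_2$. Hence
\[
(a-\epsilon)_+ \precsim a_1 \oplus a_2,
\]
and likewise $(a-\epsilon)_+ \precsim a_3 \oplus a_4$. Since the $a_j$ are mutually orthogonal, $a_1 \oplus a_2 \oplus a_3 \oplus a_4 \sim a_1 + a_2 + a_3 + a_4$. This last element lies in $\overline{a(A\otimes K)a}$, so it is Cuntz below $a$. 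Therefore $2[(a-\epsilon)_+] \leq [a]$, which completes the argument.

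\textbf{Expected obstacle.} The main point needing care is the non-membership claim in Step 2, namely that the $a_j$ avoid the ideal. Without it, pure largeness cannot be invoked to absorb $c$. This is handled by choosing $\epsilon$ so that $(a-\epsilon)_+ \notin I \otimes K$ and using the Cuntz equivalences $a_1 \sim \dots \sim a_4$. A minor additional point is checking that Lemma \ref{lemma: KR Lemma} is applied to the stabilised extension, which is why Step 1 is needed.
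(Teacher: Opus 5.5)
Your proposal is correct and follows essentially the same route as the paper: Lemma~\ref{lemma: KR Lemma} with $n = 4$, pure largeness to absorb $c$ into $a_2$ (resp.\ $a_4$), and the orthogonal pairs $a_1 + a_2$ and $a_3 + a_4$ inside $\overline{a(A \otimes K)a}$. The differences are minor. You conclude $2[(a-\epsilon)_+] \leq [a]$ directly instead of citing Kirchberg--R{\o}rdam's characterisation of properly infinite elements, and you make the passage to the stabilised extension explicit. Also, the non-membership of each $a_j$ in $I \otimes K$ already follows from $(a-\epsilon)_+ \precsim a_j \oplus c$ for that $j$ alone, so the equivalences $a_1 \sim \dots \sim a_4$ are not needed at that step.
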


\begin{proof}
Assume that $A$ is a purely large extension of $B$ by $I$ and that $B$ is purely infinite. Assume that $[x] \in \Cu(A) \backslash \Cu(I)$. To show that $x$ is properly infinite, it suffices by \cite[Proposition 3.3 (iii)]{KR00} to show that if $\epsilon > 0$ then there exist mutually orthogonal positive elements $a_1, a_2 \in \overline{x (A \otimes K) x}$ such that for $j \in \{1, 2 \}$, $(x - \epsilon)_+ \precsim a_j$. \newline
\phantom{} \newline
Assume that $\epsilon > 0$ is small enough so that $(x - \epsilon)_+ \not \in (I \otimes K)_+$. By Theorem \ref{theorem: Purely Infinite Properly Infinite} and Lemma \ref{lemma: KR Lemma}, there exist mutually orthogonal positive elements $a_1, a_2, a_3, a_4 \in \overline{x (A \otimes K) x}$ and a positive element $c \in \overline{x (I \otimes K) x}$ such that $a_1, a_2, a_3$ and $a_4$ are Cuntz equivalent to each other and for $j \in \{1, 2, 3, 4 \}$, 
\begin{equation*}
[(x - \epsilon)_+] \leq [a_j \oplus c] = [a_j] + [c]. 
\end{equation*}
Now observe that if $j \in \{1, 2, 3, 4 \}$ then $[a_j] \not \in \Cu(I)$ because $[(x - \epsilon)_+] \not \in \Cu(I)$ and $[c] \in \Cu(I)$. Since $a_j$ and $c$ are positive elements of $A \otimes K$ then by Corollary \ref{corollary: Purely Large Order Stabilised}, 
\begin{equation*}
[(x - \epsilon)_+] \leq [a_1] + [c] \leq [a_1] + [a_2] = [a_1 + a_2].
\end{equation*}
Similarly, $[(x - \epsilon)_+] \leq [a_3 + a_4]$. So $a_1 + a_2$ and $a_3 + a_4$ are mutually orthogonal positive elements in $\overline{x (A \otimes K) x}$ which Cuntz dominate $(x - \epsilon)_+$ and hence, $(x - \delta)_+$ for $\delta \geq \epsilon$. If $\mu < \epsilon$ then $[(x - \mu)_+] \not \in \Cu(I)$ and we may repeat the above argument with $\mu$ in place of $\epsilon$ to obtain two mutually orthogonal positive elements in $\overline{x (A \otimes K) x}$ which Cuntz dominate $(x - \mu)_+$. 
\end{proof}

\begin{theorem} \label{theorem: Purely Large Purely Infinite Quotient}
Let
\begin{equation*}
\begin{tikzcd}
0 \arrow[r] & I \arrow[r] & A \arrow[r] & B \arrow[r] & 0 
\end{tikzcd}
\end{equation*}
be a purely large extension of C*-algebras. Suppose that $I$ is separable and $B$ is purely infinite. Then $$\Cu(A) \cong \Cu(I) \stack \Cu(B).$$
\end{theorem}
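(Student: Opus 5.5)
By Proposition \ref{proposition: Stack Characterisation}, it suffices to show that every $x \in \Cu(A) \backslash \Cu(I)$ satisfies $x + \infty_I = x$. Separability of $I$ guarantees that $\Cu(I)$ has a largest element $\infty_I$, so this criterion applies. The plan is to combine two facts: the idempotence $2x = x$ given by Lemma \ref{lemma: Not Ideal Properly Infinite}, and the domination of all ideal classes coming from pure largeness in Corollary \ref{corollary: Purely Large Order Stabilised}.

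Fix $x \in \Cu(A) \backslash \Cu(I)$. Since $x \notin \Cu(I)$, Corollary \ref{corollary: Purely Large Order Stabilised} gives $y \leq x$ for every $y \in \Cu(I)$. Because $\infty_I \in \Cu(I)$, in particular $\infty_I \leq x$. Adding $x$ to both sides and using that addition respects the order, we get
\begin{equation*}
x \leq x + \infty_I \leq x + x = 2x = x,
\end{equation*}
where the last equality is Lemma \ref{lemma: Not Ideal Properly Infinite}; its hypotheses hold since the extension is purely large and $B$ is purely infinite. Hence $x + \infty_I = x$, and Proposition \ref{proposition: Stack Characterisation} yields $\Cu(A) \cong \Cu(I) \stack \Cu(B)$.

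The only point needing care is that Corollary \ref{corollary: Purely Large Order Stabilised} applies to arbitrary Cuntz classes in $\Cu(A)$, not merely classes of positive elements of $A$. This is exactly what Theorem \ref{theorem: Purely Large Tensor Product} provides, since it shows that pure largeness passes to the stabilised extension. I would also check that $\infty_I$ is a genuine element of $\Cu(I)$ rather than only a supremum taken in $\Cu(A)$. This holds because ideals of $\Cu(A)$ are closed under suprema of increasing sequences, and $\infty_I$ is the class of a strictly positive element of $I \otimes K$, which exists by separability. All the real work was already done in Lemma \ref{lemma: Not Ideal Properly Infinite}, so I expect no serious obstacle here.
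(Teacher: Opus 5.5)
Your proposal is correct and follows the paper's proof exactly: you use Lemma \ref{lemma: Not Ideal Properly Infinite} to get $2x = x$, Corollary \ref{corollary: Purely Large Order Stabilised} to get $\infty_I \leq x$, deduce $x + \infty_I \leq 2x = x$, and conclude with Proposition \ref{proposition: Stack Characterisation}. Your side remarks are also sound: the stabilised version of pure largeness comes from Theorem \ref{theorem: Purely Large Tensor Product}, and $\infty_I$ exists as the class of a strictly positive element of $I \otimes K$ by separability.
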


\begin{proof}
Assume that $x \in \Cu(A) \backslash \Cu(I)$. By Lemma \ref{lemma: Not Ideal Properly Infinite}, we have that $2x = x$ in $\Cu(A)$. Thus, if $\infty_I$ is the largest element of $\Cu(I)$ then by Corollary \ref{corollary: Purely Large Order Stabilised}, $x + \infty_I \leq 2x = x$. So $x + \infty_I = x$ and we are done by Proposition \ref{proposition: Stack Characterisation}. 
\end{proof}

Using Theorem \ref{theorem: Purely Large Purely Infinite Quotient}, we now prove Theorem \ref{introtheorem: General Cuntz Krieger} in the case where the distinguished vertex $v$ supports multiple cycles.

\begin{proof}[Proof of Theorem \ref{introtheorem: General Cuntz Krieger} when $v$ supports multiple cycles]
Assume that $v \in E^0$ is as in Theorem \ref{introtheorem: General Cuntz Krieger} and supports multiple cycles. It is easy to check that $H$ is hereditary and saturated. By \cite[Theorem 4.1]{BPRS00}, we have the extension
\begin{equation} \label{equation: Typical Graph Extension}
\begin{tikzcd}
0 \arrow[r] & I_H  \arrow[r] & C^*(E) \arrow[r] & C^*(E \backslash H) \arrow[r] & 0.
\end{tikzcd}
\end{equation}
Since every vertex in $E^0$ admits a path to $v$, 
\begin{equation*}
(E \backslash H)^0 = E^0 \backslash H = \{ x \in E^0 \; \vert \; \text{$x \geq v$ and $v \geq x$} \}. 
\end{equation*}
By assumption, $v \in E^0 \backslash H$ supports more than one cycle. So $E \backslash H$ cannot consist of just a single cycle. Subsequently by \cite[Proposition 4.2]{Rae05} and \cite[Remark 5.6]{BPRS00}, $C^*(E \backslash H)$ is simple purely infinite.  \newline
\phantom{} \newline
Now since $C^*(E)$ is a unital extension with a simple quotient, it is full. To see that it is purely large, first note that by \cite[Lemma 1.6]{DHS03}, there exists a row-finite graph $E_H$ such that $I_H \cong C^*(E_H)$. By \cite[Theorem 3.2]{Tom04} and the structure of $E_H$, $I_H \cong C^*(E_H)$ is stable. So by \eqref{equation: Full Corner Iso}, $I_H \cong C^*(H) \otimes K$ and thus, $I_H$ is $\sigma$-unital and has the corona factorisation property by Proposition \ref{proposition: Graph CFP}. By Theorem \ref{theorem: Kucerovsky Ng}, $C^*(E)$ is a purely large extension. Theorem \ref{theorem: Purely Large Purely Infinite Quotient} finishes the proof. 
\end{proof}

\subsection{Proof of Theorem \ref{introtheorem: General Cuntz Krieger} in the $C(\mathbb{T})$ quotient case}
\label{subsec: Circle Quotient}

We now turn to proving Theorem \ref{introtheorem: General Cuntz Krieger} in the case where $v$ supports a single loop; this corresponds to $C^*(E)$ having a $C(\mathbb{T})$ quotient. Broadly speaking, the proof divides into two steps. First, we prove Lemma \ref{lemma: Purely Large Circle Quotient}, which says that in the situation of Theorem \ref{introtheorem: General Cuntz Krieger} with $v$ supporting a single loop, $\Cu(C^*(E))$ decomposes as the stack if and only if $C^*(E)$ is a purely large extension with $C(\mathbb{T})$ quotient. 

\begin{lemma} \label{lemma: Purely Large Circle Quotient}
Let $E$ be a finite graph and suppose that there exists $v \in E^0$ such that $v$ supports a single loop $g$ and if $w \in E^0$ then $w \geq v$. Let $H = \{ x \in E^0 \; \vert \; v \not \geq x \} = E^0 \backslash \{ v \}$. Then $C^*(E)$ is a purely large extension of $C(\mathbb{T})$ by $I_H \cong C^*(H) \otimes K$ if and only if $$\Cu(C^*(E)) \cong \Cu(C^*(H)) \stack \Lsc(\mathbb{T}, \overline{\mathbb{N}}).$$ 
\end{lemma}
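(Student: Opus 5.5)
For the ``if'' direction, I would identify $\Cu(C^*(E))$ with the stack so that the ideal $\Cu(I_H) \cong \Cu(C^*(H))$ is carried to the bottom layer of $\Cu(C^*(H)) \stack \Lsc(\mathbb{T}, \overline{\mathbb{N}})$. Then the order in Definition \ref{definition: Cuntz Stack} says directly that every $y \in \Cu(I_H)$ lies below every $x \in \Cu(C^*(E)) \backslash \Cu(I_H)$, and Corollary \ref{corollary: Purely Large Order Stabilised} gives pure largeness. To justify this identification I would argue either that the isomorphism in question is the canonical one, as in the proof of Proposition \ref{proposition: Stack Characterisation}, or that it must preserve the distinguished ideal. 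For the latter, I would use that $\Cu(I_H)$ is the ideal $\Idl(\infty_I)$, which every element outside it dominates, and check that an abstract isomorphism carries it onto the bottom layer. This needs a short check, and I would record it explicitly.

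For the ``only if'' direction, assume the extension is purely large. By Proposition \ref{proposition: Stack Characterisation} it suffices to show $z + \infty_I = z$ for every $z \in \Cu(C^*(E)) \backslash \Cu(I_H)$. The quotient $C(\mathbb{T})$ is stably finite and $I_H$ is separable, so Proposition \ref{proposition: Purely Large Axiom (O5)} reduces us to the case where $\Cu(\pi)(z) = [p]$ for a projection $p \in C(\mathbb{T}) \otimes K$. Since $\mathbb{T}$ is connected, $[p]$ is the constant function $n \cdot \chi_{\mathbb{T}}$ for some $n \geq 1$, which equals $\Cu(\pi)(n[p_v])$.

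Next I would use the Cuntz--Krieger relations to show that $n[p_v]$ absorbs $\infty_I$. We have $p_v = s_g s_g^* + \sum_{r(e) = v, e \neq g} s_e s_e^*$ and $s_e s_e^* \sim p_{s(e)}$, so $[p_v] = [p_v] + \sum_{r(e)=v, e\neq g}[p_{s(e)}]$. Iterating this along paths, and using that every $w \in H$ satisfies $w \geq v$, gives $[p_v] = [p_v] + k[p_w]$ for all $w \in H$ and all $k$. Hence $[p_v] + \infty \cdot [p_H] = [p_v]$. Since $p_H$ is full in $I_H$, $\infty\cdot[p_H] = \infty_I$, and so $n[p_v] + \infty_I = n[p_v]$. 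Theorem \ref{theorem: CRS} then gives $z \leq n[p_v] + i = n[p_v]$ and $n[p_v] \leq z + \infty_I$.

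The main obstacle is upgrading this to $n[p_v] \leq z$, since then $z = n[p_v]$ and $z$ absorbs $\infty_I$. My first attempt would work at the level of a representative $x \in (C^*(E) \otimes K)_+$ with $[x] = z$ and $[\pi(x)] = [p]$ compact. Following the proof of Proposition \ref{proposition: Purely Large Axiom (O5)}, I would take the element $y_\epsilon$ from the (O5) application and, through functional calculus in $\overline{x(C^*(E)\otimes K)x}$, split $x$ into two orthogonal pieces. The first piece $e$ should satisfy $[\pi(e)] = [p]$, so that $n[p_v] \leq [e] + i$ for some $i \in \Cu(I_H)$. The second piece $e'$ should lie outside $I_H \otimes K$. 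Pure largeness (Corollary \ref{corollary: Purely Large Order Stabilised}) then gives $i \leq [e']$, so $n[p_v] \leq [e] + [e'] \leq z$.

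If no such splitting exists, then $z$ is essentially a compact lift of $[p]$ that misses $I_H$. In that case I would instead try to show $z \geq \infty_I$ directly. I would also try to exploit the quantum double suspension model from Proposition \ref{proposition: QDS Forcing}, where the Busby invariant is explicit, in order to exclude such a lift.
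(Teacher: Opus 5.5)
There is a genuine gap at the decisive step of the ``only if'' direction. Your reduction there is correct. Proposition \ref{proposition: Purely Large Axiom (O5)} leaves only those $z$ with $\Cu(\pi)(z) = n\chi_{\mathbb{T}}$. The Cuntz--Krieger relations give $n[p_v] + \infty_I = n[p_v]$, and Theorem \ref{theorem: CRS} then yields $z \leq n[p_v] \leq z + \infty_I$. But the lemma now rests entirely on $n[p_v] \leq z$, and you do not prove it.

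The splitting you propose cannot exist. If $e, e' \in \overline{x(C^*(E) \otimes K)x}$ are orthogonal, then $[\pi(e)] + [\pi(e')] \leq [\pi(x)] = n\chi_{\mathbb{T}}$ in $\Lsc(\mathbb{T}, \overline{\mathbb{N}})$. So $[\pi(e)] = n\chi_{\mathbb{T}}$ with $n$ finite forces $\pi(e') = 0$, i.e.\ $e' \in I_H \otimes K$. This is the stably finite obstruction behind Proposition \ref{proposition: Purely Large Axiom (O5)} and Remark \ref{remark: Purely Large Absorption}: pure largeness gives no control over lifts of projection classes.

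Your fallback (``show $z \geq \infty_I$ directly'', ``use the quantum double suspension to exclude such a lift'') restates the goal. The quantum double suspension only enters the paper to \emph{verify} pure largeness (Lemma \ref{lemma: QDS Purely Large}), not inside this lemma. Note also that your Cuntz--Krieger input concerns only the particular lift $n[p_v]$. What is needed is a lower bound for an \emph{arbitrary} positive representative of $z$.

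(Separately, in the ``if'' direction your second justification assumes that elements outside $\Idl(\infty_I)$ dominate it. By Corollary \ref{corollary: Purely Large Order Stabilised} that is the conclusion, so it is circular. The canonical identification, which the paper uses, is the right route.)

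The paper gets the missing lower bound by compressing with powers of the loop. Take $x \in M_n(C^*(E))_+$ with entries in $\mathcal{L}(E)$ and $[\pi((x - \frac{1}{m})_+)] = n[\pi(p_v)]$, as in Lemma \ref{lemma: Span Matrix Cutdown Quotients To Projection}. For large $k$ one has $((s_g^*)^k \otimes 1_n)\, x \,(s_g^k \otimes 1_n) = (c_{x_{ij}}(s_g))_{i,j}$. This holds because compression annihilates the summands $s_\mu s_\nu^*$ with $s(\mu) = s(\nu) \in H$, and Proposition \ref{proposition: Sg Cutdown} handles those with source $v$. One then lifts the bound $\pi(x) \geq (\eta + \frac{1}{m})(\pi(p_v) \otimes 1_n)$ and uses Lemma \ref{lemma: Positive I Arbitrarily Small} to make the $I_H$-error negligible after compression. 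This gives $\eta\,(p_v \otimes 1_n) \leq ((s_g^*)^k \otimes 1_n)(x - \frac{1}{m})_+(s_g^k \otimes 1_n)$, hence $n[p_v] \leq [(x - \frac{1}{m})_+]$.

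Because this argument needs algebraic entries and full rank, the paper does not apply Proposition \ref{proposition: Purely Large Axiom (O5)} to $z$ directly. Instead it proceeds as follows:
\begin{enumerate}
\item Via Lemma \ref{lemma: Countably Based}, it sandwiches $[(y - \frac{1}{m})_+]$, with $y$ having entries in $\mathcal{L}(E)$, between $[(x - \epsilon)_+]$ and $[x]$.
\item It proves absorption of $\infty_I$ for these approximants.
\item When the quotient rank $\ell$ is smaller than the matrix size, it uses induction on $n$ through $\Sigma_{M_k(C^*(E))}$ and Lemma \ref{lemma: Sandwich}.
\item It passes to $C^*(E) \otimes K$ with R{\o}rdam's lemma.
\end{enumerate}
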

The second step is to remove the condition that $C^*(E)$ is a purely large extension from Lemma \ref{lemma: Purely Large Circle Quotient}. By Lemma \ref{lemma: Purely Large Circle Quotient} and Proposition \ref{proposition: QDS Forcing}, it suffices to prove that the graph C*-algebra of a quantum double suspension graph (see Definition \ref{definition: QDS}) is a purely large extension of $C(\mathbb{T})$. \newline
\phantom{} \newline
To prove Lemma \ref{lemma: Purely Large Circle Quotient}, we need to show that if $x \in \Cu(C^*(E)) \backslash \Cu(I_H)$ and $y \in \Cu(I_H)$ then $x$ absorbs $y$. Fortunately by Proposition \ref{proposition: Purely Large Axiom (O5)}, we only need to prove this for elements in $\Cu(C^*(E)) \backslash \Cu(I_H)$ which quotient to a non-zero, finite-valued constant function in $\Lsc(\mathbb{T}, \overline{\mathbb{N}})$. In this simplified situation, we will see that the Cuntz-Krieger relations supply the missing piece needed for these Cuntz classes to have the desired absorption behaviour. \newline
\phantom{} \newline
To use the Cuntz-Krieger relations, we first consider positive elements of $C^*(E) \otimes M_n(\mathbb{C})$ not lying in $I_H \otimes M_n(\mathbb{C})$ whose entries lie in the set 
\begin{equation} \label{equation: Leavitt Path}
\mathcal{L}(E) = \Span \{ s_\mu s_\nu^* \; \vert \; \mu, \nu \in E^*, \; s(\mu) = s(\nu) \} \subseteq C^*(E). 
\end{equation}
Recall from the construction of a graph C*-algebra (see for instance, \cite[Proposition 1.20]{Rae05}) that the *-subalgebra $\mathcal{L}(E)$ is norm-dense in $C^*(E)$. If $\pi: C^*(E) \rightarrow C(\mathbb{T})$ is the quotient map and $x \in \mathcal{L}(E)$ then $\pi(s_g)$ is unitary and by the Cuntz-Krieger relations,
\begin{equation} \label{equation: Quotient Form}
\pi(x) \in \Span \big\{ \pi(p_v), \; \pi(s_g)^k, \; (\pi(s_g)^*)^{\ell} \; \big\vert \; k, \ell > 0 \big\}.
\end{equation}
Hence, we define $c_x \in C(\mathbb{T})$ to be the Laurent polynomial satisfying $\pi(x) = c_x(\pi(s_g))$ and $c_x(s_g)$ to be the element of $C^*(E)$ obtained by replacing every instance of $z$ and $z^{-1}$ in $c_x(z)$ with $s_g$ and $s_g^*$ respectively. \newline
\phantom{} \newline
Moreover, the Cuntz-Krieger relations also say that if $j, \ell \geq 0$ and $k \geq \max \{j, \ell \}$ then
\begin{equation} \label{equation: Paths In W Cutdown}
(s_g^*)^k \big( s_g^j (s_g^*)^\ell \big) s_g^k = 
\begin{cases}
(s_g^*)^{\ell - j}, \text{ if $\ell > j$,} \\
s_g^{j - \ell}, \text{ if $j > \ell$,} \\
p_v, \text{ if $\ell = j$.}
\end{cases}
\end{equation}
Lemma \ref{lemma: Positive I Arbitrarily Small} and Proposition \ref{proposition: Sg Cutdown} detail two specific consequences of the Cuntz-Krieger relations we will need in the proof of Lemma \ref{lemma: Purely Large Circle Quotient}. 

\begin{lemma} \label{lemma: Positive I Arbitrarily Small}
Let $y \in (I_H)_+$ and $\epsilon > 0$. Then there exists $k \geq 0$ such that $\lVert (s_g^*)^k y s_g^k \rVert \leq \epsilon$.
\end{lemma}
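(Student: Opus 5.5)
Since $I_H$ is the closed linear span of elements $s_\mu s_\nu^*$ with $s(\mu)=s(\nu)\in H$, and $y\in (I_H)_+$, the plan is to approximate $y$ by a finite linear combination of such monomials, then show that compressing each monomial by $(s_g^*)^k \,\cdot\, s_g^k$ gives zero once $k$ is large. Concretely, I would choose $y'=\sum_{i} \lambda_i s_{\mu_i}s_{\nu_i}^*$ with $s(\mu_i)=s(\nu_i)\in H$ and $\lVert y-y'\rVert\le \epsilon$. Since $s_g$ is a partial isometry, $\lVert (s_g^*)^k (y-y') s_g^k\rVert\le \epsilon$ for every $k$. So it suffices to find $k$ with $(s_g^*)^k y' s_g^k=0$.

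For the monomials, fix $\mu$ with $s(\mu)\in H=E^0\backslash\{v\}$ and consider $(s_g^*)^k s_\mu$. Because $v$ supports only the single loop $g$ and $v\ge w$ fails for every $w\neq v$ (as $H=\{x : v\not\geq x\}$), every path ending at $v$ that contains an edge not equal to $g$ has the form $g^j e \alpha$ with $r(e)=v$, $e\neq g$. I would use the standard relation $s_e^*s_f=\delta_{e,f}p_{s(e)}$ for edges with the same range (a consequence of the Cuntz--Krieger relations). Take $k>\lvert\mu\rvert$. If $r(\mu)\neq v$ then $(s_g^*)^k s_\mu = (s_g^*)^k p_v p_{r(\mu)} s_\mu=0$. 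If $r(\mu)=v$, write $\mu=g^j\mu'$ with $\mu'$ either of length zero or ending in an edge $e\ne g$. Here $j\le\lvert\mu\rvert<k$. Cancelling gives $(s_g^*)^{k-j}s_{\mu'}$. In the first case, $\mu'$ is a vertex and equals $v=s(\mu)$, which is excluded since $s(\mu)\in H$. In the second case, $s_g^* s_e=0$. Either way the product vanishes. Hence $(s_g^*)^k s_{\mu_i}=0$ for all $i$ once $k>\max_i \lvert\mu_i\rvert$. The same argument applied to adjoints handles $s_{\nu_i}^* s_g^k$, so any $k$ exceeding all lengths $\lvert\mu_i\rvert,\lvert\nu_i\rvert$ kills $y'$.

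I expect the only delicate point to be the case analysis of paths ending at $v$. It needs the standing hypothesis that $g$ is the unique cycle at $v$, so that the only edge of $E$ beginning and ending at $v$ is $g$; otherwise some $\mu$ could wind into $v$ through another cycle. With that in hand the estimate is $\lVert (s_g^*)^k y s_g^k\rVert\le \lVert y-y'\rVert\le\epsilon$, which proves the lemma. A cleaner alternative avoiding approximation is to note that $(s_g^*)^k p_{H} \to 0$ strictly in a suitable sense, but the density argument above is more direct.
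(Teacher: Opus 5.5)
Your proof is correct and is essentially the paper's argument: approximate $y$ by some $y'\in\Span\{s_\mu s_\nu^* \mid s(\mu)=s(\nu)\in H\}$, show via the Cuntz--Krieger relations that $(s_g^*)^k y' s_g^k = 0$ for large $k$, and bound the error by $\lVert s_g^k\rVert^2\lVert y-y'\rVert$; you additionally spell out the path computation that the paper leaves implicit. One minor correction to your commentary: that computation only uses $s(\mu)\neq v$ (so $\mu$ is not a power of $g$) and $s_g^* s_e = 0$ for edges $e\neq g$, so the single-loop hypothesis is not actually the delicate point of the argument.
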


\begin{proof}
By density, there exists a positive element
\begin{equation*}
y' \in \Span \big\{ s_\mu s_\nu^* \; \vert \; \mu, \nu \in E^*, \; s(\mu) = s(\nu) \in H \big\}
\end{equation*}
such that $\lVert y - y' \rVert < \epsilon$. By the Cuntz-Krieger relations for $E$, there exists $k \geq 0$ large enough such that $(s_g^*)^k y' s_g^k = 0$. So
\begin{equation*}
\big\lVert (s_g^*)^k y s_g^k \big\rVert = \big\Vert (s_g^*)^k y s_g^k - (s_g^*)^k y' s_g^k \big\rVert \leq \lVert s_g^k \rVert^2 \lVert y - y' \rVert < \epsilon. \qedhere
\end{equation*}
\end{proof}

\begin{proposition} \label{proposition: Sg Cutdown}
Let $x \in \Span \{ s_\mu s_\nu^* \; \vert \; \mu, \nu \in E^*, \; s(\mu) = s(\nu) = v \}$. Then there exists $k \geq 0$ such that $(s_g^*)^k x s_g^k = c_x(s_g)$.
\end{proposition}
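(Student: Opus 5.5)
By linearity of both sides in $x$ it suffices to treat a single spanning element $x = s_\mu s_\nu^*$ with $s(\mu) = s(\nu) = v$, and then take $k$ to be the maximum of the exponents obtained for the finitely many terms. One point needs care: the conjugation $x \mapsto (s_g^*)^k x s_g^k$ is linear, and $c_x$ is linear in $x$ because $\pi$ is. We must also check that once the identity holds for some $k_0$, it persists for all larger $k$. This is true because $(s_g^*) c(s_g) s_g = c(s_g)$ for any Laurent polynomial $c$, a fact that follows from $s_g^* s_g = p_v$ together with the following observation. Since $v$ supports the single loop $g$ and no other edges leave $v$ into $v$'s strongly connected piece, the range projection satisfies $s_g s_g^* \leq p_v$. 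Moreover, $s_g^* s_g^j (s_g^*)^\ell s_g$ equals $s_g^{j}(s_g^*)^{\ell}$ whenever $j,\ell \geq 1$, and \eqref{equation: Paths In W Cutdown} covers the remaining cases.

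For a single term, the key observation is that every path $\mu$ with $s(\mu)=v$ is a path starting at $v$. Since $v$ supports only the loop $g$, and every vertex reachable from $v$ other than $v$ itself lies in $H$ (recall $H=E^0\backslash\{v\}$ and $R(v)$ may contain other vertices), $\mu$ has one of two forms. Either $\mu = g^j$ for some $j\ge 0$ (with $g^0 = v$), or $\mu = \alpha\, e\, g^{j}$ where $e$ is an edge leaving $v$ with $r(e)\ne v$, in which case every later vertex of $\mu$ lies in $H$. The second alternative is forced because once the path leaves $v$ it can never return; otherwise $v$ would support a second cycle.

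\textbf{Case 1: $\mu = g^j$ and $\nu = g^\ell$.} Here $x = s_g^j (s_g^*)^\ell$. Taking $k \ge \max\{j,\ell\}$, \eqref{equation: Paths In W Cutdown} gives $(s_g^*)^k x s_g^k \in \{(s_g^*)^{\ell-j}, s_g^{j-\ell}, p_v\}$. On the other hand, $\pi(x) = \pi(s_g)^j \pi(s_g)^{*\ell} = \pi(s_g)^{j-\ell}$ because $\pi(s_g)$ is a unitary in $C(\mathbb{T})$ with unit $\pi(p_v)$. So $c_x(z) = z^{j-\ell}$, and hence $c_x(s_g)$ is exactly the element produced above.

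\textbf{Case 2: one of $\mu,\nu$ leaves $v$.} Say $\mu = \alpha e g^j$ with $r(e)\neq v$. Then $s_\mu s_\nu^* \in I_H$, since $r(\mu) \in H$ and $s_\mu = p_{r(\mu)} s_\mu$. It follows that $\pi(x)=0$ and $c_x = 0$. We need $(s_g^*)^k x s_g^k = 0$ for large $k$, which is the exact (not approximate) version of Lemma \ref{lemma: Positive I Arbitrarily Small}. Compute $(s_g^*)^k s_\mu = (s_g^*)^k s_\alpha s_e s_g^j$. For $k\ge 1$ we have $s_g^* s_\alpha s_e = s_g^* p_v s_\alpha s_e$, and this vanishes. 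The reason is that $s_g^* s_\beta = 0$ for any path $\beta$ whose first edge (in the composition order) differs from $g$: indeed $s_g^* s_f = 0$ for distinct edges $f\ne g$ by orthogonality of range projections, and $s_g^* s_\beta$ is also zero if $r(\beta) \ne v$. The symmetric argument applies when $\nu$ leaves $v$, using $s_\nu^* s_g^k = (s_g^*{}^k s_\nu)^*$. So any $k\ge 1$ works in this case.

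The main obstacle I expect is bookkeeping with the paper's path conventions: paths are written $\mu=\mu_n\cdots\mu_1$ with $s(\mu)=s(\mu_1)$ and $r(\mu)=r(\mu_n)$. One must verify that the portion of $\mu$ adjacent to $s_g^*$ on the left is the \emph{range} end, so the correct dichotomy is on the last edges of $\mu$, which either are copies of $g$ or end in $H$. With this orientation, a path starting at $v$ ends either at $v$ (so $\mu = g^j$) or in $H$, and the argument above goes through. Stability under increasing $k$ then lets us combine the finitely many terms.
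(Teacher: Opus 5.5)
Your argument is correct in substance, but it follows a more direct route than the paper. The paper uses \eqref{equation: Paths In W Cutdown} only to place $(s_g^*)^k x s_g^k$ in $\Span\{p_v, s_g^m, (s_g^*)^n\}$. It then notes that this element and $c_x(s_g)$ have the same image under $\pi$, so their difference lies in $I_H$. Finally it kills that difference by combining two facts: the span is invariant under $y \mapsto (s_g^*)^p y s_g^p$, and Lemma~\ref{lemma: Positive I Arbitrarily Small} makes such cut-downs of elements of $I_H$ small. You instead evaluate each spanning term exactly and read off $c_x(z) = \sum_i \lambda_i z^{j_i - \ell_i}$.

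What each route buys:
\begin{itemize}
\item Your version is shorter. It needs neither the approximation lemma nor the description of $I_H$ as a closed span. It does use uniqueness of the Laurent polynomial $c_x$, i.e.\ that distinct monomials in the coordinate unitary $\pi(s_g)$ of $C(\mathbb{T})$ are linearly independent.
\item The paper's argument works for any Laurent polynomial $c$ with $\pi(x) = c(\pi(s_g))$. In effect it proves that $\pi$ is injective on $\Span\{p_v, s_g^m, (s_g^*)^n\}$, and the same cut-down mechanism is reused in Lemma~\ref{lemma: Span Matrix Cutdown Quotients To Projection}.
\end{itemize}

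Two statements in your write-up are wrong, though neither is load-bearing.

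First, $R(v) = \{v\}$ in this setting, so your Case 2 is vacuous. If $v \geq x$ with $x \neq v$, then $x \geq v$ by hypothesis, giving a cycle at $v$ through $x$ distinct from $g$. This is exactly the paper's identity $\{x : v \not\geq x\} = E^0 \backslash \{v\}$. It forces $g$ to be the only edge with source $v$, so every path with source $v$ is some $g^j$. Your remark that $R(v)$ may contain other vertices is therefore incorrect.

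Second, for $j, \ell \geq 1$ one has $s_g^* s_g^j (s_g^*)^\ell s_g = s_g^{j-1}(s_g^*)^{\ell-1}$, not $s_g^j (s_g^*)^\ell$; for instance $s_g^* s_g s_g^* s_g = p_v$. The persistence $s_g^* c(s_g) s_g = c(s_g)$ does hold, simply because $c(s_g)$ involves only $p_v$ and pure powers of $s_g$ or of $s_g^*$, and $s_g^* s_g = p_v$. But you do not need persistence at all. Equation \eqref{equation: Paths In W Cutdown} gives the identity for every $k \geq \max\{j, \ell\}$, so $k = \max_i \{\ell(\mu_i), \ell(\nu_i)\}$ works for all terms at once.
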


\begin{proof}
Let 
\begin{equation*}
x = {\sum}_{i = 1}^n \lambda_i s_{\mu_i} s_{\nu_i}^* \in \Span \big\{ s_\mu s_\nu^* \; \big\vert \; \mu, \nu \in E^*, \; s(\mu) = s(\nu) = v \big\}. 
\end{equation*}
If $k = \max \{ \ell(\mu_i), \ell(\nu_i) \; \vert \; i \in \{1, 2, \dots, n \} \}$ then by \eqref{equation: Paths In W Cutdown}, 
\begin{equation*}
(s_g^*)^k x s_g^k \in \Span \big\{ p_v, s_g^m, (s_g^*)^n \; \big\vert \; m, n > 0 \big\}. 
\end{equation*}
Moreover by \eqref{equation: Quotient Form},
\begin{equation*}
\pi \big((s_g^*)^k x s_g^k \big) = (\pi(s_g)^*)^k \pi(x) \pi(s_g)^k = \pi(x) = c_x(\pi(s_g)) = \pi(c_x(s_g)). 
\end{equation*}
So there exists $i \in I_H$ such that $(s_g^*)^k x s_g^k - c_x(s_g) = i$. Now if $\epsilon > 0$ then by Lemma \ref{lemma: Positive I Arbitrarily Small}, there exists $p \in \{1, 2, \dots \}$ such that $\lVert (s_g^*)^p i s_g^p \rVert < \epsilon$. Since $$(s_g^*)^k x s_g^k - c_x(s_g)  \in \Span \{ p_v, s_g^m, (s_g^*)^n \; \vert \; m, n > 0 \}$$ then by \eqref{equation: Paths In W Cutdown},
\begin{equation*}
(s_g^*)^p \big( (s_g^*)^k x s_g^k - c_x(s_g) \big) s_g^p = (s_g^*)^k x s_g^k - c_x(s_g)
\end{equation*}
and $\lVert (s_g^*)^k x s_g^k - c_x(s_g) \rVert = \lVert (s_g^*)^p i s_g^p \rVert < \epsilon$. Since $\epsilon > 0$ was arbitrary then $(s_g^*)^k x s_g^k - c_x(s_g) = 0$ as required. 
\end{proof}

With the preliminary results Lemma \ref{lemma: Positive I Arbitrarily Small} and Proposition \ref{proposition: Sg Cutdown} ready, we will now prove Lemma \ref{lemma: Span Matrix Cutdown Quotients To Projection}, the technical lemma needed to prove Lemma \ref{lemma: Purely Large Circle Quotient} by induction. 

\begin{lemma} \label{lemma: Span Matrix Cutdown Quotients To Projection}
Let $n \geq 1$ and $x = (x_{ij})_{i, j = 1}^n \in (C^*(E) \otimes M_n(\mathbb{C}))_+$ be such that $x_{ij} \in \mathcal{L}(E)$ for $i, j \in \{1, 2, \dots, n \}$. Let $m > 0$ and suppose that $[\pi((x - \frac{1}{m})_+)] = n \cdot [\pi(p_v)]$. Then $$\Big[ \Big(x - \frac{1}{m} \Big)_+ \Big] = n \cdot [p_v]$$ in $\Cu(C^*(E))$ and $[(x - \frac{1}{m})_+] + \infty \cdot [1_{C^*(H)}] = [(x - \frac{1}{m})_+]$. 
\end{lemma}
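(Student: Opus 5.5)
The approach is to show both $[(x-\tfrac1m)_+]\le n[p_v]$ and $n[p_v]\le[(x-\tfrac1m)_+]$ directly in $C^*(E)\otimes M_n(\mathbb{C})$. Then $[(x-\tfrac1m)_+]+\infty\cdot[1_{C^*(H)}]=n[p_v]+\infty\cdot[p_H]$, and the Cuntz--Krieger relations give $[p_v]+\infty\cdot[p_H]=[p_v]$. Indeed, every $w\in H$ admits a path to $v$. Using $p_v=s_gs_g^*+\sum_{r(e)=v,e\neq g}s_es_e^*$ and iterating along paths, $[p_v]\ge[p_v]+[p_w]$. Iterating this bound $k$ times absorbs $k[p_w]$, hence $\infty\cdot[p_w]$. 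Since $[1_{C^*(H)}]=[p_H]$ under \eqref{equation: Full Corner Iso}, this settles the second claim once the first is proved.

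\textbf{Lower bound.} Set $S=s_g^k\otimes 1_n$ for large $k$. Each entry $x_{ij}\in\mathcal{L}(E)$ splits as $x_{ij}=x_{ij}^v+x_{ij}^H$, where $x_{ij}^v$ is spanned by $s_\mu s_\nu^*$ with $s(\mu)=s(\nu)=v$ and $x_{ij}^H$ by those with source in $H$. The terms with source in $H$ are annihilated by $(s_g^*)^k(\cdot)s_g^k$ for large $k$, as in the proof of Lemma \ref{lemma: Positive I Arbitrarily Small}. Hence, by Proposition \ref{proposition: Sg Cutdown}, for $k$ large
\[
S^*xS=(c_{x_{ij}}(s_g))_{i,j}.
\]
The corner $p_vC^*(E)p_v$ contains the C*-algebra generated by the unitary $s_g$ (unitary in that corner). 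This C*-algebra is isomorphic to $C(\mathbb{T})$ via $\pi$, since its intersection with $I_H$ is zero: by Lemma \ref{lemma: Positive I Arbitrarily Small}, conjugation by $s_g^k$ shrinks elements of $I_H$ while fixing this subalgebra. Therefore $S^*xS$ is identified with $c_x:=\pi(x)\in M_n(C(\mathbb{T}))$, and
\[
(S^*xS-\tfrac1m)_+\sim\pi((x-\tfrac1m)_+),
\]
which lives in $M_n(C^*(s_g))$ and has rank $n$ everywhere, i.e.\ it is Cuntz equivalent to $p_v\otimes 1_n$ there. Since $S^*S=p_v\otimes 1_n$ is a projection, $S^*(x-\tfrac1m)_+S\precsim(x-\tfrac1m)_+$. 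I would compare $(S^*xS-\tfrac1m)_+$ with $S^*(x-\tfrac1m)_+S$ using that $S$ is a partial isometry and functional calculus commutes with $S^*(\cdot)S$ on $SS^*$-invariant pieces. More precisely, $SS^*$ commutes with $x$ up to the $H$-part. If needed, I would perturb $m$ slightly using \cite[Lemma 2.5]{KR00}, accepting $n[p_v]\le[(x-\tfrac1m+\delta)_+]$ and then using the exact rank-$n$ compactness in the quotient to recover $\tfrac1m$. This gives $n[p_v]\le[(x-\tfrac1m)_+]$.

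\textbf{Upper bound.} The element $(x-\tfrac1m)_+$ lies in $M_n(C^*(E))=M_n(p_{E^0}C^*(E)p_{E^0})$, so its class is at most $n[1]$. Moreover $[1]=[p_v]+[p_H]\le[p_v]+\infty[p_H]=[p_v]$ by the absorption relation above. Hence $[(x-\tfrac1m)_+]\le n[p_v]$, and combining the two bounds gives equality.

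The main obstacle is the lower bound, specifically passing the cutoff $(\cdot-\tfrac1m)_+$ through the compression by $S$ while the $H$-part of $x$ does not commute with $SS^*$. I would first try to handle this by taking $k$ large enough that the $H$-part is killed exactly, so that $S^*x^lS=(S^*xS)^l$ fails only by terms in $I_H$ that vanish after further compression. I would then argue polynomially in $x$ and pass to $(\cdot-\tfrac1m)_+$ by approximation and Rørdam's lemma.
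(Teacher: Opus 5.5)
Your upper bound and absorption argument are correct. The bound $[(x-\frac1m)_+]\le n[1_{C^*(E)}]=n([p_v]+[p_H])\le n[p_v]$ is even slightly more direct than the paper's appeal to Theorem~\ref{theorem: CRS}, and your reduction $S^*xS=(c_{x_{ij}}(s_g))_{i,j}$ for large $k$ matches the paper.

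The gap is in the lower bound. The element $s_g$ is \emph{not} a unitary of $p_vC^*(E)p_v$. The Cuntz--Krieger relation at $v$ gives $p_v-s_gs_g^*=\sum_{r(e)=v,\,e\neq g}s_es_e^*$, which is a non-zero element of $I_H$ whenever $H\neq\emptyset$, since some vertex of $H$ emits an edge to $v$. So $s_g$ is a proper isometry, and $C^*(s_g)$ is a Toeplitz algebra that meets $I_H$ non-trivially. Compression by $s_g^k$ does not fix $C^*(s_g)$: it sends $s_gs_g^*$ to $p_v$. It fixes only the span of $p_v$, $s_g^j$, $(s_g^*)^j$, which is exactly \eqref{equation: Paths In W Cutdown}. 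Consequently $(c_{x_{ij}}(s_g))_{i,j}$ is not identified with $\pi(x)$, and ``rank $n$ everywhere'' in the quotient tells you nothing directly about $(S^*xS-\frac1m)_+$.

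That this element is equivalent to $p_v\otimes 1_n$ is true, but it is the real content of the lemma. Every eigenvalue of $(c_{x_{ij}}(z))_{i,j}$ exceeds $\frac1m$, so compactness of $\mathbb{T}$ gives a spectral gap $\pi(x)\ge(\frac1m+\mu)(\pi(p_v)\otimes 1_n)$. You must transfer this to the operator inequality $(c_{x_{ij}}(s_g))_{i,j}\ge(\frac1m+\mu)(p_v\otimes 1_n)$ in $M_n(C^*(E))$. The paper does this as follows:
\begin{enumerate}
\item Write $(c_{x_{ij}}(s_g))_{i,j}-(\mu+\frac1m)(p_v\otimes 1_n)=y^*y+i$ with $i=i_+-i_-\in I_H\otimes M_n(\mathbb{C})$ self-adjoint.
\item Compress by $s_g^\ell\otimes 1_n$. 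This fixes $(c_{x_{ij}}(s_g))_{i,j}$ and $p_v\otimes 1_n$ exactly, while making $i_-$ arbitrarily small by Lemma~\ref{lemma: Positive I Arbitrarily Small}.
\end{enumerate}

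With that inequality in hand, the step you single out as the main obstacle disappears. Since $S^*S=p_v\otimes 1_n$ and $x-\frac1m\le(x-\frac1m)_+$ as operators,
\[
\mu(p_v\otimes 1_n)\le S^*xS-\tfrac1m S^*S=S^*\big(x-\tfrac1m\big)S\le S^*\big(x-\tfrac1m\big)_+S,
\]
so $n[p_v]\le[(x-\frac1m)_+]$ follows with no need to pass functional calculus through the compression. Your proposed polynomial route does not work as stated, because compression is not multiplicative: $S^*x^lS$ and $(S^*xS)^l$ genuinely differ, just as $T_aT_b\neq T_{ab}$ for Toeplitz operators. Your perturbation of $m$ also goes the wrong way. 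The inequality $n[p_v]\le[(x-\frac1m+\delta)_+]$ is weaker than the one you need; the gap $\mu$ would instead let you work at the larger cutoff $\frac1m+\delta$.
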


\begin{proof}
Let $1_n \in C(\mathbb{T}) \otimes M_n(\mathbb{C})$ be the identity matrix. Since the entries of $x$ lie in $\mathcal{L}(E)$, we have $\pi(x) = \big(c_{x_{ij}}(\pi(s_g)) \big)_{i, j = 1}^n$ by \eqref{equation: Quotient Form}. Diagonalising the matrix $(c_{x_{ij}})_{i, j = 1}^n \in M_n(C(\mathbb{T}))_+$ pointwise, we see that if $z \in \mathbb{T}$ then there exist a unitary $u_z \in M_n(\mathbb{C})$ and eigenvalues $\lambda_1(z), \lambda_2(z), \dots, \lambda_n(z) \geq 0$ such that
\begin{equation*}
\big(c_{x_{ij}}(z) \big)_{i, j = 1}^n = u_z^* 
\begin{pmatrix}
\lambda_1(z) & & & \\
& \lambda_2(z) & & \\
& & \ddots & \\
& & & \lambda_n(z)
\end{pmatrix} u_z. 
\end{equation*}
If we cut down the positive matrix $\big(c_{x_{ij}}(z) \big)_{i, j}$ by $\frac{1}{m}$, the resulting matrix still has full rank because $[(\pi(x) - \frac{1}{m})_+] = n \cdot [\pi(p_v)]$. So, if $z \in \mathbb{T}$ and $i \in \{1, 2, \dots, n \}$ then $\lambda_i(z) > \frac{1}{m}$ and consequently, there exists $\mu > 0$ small enough such that for $z \in \mathbb{T}$,
\begin{equation*}
\big(c_{x_{ij}}(z) \big)_{i, j = 1}^n \geq u_z^* \Big(\Big(\mu + \frac{1}{m} \Big) 1_n(z) \Big) u_z = \Big(\mu + \frac{1}{m} \Big) 1_n(z). 
\end{equation*}
Since $z \in \mathbb{T}$ was arbitrary then as positive elements in $M_n(C(\mathbb{T}))$,
\begin{equation*}
\pi(x) = \big(c_{x_{ij}}(\pi(s_g)) \big)_{i, j = 1}^n \geq \Big(\mu + \frac{1}{m} \Big) \cdot (\pi(p_v) \otimes 1_n).
\end{equation*}
Since $\pi(x) = \pi \big( (c_{x_{ij}}(s_g))_{i, j} \big)$ then there exist $y \in M_n(C^*(E))$ and $i \in I_H \otimes M_n(\mathbb{C})$ self-adjoint such that 
\begin{equation*}
\big(c_{x_{ij}}(s_g) \big)_{i, j = 1}^n - \Big(\mu + \frac{1}{m} \Big) \cdot (p_v \otimes 1_n) = y^* y + i. 
\end{equation*}
Now write $i = i_+ - i_-$ where $i_+$ and $i_-$ are the positive and negative parts of $i$ respectively. If $\epsilon > 0$ then by Lemma \ref{lemma: Positive I Arbitrarily Small}, there exists $\ell \geq 0$ large enough such that $$\lVert ((s_g^*)^\ell \otimes 1_n) i_- (s_g^\ell \otimes 1_n) \rVert < \epsilon.$$ So by \eqref{equation: Paths In W Cutdown},
\begin{IEEEeqnarray*}{rCl}
\mu \cdot (p_v \otimes 1_n) & = & ((s_g^*)^\ell \otimes 1_n) \big( \mu \cdot (p_v \otimes 1_n) \big) (s_g^\ell \otimes 1_n)
\\
& \leq & ((s_g^*)^\ell \otimes 1_n) \big( \mu \cdot (p_v \otimes 1_n) + y^* y + i_+ \big) (s_g^\ell \otimes 1_n)
\\
& = & ((s_g^*)^\ell \otimes 1_n) \Big( \Big(c_{x_{ij}}(s_g) - \frac{1}{m} \delta_{i, j} \; p_v \Big)_{i, j = 1}^n + i_- \Big) (s_g^\ell \otimes 1_n)
\\
& \leq & \Big(c_{x_{ij}}(s_g) - \frac{1}{m} \delta_{i, j} \; p_v \Big)_{i, j = 1}^n + \epsilon \cdot (1_{C^*(E)} \otimes 1_n)
\end{IEEEeqnarray*} 
where $\delta_{i, j}$ is the Kronecker delta. Since this holds for arbitrary $\epsilon > 0$ and the subset of positive elements of a C*-algebra is always norm-closed, we find that
\begin{equation*}
\mu \cdot (p_v \otimes 1_n) \leq \Big(c_{x_{ij}}(s_g) -  \frac{1}{m} \delta_{i, j} \; p_v \Big)_{i, j = 1}^n
\end{equation*}
as positive elements in $M_n(C^*(E))$. Now for $i, j \in \{1, 2, \dots, n \}$, write $x_{ij} = a_{ij} + b_{ij}$ where
\begin{equation*}
a_{ij} \in \Span \{ s_\mu s_\nu^* \; \vert \; \mu, \nu \in E^*, \; s(\mu) = s(\nu) = v \}
\end{equation*}
and
\begin{equation*}
b_{ij} \in \Span \{ s_\mu s_\nu^* \; \vert \; \mu, \nu \in E^*, \; s(\mu) = s(\nu) \in H \}.
\end{equation*}
Applying Proposition \ref{proposition: Sg Cutdown} to the $a_{ij}$ and noting that by the Cuntz-Krieger relations, $(s_g^*)^{\ell} b_{ij} s_g^{\ell} = 0$ for large enough $\ell > 0$, there exists $k > 0$ large enough such that
\begin{equation*}
\big(c_{x_{ij}}(s_g) \big)_{i, j = 1}^n = ((s_g^*)^k \otimes 1_n) x (s_g^k \otimes 1_n). 
\end{equation*}
So
\begin{IEEEeqnarray*}{rCl}
\mu \cdot (p_v \otimes 1_n) & \leq & ((s_g^*)^k \otimes 1_n) \Big( x - \frac{1}{m} \cdot (p_v \otimes 1_n) \Big) (s_g^k \otimes 1_n)
\\
& = & ((s_g^*)^k \otimes 1_n) \Big( x - \frac{1}{m} \cdot ((1_{C^*(H)} + p_v) \otimes 1_n) \Big) (s_g^k \otimes 1_n)
\\
& = & ((s_g^*)^k \otimes 1_n) \Big( x - \frac{1}{m} \cdot (1_{C^*(E)} \otimes 1_n) \Big) (s_g^k \otimes 1_n)
\\
& \leq & ((s_g^*)^k \otimes 1_n) \Big( x - \frac{1}{m} \Big)_+ (s_g^k \otimes 1_n). 
\end{IEEEeqnarray*} 
Taking Cuntz classes, we obtain $n \cdot [p_v] \leq [(x - \frac{1}{m})_+]$. The reverse inequality follows by applying Theorem \ref{theorem: CRS} to the inequality
\begin{equation*}
\Cu(\pi)\Big(\Big[ \Big( x - \frac{1}{m} \Big)_+ \Big] \Big) = \Big[\pi \Big( \Big( x - \frac{1}{m} \Big)_+ \Big) \Big] \leq n \cdot [\pi(p_v)] = \Cu(\pi)(n \cdot [p_v])
\end{equation*}
and then using that $n \cdot [p_v] + \infty \cdot [1_{C^*(H)}] = n \cdot [p_v]$ from the Cuntz-Krieger relations. 
\end{proof}

We will now combine Lemma \ref{lemma: Span Matrix Cutdown Quotients To Projection}, Lemma \ref{lemma: Sandwich} and the Cuntz-Krieger relations to produce the inductive argument required to overcome the major difficulty in proving Lemma \ref{lemma: Purely Large Circle Quotient}; namely that $C^*(E)$ being a purely large extension implies that its Cuntz semigroup decomposes as the stack. 

\begin{proof}[Proof of Lemma \ref{lemma: Purely Large Circle Quotient}]
If $\Cu(C^*(E)) \cong \Cu(C^*(H)) \stack \Lsc(\mathbb{T}, \overline{\mathbb{N}})$ then by definition of the stack and Corollary \ref{corollary: Purely Large Order Stabilised}, $C^*(E)$ is a purely large extension of $C(\mathbb{T})$ by $I_H \cong C^*(H) \otimes K$. Conversely, assume that $C^*(E)$ is a purely large extension of $C(\mathbb{T})$ by $I_H$. We will first prove by induction that if $n > 0$ and $x \in (C^*(E) \otimes M_n(\mathbb{C}))_+$  does not lie in $I_H \otimes M_n(\mathbb{C})$ then $[x] + \infty \cdot [1_{C^*(H)}] = [x]$. \newline
\phantom{} \newline
For the base case, assume that $n = 1$ so that $x \in C^*(E)_+ \backslash I_H$. Let $\epsilon > 0$ be small enough so that $(x - \epsilon)_+ \in C^*(E)_+ \backslash I_H$. Since $\mathcal{L}(E)$ is a dense *-subalgebra of $C^*(E)$, $\mathcal{L}(E)_+$ is dense in $C^*(E)_+$ and so by Lemma \ref{lemma: Countably Based}, there exist $y \in \mathcal{L}(E)_+$ and $m > 0$  such that 
\begin{equation} \label{equation: Sup Dense}
[(x - \epsilon)_+] \leq \Big[\Big(y - \frac{1}{m} \Big)_+ \Big] \leq [x].
\end{equation}
Since $[(x - \epsilon)_+] \not \in \Cu(I_H)$ by assumption then $[(y - \frac{1}{m})_+] \not \in \Cu(I_H)$. By Proposition \ref{proposition: Purely Large Axiom (O5)} and Lemma \ref{lemma: Span Matrix Cutdown Quotients To Projection}, 
\begin{equation*}
\Big[\Big(y - \frac{1}{m} \Big)_+ \Big] + \infty \cdot [1_{C^*(H)}] = \Big[\Big(y - \frac{1}{m} \Big)_+ \Big]. 
\end{equation*}
Hence by the LHS inequality of \eqref{equation: Sup Dense},
\begin{equation*}
[(x - \epsilon)_+] + \infty \cdot [1_{C^*(H)}]  \leq \Big[\Big(y - \frac{1}{m} \Big)_+ \Big] \leq [x].
\end{equation*}
Taking the supremum over $\epsilon > 0$, we obtain $[x] + \infty \cdot [1_{C^*(H)}] = [x]$. This proves the base case. \newline
\phantom{} \newline
For the inductive hypothesis, suppose that there exists $k > 0$ such that if $x \in (C^*(E) \otimes M_k(\mathbb{C}))_+$ does not lie in $I_H \otimes M_k(\mathbb{C})$ then $[x] + \infty \cdot [1_{C^*(H)}] = [x]$. Now recall the definition of $\Sigma_A$ from \eqref{equation: Scale}. We claim that if $[y] \in \Sigma_{M_k(C^*(E))} \backslash \Cu(I_H)$ then $[y] + \infty \cdot [1_{C^*(H)}] = [y]$. To this end, assume that $[y] \in \Sigma_{M_k(C^*(E))} \backslash \Cu(I_H)$. Pick $\epsilon > 0$ small enough so that $[(y - \epsilon)_+] \not \in \Cu(I_H)$. By Lemma \ref{lemma: Sandwich} applied to $M_k(C^*(E))$, there exists $b \in M_k(C^*(E))_+$ such that $[(y - \epsilon)_+] \ll [b] \ll [y]$. Since $[(y - \epsilon)_+] \not \in \Cu(I_H)$ by construction then $[b] \not \in \Cu(I_H)$ and by the inductive hypothesis, 
\begin{equation*}
[(y - \epsilon)_+] + \infty \cdot [1_{C^*(H)}] \leq [b] + \infty \cdot [1_{C^*(H)}] = [b] \leq [y]. 
\end{equation*}
Taking the supremum over all $\epsilon > 0$, $[y] + \infty \cdot [1_{C^*(H)}] = [y]$. \newline
\phantom{} \newline
Now assume that $a \in (C^*(E) \otimes M_{k+1}(\mathbb{C}))_+$ does not lie in $I_H \otimes M_{k+1}(\mathbb{C})$. By Lemma \ref{lemma: Countably Based}, there exist $z = (z_{ij})_{i, j = 1}^{k+1} \in M_{k+1}(C^*(E))_+$ and $m > 0$ such that $z_{ij} \in \mathcal{L}(E)$ for $i, j \in \{1, 2, \dots, k+1 \}$ and 
\begin{equation*}
[(a - \epsilon)_+] \leq \Big[ \Big( z - \frac{1}{m} \Big)_+ \Big] \leq [a]. 
\end{equation*}
Since $[(a - \epsilon)_+] \in \Cu(C^*(E)) \backslash \Cu(I_H)$ then $[(z - \frac{1}{m})_+] \not \in \Cu(I_H)$. To see that $[(z - \frac{1}{m})_+]$ absorbs $\infty \cdot [1_{C^*(H)}]$, we split into three cases. First suppose that there does not exist a projection $p \in C(\mathbb{T}) \otimes K$ such that $[\pi((z - \frac{1}{m})_+)] = [p]$. Since $C^*(E)$ is a purely large extension of $C(\mathbb{T})$ by $I_H$ then by Proposition \ref{proposition: Purely Large Axiom (O5)},  $[(z - \frac{1}{m})_+] + \infty \cdot [1_{C^*(H)}] = [(z - \frac{1}{m})_+]$. \newline
\phantom{} \newline
Otherwise, if $[\pi((z - \frac{1}{m})_+)] = (k+1) \cdot [\pi(p_v)]$ then absorption  of $\infty \cdot [1_{C^*(H)}]$ follows from Lemma \ref{lemma: Span Matrix Cutdown Quotients To Projection}. Finally, if there exists $\ell \in \{1, 2, \dots, k \}$ such that $[\pi((z - \frac{1}{m})_+)] = \ell \cdot [\pi(p_v)]$ then by Theorem \ref{theorem: CRS} and the Cuntz-Krieger relations,
\begin{equation*}
\Big[ \Big( z - \frac{1}{m} \Big)_+ \Big] \leq \ell \cdot [p_v] + \infty \cdot [1_{C^*(H)}] = \ell \cdot [p_v] \leq k \cdot [p_v].
\end{equation*}
So $[(z - \frac{1}{m})_+] \in \Sigma_{M_k(C^*(E))} \backslash \Cu(I_H)$ and by the previous claim, it absorbs $\infty \cdot [1_{C^*(H)}]$. \newline
\phantom{} \newline
Combining all three cases, we have $[(z - \frac{1}{m})_+] + \infty \cdot [1_{C^*(H)}] = [(z - \frac{1}{m})_+]$ and
\begin{equation*}
[(a - \epsilon)_+] + \infty \cdot [1_{C^*(H)}] \leq \Big[ \Big( z - \frac{1}{m} \Big)_+ \Big] + \infty \cdot [1_{C^*(H)}] = \Big[ \Big( z - \frac{1}{m} \Big)_+ \Big] \leq [a].
\end{equation*}
By taking the supremum over $\epsilon > 0$, $[a] + \infty \cdot [1_{C^*(H)}] \leq [a]$. This completes the induction. \newline
\phantom{} \newline
Finally assume that $x \in (C^*(E) \otimes K)_+ \backslash (I_H \otimes K)$. Once again, let $\epsilon > 0$ be small enough so that $[(x - \epsilon)_+] \in \Cu(C^*(E)) \backslash \Cu(I_H)$. By Rørdam's lemma (see \cite[Proposition 2.4]{Ror92}), there exist $n > 0$ large enough and $a \in (C^*(E) \otimes M_n(\mathbb{C}))_+$ such that $[(x - \epsilon)_+] = [a]$. By the previous result, $$[ (x - \epsilon)_+] + \infty \cdot [1_{C^*(H)}] = [a] + \infty \cdot [1_{C^*(H)}] = [a] \leq [x].$$ Taking the supremum over all $\epsilon > 0$, we have $[x] + \infty \cdot [1_{C^*(H)}] = [x]$. Proposition \ref{proposition: Stack Characterisation} finishes the proof. 
\end{proof}

Now recall from Definition \ref{definition: QDS} that if $E$ is a finite graph then $E^0$ is a hereditary subset of $(S^2 E)^0$ and we obtain the extension
\begin{equation} \label{equation: QDS Extension}
\begin{tikzcd}
0 \arrow[r] & I_E \arrow[r] & C^*(S^2 E) \arrow[r, "\pi"] & C(\mathbb{T}) \arrow[r] & 0.
\end{tikzcd}
\end{equation}
By \cite[Lemma 1.1]{HS03}, the extension in \eqref{equation: QDS Extension} is essential. Now if $m \in \{1, 2, \dots \}$ then define
\begin{equation} \label{equation: QDS Matrix Unit}
a_{m1} = 
\begin{cases}
\sum_{w \in E^0} p_{w}, \text{ if $m = 1$,} \\
\sum_{w \in E^0} s_{f_w}, \text{ if $m = 2$,} \\
s_g^{m-2} \sum_{w \in E^0} s_{f_w}, \text{ if $m > 2$,}
\end{cases}
\end{equation}
where for $w \in E^0$, the edges $f_w$ and $g$ are as in Definition \ref{definition: QDS}. We also define $a_{1m} = a_{m1}^*$ and $a_{mn} = a_{m1} a_{1n}$ for $m, n \in \{1, 2, \dots \}$. By the proof of \cite[Proposition 6.2]{HS02}, the family $\{ a_{ij} \}_{i, j = 1}^{\infty}$ defines a matrix unit in $K$ and the map
\begin{equation} \label{equation: QDS Ideal Iso}
\begin{matrix}
C^*(E) \otimes K & \rightarrow & I_E \\
x \otimes a_{ij} & \mapsto & a_{i1} x a_{1j}
\end{matrix}
\end{equation}
extends to an isomorphism $C^*(E) \otimes K \cong I_E$. Moreover, if $\mu: \mathcal{M}(C^*(E) \otimes K) \rightarrow \mathcal{Q}(C^*(E) \otimes K)$ is the quotient map then the Busby invariant of the extension in \eqref{equation: QDS Extension} is given explicitly by sending $\pi(s_g)$ to $\mu(1_{C^*(E)} \otimes s_g)$ where $1_{C^*(E)}$ is the unit of $C^*(E)$. 

\begin{lemma} \label{lemma: QDS Purely Large}
Let $E$ be a finite graph. The extension in \eqref{equation: QDS Extension} is purely large. 
\end{lemma}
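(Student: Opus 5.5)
We plan to deduce pure largeness from Theorem \ref{theorem: Kucerovsky Ng}. It asks for three things: the ideal is $\sigma$-unital, stable and has the corona factorisation property; and the extension is full.

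The hypotheses on the ideal are immediate from the structure already recorded before the lemma. By \eqref{equation: QDS Ideal Iso}, $I_E \cong C^*(E) \otimes K$. This algebra is separable, hence $\sigma$-unital, and it is stable. By Proposition \ref{proposition: Graph CFP}, $C^*(E) \otimes K$ has the corona factorisation property.

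So the whole proof reduces to showing that the extension \eqref{equation: QDS Extension} is full. That is, for every non-zero $f \in C(\mathbb{T})$, the ideal of $\mathcal{Q}(C^*(E) \otimes K)$ generated by $\beta(f)$ must be the whole corona algebra.

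The route we would take uses the explicit Busby invariant $\beta$, which sends $\pi(s_g)$ to $\mu(1_{C^*(E)} \otimes s_g)$. Here $s_g$ is read through the matrix units $\{a_{ij}\}$ as a unilateral shift on $\ell^2$. Since the extension is unital, $\beta$ is unital, so $\beta$ factors as
\[
C(\mathbb{T}) \cong C^*(\text{unitary in the Calkin algebra}) \rightarrow 1_{C^*(E)} \otimes \mathcal{Q}(\ell^2) \subseteq \mathcal{Q}(C^*(E) \otimes K).
\]
More precisely, $\beta = \iota \circ \beta_0$, where:
\begin{enumerate}
\item $\beta_0 : C(\mathbb{T}) \rightarrow \mathcal{Q}(\ell^2)$ is the Toeplitz Busby map, sending $z$ to the image of the shift;
\item $\iota : B(\ell^2) \rightarrow \mathcal{M}(C^*(E) \otimes K)$ is the unital inclusion $T \mapsto 1_{C^*(E)} \otimes T$, which descends to corona algebras because it maps $K$ into $C^*(E) \otimes K$.
\end{enumerate}

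With this factorisation, the argument for a non-zero $f \in C(\mathbb{T})$ runs as follows.
\begin{enumerate}
\item \textbf{Invertible element in the Calkin algebra.} Choose $h \in C(\mathbb{T})_+$ with $h \le |f|^2$ up to scaling, supported on an arc where $|f|$ is bounded below. Then $\beta_0(h)$ is a non-zero positive element of the simple C*-algebra $\mathcal{Q}(\ell^2)$. Since $\mathcal{Q}(\ell^2)$ is also purely infinite, there exists $r \in \mathcal{Q}(\ell^2)$ with $r^* \beta_0(f^* f) r = 1$.
\item \textbf{Lift to $B(\ell^2)$.} Lift $r$ to $R \in B(\ell^2)$. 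Then $R^* T_{\bar f f} R - 1 \in K$, where $T_{\bar f f}$ is the Toeplitz operator.
\item \textbf{Transfer to the corona.} Apply $\iota$. This gives $\mu(1 \otimes R)^* \beta(f^*f) \mu(1 \otimes R) = 1$ in $\mathcal{Q}(C^*(E) \otimes K)$, because $1 \otimes K \subseteq \mathcal{M}(C^*(E) \otimes K)$ multiplies $C^*(E) \otimes K$ into itself, so $1 \otimes (\text{compact})$ vanishes in the corona. Since $C^*(E)$ is unital, $1_{C^*(E)} \otimes k \in C^*(E) \otimes K$ for every $k \in K$.
\item \textbf{Conclusion.} The ideal generated by $\beta(f)$ contains $1$, so $\beta$ is full. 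Theorem \ref{theorem: Kucerovsky Ng} then gives pure largeness.
\end{enumerate}

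The main obstacle is verifying the factorisation of $\beta$ carefully. We must check that, under \eqref{equation: QDS Ideal Iso}, the multiplier of $I_E$ given by left multiplication by $s_g$ corresponds exactly to $1_{C^*(E)} \otimes S$, where $S$ is the shift on the matrix units. This comes down to checking $s_g a_{m1} = a_{m+1,1}$ for $m \ge 2$ and tracking the small corrections at $m = 1, 2$, which differ by finite-rank terms and so disappear in the corona. The identification of $\beta$ asserted just before the lemma (from \cite{HS02}) is what we would rely on here.

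If this factorisation proves awkward, the fallback is a direct argument. The essentialness from \cite[Lemma 1.1]{HS03} together with simplicity of $\mathcal{Q}(\ell^2)$ and unitality give fullness directly: any non-zero $\beta(f)$ dominates $1 \otimes \beta_0(h)$ for some non-zero $h$, and the latter is full because it is properly infinite and full in $1 \otimes \mathcal{Q}(\ell^2)$, which contains the unit.
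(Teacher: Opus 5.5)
Your proposal is correct and matches the paper's proof. Both verify the ideal hypotheses of Theorem \ref{theorem: Kucerovsky Ng} via $I_E \cong C^*(E)\otimes K$ and Proposition \ref{proposition: Graph CFP}, and both obtain fullness by factoring the Busby map through the unital copy $1_{C^*(E)} \otimes \mathcal{Q}(K)$ of the Calkin algebra inside $\mathcal{Q}(C^*(E)\otimes K)$. The paper then uses only simplicity of the Calkin algebra, together with injectivity of $\beta$ from essentialness, to see that the ideal generated by $\beta(x)$ contains the unit. Your pure-infiniteness, lifting and auxiliary-$h$ steps are correct but unnecessary.
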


\begin{proof}
Since the extension in \eqref{equation: QDS Extension} is essential, its associated Busby invariant $\beta: C(\mathbb{T}) \rightarrow \mathcal{Q}(C^*(E) \otimes K)$ is an injective *-homomorphism. We may view the multiplier algebra $\mathcal{M}(K)$ as a subalgebra of $\mathcal{M}(C^*(E) \otimes K)$ via the following chain of inclusions:
\begin{equation*}
\mathcal{M}(K) \cong \{ 1_{C^*(E)} \} \otimes \mathcal{M}(K) \subseteq C^*(E) \otimes \mathcal{M}(K) \subseteq \mathcal{M}(C^*(E) \otimes K). 
\end{equation*}
In particular, the Calkin algebra $\mathcal{Q}(K)$ is a unital subalgebra of $\mathcal{Q}(C^*(E) \otimes K)$ because by \eqref{equation: QDS Matrix Unit} and \eqref{equation: QDS Ideal Iso},
\begin{equation*}
1_{\mathcal{M}(C^*(E) \otimes K)} = {\sum}_{i = 1}^{\infty} a_{ii} \in \mathcal{M}(K). 
\end{equation*}  
Now by \cite[Proposition 6.2]{HS02}, $\beta(\pi(s_g)) = \mu(1_{C^*(E)} \otimes s_g) \in \mathcal{Q}(K)$. Since $\pi(s_g)$ generates the quotient $C(\mathbb{T})$ then $\beta$ factors through $\mathcal{Q}(K)$. Consequently, if $x \in C(\mathbb{T})$ is non-zero then the ideal generated by $\beta(x) \neq 0$ in $\mathcal{Q}(K)$ contains the unit of $\mathcal{Q}(C^*(E) \otimes K)$. Therefore, $\beta$ is a full *-homomorphism and the extension in \eqref{equation: QDS Extension} is full. Moreover, the ideal $I_E \cong C^*(E) \otimes K$ is stable, $\sigma$-unital and satisfies the corona factorisation property by Proposition \ref{proposition: Graph CFP}. So by Theorem \ref{theorem: Kucerovsky Ng}, $C^*(S^2 E)$ is a purely large extension of $C(\mathbb{T})$ by $C^*(E) \otimes K$. 
\end{proof}

We are now in a position to finish the proof of Theorem \ref{introtheorem: General Cuntz Krieger}.

\begin{proof}[Proof of Theorem \ref{introtheorem: General Cuntz Krieger} when $v$ supports a single loop]
Let $E$ and $H$ be as in Theorem \ref{introtheorem: General Cuntz Krieger} and suppose $v$ supports a single loop. By Proposition \ref{proposition: QDS Forcing}, there exists a finite graph $F$ such that $C^*(E)$ is stably isomorphic to $C^*(S^2 F)$ and $C^*(H)$ is stably isomorphic to $C^*(F)$. Thus, by Lemma \ref{lemma: QDS Purely Large} and Lemma \ref{lemma: Purely Large Circle Quotient},
\begin{IEEEeqnarray*}{rCl}
\Cu(C^*(E)) & \cong & \Cu(C^*(S^2 F))
\\
& \cong & \Cu(C^*(F)) \stack \Lsc(\mathbb{T}, \overline{\mathbb{N}})
\\
& \cong & \Cu(C^*(H)) \stack \Lsc(\mathbb{T}, \overline{\mathbb{N}}). 
\end{IEEEeqnarray*} 
This completes the proof.
\end{proof}

In the situation of Theorem \ref{introtheorem: General Cuntz Krieger} when $v$ supports a single loop, we can concretely realise the characteristic functions of open subsets of $\mathbb{T}$ lying in $\Cu(C^*(H)) \stack \Lsc(\mathbb{T}, \overline{\mathbb{N}})$ as Cuntz classes of strictly positive elements of ideals of $C^*(E)$ which properly contain $I_H$. This is the content of Lemma \ref{lemma: Same Open Subset Upstairs}, which allows the various relations coming from $\Cu(C^*(H)) \stack \Lsc(\mathbb{T}, \overline{\mathbb{N}})$ to be transferred to $\Cu(C^*(F))$ where $F$ is a finite graph which contains $H$ as a subgraph and satisfies the standing assumptions.

\begin{lemma} \label{lemma: Same Open Subset Upstairs}
Let $E$ be a finite graph and suppose that there exists $v \in E^0$ such that $v$ supports a single loop $g$ and if $w \in E^0$ then $w \geq v$. Let $H = \{ w \in E^0 \; \vert \; v \not \geq w \} = E^0 \backslash \{ v \}$ and $U \subseteq \mathbb{T}$ be non-empty and open. As in Definition \ref{definition: Strictly Positive}, let $h_{v, U}$ be a strictly positive element of $$p_{L(v)} \big( I_{L(v)} \cap \mathcal{R}_{v, \mathbb{T} \backslash U} \big) p_{L(v)} = \mathcal{R}_{v, \mathbb{T} \backslash U}$$ and $\chi_U \in \Lsc(\mathbb{T}, \overline{\mathbb{N}})$ denote the characteristic function of $U$. Then the isomorphism $\Cu(C^*(E)) \cong \Cu(C^*(H)) \stack \Lsc(\mathbb{T}, \overline{\mathbb{N}})$ from Theorem \ref{introtheorem: General Cuntz Krieger} sends $[h_{v, U}]$ to $\chi_U$. 
\end{lemma}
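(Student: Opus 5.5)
The isomorphism $\Phi\colon \Cu(C^*(E)) \to \Cu(C^*(H)) \stack \Lsc(\mathbb{T}, \overline{\mathbb{N}})$ from the proof of Proposition \ref{proposition: Stack Characterisation} sends $x \notin \Cu(I_H)$ to $\Cu(\pi)(x)$, where $\pi\colon C^*(E) \to C^*(E \backslash H) \cong C(\mathbb{T})$ is the quotient map. The plan is therefore to check two things: that $[h_{v,U}] \notin \Cu(I_H)$, and that $[\pi(h_{v,U})] = \chi_U$ in $\Cu(C(\mathbb{T})) \cong \Lsc(\mathbb{T}, \overline{\mathbb{N}})$.

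First, since every vertex admits a path to $v$, we have $L(v) = E^0$. Hence $p_{L(v)} = 1$ and $I_{L(v)} = C^*(E)$, which gives the displayed equality. Moreover $R(v) = \{v\}$, so $E^0 \backslash R(v) = H$. For $t \in \mathbb{T}$, the primitive ideal $\mathcal{R}_{v,t}$ is generated by $I_H$ and $s_g - t p_v$. Its image under $\pi$ is the ideal of $C(\mathbb{T})$ generated by $z - t$ (identifying $\pi(s_g)$ with the coordinate function $z$ and $\pi(p_v)$ with $1$), namely $C_0(\mathbb{T} \backslash \{t\})$. Since $I_H \subseteq \mathcal{R}_{v,t}$, it follows that $\mathcal{R}_{v,t} = \pi^{-1}(C_0(\mathbb{T}\backslash\{t\}))$. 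Intersecting over $t \in \mathbb{T} \backslash U$ gives $\mathcal{R}_{v,\mathbb{T}\backslash U} = \pi^{-1}(C_0(U))$, because preimages commute with intersections and $\bigcap_{t \notin U} C_0(\mathbb{T}\backslash\{t\}) = C_0(U)$. For $U$ proper, we conclude that $\pi$ restricts to a surjection $\mathcal{R}_{v,\mathbb{T}\backslash U} \to C_0(U)$. A surjective $*$-homomorphism maps strictly positive elements to strictly positive elements, so $\pi(h_{v,U})$ is strictly positive in $C_0(U)$. Its open support is therefore exactly $U$, and its rank function is $\chi_U$. For $U = \mathbb{T}$ we have $h_{v,\mathbb{T}} = 1$ and $\pi(1) = 1$, which has rank $\chi_{\mathbb{T}}$.

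Since $U \neq \emptyset$, $\chi_U \neq 0$, so $\Cu(\pi)([h_{v,U}]) \neq 0$ and hence $[h_{v,U}] \notin \Cu(I_H)$. By the definition of $\Phi$, $\Phi([h_{v,U}]) = \Cu(\pi)([h_{v,U}]) = \chi_U$, where we use the identification $\Cu(C(\mathbb{T})) \cong \Lsc(\mathbb{T},\overline{\mathbb{N}})$ via the pointwise rank map from \cite[Theorem 1.1]{Rob13b}.

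The only subtlety, and the step I expect to be the main obstacle, is bookkeeping. The isomorphism in Theorem \ref{introtheorem: General Cuntz Krieger} was built by passing through the stable isomorphism with $C^*(S^2F)$. We need $\Phi$ to be the canonical map of Proposition \ref{proposition: Stack Characterisation} applied to the extension $0 \to I_H \to C^*(E) \to C(\mathbb{T}) \to 0$ itself. I would argue that the stack decomposition is intrinsic: once $x + \infty_I = x$ holds for every $x \notin \Cu(I_H)$, Proposition \ref{proposition: Stack Characterisation} produces this canonical $\Phi$. Absorption is transported along the stable isomorphism, since such an isomorphism preserves the ideal $I_H \otimes K$ up to the identifications in Proposition \ref{proposition: QDS Forcing}. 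A second point to check is that the isomorphism $C^*(E \backslash H) \cong C(\mathbb{T})$ sends $\pi(s_g)$ to the coordinate function $z$. This holds because $E\backslash H$ is the single-loop graph.
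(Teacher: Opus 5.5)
Your proposal is correct and follows the paper's route. Both arguments use that the canonical isomorphism of Proposition \ref{proposition: Stack Characterisation} agrees with $\Cu(\pi)$ off $\Cu(I_H)$, then compute $[\pi(h_{v,U})] = \chi_U$ in $\Cu(C(\mathbb{T}))$. Your direct identification $\pi(\mathcal{R}_{v,\mathbb{T}\backslash U}) = C_0(U)$, whose strictly positive elements visibly have rank function $\chi_U$, is a slightly cleaner substitute for the paper's route via $\infty \cdot [\pi(h_{v,U})] = \infty \cdot \chi_U$ and infima with $[\pi(p_v)]$; the paper glosses over the bookkeeping point you flag in the same way.
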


\begin{proof}
Let $\phi$ denote the Cu-isomorphism between $\Cu(C^*(H)) \stack \Lsc(\mathbb{T}, \overline{\mathbb{N}})$ and $\Cu(C^*(E))$ constructed in Proposition \ref{proposition: Stack Characterisation}, $\pi$ be the quotient map from $C^*(E)$ to $C(\mathbb{T})$ and $\rho$ be the map from $\Cu(C^*(H)) \stack \Lsc(\mathbb{T}, \overline{\mathbb{N}})$ to $\Lsc(\mathbb{T}, \overline{\mathbb{N}})$ which is zero on $\Cu(C^*(H))$ and restricts to the identity on $\Lsc(\mathbb{T}, \overline{\mathbb{N}})$. By Proposition \ref{proposition: Stack Characterisation}, the following diagram commutes:
\begin{equation} \label{equation: Stack Commutative Diagram}
\begin{tikzcd}
\Cu(C^*(E)) \arrow[rr, "\phi"] \arrow[ddrr, swap, "\Cu(\pi)"] & & \Cu(C^*(H)) \stack \Lsc(\mathbb{T}, \overline{\mathbb{N}}) \arrow[dd, "\rho"] \\
& & \\
& & \Lsc(\mathbb{T}, \overline{\mathbb{N}})
\end{tikzcd}
\end{equation} 
Let $U$ be a non-empty, open subset of $\mathbb{T}$. If $U = \mathbb{T}$ then $h_{v, \mathbb{T}}$ is a strictly positive element of the hereditary subalgebra $p_{L(v)} I_{L(v)} p_{L(v)} = C^*(E)$. By the Cuntz-Krieger relations on $E$, $$[h_{v, \mathbb{T}}] = {\sum}_{x \in E^0} [p_x] = [p_v]$$ in $\Cu(C^*(E))$. By commutativity of diagram \eqref{equation: Stack Commutative Diagram}, 
\begin{equation*}
(\rho \circ \phi)([h_{v, \mathbb{T}}]) = \Cu(\pi)([p_v]) = [\pi(p_v)] = \chi_{\mathbb{T}} = \rho(\chi_{\mathbb{T}}). 
\end{equation*}
Now observe that as Cu-semigroups, 
\begin{equation} \label{equation: Stack Quotient}
\Lsc(\mathbb{T}, \overline{\mathbb{N}}) \cong \big(\Cu(C^*(H)) \stack \Lsc(\mathbb{T}, \overline{\mathbb{N}}) \big) / \Cu(C^*(H)). 
\end{equation}
Hence by definition of a quotient Cu-semigroup (see Definition \ref{definition: Quotient Cu Semigroup}), $$\phi([h_{v, \mathbb{T}}]) \leq \chi_{\mathbb{T}} + \infty \cdot {\sum}_{x \in H} [p_x] = \chi_{\mathbb{T}}$$ in $\Cu(C^*(H)) \stack \Lsc(\mathbb{T}, \overline{\mathbb{N}})$ because $\chi_{\mathbb{T}} \not \in \Cu(C^*(H))$. The reverse inequality follows from an analogous argument, where we note that $[h_{v, \mathbb{T}}] = [p_v] \not \in \Cu(C^*(H))$. \newline
\phantom{} \newline
Next assume that $U$ is a proper subset of $\mathbb{T}$. Recall from Definition \ref{definition: Primitive Non Gauge Invariant} that if $t \in \mathbb{T}$ then $\mathcal{R}_{v, t}$ is the primitive, non-gauge-invariant ideal $\langle I_H, s_g - t p_v \rangle$. So if $t \in \mathbb{T} \backslash U$ then $\pi(\mathcal{R}_{v, t}) = \langle \pi(s_g) - t \pi(p_v) \rangle$ and 
\begin{equation} \label{equation: Explicit Continuous Function}
(\pi(s_g) - t \pi(p_v) )^* (\pi(s_g) - t \pi(p_v) ) = 2 \pi(p_v) - t \pi(s_g)^* - t^* \pi(s_g). 
\end{equation}
The element on the RHS of \eqref{equation: Explicit Continuous Function} is the continuous function $z \mapsto 2 - t z^* - t^* z$, which is zero if and only if $z = t$. So
\begin{equation*}
\rank \big(z \mapsto 2 - t z^* - t^* z \big) = 
\begin{cases}
0, \text{ if $z = t$,} \\
1, \text{ otherwise.}
\end{cases}
\end{equation*}
Now since the isomorphism $\Cu(C(\mathbb{T})) \cong \Lsc(\mathbb{T}, \overline{\mathbb{N}})$ is implemented by the pointwise rank map, $[2 \pi(p_v) - t \pi(s_g)^* - t^* \pi(s_g)] = \chi_{\mathbb{T} \backslash \{ t \}}$ in $\Lsc(\mathbb{T}, \overline{\mathbb{N}})$. Moreover, $2 \pi(p_v) - t \pi(s_g)^* - t^* \pi(s_g)$ is a full, positive element of the ideal $\langle \pi(s_g) - t \pi(p_v) \rangle$ and thus, as ideals of $\Lsc(\mathbb{T}, \overline{\mathbb{N}})$, 
\begin{equation} \label{equation: Primitive Ideal Cuntz Semigroup}
\Cu(\langle \pi(s_g) - t \pi(p_v) \rangle) = \big\{ f \in \Lsc(\mathbb{T}, \overline{\mathbb{N}}) \; \vert \; f \leq \infty \cdot \chi_{\mathbb{T} \backslash \{ t \}} \big\}.
\end{equation}
Since \eqref{equation: Primitive Ideal Cuntz Semigroup} holds for $t \in \mathbb{T} \backslash U$, we have
\begin{equation*}
\Cu(\pi(\mathcal{R}_{v, \mathbb{T} \backslash U})) = \big\{ f \in \Lsc(\mathbb{T}, \overline{\mathbb{N}}) \; \vert \; f \leq \infty \cdot \chi_{U} \big\}. 
\end{equation*}
Now since $h_{v, U}$ is a strictly positive element of the ideal $\mathcal{R}_{v, \mathbb{T} \backslash U}$, $\pi(h_{v, U})$ is a strictly positive element of $\pi(\mathcal{R}_{v, \mathbb{T} \backslash U})$ and 
\begin{equation*}
\Cu(\pi(\mathcal{R}_{v, \mathbb{T} \backslash U})) = \big\{ f \in \Lsc(\mathbb{T}, \overline{\mathbb{N}}) \; \vert \; f \leq \infty \cdot [\pi(h_{v, U})] \big\}.
\end{equation*}
So $\infty \cdot \chi_{U} = \infty \cdot [\pi(h_{v, U})]$. By taking infima and noting that $[\pi(h_{v, U})]$ and $\chi_{U}$ are both below $[\pi(p_v)]$ in $\Lsc(\mathbb{T}, \overline{\mathbb{N}})$, we have
\begin{IEEEeqnarray*}{rCl}
[\pi(h_{v, U})] & = & [\pi(h_{v, U})] \wedge [\pi(p_v)]
\\
& = & (\infty \cdot [\pi(h_{v, U})]) \wedge [\pi(p_v)]
\\
& = & (\infty \cdot \chi_{U} ) \wedge [\pi(p_v)]
\\
& = & \chi_{U} \wedge [\pi(p_v)] = \chi_{U}. 
\end{IEEEeqnarray*} 
So by commutativity of diagram \eqref{equation: Stack Commutative Diagram} once more, 
\begin{equation*}
(\rho \circ \phi)([h_{v, U}]) = \Cu(\pi)([h_{v, U}]) = [\pi(h_{v, U})] = \chi_{U} = \rho(\chi_{U}).
\end{equation*}
Once again by \eqref{equation: Stack Quotient}, the definition of the stack and the fact that $\chi_{U} \not \in \Cu(C^*(H))$, $\phi([h_{v, U}]) \leq \chi_{U}$. The reverse inequality follows from \eqref{equation: Stack Quotient}, together with the fact that $\phi([h_{v, U}]) \not \in \Cu(C^*(H))$, as $(\rho \circ \phi)([h_{v, U}]) = \chi_U \neq 0$. 
\end{proof}

\section{The map $\Delta_E$ from $\Cu_E$ to $\Cu(C^*(E))$}
\label{sec: Delta E}

The main goal of this section is to prove Theorem \ref{introtheorem: Cu E Iso}. To do this, we will prove that for a finite graph $E$ satisfying the standing assumptions, the map $\Delta_E: \Cu_E \rightarrow \Cu(C^*(E))$ defined in Definition \ref{definition: Delta E} is a Cu-isomorphism. As discussed in Remark \ref{remark: Delta E From Singleton}, the construction of $\Delta_E$ is motivated by the bijection $\mathcal{I}: \Lat(C^*(E)) \rightarrow \Lat(\Cu_E)$ from Theorem \ref{theorem: Ideal Lattice Iso}. To summarise, the functions from Definition \ref{definition: Singletons} are mapped via $\Delta_E$ to linear combinations of strictly positive elements coming from hereditary subalgebras of $C^*(E)$ that are determined by $\mathcal{I}$; see also Definition \ref{definition: Strictly Positive}. \newline
\phantom{} \newline
Observe that if $v \in E^0$ is not a source vertex, then the subgraph $L(v)$ satisfies the assumptions of Theorem \ref{introtheorem: General Cuntz Krieger} because $E$ satisfies the standing assumptions. As emphasised previously, we will use the isomorphism $C^*(L(v)) \cong p_{L(v)} I_{L(v)} p_{L(v)}$ to transfer the relations amongst elements in $\Cu(C^*(L(v)))$ --- which is isomorphic to a stack of two Cu-semigroups --- to $\Cu(C^*(E))$. In particular, when $v$ supports a single loop, we find from Lemma \ref{lemma: Same Open Subset Upstairs} that the family of Cuntz classes $$\{ [h_{v, U}] \}_{U \subseteq \mathbb{T} \text{ open}} \subseteq \Cu(C^*(E))$$ behaves exactly like the family of characteristic functions $\{ \chi_U \}_{U \subseteq \mathbb{T} \text{ open}}$ generating $\Lsc(\mathbb{T}, \overline{\mathbb{N}})$ as a CU-semigroup. Moreover, the absorption relations one obtains from applying Theorem \ref{introtheorem: General Cuntz Krieger} to $L(v)$ and then transferring to $\Cu(C^*(E))$ precisely mimic the absorption relations satisfies by functions in $\Cu_E$; see the third property of Definition \ref{definition: Cu E}. These are the critical reasons why $\Delta_E$ is a Cu-morphism. See the proof of Proposition \ref{proposition: Delta E Cu Morphism} for further details. \newline
\phantom{} \newline
To show that $\Delta_E$ is a Cu-isomorphism, it suffices to prove that $\Delta_E$ is an order-embedding and surjective. We will adopt a similar proof strategy to \cite[Theorem 7.1]{AMP07} and proceed by induction over the (finite) gauge-invariant ideal lattice of $C^*(E)$, taking advantage of the fact that the number of vertices of the skeleton $S(E)$ of $E$ is exactly the gauge length of $C^*(E)$; see Lemma \ref{lemma: Gauge Length}. The commutative squares in Lemma \ref{lemma: Ideal Square} and Lemma \ref{lemma: Quotient Square} allow the inductive hypothesis to be used. Combining these commutative squares with specific properties of $\Cu_E$ is enough to conclude that $\Delta_E$ is an order embedding. \newline
\phantom{} \newline
The proof that $\Delta_E$ is surjective eventually spilts into two cases, based on whether $C^*(E)$ has a simple purely infinite ideal or a gauge-simple, gauge-invariant ideal. Lemma \ref{lemma: Polar Part Absorbed} and Theorem \ref{theorem: SR1 Corner Positive Cancellation} will be used to deal with these respective cases. It is interesting that the proof of surjectivity uses Theorem \ref{theorem: SR1 Corner Positive Cancellation}, as the analogous result for the Murray-von Neumann semigroup originating from \cite[Theorem 2.1]{War80} is used by Ara, Moreno and Pardo in the proof of \cite[Theorem 7.1]{AMP07} to prove that their candidate monoid morphism is injective. We also note that the stronger cancellation result Theorem \ref{theorem: General Weak Cancellation} is in fact, not required for the proof that $\Delta_E$ is surjective. This is because every gauge-invariant ideal $I$ of $C^*(E)$ is generated by the vertex projections coming from a hereditary subset $H$ of $E^0$ (see \eqref{equation: Hereditary Saturated} and Corollary \ref{corollary: Auto Saturated}), and hence, the largest element of $\Cu(I)$ is $\infty \cdot [{\sum}_{v \in H} p_v]$. 

\subsection{Definition and proof that it is a Cu-morphism}

Let $E$ be a finite graph satisfying the standing assumptions. Observe that if $v \in E^0$ is not a source vertex, then $L(v)$ is a subgraph of $E$ which satisfies the assumptions of Theorem \ref{introtheorem: General Cuntz Krieger}. Hence, if $H_v = \{ x \in L(v) \; \vert \; v \not \geq x \}$ then by Theorem \ref{introtheorem: General Cuntz Krieger},
\begin{equation} \label{equation: Subgraph Cuntz Semigroup}
\Cu(I_{L(v)}) \cong \Cu(I_{H_v}) \stack \Cu(C^*(L(v) \backslash H_v))
\end{equation}
where $\Cu(C^*(L(v) \backslash H_v))$ is isomorphic to either $\{0, \infty \}$ or $\Lsc(\mathbb{T}, \overline{\mathbb{N}})$. If $f \in \Cu_E$, $\check{v} \in \supp_0(f)_t$, $c(\check{v}) = 1$ and $i > 0$ then define
\begin{equation*}
\{ f(\check{v}) \geq i \} = \{ x \in \mathbb{T} \; \vert \; f(\check{v})(x) \geq i \}.
\end{equation*} 
By definition of a lower semicontinuous function, this is an open subset of $\mathbb{T}$.

\begin{definition} \label{definition: Delta E}
Let $E$ be a finite graph satisfying the standing assumptions. Recalling the definition of the strictly positive element $h_{v, U}$ from Definition \ref{definition: Strictly Positive}, we define $\Delta_E(f)$ to be the following element of $\Cu(C^*(E))$:
\begin{equation*}
\begin{split}
\Delta_E(f) & = {\sum}_{\check{v} \in \supp_0(f)_t, \; c(\check{v}) \neq 1} \Big( f(\check{v}) \cdot [p_v] + {\sum}_{w \in L(v), \; v \not \geq w} \infty \cdot [p_w] \Big) \\
& + {\sum}_{\check{v} \in \supp_0(f)_t, \; c(\check{v}) = 1} \Big( {\sum}_{i = 1}^{\infty} \big[h_{v, \; \{ f(\check{v}) \geq i \}} \big] + {\sum}_{w \in L(v), \; v \not \geq w} \infty \cdot [p_w] \Big).
\end{split}
\end{equation*}
Here, ${\sum}_{i = 1}^{\infty} [h_{v, \; \{ f(\check{v}) \geq i \}} ]$ is the supremum of the increasing sequence $$\Big\{ {\sum}_{i = 1}^{n} \big[h_{v, \; \{ f(\check{v}) \geq i \}} \big] \Big\}_n$$ and we set $h_{v, \emptyset} = 0$.  
\end{definition}

Once again, we emphasise that $\Delta_E$ does not depend on any choices of strictly positive elements, as any two strictly positive elements of a C*-algebra are Cuntz equivalent. 

\begin{remark} \label{remark: Delta E From Singleton}
Recall from Lemma \ref{lemma: Singleton Decomp} that if $f \in \Cu_E$ then $$f = {\sum}_{\check{v} \in \supp_0(f)_t} \theta_{\check{v}, f(\check{v})}$$ where the functions $\theta_{\check{v}, f(\check{v})}$ are defined in Definition \ref{definition: Singletons}. In turn, the functions in Definition \ref{definition: Singletons} are either finite sums of functions in the set
\begin{equation*}
\{ \theta_{\check{v}, 1} \; \vert \; c(\check{v}) = 0 \} \sqcup \{ \theta_{\check{w}, \infty} \; \vert \; c(\check{w}) = \infty \} \sqcup \left\{ \theta_{\check{x}, \chi_U} \; \Big\vert \; 
\begin{matrix}
c(\check{x}) = 1 \text{ and } U \subseteq \mathbb{T} \\
\text{ is non-empty and open}
\end{matrix} \right\}
\end{equation*}
or the supremum of an increasing sequence of such finite sums. The essential idea behind Definition \ref{definition: Delta E} is that the functions in the above set are mapped to Cuntz classes of strictly positive elements lying in specific hereditary subalgebras of $C^*(E)$. \newline
\phantom{} \newline
To illustrate this, assume that $\check{v} \in S(E)^0$ satisfies $c(\check{v}) = 0$. Let $\mathcal{I}$ be the map defined in \eqref{equation: Ideal Lattice 2}, which is bijective by Theorem \ref{theorem: Ideal Lattice Iso}. To see where $\Delta_E$ sends the function $\theta_{\check{v}, 1}$, we first consider the ideal $\Idl(\theta_{\check{v}, 1})$ generated by $\theta_{\check{v}, 1}$ in $\Cu_E$ (see \eqref{equation: Singly Gen Ideal}). Use $\mathcal{I}$ to obtain the corresponding gauge-invariant ideal
\begin{equation*}
\mathcal{I}^{-1}\big( \Idl(\theta_{\check{v}, 1}) \big) = I_{L_{S(E)}(\check{v})^{\dagger}} = I_{L(v)} =  I_{\{ v \}}
\end{equation*}
of $C^*(E)$. The map $\Delta_E$ then maps $\theta_{\check{v}, 1}$ to the Cuntz class of a strictly positive element of the corner $p_v I_{\{ v \}} p_v$, which is $[p_v]$.  The cases where $c(\check{v}) \neq 0$ follow a similar principle. Note that we take strictly positive elements of corners of ideals in order to avoid choosing a strictly positive element of a stable ideal of $C^*(E)$, which may not be Cuntz dominated by a sum of vertex projections; see Example \ref{example: Why Corner} and the preceding discussion. \newline
\phantom{} \newline
Finally, we extend $\Delta_E$ to all of $\Cu_E$ by taking the appropriate linear combinations and suprema of increasing sequences so that
\begin{equation} \label{equation: Delta E Singleton}
\Delta_E(f) = {\sum}_{\check{v} \in \supp_0(f)_t}  \Delta_E(\theta_{\check{v}, f(\check{v})}).
\end{equation}
\end{remark}

\begin{remark} \label{remark: Delta E Ideal}
Suppose that $f \in \Cu_E$ and $\supp_0(f)$ is a proper hereditary subset of $S(E)^0$. By \eqref{equation: Hereditary Skeleton}, $\supp_0(f) = \langle \supp_0(f)^{\dagger} \rangle$ as hereditary subsets of $S(E)^0$, $$f \in \big\{ g \in \Cu_E \; \big\vert \; \supp_0(g) \subseteq \langle \supp_0(f)^{\dagger} \rangle \big\}$$ and by Definition \ref{definition: Delta E}, $\Delta_E(f) \in \Cu(I_{\supp_0(f)^{\dagger}})$. So by the contrapositive, if there does not exist a proper hereditary subset $H$ of $E^0$ such that $\Delta_E(f) \in \Cu(I_H)$ then $\supp_0(f)$ is not a proper hereditary subset of $S(E)^0$ and subsequently, $\supp_0(f) = S(E)^0$. This observation will play a major role in the proof of Theorem \ref{introtheorem: Cu E Iso}. 
\end{remark} 

By using Lemma \ref{lemma: Same Open Subset Upstairs} to handle the strictly positive elements coming from corners of non-gauge-invariant ideals of $C^*(E)$, we will now prove that $\Delta_E$ is a Cu-morphism. 

\begin{proposition} \label{proposition: Delta E Cu Morphism}
Let $E$ be a finite graph satisfying the standing assumptions. The map $\Delta_E: \Cu_E \rightarrow \Cu(C^*(E))$ is a Cu-morphism. 
\end{proposition}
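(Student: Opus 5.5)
Since both $\Cu_E$ and $\Cu(C^*(E))$ satisfy \ref{axiom: O4}, I will check four things for $\Delta_E$: it is additive, order preserving, preserves suprema of increasing sequences, and preserves the way-below relation. The organising principle is \eqref{equation: Delta E Singleton}. I first compute $\Delta_E$ on the singleton functions $\theta_{\check v, n}$, $\theta_{\check v,\infty}$ and $\theta_{\check v,\chi_U}$, working inside the corner $p_{L(v)} I_{L(v)} p_{L(v)} \cong C^*(L(v))$. There Theorem \ref{introtheorem: General Cuntz Krieger} gives $\Cu(I_{L(v)}) \cong \Cu(I_{H_v}) \stack \Cu(C^*(L(v)\backslash H_v))$ as in \eqref{equation: Subgraph Cuntz Semigroup}. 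When $c(\check v)=1$, Lemma \ref{lemma: Same Open Subset Upstairs}, applied to the subgraph $L(v)$ and transported through \eqref{equation: Full Corner Iso}, identifies $[h_{v,U}]$ with $\chi_U$ in the top layer of the stack. Together with the stack absorption $s+t=t$, this shows that the restriction of $\Delta_E$ to $\{f \in \Cu_E : \supp_0(f)\subseteq \langle L(v)\rangle,\ \check v \in \supp_0(f)_t\}$ is the composite of the obvious map
\[
f \mapsto f(\check v) \in \Cu(C^*(L(v)\backslash H_v))\backslash\{0\}
\]
with the stack isomorphism. In particular, the term $\sum_{w\in L(v),\, v\not\geq w}\infty\cdot[p_w] = \infty_{I_{H_v}}$ is absorbed by the top-layer element. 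Hence $\Delta_E(\theta_{\check v,g})$ is simply the top-layer element $g$.

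\emph{Additivity.} Take $f,g\in\Cu_E$. The terminal set $\supp_0(f+g)_t$ is contained in $\supp_0(f)_t\cup\supp_0(g)_t$. A terminal vertex of $\supp_0(f)$ that is not terminal for $\supp_0(f+g)$ lies in $L(\check w)\backslash\{\check w\}$ for some $\check w\in\supp_0(f+g)_t$. By the observation above, its contribution in $\Delta_E(f)$ lies in $\Cu(I_{H_w})$ and is absorbed by the stack term at $\check w$. For a common terminal vertex, I need $\Delta_E(\theta_{\check v,a})+\Delta_E(\theta_{\check v,b}) = \Delta_E(\theta_{\check v,a+b})$. For $c\neq 1$ this is immediate. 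For $c=1$ it reduces to the identity $\sum_i[h_{v,\{a\ge i\}}]+\sum_i[h_{v,\{b\ge i\}}]=\sum_i[h_{v,\{a+b\ge i\}}]$. This holds because all terms lie in the top layer $\Lsc(\mathbb T,\overline{\mathbb N})$ of the stack, where they are the layer-cake sums $\sum_i\chi_{\{a\ge i\}}=a$; the stack is a monoid and its top layer carries the usual addition.

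\emph{Order, suprema and way-below.} Order preservation follows from Lemma \ref{lemma: Order Alternative} by the same bookkeeping. If $f\le g$, each terminal vertex of $\supp_0(f)$ either is terminal for $\supp_0(g)$, where the stack order compares $f(\check v)\le g(\check v)$, or lies strictly below some terminal vertex of $\supp_0(g)$ and is absorbed there. For suprema, take $f_n\nearrow f$. As in the proof of Lemma \ref{lemma: Way Below}, the supports stabilise, so eventually $\supp_0(f_n)_t=\supp_0(f)_t$. Then $\sup_n\Delta_E(f_n)=\Delta_E(f)$ reduces vertex by vertex to suprema in the top layer of each stack, which are preserved by the stack isomorphism and by \ref{axiom: O4} in $\Cu(C^*(E))$. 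For way-below, Lemma \ref{lemma: Way Below} reduces $f\ll g$ to $f(\check v)\ll g(\check v)$ at the terminal vertices of $\supp_0(g)$, together with support inclusion. The stack definition says that way-below in $S\stack T$ is inherited from $T$ on the top layer. Summing via \ref{axiom: O3}, and using that absorbed lower terms are way-below the compact-free top terms by purely large absorption, yields $\Delta_E(f)\ll\Delta_E(g)$.

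\emph{Main obstacle.} I expect the main obstacle to be the single-loop case of additivity and order. There one must know that the classes $[h_{v,U}]$ add and compare exactly like the characteristic functions $\chi_U$ inside $\Cu(C^*(E))$, not merely after passing to the quotient. My plan is to get this from Lemma \ref{lemma: Same Open Subset Upstairs} combined with the fact that $\Cu(I_{L(v)})$ is an ideal of $\Cu(C^*(E))$, so that the stack isomorphism for $L(v)$ transfers verbatim. A subtlety is the identification of the hereditary subalgebra $p_{L(v)}(I_{L(v)}\cap\mathcal R_{v,\mathbb T\backslash U})p_{L(v)}$ with $\mathcal R_{v,\mathbb T\backslash U}$ computed inside $C^*(L(v))$, which I will need to check.
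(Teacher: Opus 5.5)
Your proposal is correct and follows essentially the same route as the paper's proof: you reduce to the singletons $\theta_{\check v, f(\check v)}$ at terminal vertices, and you absorb non-terminal contributions using the stack decomposition \eqref{equation: Subgraph Cuntz Semigroup} from Theorem~\ref{introtheorem: General Cuntz Krieger}. You then handle single-loop vertices with Lemma~\ref{lemma: Same Open Subset Upstairs} transported through the full corner $p_{L(v)} I_{L(v)} p_{L(v)} \cong C^*(L(v))$, and get suprema and way-below from support stabilisation together with \ref{axiom: O3} and \ref{axiom: O4}. One small slip: the top-layer terms need not be compact-free (for instance $[p_v]$ is compact). What you actually use, and what holds in any stack, is that every lower-layer element is way-below every nonzero top-layer element. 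At terminal vertices with $c(\check v)=\infty$, the needed compactness comes from $\infty \ll \infty$ in $\{0,\infty\}$, which the paper phrases as $\infty\cdot[p_v]=[p_v]\ll[p_v]$.
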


\begin{proof}
We first prove that $\Delta_E$ is additive. Suppose that $f, g \in \Cu_E$ and let $\check{v}$ be a terminal vertex for $\supp_0(f + g) = \supp_0(f) \cup \supp_0(g)$. Similarly to the proof of Lemma \ref{lemma: Cu E Minus H}, observe that if $\check{v} \in \supp_0(f)$ then it must necessarily be a terminal vertex for $\supp_0(f)$. A similar statement holds for $\supp_0(g)$. Without loss of generality, suppose that $\check{v} \in \supp_0(f)$. If $\check{v} \not \in \supp_0(g)$ then there is nothing to show. Otherwise, $\check{v} \in \supp_0(g)$ and there are two cases to consider. Firstly, if $c(\check{v}) \neq 1$ then
{\small \begin{equation*}
\begin{split}
& \Big( f(\check{v}) \cdot [p_v] + {\sum}_{w \in L(v), \; v \not \geq w} \infty \cdot [p_w] \Big) + \Big( g(\check{v}) \cdot [p_v] + {\sum}_{w \in L(v), \; v \not \geq w} \infty \cdot [p_w] \Big) \\
= & \Big( (f + g)(\check{v}) \cdot [p_v] + {\sum}_{w \in L(v), \; v \not \geq w} \infty \cdot [p_w] \Big).
\end{split}
\end{equation*}}
Secondly, if $c(\check{v}) = 1$ then
\begin{IEEEeqnarray*}{rCl}
{\sum}_{i = 1}^{\infty} \chi_{\{ (f + g)(\check{v}) \geq i \} } & = & (f + g)(\check{v}) = f(\check{v}) + g(\check{v}) 
\\
& = & {\sum}_{i = 1}^{\infty} \chi_{\{ f(\check{v}) \geq i \} } + {\sum}_{i = 1}^{\infty} \chi_{\{ g(\check{v}) \geq i \} }
\end{IEEEeqnarray*} 
in $\Lsc(\mathbb{T}, \overline{\mathbb{N}})$. By Lemma \ref{lemma: Same Open Subset Upstairs} applied to the subgraph $L(v)$,  
\begin{equation} \label{equation: Local Application}
{\sum}_{i = 1}^{\infty} \big[h_{v, \; \{ f(\check{v}) \geq i \}} \big] + {\sum}_{i = 1}^{\infty} \big[h_{v, \; \{ g(\check{v}) \geq i \}} \big] = {\sum}_{i = 1}^{\infty} \big[h_{v, \; \{ (f + g)(\check{v}) \geq i \}} \big]
\end{equation}
in $\Cu(C^*(L(v)))$. We carefully observe that in \eqref{equation: Local Application}, if $\{ f(\check{v}) \geq i \}$ is non-empty then $h_{v, \; \{ f(\check{v}) \geq i \}}$ is a strictly positive element of 
\begin{equation*}
p_{L(v)} \big( C^*(L(v)) \cap \mathcal{R}_{v, \; \mathbb{T} \backslash \{ f(\check{v}) \geq i \}} \big) p_{L(v)} = \mathcal{R}_{v, \mathbb{T} \backslash \{ f(\check{v}) \geq i \}},
\end{equation*}
viewed as a hereditary subalgebra of $C^*(L(v))$. By \eqref{equation: Full Corner Iso}, we have the isomorphism $C^*(L(v)) \cong p_{L(v)} I_{L(v)} p_{L(v)}$. Under this isomorphism, $h_{v, \; \{ f(\check{v}) \geq i \}}$ is mapped to a strictly positive element $h'$ of
\begin{equation*}
p_{L(v)} \big( I_{L(v)} \cap \mathcal{R}_{v, \; \mathbb{T} \backslash \{ f(\check{v}) \geq i \}} \big) p_{L(v)}
\end{equation*}
which is now viewed as a hereditary subalgebra of $C^*(E)$. By Definition \ref{definition: Strictly Positive}, $h'$ is Cuntz equivalent to $h_{v, \; \{ f(\check{v}) \geq i \}}$. Applying the isomorphism $C^*(L(v)) \cong p_{L(v)} I_{L(v)} p_{L(v)}$ to the rest of the strictly positive elements appearing in \eqref{equation: Local Application}, we find that
\begin{equation*}
{\sum}_{i = 1}^{\infty} \big[h_{v, \; \{ f(\check{v}) \geq i \}} \big] + {\sum}_{i = 1}^{\infty} \big[h_{v, \; \{ g(\check{v}) \geq i \}} \big] = {\sum}_{i = 1}^{\infty} \big[h_{v, \; \{ (f + g)(\check{v}) \geq i \}} \big]
\end{equation*}
in $\Cu(C^*(E))$. Consequently, by summing over all terminal vertices of $\supp_0(f + g)$ and using \eqref{equation: Delta E Singleton},
\begin{equation} \label{equation: Delta E Additivity Step}
\Delta_E(f + g) = {\sum}_{\check{v} \in \supp_0(f+g)_t}  \Delta_E(\theta_{\check{v}, f(\check{v})}) + {\sum}_{\check{v} \in \supp_0(f+g)_t}  \Delta_E(\theta_{\check{v}, g(\check{v})}).
\end{equation}
To see that the RHS of \eqref{equation: Delta E Additivity Step} is equal to $\Delta_E(f) + \Delta_E(g)$, we need to add the contributions coming from vertices lying in $\supp_0(f)_t$ or $\supp_0(g)_t$ which are not terminal vertices for $\supp_0(f + g)$. So suppose that $\check{v} \in \supp_0(f)_t$ is not a terminal vertex for $\supp_0(f + g)$. By Lemma \ref{lemma: Hereditary Terminal}, there exists $\check{x} \in \supp_0(f + g)_t$ such that $\check{v}$ admits a finite path to $\check{x}$. \newline
\phantom{} \newline
Now by \eqref{equation: Subgraph Cuntz Semigroup}, $\Cu(I_{L(x)}) \cong \Cu(I_{H_x}) \stack \Cu(C^*(L(x) \backslash H_x))$. Since $\check{v}$ admits a finite path to $\check{x}$ and $\check{x}$ does not admit a finite path to $\check{v}$ (because $\check{v}$ is not a terminal vertex for $\supp_0(f + g)$), then $\Delta_E(\theta_{\check{v}, f(\check{v})}) \in \Cu(I_{H_x})$. Hence, by definition of the stack, 
\begin{equation} \label{equation: Delta E Additivity Absorption}
\Delta_E(\theta_{\check{x}, f(\check{x})}) + \Delta_E(\theta_{\check{v}, f(\check{v})}) = \Delta_E(\theta_{\check{x}, f(\check{x})})
\end{equation}
in $\Cu(I_{L(x)})$ and thus, in $\Cu(C^*(E))$ as well. The analogous equation holds with $g$ in place of $f$. Combining \eqref{equation: Delta E Additivity Absorption}, its analogue for $g$ and \eqref{equation: Delta E Additivity Step}, we find that $\Delta_E(f + g) = \Delta_E(f) + \Delta_E(g)$. \newline
\phantom{} \newline
Next, we prove that $\Delta_E$ is order-preserving. Once again, assume that $f, g \in \Cu_E$ and $f \leq g$. By Lemma \ref{lemma: Order Alternative}, $\supp_0(f) \subseteq \supp_0(g)$. If $\check{v} \in \supp_0(f)_t$ is not a terminal vertex for $\supp_0(g)$ then by Lemma \ref{lemma: Hereditary Terminal}, there exists $\check{x} \in \supp_0(g)_t$ such that $\check{v}$ admits a finite path to $\check{x}$. So for $\check{x} \in \supp_0(g)_t$, define
\begin{equation*}
O_{\check{x}} = \{ \check{v} \in \supp_0(f)_t \; \vert \; \text{$\check{v} \not \in \supp_0(g)_t$ and $\check{v} \geq \check{x}$} \}. 
\end{equation*}
We now divide the argument into two cases. First, suppose that every vertex in $\supp_0(f)_t$ is not a terminal vertex for $\supp_0(g)$ so that
\begin{equation} \label{equation: Orbit Union}
{\bigcup}_{\check{x} \in \supp_0(g)_t} O_{\check{x}} = \supp_0(f)_t. 
\end{equation}
From here, we now employ a similar argument to the one used in the proof of additivity. If $\check{x} \in \supp_0(g)_t$ then by \eqref{equation: Subgraph Cuntz Semigroup}, $$\Cu(I_{L(x)}) \cong \Cu(I_{H_x}) \stack \Cu(C^*(L(x) \backslash H_x)).$$ Observe that if $\check{v} \in O_{\check{x}}$ then $\Delta_E(\theta_{\check{v}, f(\check{v})}) \in \Cu(I_{H_x})$. Hence by definition of the stack, 
\begin{equation} \label{equation: Not Terminal Absorption}
\Delta_E(\theta_{\check{x}, g(\check{x})}) + {\sum}_{\check{v} \in O_{\check{x}}} \Delta_E(\theta_{\check{v}, f(\check{v})}) = \Delta_E(\theta_{\check{x}, g(\check{x})})
\end{equation}
in $\Cu(I_{L(x)})$ and thus, in $\Cu(C^*(E))$ as well. Since any Cuntz class in $\Cu(I_{H_x})$ is way-below any Cuntz class in $\Cu(I_{L(x)}) \backslash \Cu(I_{H_x})$, we also have 
\begin{equation} \label{equation: Not Terminal Inequality}
{\sum}_{\check{v} \in O_{\check{x}}} \Delta_E(\theta_{\check{v}, f(\check{v})}) \ll \Delta_E(\theta_{\check{x}, g(\check{x})})
\end{equation}
in $\Cu(C^*(E))$. Summing over all $\check{x} \in \supp_0(g)_t$ in \eqref{equation: Not Terminal Inequality}, we obtain $\Delta_E(f) \leq \Delta_E(g)$ in this case by \eqref{equation: Delta E Singleton} and \eqref{equation: Orbit Union}. \newline
\phantom{} \newline
Second, assume that there exists a vertex $\check{v} \in \supp_0(f)_t$ which is also a terminal vertex for $\supp_0(g)$. If $c(\check{v}) \neq 1$ then by the assumption that $f \leq g$,
\begin{equation} \label{equation: Delta E Order No Loop}
f(\check{v}) \cdot [p_v] + {\sum}_{w \in L(v), \; v \not \geq w} \infty \cdot [p_w] \leq g(\check{v}) \cdot [p_v] + {\sum}_{w \in L(v), \; v \not \geq w} \infty \cdot [p_w].
\end{equation} 
Note that if $c(\check{v}) = \infty$ then \eqref{equation: Delta E Order No Loop} follows from the fact that $f(\check{v}) = g(\check{v}) = \infty$. In the case where $c(\check{v}) = 0$, \eqref{equation: Delta E Order No Loop} follows from Lemma \ref{lemma: Order Alternative}. So by definition of $\Delta_E$, $\Delta_E(\theta_{\check{v}, f(\check{v})}) \leq \Delta_E(\theta_{\check{v}, g(\check{v})})$ in $\Cu(C^*(E))$. \newline
\phantom{} \newline
Next, if $c(\check{v}) = 1$ then by Lemma \ref{lemma: Order Alternative} again, 
\begin{equation*}
{\sum}_{i = 1}^{\infty} \chi_{\{ f(\check{v}) \geq i \}} = f(\check{v}) \leq g(\check{v}) = {\sum}_{i = 1}^{\infty} \chi_{\{ g(\check{v}) \geq i \}}
\end{equation*}
in $\Lsc(\mathbb{T}, \overline{\mathbb{N}})$. Subsequently by Lemma \ref{lemma: Same Open Subset Upstairs} and the isomorphism $C^*(L(v)) \cong p_{L(v)} I_{L(v)} p_{L(v)}$,
\begin{equation*}
{\sum}_{i = 1}^{\infty} \big[h_{v, \; \{ f(\check{v}) \geq i \}} \big] \leq {\sum}_{i = 1}^{\infty} \big[h_{v, \; \{ g(\check{v}) \geq i \}} \big]
\end{equation*}  
in $\Cu(C^*(E))$ and $\Delta_E(\theta_{\check{v}, f(\check{v})}) \leq \Delta_E(\theta_{\check{v}, g(\check{v})})$ once more. Hence, by \eqref{equation: Delta E Singleton} and \eqref{equation: Not Terminal Absorption},
\begin{IEEEeqnarray*}{rCl}
& & \Delta_E(f) = {\sum}_{\check{w} \in \supp_0(f)_t}  \Delta_E(\theta_{\check{w}, f(\check{w})})
\\
& \leq & {\sum}_{\check{w} \in \supp_0(f)_t \cap \supp_0(g)_t} \Delta_E(\theta_{\check{w}, f(\check{w})}) + {\sum}_{\check{x} \in \supp_0(g)_t} {\sum}_{\check{v} \in O_{\check{x}}} \Delta_E(\theta_{\check{v}, f(\check{v})})
\\
& \leq & {\sum}_{\check{w} \in \supp_0(f)_t \cap \supp_0(g)_t} \Delta_E(\theta_{\check{w}, g(\check{w})}) + {\sum}_{\check{x} \in \supp_0(g)_t} {\sum}_{\check{v} \in O_{\check{x}}} \Delta_E(\theta_{\check{v}, f(\check{v})})
\\
& \leq & {\sum}_{\check{w} \in \supp_0(g)_t} \Delta_E(\theta_{\check{w}, g(\check{w})}) + {\sum}_{\check{x} \in \supp_0(g)_t} {\sum}_{\check{v} \in O_{\check{x}}} \Delta_E(\theta_{\check{v}, f(\check{v})})
\\
& = & {\sum}_{\check{w} \in \supp_0(g)_t} \Delta_E(\theta_{\check{w}, g(\check{w})}) = \Delta_E(g). 
\end{IEEEeqnarray*} 
Therefore, $\Delta_E$ is order-preserving. The proof that $\Delta_E$ preserves way-below is done similarly. Establishing this in the first case is done by summing over all $\check{x} \in \supp_0(g)_t$ in \eqref{equation: Not Terminal Inequality} and using \ref{axiom: O3}. In the second case, the argument is nearly identical to the argument for order preservation, using Lemma \ref{lemma: Way Below} in lieu of Lemma \ref{lemma: Order Alternative}. The key difference between the two arguments is that the way-below version of \eqref{equation: Delta E Order No Loop} is not immediate if $c(\check{v}) = \infty$. However, since $v$ supports multiple cycles then by the Cuntz-Krieger relations, $\infty \cdot [p_v] = [p_v]$ in $\Cu(C^*(E))$ and 
\begin{IEEEeqnarray*}{rCl}
f(\check{v}) \cdot [p_v] + {\sum}_{w \in L(v), \; v \not \geq w} \infty \cdot [p_w] & = & \infty \cdot [p_v] + {\sum}_{w \in L(v), \; v \not \geq w} \infty \cdot [p_w]
\\
& = & [p_v] + {\sum}_{w \in L(v), \; v \not \geq w} \infty \cdot [p_w] 
\\
& = & [p_v] \ll [p_v] 
\\
& \leq & g(\check{v}) \cdot [p_v] + {\sum}_{w \in L(v), \; v \not \geq w} \infty \cdot [p_w]. 
\end{IEEEeqnarray*} 
Finally, to see that $\Delta_E$ preserves suprema of increasing sequences, assume that $\{ f_n \}_{n > 0}$ is an increasing sequence in $\Cu_E$ with supremum $f$. Since $S(E)$ only has finitely many hereditary subsets, there exists $k > 0$ such that if $\ell \geq 0$ then $$\supp_0(f_k) = \supp_0(f_{k+\ell}) = \supp_0(f).$$ Now if $\check{v} \in \supp_0(f)_t$ then $f(\check{v}) = \sup_{n \geq k} f_n(\check{v})$. In particular, in the case where $c(\check{v}) = 1$, this claim follows from the fact that by Lemma \ref{lemma: Same Open Subset Upstairs} and the isomorphism $C^*(L(v)) \cong p_{L(v)} I_{L(v)} p_{L(v)}$,
\begin{equation*}
{\sum}_{i = 1}^{\infty} \big[h_{v, \; \{ f(\check{v}) \geq i \}} \big] = {\sup}_{n \geq k} {\sum}_{i = 1}^{\infty} \big[h_{v, \; \{ f_n(\check{v}) \geq i \}} \big]
\end{equation*}
in $\Cu(C^*(E))$. Since $\Cu(C^*(E))$ satisfies \ref{axiom: O4} and $\Delta_E$ is order-preserving, $\Delta_E(f) = \sup_{n \geq k} \Delta_E(f_n) = \sup_{n > 0} \Delta_E(f_n)$. So $\Delta_E$ preserves suprema of increasing sequences and is therefore a Cu-morphism. 
\end{proof}

\subsection{Proof of Theorem \ref{introtheorem: Cu E Iso}} 
\label{subsec: Proof Main Theorem}

Recall that if $E$ is a finite graph satisfying the standing assumptions and $H$ is a hereditary subset of $E^0$, then we have short exact sequences of positively ordered monoids in \eqref{equation: Cu(C*(E)) SES} and \eqref{equation: Cu E SES} which involve $\Cu(C^*(E))$ and $\Cu_E$ respectively. We now show that the Cu-morphism $\Delta_E$ defined in Definition \ref{definition: Delta E} intertwines both short exact sequences. The resulting commutative squares will be extensively used in the proof of Theorem \ref{introtheorem: Cu E Iso}. 

\begin{lemma} \label{lemma: Ideal Square}
Let $E$ be a finite graph satisfying the standing assumptions and $H$ be a hereditary subset of $E^0$. The following diagram commutes:
\begin{equation*}
\begin{tikzcd}
\Cu_H \arrow[r, "i_H"] \arrow[d, swap, "\Delta_H"] & \Cu_E \arrow[d, "\Delta_E"] \\
\Cu(C^*(H)) \arrow[r, swap, "\Cu(\iota)"] & \Cu(C^*(E))
\end{tikzcd}
\end{equation*}
where $i_H$ is the Cu-morphism in \eqref{equation: Inclusion} and $\iota$ is the inclusion map $I_H \hookrightarrow C^*(E)$. 
\end{lemma}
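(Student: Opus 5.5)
Since both $\Delta_H$ and $\Delta_E$ are Cu-morphisms (Proposition \ref{proposition: Delta E Cu Morphism}) and $i_H$, $\Cu(\iota)$ are additive and preserve suprema of increasing sequences, I would reduce commutativity to the generators. By Lemma \ref{lemma: Singleton Decomp} and Remark \ref{remark: Delta E From Singleton}, every $f \in \Cu_H$ is a finite sum of functions $\theta_{\check{v}, f(\check{v})}$ with $\check{v} \in \langle H \rangle$, and each such function is a finite sum or supremum of increasing finite sums of $\theta_{\check{v},1}$ ($c(\check v)=0$), $\theta_{\check{v},\infty}$ ($c(\check v)=\infty$) and $\theta_{\check{v},\chi_U}$ ($c(\check v)=1$). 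It therefore suffices to check $\Delta_E(i_H(\theta)) = \Cu(\iota)(\Delta_H(\theta))$ for these $\theta$. A preliminary observation is that for $\check v\in\langle H\rangle$ the skeleton data agree: since $H$ is hereditary, $L(v)\subseteq H$, so $L_{S(H)}(\check v)=L_{S(E)}(\check v)$, $c$ is unchanged, and $\supp_0(f)_t$ is the same computed in $S(H)$ or $S(E)$. Thus $i_H(\theta)$ is the same function, and the formula in Definition \ref{definition: Delta E} produces the same index sets in both graphs.

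For $c(\check v)\neq 1$, $\Delta_H(\theta)$ is $f(\check v)[p_v^H]+\sum_{w\in L(v),\,v\not\geq w}\infty\cdot[p_w^H]$, computed with the vertex projections of $C^*(H)$. I would now fix the identification $\Cu(C^*(H))\cong\Cu(I_H)$ via the full corner isomorphism \eqref{equation: Full Corner Iso}, $C^*(H)\cong p_HI_Hp_H$, which sends $p^H_w\mapsto p_w$ and $s^H_e\mapsto s_e$. This is the implicit meaning of $\Cu(\iota)$ on $\Cu(C^*(H))$. Under it, $[p^H_w]\mapsto[p_w]$ in $\Cu(C^*(E))$, which gives equality for these generators immediately.

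For $c(\check v)=1$ the terms are $[h^H_{v,U}]$. Here $h^H_{v,U}$ is strictly positive in $p_{L(v)}(I^H_{L(v)}\cap\mathcal R^H_{v,\mathbb T\setminus U})p_{L(v)}$ inside $C^*(H)$. I would check that the corner isomorphism carries $I^H_{L(v)}$ onto $p_H I_{L(v)} p_H$ and $\mathcal R^H_{v,t}$ into $\mathcal R_{v,t}$. Both are generated by the vertex projections of $H\setminus R(v)$ together with $s_g-tp_v$, and $H\setminus R(v)=H\cap(E^0\setminus R(v))$. Consequently the corner $p_{L(v)}(\cdots)p_{L(v)}$ computed in $H$ maps onto $p_{L(v)}(I_{L(v)}\cap\mathcal R_{v,\mathbb T\setminus U})p_{L(v)}$ computed in $E$, because $p_{L(v)}\le p_H$. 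Strictly positive elements therefore correspond, and $[h^H_{v,U}]\mapsto[h_{v,U}]$. This is the same transfer already used in the proof of Proposition \ref{proposition: Delta E Cu Morphism} with $L(v)$ in place of $H$; alternatively one can factor through $C^*(L(v))$ for both $H$ and $E$.

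The main obstacle is this last identification of the non-gauge-invariant ideals under the corner isomorphism. I would handle it with \eqref{equation: Sandwich} and primitivity: intersecting the ideal $\mathcal R_{v,t}$ of $C^*(E)$ with the hereditary subalgebra $p_HI_Hp_H$ gives an ideal of $C^*(H)$. By \eqref{equation: Sandwich}, this ideal contains $I^H_{H\setminus R(v)}$ and $s_g-tp_v$, and it does not contain $p_v$. Since these properties single out $\mathcal R^H_{v,t}$ among ideals sandwiched as in \eqref{equation: Sandwich} (the quotient is $C(\mathbb T)$ at the point $t$), it equals $\mathcal R^H_{v,t}$. Intersecting over $t \in \mathbb{T} \setminus U$ then handles $\mathcal R_{v, \mathbb{T} \setminus U}$. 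Finally, taking sums and suprema, and using that both composites are Cu-morphisms, finishes the proof.
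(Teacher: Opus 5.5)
Your proposal is correct and follows the same approach as the paper. The paper also verifies $\Delta_E\circ i_H=\Cu(\iota)\circ\Delta_H$ on the generators $\theta_{\check v,1}$, $\theta_{\check v,\infty}$ and $\theta_{\check v,\chi_U}$, then concludes by Lemma~\ref{lemma: Singleton Decomp} and Proposition~\ref{proposition: Delta E Cu Morphism}. The only difference is that the paper calls the generator checks ``direct computation'', whereas you justify the one nontrivial point: the corner isomorphism $C^*(H)\cong p_HI_Hp_H$ carries $\mathcal{R}^H_{v,t}$ onto $p_HI_Hp_H\cap\mathcal{R}_{v,t}$, via \eqref{equation: Sandwich} and the uniqueness of the ideal between $I^H_{H\setminus R(v)}$ and $I^H_{(H\setminus R(v))\sqcup\{v\}}$ containing $s_g-tp_v$ but not $p_v$, so that $[h^H_{v,U}]\mapsto[h_{v,U}]$.
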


\begin{proof}
Assume that $\check{v} \in \langle H \rangle$ and $U \subseteq \mathbb{T}$ be non-empty and open. From the definitions, it follows from direct computation that if $c(\check{v}) = 0$ then $$(\Cu(\iota) \circ \Delta_H)(\theta_{\check{v}, 1}) = (\Delta_E \circ i_H)(\theta_{\check{v}, 1}) = [p_v],$$ if $c(\check{v}) = \infty$ then $$(\Cu(\iota) \circ \Delta_H)(\theta_{\check{v}, \infty}) = (\Delta_E \circ i_H)(\theta_{\check{v}, \infty}) = \infty \cdot [p_v] + {\sum}_{w \in L(v), \; v \not \geq w} \infty \cdot [p_w]$$ and if $c(\check{v}) = 1$ then $$(\Cu(\iota) \circ \Delta_H)(\theta_{\check{v}, \chi_U}) = (\Delta_E \circ i_H)(\theta_{\check{v}, \chi_U}) = [h_{v, U}] + {\sum}_{w \in L(v), \; v \not \geq w} \infty \cdot [p_w].$$ By Lemma \ref{lemma: Singleton Decomp} and Proposition \ref{proposition: Delta E Cu Morphism}, the square commutes. 
\end{proof}

\begin{lemma} \label{lemma: Quotient Square}
Let $E$ be a finite graph satisfying the standing assumptions and $H$ be a hereditary subset of $E^0$. The following diagram commutes:
\begin{equation*}
\begin{tikzcd}
\Cu_E \arrow[r, "\rho_H"] \arrow[d, swap, "\Delta_E"] & \Cu_{E \backslash H} \arrow[d, "\Delta_{E \backslash H}"] \\
\Cu(C^*(E)) \arrow[r, swap, "\Cu(\pi)"] & \Cu(C^*(E \backslash H))
\end{tikzcd}
\end{equation*}
where $\rho_H$ is the Cu-morphism constructed in Lemma \ref{lemma: Cu E Minus H} and $\pi: C^*(E) \rightarrow C^*(E \backslash H)$ is the quotient map.
\end{lemma}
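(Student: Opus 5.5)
Since $\rho_H$, $\Delta_E$, $\Delta_{E \backslash H}$ and $\Cu(\pi)$ are all Cu-morphisms (Lemma \ref{lemma: Cu E Minus H}, Proposition \ref{proposition: Delta E Cu Morphism} and functoriality), it suffices by Lemma \ref{lemma: Singleton Decomp} and additivity to check commutativity on the building blocks $\theta_{\check{v}, f(\check{v})}$ with $\check{v} \in S(E)^0$. Moreover, for $c(\check{v}) = 1$ both composites preserve suprema of increasing sequences and $\Delta_E(\theta_{\check{v}, f(\check{v})})$ is built from the classes $[h_{v, U}] + \sum_{w \in L(v),\, v \not\geq w} \infty \cdot [p_w]$. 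So it is enough to treat $\theta_{\check{v}, 1}$ when $c(\check{v}) = 0$, $\theta_{\check{v}, \infty}$ when $c(\check{v}) = \infty$, and $\theta_{\check{v}, \chi_U}$ when $c(\check{v}) = 1$ and $U$ is non-empty and open. The argument then splits according to whether $v \in H$ or $v \notin H$.

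\emph{Case $v \in H$.} The vertex support $L_{S(E)}(\check{v})$ contains vertices outside $\langle H \rangle$ whenever some vertex of $E \backslash H$ reaches $v$, so $\rho_H(\theta_{\check{v}, \cdot})$ need not vanish. By \eqref{equation: Quotient}, $\rho_H(\theta_{\check{v}, \cdot})$ is the sum of $\theta_{\check{x}, \delta_{\check{x}}}$ over the terminal vertices $\check{x}$ of $L_{S(E)}(\check{v}) \cap \langle E \backslash H \rangle$; these have value $\infty$ in the corresponding $\theta$, by property (3) of Definition \ref{definition: Cu E}. On the C*-side, $\Delta_E(\theta_{\check{v}, \cdot})$ lies in $\Cu(I_{L(v)})$ and dominates $[p_v]$, so its image under $\Cu(\pi)$ is $\sum_{w \in L(v)} \infty \cdot [\pi(p_w)]$ restricted to $w \notin H$. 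This is the largest element of $\Cu$ of the ideal of $C^*(E \backslash H)$ generated by $L(v) \backslash H$, which is gauge-invariant with hereditary set $L(v) \backslash H$. To match this with $\Delta_{E \backslash H}$ of the above sum, I would check that for each terminal $\check{x}$ one has $\Delta_{E \backslash H}(\theta_{\check{x}, \infty \cdot \delta}) = \infty \cdot [p_{L(x)}]$, using $\infty \cdot [h_{x, \mathbb{T}}] = \infty \cdot [p_x]$. The sum over $x$ of these terms gives $\infty \cdot [p_{L(v) \backslash H}]$ by Lemma \ref{lemma: Hereditary Terminal}.

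\emph{Case $v \notin H$.} Here $\check{v}$ is itself terminal in $L_{S(E)}(\check{v}) \cap \langle E \backslash H \rangle$, and $\rho_H(\theta_{\check{v}, \cdot}) = \theta_{\check{v}, \cdot}$ in $\Cu_{E \backslash H}$. We have $\Cu(\pi)[p_v] = [p_v]$ and $\Cu(\pi)(\infty \cdot [p_w]) = \infty \cdot [p_w]$ or $0$ according as $w \notin H$ or $w \in H$, and the sums $\sum_{w \in L(v),\, v\not\geq w}$ in $E$ and in $E \backslash H$ differ exactly by the vertices of $H$. This settles $c(\check{v}) \neq 1$. For $c(\check{v}) = 1$ the key point, which I expect to be the main obstacle, is $\Cu(\pi)[h_{v, U}] = [h^{E \backslash H}_{v, U}]$. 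I would argue that $\pi$ maps $p_{L(v)}$ to $p_{L(v) \backslash H}$, maps $I_{L(v)}$ onto $I_{L(v) \backslash H}$, and maps $\mathcal{R}_{v, \mathbb{T} \backslash U}$ onto the corresponding ideal $\mathcal{R}_{v, \mathbb{T} \backslash U}$ of $C^*(E \backslash H)$. The last claim holds because $I_H \subseteq I_{E^0 \backslash R(v)}$ (as $R(v) \cap H = \emptyset$) and the generators $s_g - t p_v$ map to the generators. Surjective $*$-homomorphisms map hereditary corners $p(J)p$ onto $\pi(p)\pi(J)\pi(p)$ and send strictly positive elements to strictly positive elements, and intersections of ideals containing $\ker \pi$ commute with $\pi$. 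Together these give the claim, and since everything is built from these pieces, the square commutes.
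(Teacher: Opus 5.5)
Your proposal follows the paper's proof. You reduce, via Lemma \ref{lemma: Singleton Decomp} and the fact that all four maps are Cu-morphisms, to the generators $\theta_{\check{v},1}$, $\theta_{\check{v},\infty}$ and $\theta_{\check{v},\chi_U}$. You compute directly when $c(\check{v}) \neq 1$. For $c(\check{v}) = 1$ you show that $\pi$ maps the corner $p_{L(v)}(I_{L(v)} \cap \mathcal{R}_{v,\mathbb{T} \backslash U})p_{L(v)}$ onto its counterpart in $C^*(E \backslash H)$, so that $\pi(h_{v,U})$ is strictly positive there. Your treatment of the case $v \notin H$ is correct, with one small imprecision. $I_{L(v)}$ need not contain $\ker \pi = I_H$ (e.g.\ if $H$ lies in a different component of $E$). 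So your remark about intersections of ideals containing $\ker \pi$ gives $\pi(\mathcal{R}_{v,\mathbb{T}\backslash U}) = \bigcap_t \pi(\mathcal{R}_{v,t})$, but it does not literally give $\pi(I_{L(v)} \cap \mathcal{R}_{v,\mathbb{T}\backslash U}) = \pi(I_{L(v)}) \cap \pi(\mathcal{R}_{v,\mathbb{T}\backslash U})$. There are two easy fixes:
\begin{itemize}
\item use that $\pi(I \cap J) = \pi(I) \cap \pi(J)$ for arbitrary closed ideals, since $I \cap J = \overline{IJ}$ (this is what the paper takes from Blackadar); or
\item observe that $p_{L(v)}(I_{L(v)} \cap \mathcal{R})p_{L(v)} = p_{L(v)}\mathcal{R}p_{L(v)}$ because $p_{L(v)} \in I_{L(v)}$.
\end{itemize}

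Your Case $v \in H$, however, reads heredity in the wrong direction. In this paper, $H$ hereditary means $x \geq y \in H$ implies $x \in H$. So $v \in H$ forces $L(v) \subseteq H$, and no vertex of $E \backslash H$ can reach $v$. It follows that $\supp_0(\theta_{\check{v},\cdot}) = L_{S(E)}(\check{v}) \subseteq \langle H \rangle$, so $\rho_H(\theta_{\check{v},\cdot}) = 0$ by Lemma \ref{lemma: Cu E Minus H}. Likewise $\Delta_E(\theta_{\check{v},\cdot}) \in \Cu(I_{L(v)}) \subseteq \Cu(I_H)$, which $\Cu(\pi)$ sends to $0$. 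Both composites vanish, which is why the paper says there is nothing to show.

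Your assertion that $\rho_H(\theta_{\check{v},\cdot})$ ``need not vanish'' is therefore false. The matching argument you sketch there would not stand on its own: for instance, ``dominates $[p_v]$'' tells you nothing after applying $\Cu(\pi)$, since $\pi(p_v) = 0$. It only comes out right because every sum you write down is empty. Replace that case with the two-line vanishing argument above. Note that in Case $v \notin H$ you already use the correct direction when you assert $R(v) \cap H = \emptyset$.
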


\begin{proof}
Assume that $\check{v} \in \langle E \rangle$ and $U \subseteq \mathbb{T}$ is non-empty and open. We may assume without loss of generality that $\check{v} \not \in \langle H \rangle$, as otherwise, there is nothing to show. Once again from the definitions, it follows by direct computation that if $c(\check{v}) = 0$ then $$(\Cu(\pi) \circ \Delta_E)(\theta_{\check{v}, 1}) = (\Delta_{E \backslash H} \circ \rho_H)(\theta_{\check{v}, 1}) = [\pi(p_v)]$$ and if $c(\check{v}) = \infty$ then
\begin{IEEEeqnarray*}{rCl}
& & (\Cu(\pi) \circ \Delta_E)(\theta_{\check{v}, \infty}) = (\Delta_{E \backslash H} \circ \rho_H)(\theta_{\check{v}, \infty}) 
\\
& = & \infty \cdot [\pi(p_v)] + {\sum}_{w \in L_{E \backslash H}(v), \; v \not \geq w} \infty \cdot [\pi(p_w)]
\end{IEEEeqnarray*} 
where $L_{E \backslash H}(v) = \{ x \in (E \backslash H)^0 \; \vert \; x \geq v \}$. Now if $c(\check{v}) = 1$ and $g$ is the single loop based at $v$ then 
\begin{equation*}
\Delta_E(\theta_{\check{v}, \chi_U}) = [h_{v, U}] + {\sum}_{w \in L_E(v) \backslash \{ v \}} \infty \cdot [p_w]
\end{equation*}
where $L_E(v) = \{ x \in E^0 \; \vert \; x \geq v \}$ and $h_{v, U}$ is a strictly positive element of the hereditary subalgebra $$p_{L_E(v)} \big( I_{L_E(v)} \cap \mathcal{R}_{E, v, \mathbb{T} \backslash U}  \big) p_{L_E(v)} .$$ For clarity, we have modified the notation established in Definition \ref{definition: Primitive Non Gauge Invariant} slightly to include the graph, emphasising that the above subalgebra is a subalgebra of $C^*(E)$. Now by combining \cite[II.5.1.4 (ii)]{Bla06} with surjectivity of $\pi$, we have that
\begin{IEEEeqnarray*}{rCl}
& & \pi \big( p_{L_E(v)}  \big( I_{L_E(v)} \cap \mathcal{R}_{E, v, \mathbb{T} \backslash U} \big) p_{L_E(v)}  \big)
\\
& = & \pi(p_{L_E(v)}) \pi \big( I_{L_E(v)} \cap \mathcal{R}_{E, v, \mathbb{T} \backslash U} \big)  \pi(p_{L_E(v)})
\\
& = & \pi(p_{L_E(v)}) \big( \pi(I_{L_E(v)}) \cap \pi(\mathcal{R}_{E, v, \mathbb{T} \backslash U}) \big) \pi(p_{L_E(v)})
\\
& = & p_{L_{E \backslash H}(v)} \big( I_{L_{E \backslash H}(v)} \cap \mathcal{R}_{E \backslash H, v, \mathbb{T} \backslash U} \big) p_{L_{E \backslash H}(v)}. 
\end{IEEEeqnarray*} 
Consequently, $\pi(h_{v, U})$ is a strictly positive element of $$p_{L_{E \backslash H}(v)} \big( I_{L_{E \backslash H}(v)} \cap \mathcal{R}_{E \backslash H, v, \mathbb{T} \backslash U} \big) p_{L_{E \backslash H}(v)}$$ and by definition of $\Delta_{E \backslash H}$,
\begin{IEEEeqnarray*}{rCl}
(\Cu(\pi) \circ \Delta_E)(\theta_{\check{v}, \chi_U})  & = & \Cu(\pi)\Big( [h_{v, U}] + {\sum}_{w \in L_E(v) \backslash \{ v \}} \infty \cdot [p_w] \Big)
\\
& = & [\pi(h_{v, U})] + {\sum}_{w \in L_{E \backslash H}(v) \backslash \{ v \}} \infty \cdot [\pi(p_w)]
\\
& = & (\Delta_{E \backslash H} \circ \rho_H)(\theta_{\check{v}, \chi_U}). 
\end{IEEEeqnarray*} 
Again by Lemma \ref{lemma: Singleton Decomp} and Proposition \ref{proposition: Delta E Cu Morphism}, the square commutes. 
\end{proof}

We are now in a position to prove Theorem \ref{introtheorem: Cu E Iso} by induction over the gauge-invariant ideal lattice. Our goal is to prove that for a finite graph $E$ satisfying the standing assumptions, $\Delta_E: \Cu_E \rightarrow \Cu(C^*(E))$ is a Cu-isomorphism. We start by proving Theorem \ref{introtheorem: Cu E Iso} in the base case where $C^*(E)$ has gauge length one.  

\begin{lemma} \label{lemma: Base Case}
Let $E$ be a finite graph satisfying the standing assumptions. Suppose that $C^*(E)$ has gauge length one. Then the Cu-morphism $\Delta_E: \Cu_E \rightarrow \Cu(C^*(E))$ is an isomorphism.
\end{lemma}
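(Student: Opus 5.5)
By Lemma \ref{lemma: Gauge Simple}, gauge length one means $S(E)^0 = \{ \check{v} \}$ is a single vertex. The plan is to split into the three cases $c(\check{v}) \in \{0, 1, \infty\}$ and, in each case, identify both $\Cu_E$ and $\Cu(C^*(E))$ explicitly. Then check that $\Delta_E$ is the standard isomorphism.

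In every case $L(v) = \{v\}$, so the sum over $w \in L(v)$ with $v \not\geq w$ is empty. Moreover $\supp_0(f)_t$ is either empty or $\{\check{v}\}$. Hence $\Delta_E(0) = 0$, and for $f \neq 0$ we have $\Delta_E(f) = f(\check{v}) \cdot [p_v]$ when $c(\check{v}) \neq 1$, and $\Delta_E(f) = \sum_{i} [h_{v, \{f(\check{v}) \geq i\}}]$ when $c(\check{v}) = 1$.

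\textbf{Case $c(\check{v}) = 0$.} As in the proof of Lemma \ref{lemma: Gauge Simple}, $E$ is a single vertex with no edges, so $C^*(E) \cong \mathbb{C}$ and $\Cu(C^*(E)) \cong \overline{\mathbb{N}}$ with $[p_v] \mapsto 1$. Here $\Cu_E = \overline{\mathbb{N}}$ and $\Delta_E(n) = n[p_v]$, which is the identity under this identification.

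\textbf{Case $c(\check{v}) = \infty$.} $C^*(E)$ is a unital Kirchberg algebra, so $\Cu(C^*(E)) = \{0, \infty\}$ with $[p_v] = \infty$ by Theorem \ref{theorem: Purely Infinite Properly Infinite}. Also $\Cu_E = \{0, \infty\}$. Then $\Delta_E$ sends $\infty$ to $\infty \cdot [p_v] = [p_v] \neq 0$, which is a bijection preserving order and addition.

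\textbf{Case $c(\check{v}) = 1$.} $E$ is the single loop graph, so $C^*(E) \cong C(\mathbb{T})$ via $s_g \mapsto z$, and $\Cu(C^*(E)) \cong \Lsc(\mathbb{T}, \overline{\mathbb{N}})$ by the pointwise rank map \cite[Theorem 1.1]{Rob13b}. We have $\Cu_E = \Lsc(\mathbb{T}, \overline{\mathbb{N}})$ and $\Delta_E(f) = \sum_i [h_{v, \{ f \geq i \}}]$. The key step is that $[h_{v,U}] = \chi_U$. For $U = \mathbb{T}$, $h_{v,\mathbb{T}} = p_v = 1$ has rank $\chi_{\mathbb{T}}$. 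For $U$ proper, $\mathcal{R}_{v, \mathbb{T} \backslash U}$ corresponds to $C_0(U)$, since $\mathcal{R}_{v,t}$ is generated by $z - t$ (here $H = \emptyset$). A strictly positive element of $C_0(U)$ is nonvanishing exactly on $U$, so its rank is $\chi_U$. This is exactly the computation in the proof of Lemma \ref{lemma: Same Open Subset Upstairs} with $\pi$ the identity.

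Then $\Delta_E(f) = \sup_n \sum_{i \le n} \chi_{\{f \geq i\}} = f$, since $f = \sum_i \chi_{\{f \ge i\}}$ pointwise. So $\Delta_E$ is the inverse of the rank isomorphism, hence bijective and order-embedding. By Remark \ref{remark: POM Iso Between Cu Semigroups}, together with Proposition \ref{proposition: Delta E Cu Morphism}, $\Delta_E$ is a Cu-isomorphism.

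The main point needing care is this circle case: verifying that the paper's normalisation $h_{v,U}$ (a corner of $\mathcal{R}_{v,\mathbb{T}\backslash U}$, with $p_{L(v)} = 1$) matches $\chi_U$ exactly, rather than $\infty \cdot \chi_U$. Unitality of $C(\mathbb{T})$ makes the corner the whole ideal $C_0(U)$, which has a strictly positive function of rank one on $U$.
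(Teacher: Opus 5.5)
Your proposal is correct and follows the paper's proof. Both split into three cases on $c(\check{v})$ and identify $\Delta_E$ with the standard isomorphism onto $\overline{\mathbb{N}}$, $\{0,\infty\}$ and $\Lsc(\mathbb{T},\overline{\mathbb{N}})$; you additionally verify $[h_{v,U}] \mapsto \chi_U$ and $f = \sum_i \chi_{\{f \geq i\}}$, which the paper only asserts.

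One harmless slip: when $c(\check{v}) = \infty$ the strongly connected component can contain several vertices, so $L(v) = E^0$ need not equal $\{v\}$. The correction sum over $w \in L(v)$ with $v \not\geq w$ is still empty, because $v \geq w$ for every $w \in E^0$.
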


\begin{proof}
By Lemma \ref{lemma: Gauge Length}, the skeleton $S(E)$ has exactly one vertex, which we denote by $\check{v}$. There are now three cases to consider. If $c(\check{v}) = 0$ then $E$ consists of a single source vertex $v$. By definition of $\Delta_E$ (see Definition \ref{definition: Delta E}) and the fact that $\Cu(C^*(E)) \cong \overline{\mathbb{N}}$, it is clear that $\Delta_E$ is a Cu-isomorphism in this case. \newline
\phantom{} \newline
Next, suppose that $c(\check{v}) = \infty$. By the proof of Lemma \ref{lemma: Gauge Simple}, $C^*(E)$ is simple purely infinite and the map $\Delta_E$ is explicitly given by
\begin{equation*}
\begin{matrix}
\Delta_E: & \Cu_E & \rightarrow & \Cu(C^*(E)) \\
& \big(\check{v} \mapsto \infty \big) & \mapsto & \infty \cdot [p_v]. 
\end{matrix}
\end{equation*}
Since the only non-zero element of $\Cu_E$ is the function $\check{v} \mapsto \infty$ and $\Cu(C^*(E))$ is isomorphic to $\{ 0, \infty \}$, $\Delta_E$ is a Cu-isomorphism in this case. \newline
\phantom{} \newline
Finally, suppose that $c(\check{v}) = 1$. By standing assumption \ref{property: S3}, $E$ consists of a single vertex $v$ which supports a single loop and so $C^*(E) \cong C(\mathbb{T})$. If $U \subseteq \mathbb{T}$ is non-empty and open then $\Delta_E$ is explicitly the map
\begin{equation*}
\begin{matrix}
\Delta_E: & \Cu_E & \rightarrow & \Cu(C^*(E)) \\
& \theta_{\check{v}, \chi_U} & \mapsto & [h_{v, U}]
\end{matrix}
\end{equation*}
where $h_{v, U}$ is a strictly positive element of the ideal $\mathcal{R}_{v, \mathbb{T} \backslash U}$ (see Definition \ref{definition: Strictly Positive}). The isomorphism $\Cu(C^*(E)) \cong \Lsc(\mathbb{T}, \overline{\mathbb{N}})$ sends $[h_{v, U}]$ to $\chi_U$. Consequently, $\Delta_E$ is a Cu-isomorphism.
\end{proof}

Before we embark on the proof of Theorem \ref{introtheorem: Cu E Iso}, we first describe its overall architecture. Similarly to the proof of \cite[Theorem 7.1]{AMP07}, we induct over the gauge-invariant ideal lattice of $C^*(E)$. The arguments that $\Delta_E$ is an order-embedding and surjective divide into two cases, depending on whether there exists a proper hereditary subset $H$ of $E^0$ such that a certain element of $\Cu(C^*(E))$ lies in $\Cu(I_H)$. If there exists such a proper hereditary subset $H$, then the desired conclusions of both arguments follow from the inductive hypothesis and Lemma \ref{lemma: Ideal Square}. \newline
\phantom{} \newline
On the other hand, if there is no such proper hereditary subset $H$, then we apply Lemma \ref{lemma: Quotient Square} in the case where $\pi$ is the quotient map by a gauge-simple, gauge-invariant ideal of $C^*(E)$. This allows us to invoke the inductive hypothesis for the quotient graph C*-algebra. To conclude that $\Delta_E$ is an order-embedding in this case, it is enough to use the specific properties of $\Cu_E$. \newline
\phantom{} \newline
Proving that $\Delta_E$ is surjective in this situation is a more delicate affair: one splits into two further cases, depending on whether the gauge-simple, gauge-invariant ideal of $C^*(E)$ is purely infinite or has stable rank one. In the former case, one applies Lemma \ref{lemma: Polar Part Absorbed} to obtain the desirable absorption behaviour needed to prove surjectivity. In the latter case, one uses Theorem \ref{theorem: SR1 Corner Positive Cancellation} to obtain the key inequality needed for surjectivity. 

\begin{proof}[Proof of Theorem \ref{introtheorem: Cu E Iso}]
First observe that it suffices to prove Theorem \ref{introtheorem: Cu E Iso} in the special case where $E$ consists of a single connected component. Indeed, suppose that $E = E_1 \sqcup E_2 \sqcup \dots \sqcup E_m$ where $m > 1$ and for $j \in \{1, 2, \dots, m \}$, $E_j$ is a connected subgraph. It is straightforward to check that 
\begin{equation*}
\Cu_E \cong {\bigoplus}_{j = 1}^m \Cu_{E_j} \qquad \text{and} \qquad \Delta_E = \Delta_{E_1} \oplus \dots \oplus \Delta_{E_m}. 
\end{equation*}
So if Theorem \ref{introtheorem: Cu E Iso} is true for graphs consisting of a single connected component then $\Delta_{E_j}$ is a Cu-isomorphism from $\Cu_{E_j}$ to $\Cu(C^*(E_j))$ for $j \in \{1, 2, \dots, m \}$. Therefore, $\Delta_E$ is an isomorphism from $\Cu_E$ to $${\bigoplus}_{j = 1}^m \Cu(C^*(E_j)) \cong \Cu(C^*(E))$$ which gives Theorem \ref{introtheorem: Cu E Iso} in general. \newline
\phantom{} \newline
Without loss of generality, assume that $E$ consists of a single connected component. We will prove that $\Delta_E$ is a Cu-isomorphism. The base case where $C^*(E)$ has gauge length one is done by Lemma \ref{lemma: Base Case}. For the inductive hypothesis, assume that there exists $k > 0$ such that for a finite graph $F$ which satisfies the standing assumptions, has gauge length at most $k$ and consists of a single connected component, $\Delta_F$ is a Cu-isomorphism. Suppose that $C^*(E)$ has gauge length $k + 1$. Then there exists an ascending chain of gauge-invariant ideals
\begin{equation*}
\{ 0 \} = I_0 \subseteq I_1 \subseteq \dots \subseteq I_k \subseteq I_{k+1} = C^*(E)
\end{equation*}
such that if $j \in \{0, 1, \dots, k \}$ and $H_j$ is the hereditary subset of $E^0$ corresponding to $I_j$ then $C^*(H_{j+1} \backslash H_j)$ is gauge-simple. In particular, $C^*(H_1)$ is gauge-simple. Let $\pi: C^*(E) \rightarrow C^*(E \backslash H_1)$ be the quotient map. \newline
\phantom{} \newline
$\Delta_E$ is an \textbf{order-embedding:} \newline
\phantom{} \newline
Let $f, g \in \Cu_E$ be such that $\Delta_E(f) \leq \Delta_E(g)$. First, suppose that there exists a proper hereditary subset $H$ of $E^0$ such that $\Delta_E(g) \in \Cu(I_H)$. Then $\Delta_E(f) \in \Cu(I_H)$. By definition of $\Delta_E$ (see Definition \ref{definition: Delta E}), if $\check{v} \in \supp_0(f)_t$ and $c(\check{v}) \neq 1$ then $[p_v] \in \Cu(I_H)$ and $v \in H$. If $c(\check{v}) = 1$ and $U \subseteq \mathbb{T}$ is non-empty and open then by Definition \ref{definition: Strictly Positive}, $$h_{v, U} \in p_{L(v)} (I_{L(v)} \cap \mathcal{R}_{v, U}) p_{L(v)} \subseteq p_{L(v)} I_{L(v)} p_{L(v)} \subseteq I_H.$$ The last inclusion follows from the fact that $I_{L(v)}$ is the smallest gauge-invariant ideal of $C^*(E)$ containing $I_{L(v)} \cap \mathcal{R}_{v, U}$; see \eqref{equation: Sandwich}. Once again, $$p_v \in p_{L(v)} I_{L(v)} p_{L(v)}  \subseteq I_H$$ and $v \in H$. Consequently, $\supp_0(f) \subseteq \langle H \rangle$. Similarly, $\supp_0(g) \subseteq \langle H \rangle$. Since $H$ is a proper hereditary subset of $E^0$ then by Lemma \ref{lemma: Gauge Length}, $C^*(H)$ has gauge length $\lvert S(H)^0 \rvert < \lvert S(E)^0 \rvert = k+1$. So if $H$ consists of a single connected component then by the inductive hypothesis, $\Delta_H$ is a Cu-isomorphism from $\Cu_H$ to $\Cu(C^*(H)) \cong \Cu(I_H)$. On the other hand, if $H$ consists of multiple disjoint connected subgraphs then the inductive hypothesis tells that on each connected subgraph $G$ of $H$, $\Delta_G$ is a Cu-isomorphism from $\Cu_G$ to $\Cu(C^*(G))$. Then by taking direct sums, $\Delta_H$ is still a Cu-isomorphism from $\Cu_H$ to $\Cu(C^*(H))$. \newline
\phantom{} \newline
Using \eqref{equation: Ideal In Terms Of Support}, we may view $f$ and $g$ as elements of $\Cu_H$. So $(\Delta_E \circ i_H)(f) \leq (\Delta_E \circ i_H)(g)$ and by Lemma \ref{lemma: Ideal Square}, $(\Cu(\iota) \circ \Delta_H)(f) \leq (\Cu(\iota) \circ \Delta_H)(g)$. In particular, $\Delta_H(f) \leq \Delta_H(g)$ in $\Cu(C^*(E))$ and hence, in $\Cu(I_H) \cong \Cu(C^*(H))$ as well. Since $\Delta_H$ is a Cu-isomorphism then $f \leq g$ in $\Cu_H$ and by applying the map $i_H$, $f \leq g$ in $\Cu_E$. \newline
\phantom{} \newline
Now assume that there is no proper hereditary subset $H$ of $E^0$ such that $\Delta_E(g) \in \Cu(I_H)$. Then $(\Cu(\pi) \circ \Delta_E)(f) \leq (\Cu(\pi) \circ \Delta_E)(g)$ and by Lemma \ref{lemma: Quotient Square}, $(\Delta_{E \backslash H_1} \circ \rho_{H_1})(f) \leq (\Delta_{E \backslash H_1} \circ \rho_{H_1})(g)$. By the inductive hypothesis, $\Delta_{E \backslash H_1}$ is a Cu-isomorphism. So $\rho_{H_1}(f) \leq \rho_{H_1}(g)$ in $\Cu_{E \backslash H_1}$. \newline
\phantom{} \newline
Observe that by Remark \ref{remark: Delta E Ideal} and Lemma \ref{lemma: Gauge Length}, $\supp_0(g) = S(E)^0$ and $S(H_1)$ consists of a single vertex $\check{w}$ respectively. If $\check{w}$ is a terminal vertex for $\supp_0(g) = S(E)^0$ and $v \in E^0$ satisfies $w \geq v$ then $v \geq w$ and $v \in H_1$. Coupled with the fact that $H_1$ is hereditary, we see that $H_1$ is an isolated subgraph of $E$. Since $E$ consists of exactly one connected component then by the contrapositive, $\check{w}$ cannot be a terminal vertex for $\supp_0(g)$. So by Definition \ref{definition: Cu E}, $g(\check{w}) = \infty$. Together with the Cu-isomorphism $\Cu_{E \backslash H_1} \cong \Cu_E /\Cu_{H_1}$ from Proposition \ref{proposition: First Iso Thm Cu E} and the inequality $\rho_{H_1}(f) \leq \rho_{H_1}(g)$, there exists $h \in \Cu_E$ such that $\supp_0(h) \subseteq \langle H_1 \rangle = \{ \check{w} \}$ and $f \leq g + h = g$ in $\Cu_E$. Therefore, $\Delta_E$ is an order-embedding. \newline
\phantom{} \newline
$\Delta_E$ is \textbf{surjective:} \newline
\phantom{} \newline
Assume that $[x] \in \Cu(C^*(E))$. Firstly, if there exists a proper hereditary subset $H$ of $E^0$ such that $[x] \in \Cu(I_H) \cong \Cu(C^*(H))$ then by the inductive hypothesis, $\Delta_H$ is a Cu-isomorphism and there exists $f \in \Cu_H$ such that $\Delta_H(f) = [x]$. By Lemma \ref{lemma: Ideal Square}, $i_H(f) = f \in \Cu_E$ satisfies
\begin{equation*}
\Delta_E(f) = (\Delta_E \circ i_H)(f) = (\Cu(\iota) \circ \Delta_H)(f) = [\iota(x)] = [x]. 
\end{equation*}
On the other hand, suppose that there is no such proper hereditary subset $H$. If there exists a proper hereditary subset $H'$ of $(E \backslash H_1)^0 = E^0 \backslash H_1$ such that $\Cu(\pi)([x]) = [\pi(x)] \in \Cu(I_{H'})$ then $[\pi(x)] \leq \infty \cdot {\sum}_{v \in H'} [\pi(p_v)]$ and by Theorem \ref{theorem: CRS}, $$[x] \leq \infty \cdot \Big[{\sum}_{v \in H' \sqcup H_1} p_v \Big]$$ in $\Cu(C^*(E))$. Thus, $H' \sqcup H_1$ is a proper hereditary subset of $E^0$ such that $[x] \in \Cu(I_{H' \sqcup H_1})$. \newline
\phantom{} \newline
By the contrapositive, there is no proper hereditary subset $H'$ of $(E \backslash H_1)^0$ such that $[\pi(x)] \in \Cu(I_{H'})$. Since $\Delta_{E \backslash H_1}$ is a Cu-isomorphism by the inductive hypothesis, there exists $g \in \Cu_{E \backslash H_1}$ such that $\Delta_{E \backslash H_1}(g) = [\pi(x)]$. So by Remark \ref{remark: Delta E Ideal} and Lemma \ref{lemma: Cu E Minus H}, $\supp_0(g) = S(E \backslash H_1)^0$ and $$g' = {\sum}_{\check{v} \in \supp_0(g)_t} \theta_{\check{v}, g(\check{v})} + \infty \cdot \theta_{\check{w}, \delta_{\check{w}}}$$ is an element of $\Cu_E$ such that $\supp_0(g') = S(E)^0$ and $\rho_{H_1}(g') = g$ (recall the definition of $\delta_{\check{w}}$ from \eqref{equation: Delta V}). By Lemma \ref{lemma: Quotient Square}, 
\begin{equation} \label{equation: Need To Lift}
(\Cu(\pi) \circ \Delta_E)(g') = (\Delta_{E \backslash H_1} \circ \rho_{H_1})(g') = \Delta_{E \backslash H_1}(g) = \Cu(\pi)([x]). 
\end{equation}
We claim that $\Delta_E(g') = [x]$. Since $\Cu(\pi)([x]) \leq \Cu(\pi)(\Delta_E(g'))$ then by Theorem \ref{theorem: CRS}, 
\begin{IEEEeqnarray*}{rCl}
[x] & \leq & \Delta_E(g') + \infty \cdot [p_w]
\\
& = & \Delta_E\Big({\sum}_{\check{v} \in \supp_0(g)_t} \theta_{\check{v}, g(\check{v})} \Big) + \Delta_E(\infty \cdot \theta_{\check{w}, \delta_{\check{w}}}) + \infty \cdot [p_w]
\\
& = & \Delta_E\Big({\sum}_{\check{v} \in \supp_0(g)_t} \theta_{\check{v}, g(\check{v})} \Big) + \Delta_E(\infty \cdot \theta_{\check{w}, \delta_{\check{w}}}) + \Delta_E(\infty \cdot \theta_{\check{w}, \delta_{\check{w}}})
\\
& = & \Delta_E\Big({\sum}_{\check{v} \in \supp_0(g)_t} \theta_{\check{v}, g(\check{v})} \Big) + \Delta_E(\infty \cdot \theta_{\check{w}, \delta_{\check{w}}}) = \Delta_E(g').
\end{IEEEeqnarray*} 
To show that the reverse inequality holds, we will take advantage of the fact that $I_1$ is gauge-simple. There are two possible cases to consider. First, assume that $I_1$ is simple purely infinite. Then $C^*(H_1)$ is simple purely infinite and subsequently, $[p_w]$ is a properly infinite element of $\Cu(C^*(E))$ by Theorem \ref{theorem: Purely Infinite Properly Infinite}. \newline
\phantom{} \newline
We will now show that $[x] + [p_w] = [x]$. For simplicity, first assume that there exists $n > 0$ such that $x \in (C^*(E) \otimes M_n(\mathbb{C}))_+$. Since the lone vertex $\check{w}$ of $S(H_1)$ is not a terminal vertex for $S(E)^0$ then by Lemma \ref{lemma: Hereditary Terminal}, there exists $\check{v} \in S(E)^0$ such that $\check{v}$ is terminal and $\check{w} \geq \check{v}$. Let $1_n$ denote the unit of $M_n(\mathbb{C})$. Then 
\begin{equation} \label{equation: Not In Ideal Decomp}
\begin{split}
x & = (p_{R(v)} \otimes 1_n) x (p_{R(v)} \otimes 1_n) + (p_{E^0 \backslash R(v)} \otimes 1_n) x (p_{R(v)} \otimes 1_n)   \\
& + (p_{R(v)} \otimes 1_n)  x (p_{E^0 \backslash R(v)} \otimes 1_n) + (p_{E^0 \backslash R(v)} \otimes 1_n) x (p_{E^0 \backslash R(v)} \otimes 1_n).
\end{split}
\end{equation}
Since $R(v) = \{ a \in E^0 \; \vert \; v \geq a \}$ and $\check{v}$ is terminal for $S(E)^0$ then $R(v) \subseteq L(v)$ and subsequently, $$(p_{R(v)} \otimes 1_n) x (p_{R(v)} \otimes 1_n) \in (I_{L(v)} \otimes M_n(\mathbb{C}))_+.$$  Now $v$ cannot be a source vertex because $w \geq v$ and $w \neq v$. So by standing assumptions \ref{property: S1} and \ref{property: S2}, $v$ supports at least one cycle. By Theorem \ref{introtheorem: General Cuntz Krieger}, the set $G = \{ x \in L(v) \; \vert \; v \not \geq x \}$ is a hereditary subset of $L(v)$ and
\begin{equation} \label{equation: Cuntz Stack Computation}
\Cu(I_{L(v)}) \cong \Cu(I_G) \stack \Cu(C^*(L(v) \backslash G))
\end{equation}
where $\Cu(C^*(L(v) \backslash G))$ is either $\{0, \infty \}$ or $\Lsc(\mathbb{T}, \overline{\mathbb{N}})$. Since $G \subseteq E^0 \backslash R(v)$ then $$(p_{R(v)} \otimes 1_n) x (p_{R(v)} \otimes 1_n) \not \in I_{G} \otimes M_n(\mathbb{C})$$ as otherwise by \eqref{equation: Not In Ideal Decomp}, $x \in I_{E^0 \backslash R(v)} \otimes M_n(\mathbb{C})$, which would contradict our assumption that there is no proper hereditary subset $H$ of $E^0$ such that $[x] \in \Cu(I_H)$. So $$[(p_{R(v)} \otimes 1_n) x (p_{R(v)} \otimes 1_n)] \in \Cu(I_{L(v)}) \backslash \Cu(I_G).$$ Now let $e_{11} \in M_n(\mathbb{C})$ be the matrix with a 1 in the top left corner and zeros elsewhere. By \eqref{equation: Cuntz Stack Computation} and the fact that $[p_w] \in \Cu(I_G)$,
\begin{equation*}
[p_w] = [p_w \otimes e_{11}] \leq [(p_{R(v)} \otimes 1_n) x (p_{R(v)} \otimes 1_n)]
\end{equation*}
in the ideal $\Cu(I_{L(v)})$ and hence, in $\Cu(C^*(E))$ as well. Since $[p_w]$ is properly infinite and $[p_w] \leq [x]$ then by Lemma \ref{lemma: Polar Part Absorbed} with $A = C^*(E) \otimes K$, $$x \oplus (p_w \otimes e_{11}) \precsim x.$$ 
So we have proved that if $n > 0$ and $x \in (C^*(E) \otimes M_n(\mathbb{C}))_+$ then $[x] + [p_w] = [x]$ in $\Cu(C^*(E))$. This is extended to the case where $x \in (C^*(E) \otimes K)_+$ by first choosing $\epsilon > 0$ small enough so that there is no proper hereditary subset $H$ of $E^0$ such that $[(x - \epsilon)_+] \in \Cu(I_H)$ and then using Rørdam's lemma (see \cite[Proposition 2.4]{Ror92}) to pick $n > 0$ and $a \in (C^*(E) \otimes M_n(\mathbb{C}))_+$ such that $[(x - \epsilon)_+] = [a]$. \newline
\phantom{} \newline
Now to see that $\Delta_E(g') \leq [x]$ in $\Cu(C^*(E))$, recall from \eqref{equation: Need To Lift} that $$\Cu(\pi)(\Delta_E(g')) \leq \Cu(\pi)([x])$$ in $\Cu(C^*(E \backslash H_1))$. By Theorem \ref{theorem: CRS} and the fact that $[x]$ absorbs $[p_w]$, 
\begin{equation*}
\Delta_E(g') \leq [x] + \infty \cdot [p_w] = [x]. 
\end{equation*}
Therefore, we have $\Delta_E(g') = [x]$ in this case. \newline
\phantom{} \newline
For the second case, assume that $I_1$ has stable rank one. By the proof of Lemma \ref{lemma: Axiom O2}, there exists a rapidly increasing sequence $\{ h_n \}_{n > 0}$ in $\Cu_E$ such that for $n > 0$, $\supp_0(h_n) = \supp_0(g') = S(E)^0$ and $\sup_{n > 0} h_n = g'$. Now if $k > 0$ then by Theorem \ref{theorem: CRS} and the fact that $\Delta_E$ is a Cu-morphism (see Proposition \ref{proposition: Delta E Cu Morphism}),
\begin{equation*}
\Delta_E(h_k) \ll \Delta_E(g') \leq [x] + \infty \cdot [p_w]. 
\end{equation*}
So there exists $\ell > 0$ such that $\Delta_E(h_k) \leq [x] + \ell \cdot [p_w]$. Now since $\check{w}$ is not a terminal vertex for $\supp_0(h_k) = S(E)^0$ then $h_k(\check{w}) = \infty$ and 
\begin{equation*}
\Delta_E(h_k) + \ell \cdot [p_w] = \Delta_E(h_k + \ell \cdot \theta_{\check{w}, \delta_{\check{w}}}) = \Delta_E(h_k) \leq  [x] + \ell \cdot [p_w]. 
\end{equation*}
By the proof of Lemma \ref{lemma: Base Case}, $H_1$ is either a single vertex or a single vertex supporting exactly one loop. Therefore by \eqref{equation: Full Corner Iso}, the corner $$p_w C^*(E) p_w = p_w I_1 p_w \cong C^*(H_1)$$ has stable rank one and by Theorem \ref{theorem: SR1 Corner Positive Cancellation}, $\Delta_E(h_k) \leq [x]$. Since $\Delta_E$ preserves suprema of increasing sequences, $$\Delta_E(g') = {\sup}_{n > 0} \Delta_E(h_n) \leq [x].$$ Therefore, $\Delta_E(g') \leq [x]$ and $[x] = \Delta_E(g')$ in general. Hence, $\Delta_E$ is surjective, which completes the proof of Theorem \ref{introtheorem: Cu E Iso}. 
\end{proof}

\section{Consequences}
\label{sec: Consequences}

The final section of the paper consists of an assortment of consequences of Theorem \ref{introtheorem: Cu E Iso}. In \cref{subsec: Covering Dimension}, we prove Corollary \ref{introcorollary: Cu E One Comparison} and Theorem \ref{introtheorem: Covering Dimension}. With Theorem \ref{introtheorem: Covering Dimension}, we show that at the level of the Cuntz semigroup, there is no obstruction to the conjecture that the nuclear dimension of any graph C*-algebra is at most one. Next, in \cref{subsec: Relatively Purely Large}, we provide a different picture of the Cuntz semigroup of a graph C*-algebra coming from a finite graph, which arises directly from the extensions used to build such a graph C*-algebra. This is given in Corollary \ref{corollary: Extension Picture}. A significant consequence of Corollary \ref{corollary: Extension Picture} is Corollary \ref{corollary: Extension Dichotomy}, which says that any extension used to build a graph C*-algebra coming from a finite graph is either a direct sum or satisfies a weaker version of pure largeness first defined in \cite[Definition 3.1]{HW26}. We end the paper with \cref{subsec: Inf}, where it is proved that the Cuntz semigroup of a graph C*-algebra associated to a finite graph is inf-semilattice ordered. Subsequently, such Cuntz semigroups can, in certain cases, be computed as a suitable pullback of Cu-semigroups; see Corollary \ref{corollary: Pullback}. 

\subsection{Almost unperforation and covering dimension}
\label{subsec: Covering Dimension}

We begin by recalling the definition of an almost unperforated Cu-semigroup. 

\begin{definition}[{\cite[Page 260]{Ror92}}]
Let $M$ be a positively ordered monoid and $x, y \in M$. Define $x <_s y$ if and only if there exists $k > 0$ such that $(k+1) x \leq k y$. $M$ is said to be \textbf{almost unperforated} if for $x, y \in M$ satisfying $x <_s y$, one has $x \leq y$. 
\end{definition}

For simple C*-algebras, almost unperforation of the Cuntz semigroup is equivalent to \emph{strict comparison} (see \cite[Proposition 3.2]{Ror04b}), a property for C*-algebras which allows one to recover Cuntz subequivalence of certain pairs of positive elements by testing them on dimension functions. Recently for simple, separable, nuclear and non-elementary C*-algebras, almost unperforation of the Cuntz semigroup was demonstrated to be equivalent to $\mathcal{Z}$-stability, finite nuclear dimension and being pure --- important regularity conditions which together with the UCT, when added to simple, separable, nuclear and non-elementary C*-algebras, permit their classification via the Elliott invariant; see \cite[Corollary E]{TWW17}, \cite{Thi26} and \cite{EHSSTW26}. \newline
\phantom{} \newline
It is known that the Cuntz semigroups of the three fundamental building blocks of graph C*-algebras --- AF-algebras, Kirchberg algebras and those stably isomorphic to $C(\mathbb{T})$ --- are all almost unperforated. Below, we prove Corollary \ref{introcorollary: Cu E One Comparison} which says that the Cuntz semigroups of all graph C*-algebras associated to finite graphs are also almost unperforated. 

\begin{proof}[Proof of Corollary \ref{introcorollary: Cu E One Comparison}]
Assume that $E$ is a finite graph satisfying the standing assumptions. Let $f, g \in \Cu_E$ be such that $f <_s g$. There exist $k > 0$ such that $(k + 1) \cdot f \leq k \cdot g$. Hence, $\supp_0(f) \subseteq \supp_0(g)$. Now let $\check{w} \in \supp_0(g)$ be terminal and satisfy $c(\check{w}) \neq \infty$. By Lemma \ref{lemma: Order Alternative}, $(k + 1) \cdot f(\check{w}) \leq k \cdot g(\check{w})$. Since $\overline{\mathbb{N}}$ and $\Lsc(\mathbb{T}, \overline{\mathbb{N}})$ are both almost unperforated, $f(\check{w}) \leq g(\check{w})$. By Lemma \ref{lemma: Order Alternative} once more, $f \leq g$ and so $\Cu_E$ is almost unperforated. The second statement follows from Proposition \ref{proposition: Collapse Graph}. 
\end{proof}

\begin{remark} \label{remark: Graph CFP 2}
In \cite[Definition 2.11]{OPR12}, a property called $\omega$-comparison was defined for positively ordered monoids which contain suprema of all increasing sequences. Notably, the definition of the $\omega$-comparison property applies to Cu-semigroups, and the $n$-comparison property for some $n \geq 0$, defined in \cite[Definition 2.8]{OPR12}, implies the $\omega$-comparison property. By definition, almost unperforation is equivalent to having the $0$-comparison property. So if $E$ is an arbitrary finite graph, then by Corollary \ref{introcorollary: Cu E One Comparison}, $\Cu(C^*(E))$ has the $\omega$-comparison property. By \cite[Proposition 2.17]{OPR12}, $\Cu(C^*(E))$ has the corona factorisation property for semigroups and consequently, $C^*(E)$ has the corona factorisation property by \cite[Theorem 5.11]{OPR12}. This is consistent with Proposition \ref{proposition: Graph CFP}. \newline
\phantom{} \newline
Furthermore, by using the characterisation of an inductive limit of Cu-semigroups from \cite[Lemma 3.8]{TV22}, it is straightforward and slightly tedious to check that almost unperforation is preserved by taking inductive limits of Cu-semigroups. By \cite[Lemma 3.3]{AMP07} and \cite[Theorem 2]{CEI08}, the Cuntz semigroup of any graph C*-algebra from a row-finite graph is an inductive limit of Cuntz semigroups of graph C*-algebras coming from finite graphs. Hence, such a graph C*-algebra must also have the corona factorisation property. This is extended to any graph C*-algebra by Drinen-Tomforde desingularisation (see \cite[Theorem 2.11]{DT05}). The fact that any graph C*-algebras has the corona factorisation property is known to experts; see the remark preceding \cite[Proposition 6.1]{ERR13}.
\end{remark}

The second property of Cu-semigroups we address in this section bears close ties to nuclear dimension. It is the notion of covering dimension for Cu-semigroups first defined in \cite[Definition 3.1]{TV22}. 

\begin{definition}[{\cite[Definition 3.1]{TV22}}] \label{definition: Cu Covering Dimension}
Let $S$ be a Cu-semigroup and $n \geq 0$. We say that $\dim S \leq n$ if for $r > 0$ and $x', x, y_1, \dots, y_r \in S$ satisfying $$x' \ll x \ll y_1 + \dots + y_r,$$ there exist $z_{j, k} \in S$ for $j \in \{1, 2, \dots, r \}$ and $k \in \{0, 1, \dots, n \}$ such that the following statements hold:
\begin{enumerate}
\item If $j \in \{1, 2, \dots, r \}$ and $k \in \{0, 1, \dots, n \}$ then $z_{j, k} \ll y_j$.
\item $x' \ll \sum_{j = 1}^r \sum_{k = 0}^n z_{j, k}$.
\item If $k \in \{0, 1, \dots, n \}$ then $\sum_{j = 1}^r z_{j, k} \ll x$. 
\end{enumerate}
If there is no such $n \geq 0$ such that $\dim S \leq n$ then we set $\dim S = \infty$. The \textbf{covering dimension} of $S$, denoted by $\dim S$, is the smallest $n \geq 0$ such that $\dim S \leq n$. 
\end{definition}

In \cite[Theorem 4.1]{TV22}, it was proved that if $A$ is a C*-algebra then $\dim \Cu(A)$ is bounded above by the nuclear dimension of $A$. Hence, the covering dimension of $\Cu(A)$ provides lower bounds for the nuclear dimension of $A$, which could potentially aid in determining the nuclear dimension of certain extensions of C*-algebras. By \cite[Proposition 2.9]{WZ10}, we have readily available upper bounds for the nuclear dimension of an extension, but lower bounds are far more scarce. \newline
\phantom{} \newline
Next, we will compute the covering dimension of $\Cu_E$ for a finite graph $E$ satisfying the standing assumptions and as a consequence, obtain Theorem \ref{introtheorem: Covering Dimension}. We expect the covering dimension to be at most one because graph C*-algebras are ``low-dimensional" C*-algebras; one piece of evidence supporting this claim is that the nuclear dimensions of the three fundamental building blocks of graph C*-algebras do not exceed one. To do this, we will use the following third characterisation of the way-below relation, which follows from a straightforward application of Lemma \ref{lemma: Way Below}. 

\begin{lemma} \label{lemma: Way Below 2}
Let $(S, c)$ be a skeleton and $f, g \in \Cu_S$. Then $f \ll g$ if and only if the following statements are satisfied:
\begin{enumerate}
\item $\supp_0(f) \subseteq \supp_0(g)$. 
\item Suppose that $w\in \supp_0(f)$ is a terminal vertex for $\supp_0(g)$ and $c(w) \neq \infty$. Then $f(w) \ll g(w)$. 
\end{enumerate}
\end{lemma}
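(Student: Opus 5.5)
The plan is to derive Lemma \ref{lemma: Way Below 2} directly from Lemma \ref{lemma: Way Below}. The two characterisations share condition (1) and differ only in which vertices condition (2) is tested at: Lemma \ref{lemma: Way Below} tests at the terminal vertices $w$ of $\supp_0(g)$ with $c(w) \neq \infty$, whereas the present lemma tests only at those terminal vertices that also lie in $\supp_0(f)$. So, assuming $\supp_0(f) \subseteq \supp_0(g)$ throughout, it suffices to show that the two versions of (2) are equivalent.

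For the ``only if'' direction, suppose $f \ll g$. Then Lemma \ref{lemma: Way Below} gives $\supp_0(f) \subseteq \supp_0(g)$. It also gives $f(w) \ll g(w)$ for every terminal $w \in \supp_0(g)$ with $c(w) \neq \infty$, and in particular for those $w$ lying in $\supp_0(f)$.

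For the ``if'' direction, I would let $w \in \supp_0(g)$ be terminal with $c(w) \neq \infty$. If $w \in \supp_0(f)$, then $f(w) \ll g(w)$ holds by hypothesis. If $w \notin \supp_0(f)$, then $f(w) = 0$, meaning the zero of $\overline{\mathbb{N}}$ or the zero function in $\Lsc(\mathbb{T}, \overline{\mathbb{N}})$. Since $0 \ll t$ holds for every element $t$ of any Cu-semigroup, we get $f(w) = 0 \ll g(w)$. In either case condition (2) of Lemma \ref{lemma: Way Below} is satisfied, so $f \ll g$.

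There is essentially no obstacle here. The only point to check is that $0$ is compact in $\overline{\mathbb{N}}$ and in $\Lsc(\mathbb{T}, \overline{\mathbb{N}})$, and this is immediate because $0$ is the least element, so it lies below every term of any increasing sequence.
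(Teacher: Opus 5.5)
Your proposal is correct and takes the same route as the paper, which says only that this characterisation ``follows from a straightforward application of Lemma \ref{lemma: Way Below}''. The detail you supply is the one that makes that application work: at a terminal vertex $w$ of $\supp_0(g)$ lying outside $\supp_0(f)$, we have $f(w) = 0 \ll g(w)$.
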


Recall that a graph $E$ satisfies \textbf{Condition (K)} if every vertex $v \in E^0$ either does not support a cycle or supports more than one cycle. This property was first defined in \cite[Section 6]{KPRR97}.

\begin{proposition} \label{proposition: Cu E Covering Dimension}
Let $E$ be a finite graph satisfying the standing assumptions. Then
\begin{equation*}
\dim \Cu_E = 
\begin{cases}
0, \text{ if $E$ satisfies Condition (K),} \\
1, \text{ otherwise.}
\end{cases}
\end{equation*}
\end{proposition}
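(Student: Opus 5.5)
Under the standing assumptions, Condition (K) fails exactly when some vertex supports a single loop, i.e. when some $\check v\in S(E)^0$ has $c(\check v)=1$. So the plan has two halves: show $\dim\Cu_E\leq 0$ when no vertex of $S(E)$ has colour $1$, show $\dim \Cu_E \leq 1$ in general, and show $\dim\Cu_E\geq 1$ whenever some colour-$1$ vertex exists.

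\textbf{Upper bounds.} Take $x'\ll x\ll y_1+\dots+y_r$ in $\Cu_E$. Put $y=\sum_j y_j$; by Lemma \ref{lemma: Way Below 2}, $\supp_0(x)\subseteq\supp_0(y)=\bigcup_j\supp_0(y_j)$. I build the $z_{j,k}$ vertex by vertex, using Lemma \ref{lemma: Singleton Decomp} to decompose $x'$ (or a slightly larger element $x''$ with $x'\ll x''\ll x$, obtained via Lemma \ref{lemma: Axiom O2}) into $\theta$'s at the terminal vertices $\check v$ of $\supp_0(x'')$.

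For each such $\check v$, first choose $j$ with $\check v\in\supp_0(y_j)$. If $\check v$ is not terminal for $\supp_0(y)$, or if $c(\check v)=\infty$, then Lemma \ref{lemma: Way Below 2} imposes no quantitative condition at $\check v$. In that case take $z_{j,0}\ni\theta_{\check v,\delta_{\check v}}$, or $\infty\cdot\theta_{\check v,\delta_{\check v}}$ where needed, which is way-below $y_j$ by Lemma \ref{lemma: Way Below 2}. If instead $\check v$ is terminal and $c(\check v)\in\{0,1\}$, then it is terminal for each $\supp_0(y_j)$ containing it, and $x''(\check v)\ll x(\check v)\ll\sum_j y_j(\check v)$ holds in $\overline{\mathbb N}$ or $\Lsc(\mathbb T,\overline{\mathbb N})$. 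Here I invoke the known dimensions $\dim\overline{\mathbb N}=0$ and $\dim\Lsc(\mathbb T,\overline{\mathbb N})=\dim\Cu(C(\mathbb T))\le 1$ (from \cite{TV22}) to obtain local pieces $z^{\check v}_{j,k}$ with $k\le 0$ or $k\le1$ respectively. I then set $z_{j,k}=\sum_{\check v}\theta_{\check v, z^{\check v}_{j,k}}$.

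The checks are then as follows. Condition (1) holds by Lemma \ref{lemma: Way Below 2}. Condition (2) follows because $\supp_0$ of the total sum contains $\supp_0(x')$ and the terminal values dominate way-below. Condition (3), $\sum_j z_{j,k}\ll x$, needs $\supp_0\subseteq\supp_0(x)$, which holds because we only used vertices of $\supp_0(x'')$ and their upward closures. It also needs way-below at the terminal vertices of $\supp_0(x)$, which comes from the local colouring.

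\textbf{Main obstacle.} The delicate point is condition (3) at terminal vertices of $\supp_0(x)$ where $x(\check v)$ is finite. Assigning the ``infinite'' $\theta$'s for non-terminal vertices must not create new support at a terminal vertex of $\supp_0(x)$ with the wrong value. Since $\theta_{\check v,\cdot}$ is infinite only strictly above $\check v$, the plan is to assign such vertices only when they lie strictly above some vertex of $\supp_0(x'')$; there $x$ is $\infty$ by Definition \ref{definition: Cu E}(3), so these assignments cost nothing. A second subtlety is that distinct $z_{j,k}$ for the same $k$ must sum way-below $x$. This needs the $\ll$-additivity of Lemma \ref{lemma: Axiom O3} together with the choice of $x''$ strictly between $x'$ and $x$.

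\textbf{Lower bound.} If $c(\check v)=1$ for some $\check v$, I would localise to exhibit failure of dimension $0$. Consider the ideal/quotient structure from Lemma \ref{lemma: Cu E Minus H} and Proposition \ref{proposition: First Iso Thm Cu E}: take $H=\{x\in E^0: v\not\ge x\}$ inside $L(v)$. Then $\Lsc(\mathbb T,\overline{\mathbb N})$ appears as a quotient of the ideal $\Cu_{L(v)}$, and covering dimension does not increase under passage to ideals and quotients (\cite{TV22}). Since $\dim\Lsc(\mathbb T,\overline{\mathbb N})=1$, this gives $\dim\Cu_E\ge1$. Alternatively, and more concretely, I would take $x'\ll x=\theta_{\check v,\chi_{\mathbb T}}\ll\theta_{\check v,\chi_{U_1}}+\theta_{\check v,\chi_{U_2}}$ for a two-arc cover $U_1,U_2$ of $\mathbb T$ and check directly that no dimension-$0$ refinement exists, using the connectedness of $\mathbb T$.
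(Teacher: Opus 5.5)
\textbf{Gap in the upper bound.} Your overall architecture is the paper's: localise at terminal vertices, use the dimension bounds for $\overline{\mathbb{N}}$ and $\Lsc(\mathbb{T},\overline{\mathbb{N}})$ there, and reassemble with the $\theta$'s. The argument breaks in the case you treat as free.

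Take $\check v\in\supp_0(x'')_t$ with $c(\check v)\in\{0,1\}$ that is not terminal for $\supp_0(y)$. Lemma \ref{lemma: Way Below 2} imposes no condition at $\check v$ for $x\ll y$. That says nothing about the conditions at $\check v$ for $\sum_j z_{j,k}\ll x$ and $z_{j,k}\ll y_j$. Indeed $\check v$ may be terminal for $\supp_0(x)$ with $x(\check v)$ small, and then neither $\delta_{\check v}$ nor $\infty$ is way-below it.

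Concretely, let $E$ be the graph $v\to w$ with one loop at each vertex, let $U\subsetneq\mathbb{T}$ be a non-empty open arc, and set $x=\theta_{\check v,\chi_U}$, $r=1$, $y_1=\theta_{\check w,\chi_{\mathbb{T}}}$.
\begin{itemize}
\item The only terminal vertex of $\supp_0(y_1)=\{\check v,\check w\}$ is $\check w\notin\supp_0(x)$, so $x\ll y_1$ by Lemma \ref{lemma: Way Below 2}.
\item $\check v$ is not terminal for $\supp_0(y_1)$, so your recipe gives $z_{1,0}=\theta_{\check v,\chi_{\mathbb{T}}}$ or $\theta_{\check v,\infty}$.
\item Neither is way-below $x$, since $\chi_{\mathbb{T}}\not\leq\chi_U$.
\end{itemize}

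Your ``main obstacle'' remedy does not reach this case. It only covers vertices lying strictly above some vertex of $\supp_0(x'')$. The vertices you are assigning are terminal in $\supp_0(x'')$, so they never lie strictly above another vertex of it.

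The choice of $j$ has a related problem. For an arbitrary $j$ with $\check v\in\supp_0(y_j)$, the vertex $\check v$ may be terminal for $\supp_0(y_j)$ with $y_j(\check v)$ finite. For example, take $c(\check v)=0$ and $y_j(\check v)=1$; then $\infty\cdot\theta_{\check v,\delta_{\check v}}\not\ll y_j$.

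\textbf{Repair, following the paper.}
\begin{enumerate}
\item Take $x''$ from the sequence in the proof of Lemma \ref{lemma: Axiom O2}, so that $x'\leq x''\ll x$ and $\supp_0(x'')=\supp_0(x)$. By Lemma \ref{lemma: Way Below}, $x''(\check v)\ll x(\check v)$ at every $\check v\in\supp_0(x)_t$ with $c(\check v)\neq\infty$.
\item Suppose such a $\check v$ is not terminal for $\supp_0(y)$. Pick $\check u\in\supp_0(y)$ with $\check v\geq\check u\neq\check v$, and pick $\ell$ with $\check u\in\supp_0(y_\ell)$.
\item Definition \ref{definition: Cu E}(3) then forces $y_\ell(\check v)=\infty$. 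Hence $\check v$ is not terminal for $\supp_0(y_\ell)$, and every $\theta_{\check v,s}$ is way-below $y_\ell$.
\item Put $\theta_{\check v,s}$ into $z_{\ell,0}$, where $s$ interpolates $x''(\check v)\ll s\ll x(\check v)$ in $\overline{\mathbb{N}}$ or $\Lsc(\mathbb{T},\overline{\mathbb{N}})$. Put nothing else at $\check v$.
\end{enumerate}
With this change, your verification of the three conditions goes through.

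\textbf{Comparison with the paper.} This case only uses $k=0$, so the corrected argument also gives a direct combinatorial proof that $\dim\Cu_E=0$ under Condition (K). The paper instead gets that case from real rank zero of $C^*(E)$ (\cite[Theorem 4.6]{JPS01}), \cite[Theorem 5.7]{TV22} and Theorem \ref{introtheorem: Cu E Iso}. Your lower bound, via the ideal $\Cu_{L(v)}$ and its quotient $\Lsc(\mathbb{T},\overline{\mathbb{N}})$, is the abstract form of the paper's subquotient argument and is fine.
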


\begin{proof}
First suppose that $E$ satisfies Condition (K). By \cite[Theorem 4.6]{JPS01}, $C^*(E)$ has real rank zero and subsequently by \cite[Theorem 5.7]{TV22} and Theorem \ref{introtheorem: Cu E Iso}, $\dim \Cu_E = 0$. Now suppose that $E$ does not satisfy Condition (K) so that there exist vertices in $E$ which support a single loop. To see that $\dim \Cu_E \leq 1$, assume that $r > 0$ and $f', f, g_1, \dots, g_r \in \Cu_E$ satisfy $$f' \ll f \ll g_1 + \dots + g_r.$$ By the proof of Lemma \ref{lemma: Axiom O2}, there exists a rapidly increasing sequence $\{ h_n \}_{n > 0}$ in $\Cu_E$ such that for $n > 0$, $\supp_0(h_n) = \supp_0(f)$ and $\sup_{n > 0} h_n = f$. Since $f' \ll f$, there exists $h \in \Cu_E$ such that $f' \leq h \ll f$ and $\supp_0(h) = \supp_0(f)$. \newline
\phantom{} \newline
We claim that if $\check{v} \in \supp_0(f)_t$ then there exist elements $z_{j, k, v}$ for $j \in \{1, 2, \dots, r \}$ and $k \in \{0, 1 \}$ in the Cu-semigroups $\{ 0, \infty \}$, $\overline{\mathbb{N}}$ or $\Lsc(\mathbb{T}, \overline{\mathbb{N}})$ such that 
\begin{enumerate}
\item if $j \in \{1, 2, \dots, r \}$ and $k \in \{0, 1 \}$ then $z_{j, k, v} \ll g_j(\check{v})$, 
\item $h(\check{v}) \ll \sum_{j = 1}^r \sum_{k = 0}^1 z_{j, k, v}$, 
\item if $k \in \{0, 1 \}$ then $\sum_{j = 1}^r z_{j, k, v} \ll f(\check{v})$. 
\end{enumerate}
To this end, first assume that $\check{v} \in \supp_0(f)_t$ is such that $c(\check{v}) \neq \infty$. If $\check{v}$ is also a terminal vertex for $\bigcup_{i = 1}^r \supp_0(g_i)$ then by Lemma \ref{lemma: Way Below}, we have in either $\overline{\mathbb{N}}$ or $\Lsc(\mathbb{T}, \overline{\mathbb{N}})$
\begin{equation*}
h(\check{v}) \ll f(\check{v}) \ll g_1(\check{v}) + \dots + g_r(\check{v}). 
\end{equation*}
Now recall that $\overline{\mathbb{N}}$ and $\Lsc(\mathbb{T}, \overline{\mathbb{N}})$ both have covering dimension at most one by \cite[Theorem 5.7]{TV22} and \cite[Corollary 4.8]{TV22} respectively. By Definition \ref{definition: Cu Covering Dimension}, there exist $z_{j, k, v}$ for $j \in \{1, 2, \dots, r \}$ and $k \in \{0, 1 \}$ which collectively satisfy the three conditions outlined above. \newline
\phantom{} \newline
Next assume that $\check{v}$ is not a terminal vertex for $\bigcup_{i = 1}^r \supp_0(g_i)$. Then $h(\check{v}) \ll f(\check{v})$ and $g_1(\check{v}) + \dots + g_r(\check{v}) = \infty$. So let $\ell \in \{1, 2, \dots, r \}$ be such that $g_\ell(\check{v}) = \infty$. There exists $z_{\ell, 0, v}$ such that $h(\check{v}) \ll z_{\ell, 0, v} \ll f(\check{v})$ in either $\overline{\mathbb{N}}$ or $\Lsc(\mathbb{T}, \overline{\mathbb{N}})$. Now if $j \in \{1, 2, \dots, r \}$, $k \in \{0, 1 \}$ and $(j, k) \neq (\ell, 0)$ then define $z_{j, k, v} = 0$. In this way, the family of elements $\{ z_{j, k, v} \}_{j, k}$ satisfies the three properties we are after. \newline
\phantom{} \newline
Finally, assume that $c(\check{v}) = \infty$. Then $f(\check{v}) = \infty$ and there exists $\ell \in \{1, 2, \dots, r \}$ such that $g_\ell(\check{v}) = \infty$. If $j \in \{1, 2, \dots, r \}$ and $k \in \{0, 1 \}$ then similarly to the previous case, define
\begin{equation*}
z_{j, k, v} = 
\begin{cases}
\infty, \text{ if $(j, k) = (\ell, 0)$,} \\
0, \text{ otherwise.}
\end{cases}
\end{equation*}
Using the fact that $\infty \ll \infty$ in the Cu-semigroup $\{0, \infty \}$, the family $\{ z_{j, k, v} \}_{j, k}$ once again satisfies the desired three properties. This proves the claim. \newline
\phantom{} \newline
Now if $j \in \{1, 2, \dots, r \}$ and $k \in \{0, 1 \}$ then define $S_j = \supp_0(f)_t \cap \supp_0(g_j)$ and
\begin{equation*}
\zeta_{j, k} = {\sum}_{\check{v} \in S_j} \theta_{\check{v}, z_{j, k, v}}
\end{equation*}
with the convention that $\theta_{\check{v}, z_{j, k, v}} = 0$ if $z_{j, k, v} = 0$. By Lemma \ref{lemma: Hereditary Terminal}, 
\begin{IEEEeqnarray*}{rCl}
\supp_0(\zeta_{j, k}) & \subseteq & {\bigcup}_{\check{v} \in S_j} L_{S(E)}(\check{v})
\\
& \subseteq & \supp_0(g_j) \cap {\bigcup}_{\check{v} \in \supp_0(f)_t} L_{S(E)}(\check{v})
\\
& = & \supp_0(g_j) \cap \supp_0(f).
\end{IEEEeqnarray*} 
Furthermore, if $\check{v}, \check{w} \in S_j$ are distinct then $\check{w} \not \in L_{S(E)}(\check{v})$ because $\check{v}$ and $\check{w}$ are both terminal vertices for $\supp_0(f)$. Since the vertex support of $\theta_{\check{v}, z_{j, k, v}}$ is $L_{S(E)}(\check{v})$ then 
\begin{equation} \label{equation: Zeta Evaluation}
\zeta_{j, k}(\check{w}) = {\sum}_{\check{v} \in S_j} \theta_{\check{v}, z_{j, k, v}}(\check{w}) = z_{j, k, w}. 
\end{equation}
Next, observe that
\begin{equation*}
\supp_0 \Big( {\sum}_{j, k} \zeta_{j, k} \Big) = {\bigcup}_{j, k} \supp_0(\zeta_{j, k}) \subseteq  \supp_0(f). 
\end{equation*}
To see that the reverse inclusion holds, assume that $\check{v} \in \supp_0(f)$. By Lemma \ref{lemma: Hereditary Terminal}, there exists $\check{w} \in \supp_0(f)_t$ such that $\check{v} \geq \check{w}$. By construction of the elements $z_{j, k, w}$, we know that $h(\check{w}) \ll \sum_{j, k} z_{j, k, w}$ in either $\overline{\mathbb{N}}$ or $\Lsc(\mathbb{T}, \overline{\mathbb{N}})$. Now since $\supp_0(h) = \supp_0(f)$ then $h(\check{w}) \neq 0$ and so, there exist $j' \in \{1, 2, \dots, r \}$ and $k' \in \{0, 1 \}$ such that $0 \neq z_{j', k', w} = \zeta_{j', k'}(\check{w})$. Since $\check{v} \in L_S(\check{w})$ then $\zeta_{j', k'}(\check{v}) \neq 0$. This gives the reverse inclusion and so
\begin{equation} \label{equation: Zeta Support}
\supp_0 \Big( {\sum}_{j, k} \zeta_{j, k} \Big) = \supp_0(f) = \supp_0(h).
\end{equation}
We will now show that the first property in Definition \ref{definition: Cu Covering Dimension} is satisfied. Assume that $j \in \{1, 2, \dots, r \}$ and $k \in \{0, 1 \}$. We know that $\supp_0(\zeta_{j, k}) \subseteq \supp_0(g_j)$. If $\check{w} \in \supp_0(\zeta_{j, k})$ is a terminal vertex for $\supp_0(g_j)$ such that $c(\check{w}) \neq \infty$ then by definition of $\zeta_{j, k}$, there exists $\check{x} \in S_j$ such that $\check{w} \geq \check{x}$. But since $\check{x} \in \supp_0(g_j)$ then $\check{w} = \check{x} \in S_j$ and $\zeta_{j, k}(\check{w}) = z_{j, k, w} \ll g_j(\check{w})$ by \eqref{equation: Zeta Evaluation}. So by Lemma \ref{lemma: Way Below 2}, we have $\zeta_{j, k} \ll g_j$ for $j \in \{1, 2, \dots, r \}$ and $k \in \{0, 1 \}$. \newline
\phantom{} \newline
To see that the second property in Definition \ref{definition: Cu Covering Dimension} is satisfied, observe that if $\check{a} \in \supp_0(h)_t = \supp_0(f)_t$ and $c(\check{a}) \neq \infty$ then
{\small \begin{equation*}
h(\check{a}) \ll {\sum}_{j, k} z_{j, k, a} = {\sum}_{j, k} \theta_{\check{a}, z_{j, k, a}}(\check{a}) \leq {\sum}_{j, k} {\sum}_{\check{v} \in S_j} \theta_{\check{v}, z_{j, k, v}}(\check{a}) = {\sum}_{j, k} \zeta_{j, k}(\check{a}). 
\end{equation*}}
By \eqref{equation: Zeta Support} and Lemma \ref{lemma: Way Below 2}, $h \ll \sum_{j, k} \zeta_{j, k}$ in $\Cu_E$. Since $f' \leq h$ by construction then $f' \ll \sum_{j, k} \zeta_{j, k}$. \newline
\phantom{} \newline
Now we verify that the third property in Definition \ref{definition: Cu Covering Dimension} holds. First observe that if $k \in \{0, 1 \}$ then
\begin{equation*}
\supp_0 \Big( {\sum}_{j = 1}^r \zeta_{j, k} \Big) \subseteq {\bigcup}_{j = 1}^r \supp_0(\zeta_{j, k}) \subseteq \supp_0(f).
\end{equation*}
Assume that $\check{x} \in \supp_0 \Big( {\sum}_{j = 1}^r \zeta_{j, k} \Big)$ is a terminal vertex for $\supp_0(f)$ such that $c(\check{x}) \neq \infty$. If $j \in \{1, 2, \dots, r \}$ then by the inclusion $\supp_0(\zeta_{j, k}) \subseteq \supp_0(g_j)$ and \eqref{equation: Zeta Evaluation}, 
\begin{equation*}
\zeta_{j, k}(\check{x}) = 
\begin{cases}
z_{j, k, x}, \text{ if $\check{x} \in S_j$,} \\
0, \text{ otherwise.}
\end{cases}
\end{equation*}
Therefore, $${\sum}_{j = 1}^r \zeta_{j, k}(\check{x}) \leq {\sum}_{j = 1}^r z_{j, k, x} \ll f(\check{x})$$ and by Lemma \ref{lemma: Way Below 2}, $\sum_{j = 1}^r \zeta_{j, k} \ll f$ for $k \in \{0, 1 \}$. So $\dim \Cu_E \leq 1$. To see that $\dim \Cu_E \leq 1$, note that because $E$ does not satisfy Condition (K) by assumption, $C^*(E)$ has a subquotient which is stably isomorphic to $C(\mathbb{T})$. So by Theorem \ref{introtheorem: Cu E Iso} and \cite[Proposition 3.5]{TV22}, $$\dim \Cu_E = \dim \Cu(C^*(E)) \geq \dim \Lsc(\mathbb{T}, \overline{\mathbb{N}}) = 1.$$ Therefore, $\dim \Cu_E = 1$. 
\end{proof}

Using Proposition \ref{proposition: Cu E Covering Dimension}, we will now compute the covering dimension of the Cuntz semigroup of an arbitrary graph C*-algebra, proving Theorem \ref{introtheorem: Covering Dimension}. 

\begin{proof}[Proof of Theorem \ref{introtheorem: Covering Dimension}]
By \cite[Theorem 2.11]{DT05}, it suffices to prove Theorem \ref{introtheorem: Covering Dimension} in the case where $E$ is row-finite. So assume that $E$ is a row-finite graph. If $E$ satisfies Condition (K) then $C^*(E)$ has real rank zero and $\dim \Cu(C^*(E)) = 0$ by \cite[Theorem 5.7]{TV22}. \newline
\phantom{} \newline
Now assume that $E$ does not satisfy Condition (K). Then $C^*(E)$ can be written as an inductive limit of unital graph C*-algebras $\{ C^*(E_i) \}_{i > 0}$ such that if $i > 0$ then $E_i$ is a (finite) subgraph of $E$ and $E_i \subseteq E_{i+1}$; see for instance, \cite[Lemma 3.1]{AMP07} and \cite[Lemma 3.3]{AMP07}.  Since $E$ does not satisfy Condition (K) then there exists $j > 0$ such that the subgraph $E_j$ does not have Condition (K). Hence, $E_{j + n}$ does not have Condition (K) for $n \geq 0$ and by Proposition \ref{proposition: Cu E Covering Dimension} and \cite[Proposition 3.10]{TV22}, 
\begin{equation*}
\dim \Cu(C^*(E)) \leq {\liminf}_{i > 0} \dim \Cu(C^*(E_i)) = 1. 
\end{equation*}
This completes the proof. 
\end{proof}

\subsection{Relatively purely large extensions}
\label{subsec: Relatively Purely Large}

In general, it is not true that any unital graph C*-algebra is built from extensions that are either purely large or direct sums; one such example of a graph C*-algebra, which was kindly pointed out by Chris Schafhauser, comes from the graph
\begin{equation*}
\begin{tikzpicture}[baseline=(current  bounding  box.center)] 
\node (1) at (-2, 0) {$a$};
\node (2) at (0, 0) {$b$};
\node (3) at (2, 0) {$c$};
\node (4) at (4, 0) {$d$};
\draw[->] (2) to (1);
\draw[->] (2) to (3);
\draw[->] (4) to (3);
\draw[->] (1) to [out = 45, in = 135, looseness = 10] (1);
\draw[->] (1) to [out = 225, in = 315, looseness = 10] (1);
\draw[->] (3) to [out = 45, in = 135, looseness = 10] (3);
\draw[->] (3) to [out = 225, in = 315, looseness = 10] (3);
\end{tikzpicture}
\end{equation*}
Denoting the above graph by $E$, we observe that by \cite[Lemma 1.1]{HS03} and \cite[Proposition 4.11]{BG24}, the extensions
\begin{equation} \label{equation: RPL Example 1}
\begin{tikzcd}
0 \arrow[r] & I_{\{ b, c, d \}} \arrow[r] & C^*(E) \arrow[r] & \mathcal{O}_2 \arrow[r] & 0
\end{tikzcd}
\end{equation}
and
\begin{equation} \label{equation: RPL Example 2}
\begin{tikzcd}
0 \arrow[r] & I_{\{ a, b, d \}} \arrow[r] & C^*(E) \arrow[r] & \mathcal{O}_2 \arrow[r] & 0
\end{tikzcd}
\end{equation}
are both not essential and hence, cannot be purely large. Focusing on the first extension in \eqref{equation: RPL Example 1}, let $x \in \Cu(C^*(E)) \backslash \Cu(I_{\{ b, c, d \}})$. Although $x$ does not automatically absorb any element of $\Cu(I_{\{b, c, d \}})$, if $x$ happens to lie in the ideal $\Cu(I_{\{a, b \}})$, then it absorbs any element of the smaller ideal $\Cu(I_{\{ b \}})$ by Theorem \ref{introtheorem: General Cuntz Krieger}. A similar phenomenon is witnessed for the extension in \eqref{equation: RPL Example 2}. These examples suggest that although unital extensions involving graph C*-algebras associated to finite graphs which are not direct sums may not be purely large, they instead satisfy a weaker property called \emph{relative pure largeness}, which was first introduced in \cite[Definition 3.1]{HW26}. 

\begin{definition}[{\cite[Definition 3.1]{HW26}}] \label{definition: RPL}
Let 
\begin{equation*}
\begin{tikzcd}
0 \arrow[r] & I \arrow[r] & A \arrow[r] & B \arrow[r] & 0 
\end{tikzcd}
\end{equation*}
be an extension of C*-algebras and $J$ be an ideal of $I$. The extension is said to be \textbf{purely large relative to} $J$ if for $a \in J_+$ and $b \in A_+ \backslash I$, $a \precsim b$. 
\end{definition}

Note that Definition \ref{definition: RPL} is a slight modification of \cite[Definition 3.1]{HW26} because unlike \cite[Definition 3.1]{HW26}, we do not assume that the ideal $I$ is stable. We recover pure largeness from Definition \ref{definition: RPL} in the special case where $J = I$. Below, we prove Lemma \ref{lemma: RPL Extensions}, which is a generalisation of the observation highlighted for the extension in \eqref{equation: RPL Example 1}. To prepare, we remind the reader of the bijective correspondence of hereditary subsets in \eqref{equation: Hereditary Skeleton} and the bijection $\mathcal{I}$ between ideal lattices defined in \eqref{equation: Ideal Lattice 2}.

\begin{lemma} \label{lemma: RPL Extensions}
Let $E$ be a finite graph satisfying the standing assumptions. Let $H$ be a hereditary subset of $E^0$ such that the quotient $C^*(E \backslash H)$ is gauge-simple. Let $H'$ be the hereditary set
\begin{equation*}
H' = \{ w \in H \; \vert \; \text{There exists $x \in E^0 \backslash H$ such that $w \geq x$} \}. 
\end{equation*}
If $f \in \Cu_E \backslash \mathcal{I}(I_H)$ and $g \in \Cu_E$ is such that $\supp_0(g) \subseteq \langle H' \rangle$ then $f + g = f$. 
\end{lemma}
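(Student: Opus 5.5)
The proof should be a direct pointwise computation in $\Cu_E$, using only Definition \ref{definition: Cu E} and the correspondence \eqref{equation: Hereditary Skeleton}. No C*-algebraic input is needed, because the absorption is built into property (3) of Definition \ref{definition: Cu E}.

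First I identify the relevant skeleton vertices. Since $C^*(E \backslash H)$ is gauge-simple, Lemma \ref{lemma: Gauge Simple} applied to $E \backslash H$ gives $\lvert S(E \backslash H)^0 \rvert = 1$. Hence all vertices of $E^0 \backslash H$ lie in one $\sim_E$-class, which is a single skeleton vertex $\check{u}$. Because $H$ is hereditary it is a union of $\sim_E$-classes, so $\check{u} \notin \langle H \rangle$ and $\langle H \rangle = S(E)^0 \backslash \{ \check{u} \}$.

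Next I show that $\langle H' \rangle \subseteq L_{S(E)}(\check{u}) \backslash \{ \check{u} \}$. Take $w \in H'$. There is $x \in E^0 \backslash H$ with $w \geq x$. Since $x \sim_E u$, we get $\check{w} \geq \check{u}$, and $\check{w} \neq \check{u}$ because $w \in H$.

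Now let $f \in \Cu_E \backslash \mathcal{I}(I_H)$. By \eqref{equation: Ideal Lattice 1}, $\supp_0(f) \not\subseteq \langle H \rangle$, so $\check{u} \in \supp_0(f)$, i.e. $f(\check{u}) \neq 0$. Property (3) of Definition \ref{definition: Cu E} then gives $f(\check{w}) = \infty$ for every $\check{w} \geq \check{u}$ with $\check{w} \neq \check{u}$. By the previous step this covers every $\check{w} \in \langle H' \rangle$. When $c(\check{w}) = 1$, this means $f(\check{w})$ is the constant infinite function on $\mathbb{T}_{\check{w}}$.

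Finally I compare $f + g$ with $f$ pointwise on $\langle E \rangle_\kappa$. On points belonging to vertices outside $\supp_0(g) \subseteq \langle H' \rangle$, $g$ vanishes, so $(f+g) = f$ there. On points belonging to vertices of $\langle H' \rangle$, $f$ takes the value $\infty$, and $\infty + n = \infty$ in $\overline{\mathbb{N}}$, including pointwise on $\mathbb{T}_{\check{w}}$. Hence $f + g = f$.

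The only step requiring care is the bookkeeping in the first two paragraphs: that gauge-simplicity collapses $E^0 \backslash H$ to a single skeleton vertex $\check{u}$, and that every vertex of $H'$ lies strictly above $\check{u}$. Once these are in place the rest is immediate from the definition of $\Cu_E$.
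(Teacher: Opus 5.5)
Your proof is correct and is essentially the paper's argument. The paper takes $z \in \supp_0(f)^\dagger \backslash H$ and uses gauge-simplicity of $C^*(E \backslash H)$ with Lemma \ref{lemma: Gauge Simple} to get $\check{x} = \check{z}$; it then applies property (3) of Definition \ref{definition: Cu E} through non-terminality of the vertices of $\langle H' \rangle$ in $\supp_0(f)$. That is your argument, with your unique quotient skeleton vertex $\check{u}$ playing the role of $\check{z}$.

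The only tacit points, which the paper also merely asserts, are that $E \backslash H$ satisfies the standing assumptions and that $\geq$ on $E^0 \backslash H$ is the same whether computed in $E$ or in $E \backslash H$. Both follow because a path starting outside the hereditary set $H$ never enters $H$.
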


\begin{proof}
By \eqref{equation: Ideal In Terms Of Support}, $\supp_0(f) \not \subseteq \langle H \rangle$ and $\supp_0(f)^\dagger \not \subseteq H$ as hereditary subsets of $E^0$, where $\supp_0(f)^\dagger$ is defined in \eqref{equation: Dagger}. We claim that $H' \subseteq \supp_0(f)^\dagger$. To this end, assume that $w \in H'$. Then $w \in H$ and there exists $x \in E^0 \backslash H$ such that $w \geq x$. Since $\supp_0(f)^\dagger \not \subseteq H$ then there exists $z \in \supp_0(f)^\dagger$ such that $z \in E^0 \backslash H$. By assumption, the quotient $C^*(E \backslash H)$ is gauge-simple. So by Lemma \ref{lemma: Gauge Simple} and the fact that the subgraph $E \backslash H$ satisfies the standing assumptions, $\check{x} = \check{z}$. So $w \geq x \geq z$ in $E$ and since $\supp_0(f)^\dagger$ is a hereditary subset of $E^0$, $w \in \supp_0(f)^\dagger$. So $H' \subseteq \supp_0(f)^\dagger$. \newline
\phantom{} \newline
Next, we claim that if $\check{a} \in \langle H' \rangle_t$ then $f(\check{a}) = \infty$. Since $H' \subseteq \supp_0(f)^{\dagger}$ then by \eqref{equation: Hereditary Skeleton}, $\langle H' \rangle \subseteq \supp_0(f)$ and $\check{a} \in \supp_0(f)$. To see that $\check{a}$ is not a terminal vertex for $\supp_0(f)$, observe that in $E$, $a$ lies in $H'$ and thus, admits a finite path to $z$. But since $z \not \in H$, $z$ cannot admit a finite path back to $a$. Hence, $\check{a} \geq \check{z}$ and $\check{z} \not \geq \check{a}$ in $S(E)$. So $\check{a}$ is not a terminal vertex for $\supp_0(f)$ and $f(\check{a}) = \infty$. Consequently, $f$ is infinite-valued on all of $\langle H' \rangle$ and hence, $f$ absorbs any function $g \in \Cu_E$ such that $\supp_0(g) \subseteq \langle H' \rangle$. 
\end{proof}

We now use Lemma \ref{lemma: RPL Extensions} to obtain a specific decomposition of $\Cu_E$ which uses the hereditary sets $H$ and $H'$.

\begin{corollary} \label{corollary: Extension Picture}
Let $E$ be a finite graph satisfying the standing assumptions. Let $H$ be a hereditary subset of $E^0$ such that the quotient $C^*(E \backslash H)$ is gauge-simple. Let $H'$ be the hereditary set
\begin{equation*}
H' = \{ w \in H \; \vert \; \text{There exists $x \in E^0 \backslash H$ such that $w \geq x$} \}. 
\end{equation*}
Then there is an isomorphism of Cu-semigroups
\begin{equation*}
\Cu_E \cong \Cu_H \sqcup \; \big( \Cu_{E \backslash H'} \backslash \Cu_{H \backslash H'} \big).
\end{equation*}
On the RHS, if $f \in \Cu_H$, $g \in \Cu_{E \backslash H'} \backslash \Cu_{H \backslash H'}$ and $\rho: \Cu_E \rightarrow \Cu_{E \backslash H'}$ is the surjective Cu-morphism from Lemma \ref{lemma: Cu E Minus H} then $f + g = \rho(f) + g$ and $f < g$ if and only if $\rho(f) \leq g$ in $\Cu_{E \backslash H'}$. 
\end{corollary}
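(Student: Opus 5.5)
The plan is to build the isomorphism explicitly. Send $f \in \Cu_E$ to itself, viewed in $\Cu_H$, when $\supp_0(f) \subseteq \langle H \rangle$, that is when $f \in \mathcal{I}(I_H) = i_H(\Cu_H)$ by \eqref{equation: Ideal In Terms Of Support}. Otherwise send $f$ to $\rho(f) = \rho_{H'}(f) \in \Cu_{E \backslash H'}$.

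\textbf{Step 1: the image of $\rho$ lies outside $\Cu_{H \backslash H'}$.} Here $\Cu_{H \backslash H'}$ is viewed inside $\Cu_{E \backslash H'}$ via $i_{H \backslash H'}$. We first check that $H \backslash H'$ is hereditary in $E \backslash H'$. If $f \notin \mathcal{I}(I_H)$, then $\supp_0(f)$ meets $\langle E \backslash H \rangle$. Since $\langle E \backslash H\rangle \cap \langle H' \rangle = \emptyset$, the support of $\rho(f)$ still meets $\langle E\backslash H\rangle$, so $\rho(f) \notin \Cu_{H\backslash H'}$.

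\textbf{Step 2: bijectivity on the second piece.} Given $g \in \Cu_{E\backslash H'} \backslash \Cu_{H\backslash H'}$, take the lift
\[
g' = {\sum}_{\check v \in \supp_0(g)_t} \theta_{\check v, g(\check v)} + \infty \cdot {\sum}_{\check w \in \langle H'\rangle_t} \theta_{\check w, \delta_{\check w}},
\]
as in Lemma \ref{lemma: Cu E Minus H}. Then $\rho(g') = g$ and $g' \notin \mathcal{I}(I_H)$.

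For injectivity, suppose $f_1, f_2 \notin \mathcal{I}(I_H)$ with $\rho(f_1) = \rho(f_2)$. Proposition \ref{proposition: First Iso Thm Cu E} gives $f_1 \leq f_2 + h$ with $\supp_0(h) \subseteq \langle H' \rangle$. Lemma \ref{lemma: RPL Extensions} says $f_2 + h = f_2$, so $f_1 \le f_2$, and by symmetry $f_1 = f_2$. The same argument shows that on this piece the order is reflected exactly: $f_1 \le f_2$ if and only if $\rho(f_1) \le \rho(f_2)$. On the $\Cu_H$ piece, $i_H$ is an order-embedding and additive.

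\textbf{Step 3: mixed relations.} Take $f \in \Cu_H$ and $g$ in the second piece, and let $\tilde g$ be its unique lift. Then $f + \tilde g \notin \mathcal{I}(I_H)$ and $\rho(f + \tilde g) = \rho(f) + g$, which is exactly the stated addition rule. For the order, if $f \le \tilde g$ then applying $\rho$ gives $\rho(f) \le g$. Conversely, if $\rho(f) \le g = \rho(\tilde g)$, then as in Step 2 we get $f \le \tilde g + h = \tilde g$, using Lemma \ref{lemma: RPL Extensions} again.

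It remains to check that no element of the second piece lies below an element of $\Cu_H$. This holds because $\mathcal{I}(I_H)$ is downward closed, so anything below an element of $\Cu_H$ is in $\Cu_H$. The relation is strict ($f < g$), since the two pieces are disjoint.

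\textbf{Conclusion.} The map is now an isomorphism of positively ordered monoids. Remark \ref{remark: POM Iso Between Cu Semigroups} upgrades it to a Cu-isomorphism. The point I expect to need the most care is the injectivity and order reflection in Step 2, specifically that the quotient-order witness $h$ can be chosen with support in $\langle H' \rangle$ rather than merely in $\langle H\rangle$.

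To handle this, I would first apply Proposition \ref{proposition: First Iso Thm Cu E} to the hereditary set $H'$, which is hereditary in $E$ because $H$ is and paths compose. With that choice, Lemma \ref{lemma: RPL Extensions} applies directly to $h$.
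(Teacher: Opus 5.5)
Your proposal is correct and follows the paper's proof. Both use the same piecewise map ($i_H^{-1}$ on $\mathcal{I}(I_H)$, and $\rho$ elsewhere), get order reflection from Proposition \ref{proposition: First Iso Thm Cu E} applied to $H'$ together with the absorption in Lemma \ref{lemma: RPL Extensions}, and upgrade to a Cu-isomorphism via Remark \ref{remark: POM Iso Between Cu Semigroups}. You additionally spell out checks the paper leaves implicit: that $\rho(f)$ lands outside $\Cu_{H \backslash H'}$, the explicit lift for surjectivity, and downward closure of $\mathcal{I}(I_H)$ for the cross-component order.
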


\begin{proof}
Let $i_H: \Cu_H \rightarrow \Cu_E$ be the order embedding defined in \eqref{equation: Inclusion}. Recall that $i_H$ defines a Cu-isomorphism from $\Cu_H$ to the ideal $\mathcal{I}(I_H)$ of $\Cu_E$. Define a map $\phi$ by
\begin{equation*}
\phi(f) = 
\begin{cases}
i_H^{-1}(f), \text{ viewed as an element of $\Cu_H$ if $f \in \mathcal{I}(I_H)$,} \\
\rho(f), \text{ viewed as an element of $\Cu_{E \backslash H'} \backslash \Cu_{H \backslash H'}$ otherwise.}
\end{cases}
\end{equation*}
To see that $\phi$ is additive, assume that $f \in \mathcal{I}(I_H)$ and $g \in \Cu_E \backslash \mathcal{I}(I_H)$. By definition of addition on $\Cu_H \sqcup \; (\Cu_{E \backslash H'} \backslash \Cu_{H \backslash H'} )$,
\begin{equation*}
\phi(f + g) = \rho(f + g) = \rho(i_H^{-1}(f)) + \rho(g) = i_H^{-1}(f) + \rho(g) = \phi(f) + \phi(g). 
\end{equation*}
To see that $\phi$ is an order-embedding, assume that $f, g \in \Cu_E$ satisfy $\phi(f) \leq \phi(g)$. If $f, g \in \mathcal{I}(I_H)$ then the inequality $f \leq g$ follows from the fact that $i_H$ is a Cu-isomorphism. Now assume that $f \in \Cu_E$ is arbitrary and $g \in \Cu_E \backslash \mathcal{I}(I_H)$. Then $\rho(f) \leq \rho(g)$ in $\Cu_{E \backslash H'}$ and by the Cu-isomorphism $\Cu_{E \backslash H'} \cong \Cu_E / \Cu_{H'}$ (see Proposition \ref{proposition: First Iso Thm Cu E}) together with Lemma \ref{lemma: RPL Extensions}, there exists $h \in \Cu_E$ such that $\supp_0(h) \subseteq \langle H' \rangle$ and $f \leq g + h = g$ in $\Cu_E$. Hence, $\phi$ is an order-embedding. Finally, $\phi$ is surjective because $i_H^{-1}$ and $\rho$ are both surjective Cu-morphisms. Therefore, $\phi$ is an isomorphism of positively ordered monoids and we are done by Remark \ref{remark: POM Iso Between Cu Semigroups}. 
\end{proof}

\begin{remark}
Observe that in Corollary \ref{corollary: Extension Picture}, $E \backslash H' = (E \backslash H) \sqcup (H \backslash H')$. Hence, 
\begin{equation*}
\Cu_{E \backslash H'} \backslash \Cu_{H \backslash H'} \cong (\Cu_{E \backslash H} \backslash \{ 0 \}) \oplus \Cu_{H \backslash H'}
\end{equation*}
and by Corollary \ref{corollary: Extension Picture}, we have a Cu-isomorphism
\begin{equation} \label{equation: Extension Picture Block}
\Cu_E \cong \Cu_H \; \sqcup \; \big( (\Cu_{E \backslash H} \backslash \{ 0 \}) \oplus \Cu_{H \backslash H'} \big).
\end{equation}
We describe the addition and order on the RHS of \eqref{equation: Extension Picture Block}. Let $f \in \Cu_H$, $g \in \Cu_{E \backslash H} \backslash \{ 0 \}$ and $h \in \Cu_{H \backslash H'}$. Then $f + (g, h) = (g, \rho(f) + h)$ and $f < (g, h)$ if and only if $\rho(f) \leq h$ in $\Cu_{H \backslash H'}$. The addition and order behave as usual when restricted to each disjoint component. \newline
\phantom{} \newline
An advantage the characterisation of $\Cu_E$ in \eqref{equation: Extension Picture Block} has over the one in Corollary \ref{corollary: Extension Picture} is the presence of $\Cu_{E \backslash H}$, which is isomorphic to $\{0, \infty \}$, $\overline{\mathbb{N}}$ or $\Lsc(\mathbb{T}, \overline{\mathbb{N}})$ by Lemma \ref{lemma: Base Case}. These are the Cuntz semigroups of the three building blocks for graph C*-algebras coming from a finite graph. So with \eqref{equation: Extension Picture Block}, one can clearly see how they combine to form $\Cu_E \cong \Cu(C^*(E))$ when $E$ satisfies the standing assumptions. 
\end{remark}

Next, we combine Corollary \ref{corollary: Extension Picture} with Theorem \ref{introtheorem: Cu E Iso} to prove the following corollary.

\begin{corollary} \label{corollary: Extension Dichotomy}
Let $E$ be a finite graph. The graph C*-algebra $C^*(E)$ is built by recursively taking extensions which are either direct sums or relatively purely large.
\end{corollary}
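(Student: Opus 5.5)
First reduce to the case where $E$ satisfies the standing assumptions: by Proposition \ref{proposition: Collapse Graph} there is a finite graph $F$ satisfying \ref{property: S1}, \ref{property: S2} and \ref{property: S3} with $C^*(E)\otimes K\cong C^*(F)\otimes K$. Since the statement concerns the extensions obtained from the gauge-invariant ideal lattice, we interpret it for $C^*(F)$. The gauge-invariant ideals of $C^*(F)$ correspond to hereditary subsets of $F^0$, and stabilisation preserves both direct sum decompositions and relative pure largeness (via Theorem \ref{theorem: Purely Large Tensor Product}, or directly by Cuntz-class arguments, since Definition \ref{definition: RPL} only involves Cuntz subequivalence). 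Then induct on the gauge length $\lvert S(F)^0\rvert$ (Lemma \ref{lemma: Gauge Length}). At each stage, choose a hereditary $H\subseteq F^0$ with $C^*(F\backslash H)$ gauge-simple. An example is the complement of a strongly connected component $\check v$ that is maximal, meaning no other vertex of $S(F)$ admits a path to it. Then consider
\begin{equation*}
0\to I_H\to C^*(F)\to C^*(F\backslash H)\to 0,
\end{equation*}
and recurse into $I_H\cong C^*(H)\otimes K$, whose graph $H$ again satisfies the standing assumptions and has strictly smaller gauge length.

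The core step is the dichotomy for this single extension. Let $H'$ be as in Lemma \ref{lemma: RPL Extensions}. If $H'=\emptyset$, then no vertex of $H$ reaches $F^0\backslash H$. Since $F^0\backslash H$ is hereditary as well (nothing outside it reaches it, by maximality of $\check v$), $F$ splits as the disjoint union $H\sqcup(F\backslash H)$, and $C^*(F)\cong C^*(H)\oplus C^*(F\backslash H)$ is a direct sum. If $H'\neq\emptyset$, then the extension is purely large relative to the ideal $J=I_{H'}\subseteq I_H$. To see this, let $a\in (I_{H'})_+$ and $b\in C^*(F)_+\backslash I_H$. Via $\Delta_F$ (Theorem \ref{introtheorem: Cu E Iso}) and the ideal bijection $\mathcal I$ (Theorem \ref{theorem: Ideal Lattice Iso}), together with Lemma \ref{lemma: Ideal Square}, the class $[b]$ corresponds to some $f\in\Cu_F\backslash\mathcal I(I_H)$. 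Likewise $[a]$ corresponds to some $g$ with $\supp_0(g)\subseteq\langle H'\rangle$, because $\Delta_F$ maps $\mathcal I(I_{H'})$ onto $\Cu(I_{H'})$. Lemma \ref{lemma: RPL Extensions} gives $f+g=f$, so $g\le f$, hence $[a]\le[b]$, i.e. $a\precsim b$.

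The main obstacle is making the identifications precise. One must check that $\Delta_F^{-1}(\Cu(I_H))=\mathcal I(I_H)$ and $\Delta_F^{-1}(\Cu(I_{H'}))=\mathcal I(I_{H'})$. This follows from Lemma \ref{lemma: Ideal Square} combined with the fact that $\Delta_F$ is an order-isomorphism, so it carries ideals to ideals, and from the observation that $\Delta_H$ is an isomorphism onto $\Cu(I_H)$. A secondary point is to state carefully that the recursion applies to the ideal $I_H$, which is stably isomorphic to $C^*(H)$, so that the iterated extensions for the original $C^*(E)$ are understood up to stable isomorphism. This is consistent with the paper's setting, since all the invariants involved are stable.
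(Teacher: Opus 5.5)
Your core step is the paper's: both proofs rest on Lemma~\ref{lemma: RPL Extensions}, transported to $\Cu(C^*(E))$ through $\Delta$ (Theorem~\ref{introtheorem: Cu E Iso}).

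Three slips need fixing first.
\begin{enumerate}
\item Your example of a suitable $H$ is reversed. In the paper's convention, $v\geq w$ means there is a path from $v$ to $w$, and hereditary sets are closed under predecessors. So $F^0\backslash H$ must be a component that reaches no other component, i.e.\ a terminal vertex of $S(F)$. The complement of a component that nothing else reaches is in general not hereditary. For the graph $v\to w$ with a loop at each vertex, your recipe gives $H=\{w\}$, which is not hereditary since $v\geq w$; the correct choice is $H=\{v\}$.
\item In the case $H'=\emptyset$, the heredity of $F^0\backslash H$ follows from $H'=\emptyset$ itself, not from any maximality of $\check v$.
\item In general $I_H$ is only stably isomorphic to $C^*(H)$, via the full corner $p_HI_Hp_H\cong C^*(H)$. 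In the direct sum case $I_H\cong C^*(H)$ is unital, so $I_H\cong C^*(H)\otimes K$ fails there. Since you work up to stable isomorphism anyway, this is harmless.
\end{enumerate}

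The routes differ as follows. The paper works with the extension $0\to I_F\to C^*(E)\to C^*(E\backslash F)\to 0$ of the original graph. There $F_0$ need not be saturated, so the paper passes to its saturation $F'$. It then observes that vertices of $F'\backslash F_0$ support no cycles and are deleted by the moves (S) and (CO), so $F'$ is carried to $H'$. Finally it reads off relative pure largeness with respect to $I_{F'}$ from Corollary~\ref{corollary: Extension Picture}.

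You instead stay inside the class of graphs satisfying the standing assumptions, where saturation is automatic (Corollary~\ref{corollary: Auto Saturated}). You apply Lemma~\ref{lemma: RPL Extensions} directly, and you make explicit, via Lemma~\ref{lemma: Ideal Square}, the identification $\Delta_F^{-1}(\Cu(I_H))=\mathcal{I}(I_H)$ that the paper leaves implicit. This is cleaner, but it proves the dichotomy for the model $C^*(F)$ rather than for $C^*(E)$. To conclude something about $C^*(E)$, you must transport ideals through $C^*(E)\otimes K\cong C^*(F)\otimes K$. Identifying the transported ideal as the gauge-invariant ideal $I_{F'}$ of $C^*(E)$ is exactly what the paper's saturation bookkeeping supplies.

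For that transport, Theorem~\ref{theorem: Purely Large Tensor Product} does not help, since it only concerns ordinary pure largeness. What does work is that your argument, run with $a,b$ in the stabilisation, already gives the Cuntz-semigroup statement: $y\le x$ for all $y\in\Cu(I_{H'})$ and $x\in\Cu(C^*(F))\backslash\Cu(I_H)$. This statement is invariant under stable isomorphism and restricts back to positive elements of $C^*(E)$ itself.
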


To set up the proof, we first remind the reader of the construction of the saturation of a hereditary subset (see for instance, the remark after \cite[Definition 2.5]{Tom04}). 

\begin{definition} \label{definition: Saturation}
Let $E$ be a finite graph and $H$ be a hereditary subset of $E^0$. Define $H_0 = H$. Suppose that we have defined $H_n$ for some $n \in \{0, 1, \dots \}$. Define the set $H_{n+1}$ by
\begin{equation*}
H_{n+1} = H_n \cup \left\{ v \in E^0 \; \Big\vert \; 
\begin{matrix}
\text{$0 < \lvert r^{-1}(v) \rvert < \infty$ and} \\
\text{$r(e) = v$ implies $s(e) \in H_n$} 
\end{matrix}
 \right\}.  
\end{equation*}
The saturation $H_s$ is the set 
\begin{equation*}
H_s = {\bigcup}_{n = 0}^{\infty} H_n. 
\end{equation*}
\end{definition}

It is straightforward to check that the saturation of a hereditary set is hereditary and saturated.

\begin{proof}[Proof of Corollary \ref{corollary: Extension Dichotomy}]
If $C^*(E)$ is gauge-simple then there is nothing to show. If $C^*(E)$ is not gauge-simple then there exists a proper, hereditary, saturated subset $F$ of $E^0$ such that the quotient $C^*(E \backslash F)$ is gauge-simple. Let $F'$ be the saturation of the hereditary subset
\begin{equation*}
F_0 = \{ w \in F \; \vert \; \text{There exists $x \in E^0 \backslash F$ such that $w \geq x$} \}.
\end{equation*}
Assume that $F' = \emptyset$. If $v, w \in E^0$, $v \in F$ and $v \geq w$ then $w \in F$. In this case, $F$ is an isolated subgraph of $E$ and $$C^*(E) \cong C^*(F) \oplus C^*(E \backslash F).$$ Now assume that $F' \neq \emptyset$. By Proposition \ref{proposition: Collapse Graph}, there exist a finite graph $E'$ satisfying the standing assumptions and a hereditary subset $H$ of $(E')^0$ such that $C^*(E') \otimes K \cong C^*(E) \otimes K$ and $C^*(H) \otimes K \cong C^*(F) \otimes K$. Observe that if $v \in F'$ and $v$ supports at least one cycle then by Definition \ref{definition: Saturation}, $v \in F_0$. Thus, if $v \in F' \backslash F_0$ then $v$ cannot support a cycle and subsequently, the applications of moves (S) and (CO) in the proof of Proposition \ref{proposition: Collapse Graph} remove all vertices originally lying in $F' \backslash F_0$. Consequently, the moves from the proof of Proposition \ref{proposition: Collapse Graph} transform $F'$ to the hereditary set 
\begin{equation*}
H' = \{ w \in H \; \vert \; \text{There exists $x \in (E')^0 \backslash H$ such that $w \geq x$} \}. 
\end{equation*}
So $C^*(F') \otimes K \cong C^*(H') \otimes K$ and by Theorem \ref{introtheorem: Cu E Iso} and Corollary \ref{corollary: Extension Picture}, 
\begin{IEEEeqnarray*}{rCl}
\Cu(C^*(E)) & \cong & \Cu(C^*(E')) \cong \Cu_{E'}
\\
& \cong & \Cu_H \sqcup \; \big( \Cu_{E' \backslash H'} \backslash \Cu_{H \backslash H'} \big)
\\
& \cong & \Cu(C^*(H)) \sqcup \big( \Cu(C^*(E' \backslash H')) \backslash \Cu(C^*(H \backslash H')) \big)
\\
& \cong & \Cu(C^*(F)) \sqcup \big( \Cu(C^*(E \backslash F')) \backslash \Cu(C^*(F \backslash F')) \big).
\end{IEEEeqnarray*} 
Finally by definition of $\Cu(C^*(F)) \sqcup ( \Cu(C^*(E \backslash F')) \backslash \Cu(C^*(F \backslash F')) )$ in Corollary \ref{corollary: Extension Picture}, $C^*(E)$ is purely large relative to the ideal $I_{F'}$. This completes the proof. 
\end{proof}

\subsection{Inf-semilattice ordered Cu-semigroups and pullbacks}
\label{subsec: Inf}

It was proved in \cite[Theorem 3.8]{APRT22} that the Cuntz semigroup of a separable C*-algebra with stable rank one is \emph{inf-semilattice ordered}, meaning that the infimum of any two Cuntz classes exists and addition distributes over the infimum operation. 

\begin{definition} \label{definition: Inf}
An ordered monoid $S$ is said to be \textbf{inf-semilattice ordered} if for $x, y \in S$, the infimum $x \wedge y$ exists and for $x, y, z \in S$, $$(x + z) \wedge (y + z) = (x \wedge y) + z.$$
\end{definition}

It is straightforward to check that the Cuntz semigroups of the three building blocks for graph C*-algebras coming from finite graphs --- $\Lsc(\mathbb{T}, \overline{\mathbb{N}}), \overline{\mathbb{N}}$ and $\{0, \infty \}$ --- are all inf-semilattice ordered; see \cite[Theorem 3.8]{APRT22} for instance. Hence, it is natural to ask whether the Cuntz semigroup of a graph C*-algebra associated to a finite graph is inf-semilattice ordered. We prove below that this is indeed the case.

\begin{proposition} \label{proposition: Cu E Inf}
Let $E$ be a finite graph satisfying the standing assumptions. Then $\Cu_E$ is inf-semilattice ordered. Hence, if $F$ is an arbitrary finite graph then $\Cu(C^*(F))$ is inf-semilattice ordered. 
\end{proposition}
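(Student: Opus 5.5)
We construct the infimum explicitly and then check distributivity; the second statement will follow from Theorem \ref{introtheorem: Cu E Iso} and Proposition \ref{proposition: Collapse Graph}, since both properties in Definition \ref{definition: Inf} are defined purely in terms of the ordered monoid structure and hence transfer along isomorphisms of positively ordered monoids.

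\textbf{Step 1: construct the candidate $f \wedge g$.} For $f, g \in \Cu_E$, set $D = \supp_0(f) \cap \supp_0(g)$. This is a hereditary subset of $S(E)^0$, since both supports are hereditary. Take $f \wedge g$ to be the pointwise infimum, computed in $\overline{\mathbb{N}}$, in $\{0, \infty\}$, or in $\Lsc(\mathbb{T}, \overline{\mathbb{N}})$ as appropriate; in the last case the pointwise minimum of two lower semicontinuous functions is again lower semicontinuous.

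We then check that this function lies in $\Cu_E$. Properties (1) and (2) of Definition \ref{definition: Cu E} are immediate. For property (3), suppose $w \geq v$, $w \neq v$ and $(f \wedge g)(v) \neq 0$. The last condition means $f(v) \neq 0$ and $g(v) \neq 0$, so property (3) for $f$ and for $g$ gives $f(w) = g(w) = \infty$, and hence $(f \wedge g)(w) = \infty$. Since the order on $\Cu_E$ is pointwise, the pointwise minimum is the greatest lower bound. Note also that $\supp_0(f \wedge g) = D$.

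\textbf{Step 2: distributivity.} Fix $f, g, h \in \Cu_E$. One inequality, $(f \wedge g) + h \leq (f+h) \wedge (g+h)$, holds in any ordered monoid. For the reverse we argue pointwise at each $x \in S(E)_\kappa$, splitting into cases. If $h(x) = \infty$, both sides equal $\infty$. If $h(x)$ is finite, the identity $\min(a+c, b+c) = \min(a,b) + c$ holds in $\overline{\mathbb{N}}$ and in $\{0,\infty\}$, and it holds pointwise on $\mathbb{T}$ for $\Lsc(\mathbb{T}, \overline{\mathbb{N}})$. Because addition, order and infimum are all pointwise here, the equality of functions follows directly.

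\textbf{Expected obstacle.} The only real subtlety is that $\Cu_E$ is not a full product of the factor semigroups: it is the subset cut out by the absorption condition (3). So one must check that this subset is closed under pointwise minima; if it were not, the infimum in $\Cu_E$ could differ from the pointwise one. Step 1 shows that it is closed, because the condition $(f \wedge g)(v) \neq 0$ forces both $f(v)$ and $g(v)$ to be nonzero. Once the infimum is known to be pointwise, everything else reduces to the building blocks.
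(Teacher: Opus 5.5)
Your proof is correct and takes the same route as the paper: you take the pointwise infimum, check that it satisfies condition (3) of Definition \ref{definition: Cu E}, verify distributivity pointwise in the building blocks, and transfer the result to $\Cu(C^*(F))$ via Theorem \ref{introtheorem: Cu E Iso} (your explicit use of Proposition \ref{proposition: Collapse Graph} for arbitrary $F$ spells out a step the paper leaves implicit). One side remark is false, though your argument never uses it: $\supp_0(f \wedge g)$ can be strictly smaller than $\supp_0(f) \cap \supp_0(g)$; for example, on a single vertex $v$ with $c(v) = 1$, taking $f(v) = \chi_U$ and $g(v) = \chi_V$ for non-empty disjoint open sets $U, V \subseteq \mathbb{T}$ gives $f \wedge g = 0$.
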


\begin{proof}
Assume that $E$ is a finite graph satisfying the standing assumptions and $f, g \in \Cu_E$. Define $f \wedge g$ to be the pointwise infimum of $f$ and $g$. By direct computation, $f \wedge g \in \Cu_E$. If $\check{v} \in S(E)^0$, then by using the fact that $\overline{\mathbb{N}}, \{ 0, \infty \}$ and $\Lsc(\mathbb{T}, \overline{\mathbb{N}})$ are all inf-semilattice ordered, $$((f + h) \wedge (g + h))(\check{v}) = (f + h)(\check{v}) \wedge (g + h)(\check{v}) = (f \wedge g)(\check{v}) + h(\check{v}).$$ Since the infimum operation on $\Lsc(\mathbb{T}, \overline{\mathbb{N}})$ is pointwise and the order on $\Cu_E$ is pointwise, $(f + h) \wedge (g + h) = (f \wedge g) + h$ and $\Cu_E$ is inf-semilattice ordered. The final statement follows from Theorem \ref{introtheorem: Cu E Iso}. 
\end{proof}

It is known that if $A$ is such that $\Cu(A)$ is inf-semilattice ordered, then $\Cu(A)$ and certain quotients of it can be computed as suitable pullbacks of Cu-semigroups. 

\begin{lemma}[see {\cite[Corollary 3.13]{APRT22}}] \label{lemma: Inf Pullback}
Let $A$ be a separable C*-algebra and suppose that $\Cu(A)$ is inf-semilattice ordered. Let $I, J \subseteq A$ be closed two-sided ideals of $A$. Then the following commutative square of $\Cu$-semigroups is a pullback square:
\begin{equation*}
\begin{tikzcd}
\Cu(A/(I \cap J)) \arrow[r] \arrow[d] & \Cu(A/I) \arrow[d] \\
\Cu(A/J) \arrow[r] & \Cu(A/(I + J)). 
\end{tikzcd}
\end{equation*} 
\end{lemma}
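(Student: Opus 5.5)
This is essentially \cite[Corollary 3.13]{APRT22}, so the plan is to reproduce its argument, which only uses two facts. The first is the extension picture of quotients from Theorem \ref{theorem: CRS}: for a separable $A$ and an ideal $K$, $\Cu(A/K)\cong\Cu(A)/\Cu(K)$ as Cu-semigroups. The second is that infima in $\Cu(A)$ exist and distribute over addition.

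By Remark \ref{remark: POM Iso Between Cu Semigroups} it is enough to show that the canonical map
\begin{equation*}
\Phi:\Cu(A)/\Cu(I\cap J)\to \Cu(A)/\Cu(I)\times_{\Cu(A)/\Cu(I+J)}\Cu(A)/\Cu(J)
\end{equation*}
is an isomorphism of positively ordered monoids onto the pullback, with the pullback carrying the componentwise order. I would first record the ideal identities $\Cu(I\cap J)=\Cu(I)\cap\Cu(J)$ and $\Cu(I+J)=\Cu(I)+\Cu(J)$, where the latter is the set of sums. For a separable C*-algebra, ideals of $\Cu(A)$ correspond latticially to ideals of $A$, so both identities hold.

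\textbf{Order-embedding.} Suppose $x\le y+i$ and $x\le y+j$ with $i\in\Cu(I)$ and $j\in\Cu(J)$. By inf-semilattice ordering and distributivity, $x\le (y+i)\wedge(y+j)=y+(i\wedge j)$. Since $i\wedge j\le i,j$, we get $i\wedge j\in\Cu(I)\cap\Cu(J)=\Cu(I\cap J)$. Hence $[x]\le[y]$ in $\Cu(A)/\Cu(I\cap J)$. Injectivity follows from this as well.

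\textbf{Surjectivity.} This is the main obstacle. Take $x,y\in\Cu(A)$ whose images agree modulo $\Cu(I+J)$. Then $x\le y+i_1+j_1$ and $y\le x+i_2+j_2$ for some $i_k\in\Cu(I)$ and $j_k\in\Cu(J)$. The first candidate I would try is
\begin{equation*}
z=(x+j)\wedge(y+i),\qquad i=i_1+i_2+\infty_{I}\text{-type elements},\quad j=j_1+j_2.
\end{equation*}
Modulo $I$, the element $y+i$ is equivalent to $y$, and $x+j\ge y$ modulo $I$, so $z\equiv y$ modulo $I$ should follow by distributing. Symmetrically, $z\equiv x$ modulo $J$. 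Verifying these congruences requires that the quotient maps commute with $\wedge$, which I would derive from the order-embedding argument above together with distributivity. If this direct construction fails, I would instead cite \cite[Corollary 3.13]{APRT22} verbatim, since its hypotheses are exactly separability and inf-semilattice ordering. Once this is done, $\Phi$ is surjective, and the square is a pullback.
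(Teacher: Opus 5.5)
Your argument is correct and is the argument of \cite[Corollary 3.13]{APRT22} that the paper invokes. The order-embedding comes from $x\leq (y+i)\wedge(y+j)=y+(i\wedge j)$ with $i\wedge j\in\Cu(I)\cap\Cu(J)=\Cu(I\cap J)$. Surjectivity comes from $z=(x+j)\wedge(y+i)$, and it needs no fallback or commutation of quotient maps with $\wedge$, since distributivity alone gives $z+i=(x+i+j)\wedge(y+2i)\geq y$ and $z+j=(x+2j)\wedge(y+i+j)\geq x$.

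One correction: $\Cu(I+J)$ is not in general the set of sums $\Cu(I)+\Cu(J)$. For $A=C([0,1])$, $I=C_0([0,\frac{2}{3}))$ and $J=C_0((\frac{1}{3},1])$, rank functions and connectedness of $[0,1]$ show that $[1_A]$ is not such a sum. You only use that every element of $\Cu(I+J)$ lies below some $i+j$. This does hold: writing $c=p+q$ with $p\in I\otimes K$ and $q\in J\otimes K$ gives $c^2\leq 2(p^*p+q^*q)$, so $[c]=[c^2]\leq[p^*p]+[q^*q]$.
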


Lemma \ref{lemma: Inf Pullback} is proved in exactly the same way as \cite[Corollary 3.13]{APRT22}; although \cite[Corollary 3.13]{APRT22} is stated for a separable C*-algebra $A$ with stable rank one, its proof only uses the fact that $\Cu(A)$ is inf-semilattice ordered. Combining Theorem \ref{introtheorem: Cu E Iso}, Proposition \ref{proposition: Cu E Inf} and Lemma \ref{lemma: Inf Pullback}, we find that in certain cases, the Cuntz semigroup of a graph C*-algebra coming from a finite graph can be computed as a suitable pullback of two Cu-semigroups.

\begin{corollary} \label{corollary: Pullback}
Let $E$ be a finite graph and $H_1, H_2$ be hereditary, saturated subsets of $E^0$. Let $(H_1 \cup H_2)_s$ denote the saturation of the hereditary set $H_1 \cup H_2$ (see Definition \ref{definition: Saturation}). Then the following square is a pullback of Cu-semigroups:
\begin{equation*}
\begin{tikzcd}
\Cu(C^*(E \backslash (H_1 \cap H_2))) \arrow[r] \arrow[d] & \Cu(C^*(E \backslash H_1)) \arrow[d] \\
\Cu(C^*(E \backslash H_2)) \arrow[r] & \Cu(C^*(E \backslash (H_1 \cup H_2)_s)).
\end{tikzcd}
\end{equation*}
\end{corollary}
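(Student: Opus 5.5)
The plan is to deduce Corollary \ref{corollary: Pullback} directly from Lemma \ref{lemma: Inf Pullback}, applied with $A = C^*(E)$, $I = I_{H_1}$ and $J = I_{H_2}$. First, $A$ is separable and, by Proposition \ref{proposition: Cu E Inf}, $\Cu(C^*(E))$ is inf-semilattice ordered. Lemma \ref{lemma: Inf Pullback} then gives a pullback square with corners $\Cu(A/(I_{H_1} \cap I_{H_2}))$, $\Cu(A/I_{H_1})$, $\Cu(A/I_{H_2})$ and $\Cu(A/(I_{H_1} + I_{H_2}))$. What remains is to identify these four quotients with the graph C*-algebras in the statement, compatibly with the quotient maps.

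For the identification I will use the correspondence \eqref{equation: Hereditary Saturated} from \cite[Theorem 4.1]{BPRS00}: for a hereditary saturated $H$ there is a gauge-equivariant isomorphism $C^*(E)/I_H \cong C^*(E \backslash H)$ sending $p_v + I_H \mapsto p_v$ for $v \notin H$, and similarly for the partial isometries. In particular $A/I_{H_i} \cong C^*(E \backslash H_i)$ for $i = 1,2$.

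\begin{enumerate}
\item[(1)] \emph{The intersection.} $I_{H_1} \cap I_{H_2}$ is gauge-invariant. Its set of vertices $\{v : p_v \in I_{H_1} \cap I_{H_2}\}$ is $H_1 \cap H_2$, because $H_{I_{H_i}} = H_i$. By the bijection \eqref{equation: Hereditary Saturated}, $I_{H_1} \cap I_{H_2} = I_{H_1 \cap H_2}$. The set $H_1 \cap H_2$ is hereditary and saturated; saturation holds since if all sources into $v$ lie in both $H_i$, then $v$ lies in both.
\item[(2)] \emph{The sum.} $I_{H_1} + I_{H_2}$ is a closed gauge-invariant ideal, being a sum of closed ideals. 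It is generated by $\{p_v : v \in H_1 \cup H_2\}$, so it is the smallest gauge-invariant ideal containing these projections. Under \eqref{equation: Hereditary Saturated} it therefore corresponds to the smallest hereditary saturated set containing $H_1 \cup H_2$, namely $(H_1 \cup H_2)_s$. To justify this, note that $(H_1 \cup H_2)_s$ is hereditary and saturated, and any hereditary saturated set containing $H_1 \cup H_2$ contains each $H_n$ of Definition \ref{definition: Saturation} by induction. Hence $I_{H_1} + I_{H_2} = I_{(H_1 \cup H_2)_s}$.
\end{enumerate}

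Finally, I check that the maps in the square match. All four quotient identifications are implemented by the canonical maps on generators $p_v$, $s_e$. The natural maps $A/(I \cap J) \to A/I$ and the others then correspond to the quotient maps $C^*(E \backslash (H_1 \cap H_2)) \to C^*(E \backslash H_1)$ and so on, which kill the vertex projections in the larger hereditary set. So the square of the corollary is isomorphic to the square of Lemma \ref{lemma: Inf Pullback}, and a pullback square is preserved under isomorphism of diagrams.

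The only place requiring care is step (2): one must confirm that the ideal generated by the vertex projections of a non-saturated hereditary set coincides with the ideal of its saturation. This follows from the Cuntz--Krieger relation $p_v = \sum_{r(e)=v} s_e s_e^*$. If every source into $v$ lies in $H_n$, each $s_e s_e^* = s_e p_{s(e)} s_e^*$ lies in the ideal, so $p_v$ does too. Induction over $n$ then gives $I_{(H_1\cup H_2)_s} \subseteq I_{H_1} + I_{H_2}$, and the reverse inclusion is clear.
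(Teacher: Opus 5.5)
Your proposal is correct and follows the same route as the paper. The paper obtains the corollary by combining Theorem \ref{introtheorem: Cu E Iso}, Proposition \ref{proposition: Cu E Inf} and Lemma \ref{lemma: Inf Pullback} with $A = C^*(E)$, $I = I_{H_1}$, $J = I_{H_2}$. Your identifications $I_{H_1} \cap I_{H_2} = I_{H_1 \cap H_2}$ and $I_{H_1} + I_{H_2} = I_{(H_1 \cup H_2)_s}$ make explicit steps that the paper leaves unstated.
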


\bibliography{DPhilBibliography9}

@article{TWW17,
	author = {Tikuisis, Aaron and White, Stuart and Winter, Wilhelm},
	doi = {10.4007/annals.2017.185.1.4},
	fjournal = {Annals of Mathematics. Second Series},
	issn = {0003-486X,1939-8980},
	journal = {Ann. of Math. (2)},
	mrclass = {46L05 (47L40)},
	mrnumber = {3583354},
	mrreviewer = {Dinesh\ Jayantilal\ Karia},
	number = {1},
	pages = {229--284},
	title = {Quasidiagonality of nuclear {$C^\ast$}-algebras},
	url = {https://doi.org/10.4007/annals.2017.185.1.4},
	volume = {185},
	year = {2017}}

@misc{EHSSTW26,
	archiveprefix = {arXiv},
	author = {Samuel Evington and Shanshan Hua and Christopher Schafhauser and Apurva Seth and Aaron Tikuisis and Stuart White},
	eprint = {2610.09390},
	howpublished = {preprint (arXiv: 2610.09390 [math.OA])},
	primaryclass = {math.OA},
	title = {Regular simple nuclear {$C^\ast$}-algebras},
	url = {https://arxiv.org/abs/2610.09390},
	year = {2026}}

@misc{Thi26,
	archiveprefix = {arXiv},
	author = {Hannes Thiel},
	eprint = {2610.06761},
	howpublished = {preprint (arXiv: 2610.06761 [math.OA])},
	primaryclass = {math.OA},
	title = {Ranks of operators and pureness of {$C^\ast$}-algebras},
	url = {https://arxiv.org/abs/2610.06761},
	year = {2026}}

@article{Ror04b,
	author = {R{\o}rdam, Mikael},
	doi = {10.1142/S0129167X04002661},
	fjournal = {International Journal of Mathematics},
	issn = {0129-167X},
	journal = {Internat. J. Math.},
	mrclass = {46L35 (19K14 46L05 46L06)},
	mrnumber = {2106263},
	mrreviewer = {Nadia S. Larsen},
	number = {10},
	pages = {1065--1084},
	title = {The stable and the real rank of {$\mathcal{Z}$}-absorbing {$C^\ast$}-algebras},
	url = {http://dx.doi.org/10.1142/S0129167X04002661},
	volume = {15},
	year = {2004},
	zbl = {1077.46054}}

@book{Sak71,
	author = {Sakai, Sh\^oichir\^o},
	mrclass = {46L05 (46L10)},
	mrnumber = {442701},
	mrreviewer = {Richard\ I.\ Loebl},
	pages = {xii+253},
	publisher = {Springer-Verlag, New York-Heidelberg},
	series = {Ergebnisse der Mathematik und ihrer Grenzgebiete [Results in Mathematics and Related Areas]},
	title = {{$C^\ast$}-algebras and {$W^\ast$}-algebras},
	volume = {Band 60},
	year = {1971}}

@misc{CGSTW26,
	archiveprefix = {arXiv},
	author = {Jos{\'e} R. Carri{\'o}n and James Gabe and Christopher Schafhauser and Aaron Tikuisis and Stuart White},
	eprint = {2307.06480},
	howpublished = {preprint (arXiv: 2307.06480v4 [math.OA])},
	primaryclass = {math.OA},
	title = {Classifying *-homomorphisms {I}: {U}nital simple nuclear {$C^\ast$}-algebras},
	url = {https://arxiv.org/abs/2307.06480},
	year = {2026}}

@incollection{DF81,
	author = {Dupr\'e, Maurice J. and Fillmore, Peter A.},
	booktitle = {Topics in modern operator theory ({T}imi\c soara/{H}erculane, 1980)},
	isbn = {3-7643-1244-0},
	mrclass = {46M20 (46L99)},
	mrnumber = {672817},
	mrreviewer = {S.\ Popa},
	pages = {71--79},
	publisher = {Birkh\"auser Verlag, Basel-Boston, Mass.},
	series = {Oper. Theory Adv. Appl.},
	title = {Triviality theorems for {H}ilbert modules},
	volume = {2},
	year = {1981}}

@article{Rob13b,
	author = {Robert, Leonel},
	fjournal = {Comptes Rendus Math\'ematiques de l'Acad\'emie des Sciences. La Soci\'et\'e{} Royale du Canada. Mathematical Reports of the Academy of Science. The Royal Society of Canada},
	issn = {0706-1994,2816-5810},
	journal = {C. R. Math. Acad. Sci. Soc. R. Can.},
	mrclass = {46L05 (46L08 46L35)},
	mrnumber = {3098039},
	mrreviewer = {Joav\ Orovitz},
	number = {1},
	pages = {22--32},
	title = {The {C}untz semigroup of some spaces of dimension at most two},
	volume = {35},
	year = {2013}}

@article{Cun81,
	author = {Cuntz, Joachim},
	doi = {10.2307/1971137},
	fjournal = {Annals of Mathematics. Second Series},
	issn = {0003-486X},
	journal = {Ann. of Math. (2)},
	mrclass = {46L05 (16A54 18G99 46M20 58G12)},
	mrnumber = {604046},
	mrreviewer = {Vern\ Paulsen},
	number = {1},
	pages = {181--197},
	title = {{$K$}-theory for certain {$C^{\ast}$}-algebras},
	url = {https://doi.org/10.2307/1971137},
	volume = {113},
	year = {1981}}

@article{ERR13,
	author = {Eilers, S{\o}ren and Restorff, Gunnar and Ruiz, Efren},
	doi = {10.1016/j.jfa.2013.05.006},
	fjournal = {Journal of Functional Analysis},
	issn = {0022-1236,1096-0783},
	journal = {J. Funct. Anal.},
	mrclass = {46L35},
	mrnumber = {3056712},
	mrreviewer = {Jia-jie\ Hua},
	number = {3},
	pages = {449--468},
	title = {Classifying {$C^{\ast}$}-algebras with both finite and infinite subquotients},
	url = {https://doi.org/10.1016/j.jfa.2013.05.006},
	volume = {265},
	year = {2013}}

@article{Sor13,
	author = {S{\o}rensen, Adam P. W.},
	doi = {10.1017/S0143385712000260},
	fjournal = {Ergodic Theory and Dynamical Systems},
	issn = {0143-3857,1469-4417},
	journal = {Ergodic Theory Dynam. Systems},
	mrclass = {05C62 (37A55 46L05)},
	mrnumber = {3082546},
	number = {4},
	pages = {1199--1220},
	title = {Geometric classification of simple graph algebras},
	url = {https://doi.org/10.1017/S0143385712000260},
	volume = {33},
	year = {2013}}

@article{BP04,
	author = {Bates, Teresa and Pask, David},
	doi = {10.1017/S0143385703000348},
	fjournal = {Ergodic Theory and Dynamical Systems},
	issn = {0143-3857,1469-4417},
	journal = {Ergodic Theory Dynam. Systems},
	mrclass = {37B10 (46L05 46L55)},
	mrnumber = {2054048},
	mrreviewer = {Mark\ Tomforde},
	number = {2},
	pages = {367--382},
	title = {Flow equivalence of graph algebras},
	url = {https://doi.org/10.1017/S0143385703000348},
	volume = {24},
	year = {2004}}

@article{Wil73,
	author = {Williams, R. F.},
	doi = {10.2307/1970908},
	fjournal = {Annals of Mathematics. Second Series},
	issn = {0003-486X},
	journal = {Ann. of Math. (2)},
	mrclass = {58F20 (28A65)},
	mrnumber = {331436},
	mrreviewer = {C.\ B.\ Thomas},
	pages = {120--153; errata, ibid. (2) 99 (1974), 380--381},
	title = {Classification of subshifts of finite type},
	url = {https://doi.org/10.2307/1970908},
	volume = {98},
	year = {1973}}

@article{NR12,
	author = {Nagy, Gabriel and Reznikoff, Sarah},
	doi = {10.1112/jlms/jdr073},
	fjournal = {Journal of the London Mathematical Society. Second Series},
	issn = {0024-6107,1469-7750},
	journal = {J. Lond. Math. Soc. (2)},
	mrclass = {46L05},
	mrnumber = {2927813},
	mrreviewer = {Yangping\ Jing},
	number = {3},
	pages = {889--908},
	title = {Abelian core of graph algebras},
	url = {https://doi.org/10.1112/jlms/jdr073},
	volume = {85},
	year = {2012}}

@article{ERRS18,
	author = {Eilers, S{\o}ren and Restorff, Gunnar and Ruiz, Efren and S{\o}rensen, Adam P. W.},
	doi = {10.4153/CJM-2017-016-7},
	fjournal = {Canadian Journal of Mathematics. Journal Canadien de Math\'ematiques},
	issn = {0008-414X,1496-4279},
	journal = {Canad. J. Math.},
	mrclass = {46L35 (37B10 46L55 46L80)},
	mrnumber = {3759003},
	mrreviewer = {Mingchu\ Gao},
	number = {2},
	pages = {294--353},
	title = {Geometric classification of graph {$C^\ast$}-algebras over finite graphs},
	url = {https://doi.org/10.4153/CJM-2017-016-7},
	volume = {70},
	year = {2018}}

@article{ERRS21,
	author = {Eilers, S{\o}ren and Restorff, Gunnar and Ruiz, Efren and S{\o}rensen, Adam P. W.},
	doi = {10.1215/00127094-2021-0060},
	fjournal = {Duke Mathematical Journal},
	issn = {0012-7094,1547-7398},
	journal = {Duke Math. J.},
	mrclass = {46L35 (19K99 37A55 46L55 46L80)},
	mrnumber = {4302548},
	mrreviewer = {Bipul\ Saurabh},
	number = {11},
	pages = {2421--2517},
	title = {The complete classification of unital graph {$C^\ast$}-algebras: geometric and strong},
	url = {https://doi.org/10.1215/00127094-2021-0060},
	volume = {170},
	year = {2021}}

@article{APT20,
	author = {Antoine, Ramon and Perera, Francesc and Thiel, Hannes},
	doi = {10.1112/jlms.12343},
	fjournal = {Journal of the London Mathematical Society. Second Series},
	issn = {0024-6107,1469-7750},
	journal = {J. Lond. Math. Soc. (2)},
	mrclass = {46L05 (03C20 06B35 18Axx 19K14 46M07 46M15 46M40)},
	mrnumber = {4186119},
	number = {3},
	pages = {994--1029},
	title = {Cuntz semigroups of ultraproduct {$C^\ast$}-algebras},
	url = {https://doi.org/10.1112/jlms.12343},
	volume = {102},
	year = {2020}}

@misc{CGSTW,
	author = {Jos{\'e} R. Carri{\'o}n and James Gabe and Christopher Schafhauser and Aaron Tikuisis and Stuart White},
	howpublished = {manuscript in preparation},
	primaryclass = {math.OA},
	title = {Classifying *-homomorphisms {II}}}

@book{Bla06,
	author = {Blackadar, B.},
	doi = {10.1007/3-540-28517-2},
	isbn = {978-3-540-28486-4; 3-540-28486-9},
	mrclass = {46L05 (46L10 46L80)},
	mrnumber = {2188261},
	mrreviewer = {Paul\ Jolissaint},
	note = {Theory of $C^*$-algebras and von Neumann algebras, Operator Algebras and Non-commutative Geometry, III},
	pages = {xx+517},
	publisher = {Springer-Verlag, Berlin},
	series = {Encyclopaedia of Mathematical Sciences},
	title = {Operator algebras},
	url = {https://doi.org/10.1007/3-540-28517-2},
	volume = {122},
	year = {2006}}

@article{War80,
	author = {Warfield, Jr., R. B.},
	fjournal = {Pacific Journal of Mathematics},
	issn = {0030-8730,1945-5844},
	journal = {Pacific J. Math.},
	mrclass = {16A54 (16A65)},
	mrnumber = {615693},
	mrreviewer = {K.\ R.\ Goodearl},
	number = {2},
	pages = {457--485},
	title = {Cancellation of modules and groups and stable range of endomorphism rings},
	url = {http://projecteuclid.org/euclid.pjm/1102778738},
	volume = {91},
	year = {1980}}

@article{Rie83,
	author = {Rieffel, Marc A.},
	doi = {10.1112/plms/s3-46.2.301},
	fjournal = {Proceedings of the London Mathematical Society. Third Series},
	issn = {0024-6115,1460-244X},
	journal = {Proc. London Math. Soc. (3)},
	mrclass = {46L05 (46H20 46M20)},
	mrnumber = {693043},
	mrreviewer = {David\ Handelman},
	number = {2},
	pages = {301--333},
	title = {Dimension and stable rank in the {$K$}-theory of {$C^\ast$}-algebras},
	url = {https://doi.org/10.1112/plms/s3-46.2.301},
	volume = {46},
	year = {1983}}

@article{KR00,
	author = {Kirchberg, Eberhard and R{\o}rdam, Mikael},
	fjournal = {American Journal of Mathematics},
	issn = {0002-9327,1080-6377},
	journal = {Amer. J. Math.},
	mrclass = {46L05 (46L35)},
	mrnumber = {1759891},
	mrreviewer = {Judith\ A.\ Packer},
	number = {3},
	pages = {637--666},
	title = {Non-simple purely infinite {$C^\ast$}-algebras},
	url = {http://muse.jhu.edu/journals/american_journal_of_mathematics/v122/122.3kirchberg.pdf},
	volume = {122},
	year = {2000}}

@article{BK04,
	author = {Blanchard, Etienne and Kirchberg, Eberhard},
	doi = {10.1016/j.jfa.2003.06.008},
	fjournal = {Journal of Functional Analysis},
	issn = {0022-1236,1096-0783},
	journal = {J. Funct. Anal.},
	mrclass = {46L35 (19K56 46L80)},
	mrnumber = {2032998},
	mrreviewer = {Carla\ Farsi},
	number = {2},
	pages = {461--513},
	title = {Non-simple purely infinite {$C^\ast$}-algebras: the {H}ausdorff case},
	url = {https://doi.org/10.1016/j.jfa.2003.06.008},
	volume = {207},
	year = {2004}}

@article{KNZ17,
	author = {Kaftal, Victor and Ng, Ping Wong and Zhang, Shuang},
	doi = {10.4153/CJM-2016-015-3},
	fjournal = {Canadian Journal of Mathematics. Journal Canadien de Math\'ematiques},
	issn = {0008-414X,1496-4279},
	journal = {Canad. J. Math.},
	mrclass = {46L05 (46L35 46L45 47C15)},
	mrnumber = {3612090},
	mrreviewer = {Mohammad\ Sal\ Moslehian},
	number = {2},
	pages = {373--407},
	title = {Strict comparison of positive elements in multiplier algebras},
	url = {https://doi.org/10.4153/CJM-2016-015-3},
	volume = {69},
	year = {2017}}

@article{HS04,
	author = {Hong, Jeong Hee and Szyma{\'n}ski, Wojciech},
	doi = {10.2969/jmsj/1191418695},
	fjournal = {Journal of the Mathematical Society of Japan},
	issn = {0025-5645,1881-1167},
	journal = {J. Math. Soc. Japan},
	mrclass = {46L55 (46L05)},
	mrnumber = {2023453},
	mrreviewer = {Mark\ Tomforde},
	number = {1},
	pages = {45--64},
	title = {The primitive ideal space of the {$C^\ast$}-algebras of infinite graphs},
	url = {https://doi.org/10.2969/jmsj/1191418695},
	volume = {56},
	year = {2004}}

@article{DT05,
	author = {Drinen, D. and Tomforde, M.},
	doi = {10.1216/rmjm/1181069770},
	fjournal = {The Rocky Mountain Journal of Mathematics},
	issn = {0035-7596,1945-3795},
	journal = {Rocky Mountain J. Math.},
	mrclass = {46L05 (46L55)},
	mrnumber = {2117597},
	mrreviewer = {Jeong\ Hee\ Hong},
	number = {1},
	pages = {105--135},
	title = {The {$C^\ast$}-algebras of arbitrary graphs},
	url = {https://doi.org/10.1216/rmjm/1181069770},
	volume = {35},
	year = {2005}}

@article{DHS03,
	author = {Deicke, Klaus and Hong, Jeong Hee and Szyma{\'n}ski, Wojciech},
	doi = {10.1512/iumj.2003.52.2350},
	fjournal = {Indiana University Mathematics Journal},
	issn = {0022-2518,1943-5258},
	journal = {Indiana Univ. Math. J.},
	mrclass = {46L05 (46L55 46L80)},
	mrnumber = {2001940},
	mrreviewer = {Nadia\ S.\ Larsen},
	number = {4},
	pages = {963--979},
	title = {Stable rank of graph algebras. {T}ype {I} graph algebras and their limits},
	url = {https://doi.org/10.1512/iumj.2003.52.2350},
	volume = {52},
	year = {2003}}

@article{JPS01,
	author = {Jeong, J. A. and Park, G. H. and Shin, D. Y.},
	doi = {10.2140/pjm.2001.200.331},
	fjournal = {Pacific Journal of Mathematics},
	issn = {0030-8730,1945-5844},
	journal = {Pacific J. Math.},
	mrclass = {46L05 (46L35 46L80)},
	mrnumber = {1868695},
	mrreviewer = {Takahiro\ Sudo},
	number = {2},
	pages = {331--343},
	title = {Stable rank and real rank of graph {$C^\ast$}-algebras},
	url = {https://doi.org/10.2140/pjm.2001.200.331},
	volume = {200},
	year = {2001}}

@article{Tom04,
	author = {Tomforde, Mark},
	doi = {10.1090/S0002-9939-04-07411-8},
	fjournal = {Proceedings of the American Mathematical Society},
	issn = {0002-9939,1088-6826},
	journal = {Proc. Amer. Math. Soc.},
	mrclass = {46L05},
	mrnumber = {2051143},
	mrreviewer = {Nadia\ S.\ Larsen},
	number = {6},
	pages = {1787--1795},
	title = {Stability of {$C^\ast$}-algebras associated to graphs},
	url = {https://doi.org/10.1090/S0002-9939-04-07411-8},
	volume = {132},
	year = {2004}}

@misc{ER25,
	archiveprefix = {arXiv},
	author = {S{\o}ren Eilers and Efren Ruiz},
	eprint = {1908.03714},
	howpublished = {preprint (arXiv:1908.03714 [math.OA])},
	primaryclass = {math.OA},
	title = {Refined moves for structure-preserving isomorphism of graph {$C^\ast$}-algebras},
	url = {https://arxiv.org/abs/1908.03714},
	year = {2025}}

@article{HS02,
	author = {Hong, Jeong Hee and Szyma{\'n}ski, Wojciech},
	doi = {10.1007/s00220-002-0732-1},
	fjournal = {Communications in Mathematical Physics},
	issn = {0010-3616,1432-0916},
	journal = {Comm. Math. Phys.},
	mrclass = {46L65 (46L05 46L35 58B32)},
	mrnumber = {1942860},
	mrreviewer = {Xiao\ Chun\ Fang},
	number = {1},
	pages = {157--188},
	title = {Quantum spheres and projective spaces as graph algebras},
	url = {https://doi.org/10.1007/s00220-002-0732-1},
	volume = {232},
	year = {2002}}

@article{HS03,
	author = {Hong, Jeong Hee and Szyma{\'n}ski, Wojciech},
	doi = {10.2140/pjm.2003.211.249},
	fjournal = {Pacific Journal of Mathematics},
	issn = {0030-8730,1945-5844},
	journal = {Pacific J. Math.},
	mrclass = {46L05 (19K99 46L80 58B32)},
	mrnumber = {2015735},
	mrreviewer = {Mark\ Tomforde},
	number = {2},
	pages = {249--263},
	title = {Quantum lens spaces and graph algebras},
	url = {https://doi.org/10.2140/pjm.2003.211.249},
	volume = {211},
	year = {2003}}

@article{AMP07,
	author = {Ara, P. and Moreno, M. A. and Pardo, E.},
	doi = {10.1007/s10468-006-9044-z},
	fjournal = {Algebras and Representation Theory},
	issn = {1386-923X,1572-9079},
	journal = {Algebr. Represent. Theory},
	mrclass = {46L80 (46L05)},
	mrnumber = {2310414},
	mrreviewer = {Mark\ Tomforde},
	number = {2},
	pages = {157--178},
	title = {Nonstable {$K$}-theory for graph algebras},
	url = {https://doi.org/10.1007/s10468-006-9044-z},
	volume = {10},
	year = {2007}}

@book{Lan95,
	author = {Lance, E. C.},
	doi = {10.1017/CBO9780511526206},
	isbn = {0-521-47910-X},
	mrclass = {46L05 (46H25 46K05 46L80 46M05 47D25)},
	mrnumber = {1325694},
	mrreviewer = {Robert\ S.\ Doran},
	pages = {x+130},
	publisher = {Cambridge University Press, Cambridge},
	series = {London Mathematical Society Lecture Note Series},
	title = {Hilbert {$C^\ast$}-modules: A toolkit for operator algebraists},
	url = {https://doi.org/10.1017/CBO9780511526206},
	volume = {210},
	year = {1995}}

@article{KN06a,
	author = {Kucerovsky, D. and Ng, P. W.},
	doi = {10.7146/math.scand.a-15009},
	fjournal = {Mathematica Scandinavica},
	issn = {0025-5521,1903-1807},
	journal = {Math. Scand.},
	mrclass = {46L05},
	mrnumber = {2289022},
	number = {2},
	pages = {204--216},
	title = {{$S$}-regularity and the corona factorization property},
	url = {https://doi.org/10.7146/math.scand.a-15009},
	volume = {99},
	year = {2006}}

@article{LR95,
	author = {Lin, Hua Xin and R{\o}rdam, Mikael},
	doi = {10.1112/jlms/51.3.603},
	fjournal = {Journal of the London Mathematical Society. Second Series},
	issn = {0024-6107,1469-7750},
	journal = {J. London Math. Soc. (2)},
	mrclass = {46L05 (46L80)},
	mrnumber = {1332895},
	number = {3},
	pages = {603--613},
	title = {Extensions of inductive limits of circle algebras},
	url = {https://doi.org/10.1112/jlms/51.3.603},
	volume = {51},
	year = {1995}}

@article{EK01,
	author = {Elliott, George A. and Kucerovsky, Dan},
	doi = {10.2140/pjm.2001.198.385},
	fjournal = {Pacific Journal of Mathematics},
	issn = {0030-8730,1945-5844},
	journal = {Pacific J. Math.},
	mrclass = {46L05 (46M20)},
	mrnumber = {1835515},
	mrreviewer = {Hua\ Xin\ Lin},
	number = {2},
	pages = {385--409},
	title = {An abstract {V}oiculescu-{B}rown-{D}ouglas-{F}illmore absorption theorem},
	url = {https://doi.org/10.2140/pjm.2001.198.385},
	volume = {198},
	year = {2001}}

@article{TV23,
	author = {Thiel, Hannes and Vilalta, Eduard},
	doi = {10.1090/tran/8880},
	fjournal = {Transactions of the American Mathematical Society},
	issn = {0002-9947,1088-6850},
	journal = {Trans. Amer. Math. Soc.},
	mrclass = {46L05 (19K14 46L80 46L85)},
	mrnumber = {4608430},
	mrreviewer = {Vladimir\ Manuilov},
	number = {7},
	pages = {4713--4744},
	title = {The {G}lobal {G}limm {P}roperty},
	url = {https://doi.org/10.1090/tran/8880},
	volume = {376},
	year = {2023}}

@article{CEI08,
	author = {Coward, Kristofer T. and Elliott, George A. and Ivanescu, Cristian},
	doi = {10.1515/CRELLE.2008.075},
	fjournal = {Journal f\"ur die Reine und Angewandte Mathematik. [Crelle's Journal]},
	issn = {0075-4102,1435-5345},
	journal = {J. Reine Angew. Math.},
	mrclass = {46L35 (19K14 46L05 46L08 46L80 46M15)},
	mrnumber = {2458043},
	mrreviewer = {Francesc\ Perera},
	pages = {161--193},
	title = {The {C}untz semigroup as an invariant for {$C^\ast$}-algebras},
	url = {https://doi.org/10.1515/CRELLE.2008.075},
	volume = {623},
	year = {2008}}

@article{OPR12,
	author = {Ortega, Eduard and Perera, Francesc and R{\o}rdam, Mikael},
	doi = {10.1093/imrn/rnr013},
	fjournal = {International Mathematics Research Notices. IMRN},
	issn = {1073-7928,1687-0247},
	journal = {Int. Math. Res. Not. IMRN},
	mrclass = {46L35 (06F05 46L05 46L80)},
	mrnumber = {2874927},
	mrreviewer = {Aaron\ Tikuisis},
	number = {1},
	pages = {34--66},
	title = {The corona factorization property, stability, and the {C}untz semigroup of a {$C^\ast$}-algebra},
	url = {https://doi.org/10.1093/imrn/rnr013},
	year = {2012}}

@article{APT18,
	author = {Antoine, Ramon and Perera, Francesc and Thiel, Hannes},
	doi = {10.1090/memo/1199},
	fjournal = {Memoirs of the American Mathematical Society},
	isbn = {978-1-4704-2797-9; 978-1-4704-4282-8},
	issn = {0065-9266,1947-6221},
	journal = {Mem. Amer. Math. Soc.},
	mrclass = {06B35 (06F05 15A69 16W80 18B35 19K14 46L05 54F05)},
	mrnumber = {3756921},
	number = {1199},
	pages = {viii+191},
	title = {Tensor products and regularity properties of {C}untz semigroups},
	url = {https://doi.org/10.1090/memo/1199},
	volume = {251},
	year = {2018}}

@misc{APTV24,
	archiveprefix = {arXiv},
	author = {Ramon Antoine and Francesc Perera and Hannes Thiel and Eduard Vilalta},
	eprint = {2406.11052},
	howpublished = {preprint (arXiv: 2406.11052 [math.OA])},
	primaryclass = {math.OA},
	title = {Pure {$C^\ast$}-algebras},
	url = {https://arxiv.org/abs/2406.11052},
	year = {2024}}

@article{APS11,
	author = {Antoine, Ramon and Perera, Francesc and Santiago, Luis},
	doi = {10.1016/j.jfa.2011.02.016},
	fjournal = {Journal of Functional Analysis},
	issn = {0022-1236,1096-0783},
	journal = {J. Funct. Anal.},
	mrclass = {46L35 (46L05 46L80 46M15)},
	mrnumber = {2774057},
	mrreviewer = {Valentin\ Deaconu},
	number = {10},
	pages = {2844--2880},
	title = {Pullbacks, {$C(X)$}-algebras, and their {C}untz semigroup},
	url = {https://doi.org/10.1016/j.jfa.2011.02.016},
	volume = {260},
	year = {2011}}

@article{Ror92,
	author = {R{\o}rdam, Mikael},
	doi = {10.1016/0022-1236(92)90106-S},
	fjournal = {Journal of Functional Analysis},
	issn = {0022-1236,1096-0783},
	journal = {J. Funct. Anal.},
	mrclass = {46L05 (46L85)},
	mrnumber = {1172023},
	mrreviewer = {Mahmood\ Khoshkam},
	number = {2},
	pages = {255--269},
	title = {On the structure of simple {$C^\ast$}-algebras tensored with a {UHF}-algebra. {II}},
	url = {https://doi.org/10.1016/0022-1236(92)90106-S},
	volume = {107},
	year = {1992}}

@article{TV24,
	author = {Thiel, Hannes and Vilalta, Eduard},
	doi = {10.4171/jncg/526},
	fjournal = {Journal of Noncommutative Geometry},
	issn = {1661-6952,1661-6960},
	journal = {J. Noncommut. Geom.},
	mrclass = {46L05 (19K14 46L85)},
	mrnumber = {4705669},
	mrreviewer = {Amaury\ Freslon},
	number = {1},
	pages = {231--263},
	title = {Nowhere scattered {$C^\ast$}-algebras},
	url = {https://doi.org/10.4171/jncg/526},
	volume = {18},
	year = {2024}}

@incollection{APT11,
	author = {Ara, Pere and Perera, Francesc and Toms, Andrew S.},
	booktitle = {Aspects of operator algebras and applications},
	doi = {10.1090/conm/534/10521},
	isbn = {978-0-8218-4905-7},
	mrclass = {46L35 (19K14 46L05 46L80)},
	mrnumber = {2767222},
	mrreviewer = {Vladimir\ Manuilov},
	pages = {1--71},
	publisher = {Amer. Math. Soc., Providence, RI},
	series = {Contemp. Math.},
	title = {{$K$}-theory for operator algebras. {C}lassification of {$C^*$}-algebras},
	url = {https://doi.org/10.1090/conm/534/10521},
	volume = {534},
	year = {2011}}

@article{BC09,
	author = {Brown, Nathanial P. and Ciuperca, Alin},
	doi = {10.1016/j.jfa.2008.12.004},
	fjournal = {Journal of Functional Analysis},
	issn = {0022-1236,1096-0783},
	journal = {J. Funct. Anal.},
	mrclass = {46L08 (46L05)},
	mrnumber = {2523343},
	mrreviewer = {Vladimir\ Manuilov},
	number = {1},
	pages = {332--339},
	title = {Isomorphism of {H}ilbert modules over stably finite {$C^*$}-algebras},
	url = {https://doi.org/10.1016/j.jfa.2008.12.004},
	volume = {257},
	year = {2009}}

@article{Cun78,
	author = {Cuntz, Joachim},
	doi = {10.1007/BF01421922},
	fjournal = {Mathematische Annalen},
	issn = {0025-5831,1432-1807},
	journal = {Math. Ann.},
	mrclass = {46L05},
	mrnumber = {467332},
	mrreviewer = {J.\ W.\ Bunce},
	number = {2},
	pages = {145--153},
	title = {Dimension functions on simple {$C^\ast$}-algebras},
	url = {https://doi.org/10.1007/BF01421922},
	volume = {233},
	year = {1978}}

@article{ABP11,
	author = {Antoine, Ramon and Bosa, Joan and Perera, Francesc},
	doi = {10.1142/S0129167X11007057},
	fjournal = {International Journal of Mathematics},
	issn = {0129-167X,1793-6519},
	journal = {Internat. J. Math.},
	mrclass = {46L35 (06F05 19K14)},
	mrnumber = {2812090},
	mrreviewer = {Aaron\ Tikuisis},
	number = {6},
	pages = {837--861},
	title = {Completions of monoids with applications to the {C}untz semigroup},
	url = {https://doi.org/10.1142/S0129167X11007057},
	volume = {22},
	year = {2011}}

@article{Rob12,
	author = {Robert, Leonel},
	doi = {10.1016/j.aim.2012.07.010},
	fjournal = {Advances in Mathematics},
	issn = {0001-8708,1090-2082},
	journal = {Adv. Math.},
	mrclass = {46L35},
	mrnumber = {2970466},
	mrreviewer = {Changguo\ Wei},
	number = {5},
	pages = {2802--2836},
	title = {Classification of inductive limits of 1-dimensional {NCCW} complexes},
	url = {https://doi.org/10.1016/j.aim.2012.07.010},
	volume = {231},
	year = {2012}}

@article{APRT22,
	author = {Antoine, Ramon and Perera, Francesc and Robert, Leonel and Thiel, Hannes},
	doi = {10.1215/00127094-2021-0009},
	fjournal = {Duke Mathematical Journal},
	issn = {0012-7094,1547-7398},
	journal = {Duke Math. J.},
	mrclass = {46L05 (06F05 19K14 46L08 46L35)},
	mrnumber = {4364731},
	number = {1},
	pages = {33--99},
	title = {{$C^\ast$}-algebras of stable rank one and their {C}untz semigroups},
	url = {https://doi.org/10.1215/00127094-2021-0009},
	volume = {171},
	year = {2022}}

@article{RS21,
	author = {Robert, Leonel and Santiago, Luis},
	doi = {10.7146/math.scand.a-121016},
	fjournal = {Mathematica Scandinavica},
	issn = {0025-5521,1903-1807},
	journal = {Math. Scand.},
	mrclass = {46L35},
	mrnumber = {4299231},
	mrreviewer = {Luo\ Yi\ Shi},
	number = {1},
	pages = {131--160},
	title = {A revised augmented {C}untz semigroup},
	url = {https://doi.org/10.7146/math.scand.a-121016},
	volume = {127},
	year = {2021}}

@article{RW10,
	author = {R{\o}rdam, Mikael and Winter, Wilhelm},
	doi = {10.1515/CRELLE.2010.039},
	fjournal = {Journal f\"ur die Reine und Angewandte Mathematik. [Crelle's Journal]},
	issn = {0075-4102,1435-5345},
	journal = {J. Reine Angew. Math.},
	mrclass = {46L35 (46L05 46L85)},
	mrnumber = {2658184},
	mrreviewer = {Hiroyuki\ Osaka},
	pages = {129--155},
	title = {The {J}iang-{S}u algebra revisited},
	url = {https://doi.org/10.1515/CRELLE.2010.039},
	volume = {642},
	year = {2010}}

@misc{HW26,
	archiveprefix = {arXiv},
	author = {Shanshan Hua and Stuart White},
	eprint = {2601.08779},
	howpublished = {preprint (arXiv: 2601.08779 [math.OA])},
	primaryclass = {math.OA},
	title = {Uniqueness for embeddings of nuclear {$C^\ast$}-algebras into type {II$_{1}$}-factors},
	url = {https://arxiv.org/abs/2601.08779},
	year = {2026}}

@article{Tom08,
	author = {Toms, Andrew S.},
	doi = {10.4007/annals.2008.167.1029},
	fjournal = {Annals of Mathematics. Second Series},
	issn = {0003-486X,1939-8980},
	journal = {Ann. of Math. (2)},
	mrclass = {46L35 (19K14 46L80)},
	mrnumber = {2415391},
	mrreviewer = {Francesc\ Perera},
	number = {3},
	pages = {1029--1044},
	title = {On the classification problem for nuclear {$C^\ast$}-algebras},
	url = {https://doi.org/10.4007/annals.2008.167.1029},
	volume = {167},
	year = {2008}}

@misc{BG24,
	archiveprefix = {arXiv},
	author = {Ben Bouwen and James Gabe},
	eprint = {2412.15968},
	howpublished = {preprint (arXiv: 2412.15968 [math.OA])},
	primaryclass = {math.OA},
	title = {A unified approach for classifying simple nuclear {$C^\ast$}-algebras},
	url = {https://arxiv.org/abs/2412.15968},
	year = {2024}}

@article{ENSW25,
	author = {Evington, Samuel and Ng, Abraham C. S. and Sims, Aidan and White, Stuart},
	doi = {10.1007/s00209-025-03787-7},
	fjournal = {Mathematische Zeitschrift},
	issn = {0025-5874,1432-1823},
	journal = {Math. Z.},
	mrclass = {46L05 (46L35 46L85)},
	mrnumber = {4920021},
	mrreviewer = {Ilja\ Gogi\'c},
	number = {4},
	pages = {Paper No. 89, 28},
	title = {Nuclear dimension of extensions of commutative {$C^\ast$}-algebras by {K}irchberg algebras},
	url = {https://doi.org/10.1007/s00209-025-03787-7},
	volume = {310},
	year = {2025}}

@article{CRS10,
	author = {Ciuperca, Alin and Robert, Leonel and Santiago, Luis},
	fjournal = {Journal of Operator Theory},
	issn = {0379-4024,1841-7744},
	journal = {J. Operator Theory},
	mrclass = {46L35 (46L05)},
	mrnumber = {2669433},
	mrreviewer = {Francesc\ Perera},
	number = {1},
	pages = {155--169},
	title = {The {C}untz semigroup of ideals and quotients and a generalized {K}asparov stabilization theorem},
	volume = {64},
	year = {2010}}

@article{TV22,
	author = {Thiel, Hannes and Vilalta, Eduard},
	doi = {10.1016/j.aim.2021.108016},
	fjournal = {Advances in Mathematics},
	issn = {0001-8708,1090-2082},
	journal = {Adv. Math.},
	mrclass = {46L05 (46L85 54F45 55M10)},
	mrnumber = {4355725},
	mrreviewer = {Masaharu\ Kusuda},
	pages = {Paper No. 108016, 44},
	title = {Covering dimension of {C}untz semigroups},
	url = {https://doi.org/10.1016/j.aim.2021.108016},
	volume = {394},
	year = {2022}}

@article{GP26,
	archiveprefix = {arXiv},
	author = {Gardella, Eusebio and Perera, Francesc},
	doi = {10.4171/emss/84},
	eprint = {2212.02290},
	fjournal = {EMS Surveys in Mathematical Sciences},
	issn = {2308-2151,2308-216X},
	journal = {EMS Surv. Math. Sci.},
	mrclass = {46L05 (06F05 19K14 46L35)},
	mrnumber = {5037040},
	number = {1},
	pages = {133--214},
	primaryclass = {math.OA},
	title = {The modern theory of {C}untz semigroups of {$C^\ast$}-algebras},
	url = {https://doi.org/10.4171/emss/84},
	volume = {13},
	year = {2026}}

@article{WZ10,
	author = {Winter, Wilhelm and Zacharias, Joachim},
	doi = {10.1016/j.aim.2009.12.005},
	fjournal = {Advances in Mathematics},
	issn = {0001-8708,1090-2082},
	journal = {Adv. Math.},
	mrclass = {46L05},
	mrnumber = {2609012},
	mrreviewer = {William\ Paschke},
	number = {2},
	pages = {461--498},
	title = {The nuclear dimension of {$C^\ast$}-algebras},
	url = {https://doi.org/10.1016/j.aim.2009.12.005},
	volume = {224},
	year = {2010}}

@article{Rob11,
	author = {Robert, Leonel},
	fjournal = {M\"unster Journal of Mathematics},
	issn = {1867-5778,1867-5786},
	journal = {M\"unster J. Math.},
	mrclass = {46L05 (46L35)},
	mrnumber = {2869254},
	mrreviewer = {Xiao\ Chun\ Fang},
	pages = {65--71},
	title = {Nuclear dimension and {$n$}-comparison},
	volume = {4},
	year = {2011}}

@book{Rae05,
	author = {Raeburn, Iain},
	doi = {10.1090/cbms/103},
	isbn = {0-8218-3660-9},
	mrclass = {46L05 (22D25)},
	mrnumber = {2135030},
	mrreviewer = {Mark\ Tomforde},
	pages = {vi+113},
	publisher = {Conference Board of the Mathematical Sciences, Washington, DC; by the American Mathematical Society, Providence, RI},
	series = {CBMS Regional Conference Series in Mathematics},
	title = {Graph algebras},
	url = {https://doi.org/10.1090/cbms/103},
	volume = {103},
	year = {2005}}

@book{Mur90,
	author = {Murphy, Gerard J.},
	isbn = {0-12-511360-9},
	mrclass = {46Lxx (46-01)},
	mrnumber = {1074574},
	mrreviewer = {E.\ Gerlach},
	pages = {x+286},
	publisher = {Academic Press, Inc., Boston, MA},
	title = {{$C^\ast$}-algebras and operator theory},
	year = {1990}}

@article{KPRR97,
	author = {Kumjian, Alex and Pask, David and Raeburn, Iain and Renault, Jean},
	doi = {10.1006/jfan.1996.3001},
	fjournal = {Journal of Functional Analysis},
	issn = {0022-1236,1096-0783},
	journal = {J. Funct. Anal.},
	mrclass = {46L05 (46L55)},
	mrnumber = {1432596},
	mrreviewer = {Masatoshi\ Enomoto},
	number = {2},
	pages = {505--541},
	title = {Graphs, groupoids, and {C}untz-{K}rieger algebras},
	url = {https://doi.org/10.1006/jfan.1996.3001},
	volume = {144},
	year = {1997}}

@article{KPR98,
	author = {Kumjian, Alex and Pask, David and Raeburn, Iain},
	doi = {10.2140/pjm.1998.184.161},
	fjournal = {Pacific Journal of Mathematics},
	issn = {0030-8730,1945-5844},
	journal = {Pacific J. Math.},
	mrclass = {46L35 (05C20 22A22)},
	mrnumber = {1626528},
	mrreviewer = {Berndt\ Brenken},
	number = {1},
	pages = {161--174},
	title = {Cuntz-{K}rieger algebras of directed graphs},
	url = {https://doi.org/10.2140/pjm.1998.184.161},
	volume = {184},
	year = {1998}}

@article{KN06b,
	author = {Kucerovsky, Dan and Ng, P. W.},
	fjournal = {Houston Journal of Mathematics},
	issn = {0362-1588},
	journal = {Houston J. Math.},
	mrclass = {46L85 (46L05 46L80)},
	mrnumber = {2219330},
	mrreviewer = {Vladimir\ Manuilov},
	number = {2},
	pages = {531--550},
	title = {The corona factorization property and approximate unitary equivalence},
	volume = {32},
	year = {2006}}

@article{Gab24,
	author = {Gabe, James},
	doi = {10.1090/memo/1461},
	fjournal = {Memoirs of the American Mathematical Society},
	isbn = {978-1-4704-6793-7; 978-1-4704-7718-9},
	issn = {0065-9266,1947-6221},
	journal = {Mem. Amer. Math. Soc.},
	mrclass = {46L35 (46L80)},
	mrnumber = {4684339},
	number = {1461},
	pages = {v+115},
	title = {Classification of {$\mathcal{O}_{\infty}$}-stable {$C^\ast$}-algebras},
	url = {https://doi.org/10.1090/memo/1461},
	volume = {293},
	year = {2024}}

@article{BPRS00,
	author = {Bates, Teresa and Pask, David and Raeburn, Iain and Szyma{\'n}ski, Wojciech},
	fjournal = {New York Journal of Mathematics},
	issn = {1076-9803},
	journal = {New York J. Math.},
	mrclass = {46L05 (05C20)},
	mrnumber = {1777234},
	mrreviewer = {Xiao\ Chun\ Fang},
	pages = {307--324},
	title = {The {$C^\ast$}-algebras of row-finite graphs},
	url = {http://nyjm.albany.edu:8000/j/2000/6_307.html},
	volume = {6},
	year = {2000}}

@article{Bro77,
	author = {Brown, Lawrence G.},
	fjournal = {Pacific Journal of Mathematics},
	issn = {0030-8730,1945-5844},
	journal = {Pacific J. Math.},
	mrclass = {46L05},
	mrnumber = {454645},
	mrreviewer = {Jun\ Tomiyama},
	number = {2},
	pages = {335--348},
	title = {Stable isomorphism of hereditary subalgebras of {$C^\ast$}-algebras},
	url = {http://projecteuclid.org/euclid.pjm/1102811431},
	volume = {71},
	year = {1977}}
\bibliographystyle{amsalpha}

\end{document}